\setlist[enumerate,1]{label=(\roman*)}  
\numberwithin{equation}{section}
\newcommand{\mbbS}{\mathbb{S}}
\newcommand{\mcalA}{\mathcal{A}}
\newcommand{\mcalB}{\mathcal{B}}
\newcommand{\mcalF}{\mathcal{F}}
\newcommand{\mcalV}{\mathcal{V}}
\newcommand{\naturals}{\mathbb{N}}
\newcommand{\N}{\naturals}
\newcommand{\reals}{\mathbb{R}}
\newcommand{\R}{\reals}
\newcommand{\complex}{\mathbb{C}}
\newcommand{\C}{\complex}
\newcommand{\field}{\mathbb{F}}
\newcommand{\eps}{\varepsilon}
\newcommand{\fourier}[1]{\widehat{#1}}
\newcommand{\gs}{>}
\newcommand{\ls}{<}
\newcommand{\conj}[1]{\overline{#1}}
\newcommand{\norm}[1]{\left\|  #1  \right\|}
\newcommand{\abs}[1]{\left| #1 \right|}
\newcommand{\vol}[1]{\abs{#1}}
\newcommand{\of}[1]{\left( #1 \right)}
\newcommand{\inv}[1]{\frac{1}{#1}}
\newcommand{\inset}[1]{\left \{ #1 \right \}}
\newcommand{\angles}[1]{\left\langle #1 \right\rangle}
\newcommand{\inner}[1]{\angles{#1}}
\newcommand{\charf}[1]{\mathbbm{1}_{#1}}
\DeclareMathOperator{\tRe}{Re}
\DeclareMathOperator{\diff}{d\!}
\newcommand{\theoremname}{Theorem}
\newcommand{\lemmaname}{Lemma}
\newcommand{\propositionname}{Proposition}
\newcommand{\definitionname}{Definition}
\newcommand{\corollaryname}{Corollary}
\newcommand{\claimname}{Claim}
\newcommand{\examplename}{Example}
\newcommand{\remarkname}{Remark}
\newcommand{\commentsname}{Comments}
\newcommand{\notename}{Note}
\newcommand{\notationname}{Notation}
\newcounter{allcounter}[section]
\newtheorem{numthm}[allcounter]{\theoremname}
\newtheorem{theorem}{\theoremname}
\newtheorem*{lemma}{\lemmaname}                     
\newtheorem{numlemma}[allcounter]{\lemmaname}
\newtheorem{numprop}[allcounter]{\propositionname}
\newtheorem{numcor}[allcounter]{\corollaryname}
\newtheorem*{obs}{\remarkname}                      
\newtheorem{numremark}[allcounter]{\remarkname}
\newcommand\ee{v^*}
\newcommand{\Fock}{\mcalF^2}
\newcommand{\Fnorm}[1]{\norm{#1}_{\Fock}}
\newcommand{\Om}{\Omega}
\newcommand{\om}{\omega}
\newcommand{\dso}{\delta_{s_0}}
\newcommand{\STFT}{\mcalV}
\newcommand{\Barg}{\mcalB}
\newcommand{\bol}{\boldsymbol{\omega}}
\newcommand{\BergD}{\mcalA_{\alpha}(D)}
\newcommand{\BergC}{\mcalA_{\alpha}(\C^+)}
\newcommand{\K}{\mathcal K}
\newcommand{\e}{\varepsilon}
\newcommand{\p}{\partial}
\newcommand{\mc}{\mathcal}
\newcommand{\mb}{\mathbb}
\DeclareMathOperator*{\ddiv}{div}
\renewcommand{\Re}{\operatorname{Re}}
\renewcommand{\Im}{\operatorname{Im}}
\title[Stability of Faber-Krahn for the STFT]{Stability of the Faber-Krahn inequality for the Short-time Fourier Transform}
\author{Jaime Gómez}
\address{Mathematics section, EPFL, Lausanne, Switzerland.}
\email{jaime.gomezramirez@epfl.ch}
\author{Andr\'e Guerra}
\address{Institute for Theoretical Studies, ETH Zürich, Zürich, Switzerland.}
\email{andre.guerra@eth-its.ethz.ch}
\author{Jo\~ao P. G. Ramos}
\address{Department of Mathematics, ETH Zürich, Zürich, Switzerland.}
\email{joao.ramos@math.ethz.ch}
\author{Paolo Tilli}
\address{Dipartimento di Scienze Matematiche, Politecnico di Torino, Corso Duca degli Abruzzi 24, 10129 Torino, Italy.}
\email{paolo.tilli@polito.it}
\begin{document}
\maketitle

\begin{abstract}
We prove a sharp quantitative version of the Faber--Krahn inequality for the short-time Fourier transform (STFT). 
To do so, we consider a deficit $\delta(f;\Om)$ which measures by how much the STFT of a function $f\in L^2(\R)$ fails to be optimally concentrated on an arbitrary set $\Omega\subset \R^2$ of positive, finite measure. We then show that an optimal power of the deficit $\delta(f;\Om)$ controls both the $L^2$-distance of $f$ to an appropriate class of Gaussians and the distance of $\Omega$ to a ball, through the Fraenkel asymmetry of $\Omega$. Our proof is completely quantitative and hence all constants are explicit. We also establish suitable generalizations of this result in the higher-dimensional context. 

\end{abstract}
\vspace{0.7cm} 

\section{Introduction}

\subsection{Main results}
Given a function $g\in L^2(\R)$ (called the \emph{window}), the
\emph{short-time Fourier transform (STFT)}
of a function $f \in L^2(\R)$ is
usually defined as
\begin{equation}
    \label{defSTFT}
    V_gf(x,\omega) = \int_{\R} e^{- 2 \pi i t \omega} f(t) \overline{g(x-t)} \diff t.
\end{equation}
This transform plays a distinguished role in different areas of mathematics, including time-frequency analysis \cite{Grochenig} and signal processing \cite{Mallat2009}, mathematical physics \cite{Lieb2001}, where it is also known as the \textit{coherent state transform}, and semiclassical and microlocal analysis \cite{Hormander2003,Tataru}. 

From the point of view of time-frequency analysis, the STFT is a
measure of the ``instantaneous frequency'' of the signal $f$ at each point, in analogy to what a music score does. 
As the notion of ``instantaneous frequency'' is not well-defined for generic signals, due to the uncertainty principle, the STFT can only concentrate a limited amount of its $L^2$-norm on any
set $\Omega\subset \R^2$ with finite Lebesgue measure $|\Omega|$,
and finding explicit bounds in terms of $|\Omega|$  
is an important issue in time-frequency analysis. For a general window $g$, this appears to be  extremely challenging and only suboptimal bounds have been obtained: we refer the reader to the work of E.\ Lieb \cite{Lieb} for what is, to our knowledge, the current best result at this level of generality.

For very regular windows, however, the situation improves. In particular,
in the relevant case
(extensively studied in the literature also in connection with
the spectrum of localization operators in the radially symmetric case, 
see e.g.\ \cite{Abreu2012,Daubechies,Galbis,RamosLocalization})
where
$g=\varphi$ is the \textit{Gaussian} window
\begin{equation}\label{eqn:GaussianWindow}
    \varphi(x) = 2^{1/4} e^{-\pi x^2}, \quad x \in \R,
\end{equation}
a complete solution to this concentration problem 
has recently been 
given in \cite{NicolaTilli},
thus proving a conjecture from \cite{Abreu2021} (see also \cite{Donoho1989}).
Denoting by
$\STFT f := V_{\varphi}f$ the STFT with the Gaussian window $\varphi$ defined in \eqref{eqn:GaussianWindow},
the main result of \cite{NicolaTilli} can be stated as follows:
\begin{theorem}[\cite{NicolaTilli}; Faber-Krahn inequality for the STFT]\label{thm:FKforSTFT}
    If $\Omega\subset \R^2$ is a measurable set with finite Lebesgue measure $|\Omega|>0$, 
    and $f\in L^2(\R)\setminus\{0\}$ is an arbitrary function,
    then
    \begin{equation}\label{eqn:FKInequality}
    \frac{\int_{\Omega} |\STFT f(x,\omega)|^2 \diff x \diff \omega}
    {\Vert f\Vert_{L^2(\R)}^2}
    \leq 1-e^{- \vol{\Om}}.
    \end{equation}
    Moreover, 
    equality is attained  if and only if $\Om$ coincides (up to a set of measure zero) with a ball
    centered at some $z_0=(x_0, \om_0)\in\R^2$  and, at the same time, 
$f$ is a function of the kind
    \begin{equation}\label{eqn:FKEqualityFunctions}
        f(x) = c\, \varphi_{z_0}(x) , \qquad  \varphi_{z_0}(x) := e^{2 \pi i \om_0 x} \varphi(x-x_0),
    \end{equation}
    for some $c\in \C\setminus\{0\}$.
\end{theorem}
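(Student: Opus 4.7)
My approach passes to the Bargmann--Fock representation and deduces the estimate from a sharp ODE for a weighted distribution function. The Bargmann transform $B:L^2(\R)\to\Fock(\C)$ is a unitary isomorphism onto the Fock space of entire functions; under the identification $\R^2\simeq\C$ via $(x,\om)\mapsto x-i\om$ one has $|\STFT f(x,\om)|^2=|F(x-i\om)|^2 e^{-\pi(x^2+\om^2)}$ and $\|f\|_{L^2(\R)}^2=\|F\|_{\Fock}^2$ for $F=Bf$. After normalizing $\|f\|_{L^2}=1$ and setting $u(z):=|F(z)|^2 e^{-\pi|z|^2}$, the inequality \eqref{eqn:FKInequality} reads $\int_\Om u\,\diff A\le 1-e^{-|\Om|}$, and by the bathtub principle it suffices to treat the case $\Om=E_t:=\{u>t\}$; the task thus reduces to bounding $\phi(t):=\int_{E_t} u\,\diff A$ against $\mu(t):=|E_t|$.

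\textbf{Differential inequality.} The crucial feature of $u$ is that, away from the discrete zero set of $F$, the function $\log u=2\log|F|-\pi|z|^2$ satisfies $\Delta\log u=-4\pi$ pointwise; since the zeros of $F$ sit in $\{u=0\}$, this identity is valid smoothly on every $E_t$ with $t>0$. For a.e.\ such $t$ (by Sard) the boundary $\partial E_t$ is a smooth curve, and the divergence theorem combined with the coarea formula give respectively
\[
\int_{\partial E_t}|\grad\log u|\,\diff\hausdorff^1=4\pi\mu(t),\qquad \int_{\partial E_t}|\grad\log u|^{-1}\,\diff\hausdorff^1=-t\mu'(t).
\]
Cauchy--Schwarz bounds their product below by $P(E_t)^2$, while the planar isoperimetric inequality gives $P(E_t)^2\ge 4\pi\mu(t)$; combining these yields the key pointwise inequality $-t\mu'(t)\ge 1$.

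\textbf{Integration and rigidity.} Let $u^*=\mu^{-1}$ denote the decreasing rearrangement and $\Phi(s):=\int_0^s u^*(\sigma)\,\diff\sigma$, so that (via bathtub) $\Phi(|\Om|)$ equals the supremum of $\int_\Om u$ over sets of measure $|\Om|$. The inequality $-t\mu'(t)\ge 1$ translates into $(u^*)'(s)+u^*(s)\ge 0$, which integrated from $s$ to $\infty$ using $\Phi(\infty)=1$ yields $\Phi'(s)+\Phi(s)\le 1$; combined with $\Phi(0)=0$, an integrating-factor argument produces $\Phi(s)\le 1-e^{-s}$, establishing \eqref{eqn:FKInequality}. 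Equality throughout forces three simultaneous saturations: in the isoperimetric inequality (each $E_t$ is a ball), in Cauchy--Schwarz ($|\grad\log u|$ is constant on each $\partial E_t$), and in the ODE ($u^*(s)=e^{-s}$). These together make $u$ radial about some $z_0\in\R^2$ (with $F$ necessarily non-vanishing, since otherwise radial symmetry would produce entire circles of zeros, violating discreteness of the zero set of $F$) and yield $\Delta\log u=-4\pi$ globally; a direct integration of the radial Laplacian gives $u(z)=e^{-\pi|z-z_0|^2}$, whence $F(z)=c\,e^{\pi\bar z_0 z}$, and inverting the Bargmann transform produces the claimed $f=c'\varphi_{z_0}$. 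The most delicate step is the rigorous justification of the divergence and coarea identities at a.e.\ level $t$, together with the propagation of level-set rigidity to a global symmetry statement—both facilitated by the real-analyticity of $u$ stemming from the entirety of $F$.
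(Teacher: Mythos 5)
Your proposal is correct and follows essentially the same route as the proof of [NicolaTilli] that the paper recalls in Section \ref{subsec:12}: pass to the Fock space via the Bargmann transform, reduce to super-level sets of $u=|F|^2e^{-\pi|z|^2}$, derive $\mu'(t)\le -1/t$ from $\Delta\log u=-4\pi$ together with Cauchy--Schwarz and the isoperimetric inequality, and integrate (your integrating-factor step is the same computation as the paper's convexity of $G(\sigma)=I(-\log\sigma)$). The only cosmetic difference is the equality case, where the paper uses $I'(0)=T$ and the reproducing-kernel bound \eqref{eq:repkernel} while you invoke the level-set rigidity (balls with $|\nabla u|$ constant on each boundary), which is the same argument the paper itself uses in Corollary \ref{cor:nondegen}.
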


Note that the optimal functions in \eqref{eqn:FKEqualityFunctions} are scalar multiples of the Gaussian window defined in \eqref{eqn:GaussianWindow}, translated and modulated according to the center of the ball $\Omega$.


This result, which improves upon Lieb's uncertainty principle \cite{Lieb}, has inspired 
other subsequent works: \cite{RamosTilli}, where a similar
result is extended to the case of Wavelet transforms; \cite{Kulikov}, where Kulikov used techniques 
inspired by those of \cite{NicolaTilli} to prove some contractivity conjectures; and \cite{Frank}, where R. Frank uses the same circle of ideas to generalize a series of entropy-like inequalities. We also refer the reader to \cite{KNOT, KalajRamos, Kalaj1, Kalaj2, KalajMelentijevic, Melentijevic, NicolaTilli2} and the references therein for further closely related work.

In the present paper we investigate the \emph{stability} of Theorem \ref{thm:FKforSTFT}: given $\Om \subset \R^2$ and $f\in L^2(\R)$ which are almost optimal, in the sense that they almost saturate  inequality \eqref{eqn:FKInequality}, can we infer (and to what extent) that $\Omega$ is close to a ball and that $f$ is close to a function of the form \eqref{eqn:FKEqualityFunctions}? To
formulate this question precisely,
a crucial point  is choosing how to measure \textit{almost optimality} as well as \textit{closeness}. 

To measure almost optimality in \eqref{eqn:FKInequality} for a pair $(f,\Omega)$, we will consider the \emph{combined deficit}
\begin{equation}\label{eqn:Deficit}
    \delta(f;\Om) \coloneqq 1- \frac{\displaystyle \int_{\Omega} |\mathcal{V}f(x,\omega)|^2 \, \diff x  \diff \omega}{\displaystyle (1-e^{-\vol{\Om}}) \|f\|_{L^2(\R)}^2},
\end{equation}
while we will use the \emph{Fraenkel asymmetry} of $\Om \subset \R^2$ to measure its distance to a ball:
\begin{equation}\label{eqn:FraenkelAsymmetry}
    \mcalA (\Om) \coloneqq \inf \inset{ \frac{\vol{\Om \triangle B(x,r)}}{\vol{\Om}} \colon \vol{B(x,r)} = \vol{\Om} , r \gs 0, x \in \R^2 } .
\end{equation}
The Fraenkel asymmetry is a natural notion of asymmetry and it is often used to formulate the stability of geometric and functional inequalities, such as the isoperimetric inequality \cite{CicaleseLeonardi,FigalliMaggiPratelli,Fuglede1989,FuscoMaggiPratelli} or the Faber--Krahn inequality for the Dirichlet Laplacian \cite{AllenKriventsovNeumayer,BrascoDePhilippisVelichkov}.

Our main result reads as follows:

\begin{numthm}[Stability of the Faber-Krahn inequality for the STFT]\label{thm:Stability}
    There is an explicitly computable constant $C>0$ such that, for all  measurable sets $\Om \subset \R^2$ with finite measure
    $|\Omega|>0$ and all functions $f \in L^2(\R)\backslash\{0\}$, we have 
    \begin{equation}\label{eqn:StabilityFunction}
        \min_{z_0\in \C, |c|=\|f\|_{2}} \frac{\|f - c\, \varphi_{z_0} \|_2}{\|f\|_2} \leq C\big( e^{|\Omega|} \delta(f;\Om)\big)^{1/2}.
    \end{equation}
    Moreover, for some explicit constant $K(|\Omega|)$ we also have
    \begin{equation}\label{eqn:StabilitySet}
        \mcalA(\Om) \leq K(|\Omega|) \delta(f;\Om)^{1/2} .
    \end{equation}
\end{numthm}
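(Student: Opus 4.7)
The plan is to translate the problem into the Bargmann/Fock-space picture and then quantify, step by step, the proof of Theorem~\ref{thm:FKforSTFT} from \cite{NicolaTilli}. Let $F$ denote the Bargmann transform of $f$; then $F\in\Fock$, $\|F\|_{\Fock}=\|f\|_{L^{2}(\R)}$, and
\begin{equation*}
G(z):=|\STFT f(z)|^{2}=|F(z)|^{2} e^{-\pi|z|^{2}}.
\end{equation*}
After rescaling $\|f\|_{2}=1$, the extremals $c\,\varphi_{z_{0}}$ become scalar multiples of the normalized reproducing kernels $\widetilde{\K}_{z_{0}}:=\K_{z_{0}}/\|\K_{z_{0}}\|_{\Fock}$, and the reproducing formula yields $|\langle F,\widetilde{\K}_{w}\rangle|^{2}=G(w)\le 1$.

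Following \cite{NicolaTilli}, let $t(s)$ be the decreasing rearrangement of $G$ and $u(s):=\int_{0}^{s} t(\sigma)\,\diff\sigma=\sup_{|E|=s}\int_{E} G$. The key differential inequality $u'(s)+u(s)\le 1$ yields $u(s)\le 1-e^{-s}$, with equality throughout iff $F$ is a coherent state. Writing $\varepsilon(s):=1-u'(s)-u(s)\ge 0$ and integrating, $(1-e^{-s})-u(s)=e^{-s}\int_{0}^{s} e^{\sigma}\varepsilon(\sigma)\,\diff\sigma$. I would then split the total deficit as
\begin{equation*}
\delta(f;\Om)(1-e^{-|\Om|})=\delta_{1}+\delta_{2},\quad \delta_{1}:=(1-e^{-|\Om|})-u(|\Om|),\quad \delta_{2}:=u(|\Om|)-\int_{\Om} G,
\end{equation*}
where $\delta_{1}\ge 0$ measures how far $F$ is from being a coherent state and $\delta_{2}\ge 0$ measures how far $\Om$ is from being a superlevel set of $G$.

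For \eqref{eqn:StabilityFunction}, the reproducing-kernel identity gives
\begin{equation*}
\min_{|c|=1,\,z_{0}\in\C}\|F-c\,\widetilde{\K}_{z_{0}}\|_{\Fock}^{2} = 2\bigl(1-\sqrt{\sup G}\bigr)\le 2(1-\sup G),
\end{equation*}
so it suffices to prove $1-\sup G\lesssim e^{|\Om|}\delta$. A direct ODE comparison using $u'(s)\le\min(\sup G,\,1-u(s))$ produces only $1-\sup G\lesssim\sqrt{e^{|\Om|}\delta}$, hence the suboptimal exponent $\delta^{1/4}$. To reach the sharp $\delta^{1/2}$, I would exploit the holomorphic structure of $F$: decomposing $F=c_{0}\widetilde{\K}_{z_{*}}+R$ at a maximizer $z_{*}$ of $G$ (so that $R(z_{*})=0$) and using the log-subharmonic identity $\Delta\log G=4\pi\sum_{F(w_{k})=0}\delta_{w_{k}}-4\pi$, together with a local Taylor expansion of $F$ near $z_{*}$, one shows that $\varepsilon(s)$ is in fact \emph{quadratic} in $\|R\|_{\Fock}^{2}$ for small $s$; this second-order improvement closes the gap.

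For \eqref{eqn:StabilitySet}, once the previous step produces $z_{0}$ with $G$ uniformly close to the Gaussian $G_{z_{0}}(z):=e^{-\pi|z-z_{0}|^{2}}$, whose superlevel sets are balls centred at $z_{0}$, the superlevel set $E(|\Om|):=\{G>t(|\Om|)\}$ is quantitatively close to a ball of measure $|\Om|$. To bound $|\Om\triangle E(|\Om|)|$ I would use
\begin{equation*}
\delta_{2}=\int_{E\setminus\Om}(G-t(|\Om|))+\int_{\Om\setminus E}(t(|\Om|)-G),
\end{equation*}
together with the quantitative lower bound $|G(z)-t(|\Om|)|\gtrsim|\nabla G_{z_{0}}(z)|\cdot\operatorname{dist}(z,\partial E)$ valid near $\partial E$ (thanks to the Gaussian profile provided by the previous step). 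A standard isoperimetric-in-measure argument then gives $|\Om\triangle E|\lesssim K(|\Om|)\sqrt{\delta}$, and the triangle inequality with the near-ball shape of $E$ yields \eqref{eqn:StabilitySet}.

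The \emph{main obstacle} is the function-stability step: extracting the sharp $\delta^{1/2}$ exponent from the ODE inequality requires a second-order improvement using the holomorphic constraint on $G$, since the maximum-principle bound $G\le 1$ alone gives only $\delta^{1/4}$.
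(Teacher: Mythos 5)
Your architecture matches the paper's: pass to the Fock space, reduce the function stability to bounding $1-T$, where $T=\sup_z|F(z)|^2e^{-\pi|z|^2}$, via the reproducing--kernel identity $\min\|F-cF_{z_0}\|_{\Fock}^2=2(1-\sqrt T)$; observe that the bare ODE comparison only yields $(1-T)^2\lesssim e^{|\Om|}\delta$ (hence the exponent $1/4$); and recover the sharp exponent from a cancellation at the maximum point that is available only because $F$ is holomorphic. Your splitting of the deficit into a function part $\delta_1$ and a set part $\delta_2$, and your two-step set comparison ($\Om$ versus the super-level set, then the super-level set versus a ball) are also exactly the paper's route; for the set part your pointwise gradient bound near the boundary replaces the paper's transport-map/Chebyshev device, which is an acceptable variant in outline.

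The genuine gap is at the step you yourself flag as the main obstacle. What is required is a \emph{lower} bound $\int_0^{s_0}e^{\sigma}\varepsilon(\sigma)\,\diff\sigma\gtrsim 1-T$. Note that $\varepsilon(0)=1-T$ but $\varepsilon$ is \emph{decreasing} (since $u^*+(u^*)'\ge0$) and a priori may collapse at unit rate, which is exactly what produces the quadratic loss $(1-T)^2$; so the issue is to show that $\varepsilon(s)\gtrsim 1-T$ persists on an $s$-interval of \emph{universal} length. Your assertion that ``$\varepsilon(s)$ is quadratic in $\|R\|_{\Fock}^2$'' is, read literally, of order $\|R\|^4\sim(1-T)^2$ --- the suboptimal order --- and in any case a smallness statement about $\varepsilon$ points in the wrong direction. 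The paper's actual mechanism is a sharp \emph{upper} bound on the distribution function, $\mu(t)\le(1+C(1-T))\log(T/t)$ for $t,T$ near $1$, proved by writing $F/\sqrt T=1+R$ with $R$ vanishing to second order at the maximum, enclosing each super-level set in a one-parameter family $E_\sigma$ interpolating from a disc, and killing the first variation by the mean-value property of the harmonic function $2\Re R$; it further requires the uniqueness and universal non-degeneracy of the crossing point $s^*$, and a monotonicity argument (based on $s\mapsto e^su^*(s)$ being increasing) to transfer the estimate between the scale $s^*$ and the scale $|\Om|$, which is where the factor $e^{|\Om|}$ enters. None of this follows from the log-subharmonicity identity plus a local Taylor expansion alone, so as written the proof of \eqref{eqn:StabilityFunction} is incomplete precisely at its decisive step, and \eqref{eqn:StabilitySet} inherits this incompleteness since it relies on the function estimate.
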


\begin{numremark}[Sharpness]\label{rmk:sharpness}
In Theorem \ref{thm:Stability} the factor $\delta(f;\Om)^{1/2}$ in \eqref{eqn:StabilityFunction} and \eqref{eqn:StabilitySet} cannot be replaced by $\delta(f;\Om)^{\beta},$ for any $\beta > 1/2$. Similarly, the dependence on $|\Omega|$ in \eqref{eqn:StabilityFunction} 
 is also sharp,
in the sense that factor $e^{|\Omega|/2}$ cannot be replaced
by $e^{\beta |\Omega|}$
for any $\beta<1/2$. We refer to Section \ref{sec:sharpness} for proofs of these claims.
\end{numremark}

\begin{numremark}[Higher dimensions]\label{rmk:higherd}
    There is a natural generalization of the $STFT$ to functions $f\in L^2(\R^d)$, for any $d\geq 1$. In Section \ref{sec:Generalize} we show that a more general version of Theorem \ref{thm:Stability} holds in all dimensions. It is worth noting that, although $\delta(f;\Omega)^{1/2}$ still controls the distance of $f$ to the set of optimizers, there is a dimensional dependence of this estimate on $|\Omega|$.
\end{numremark}

As observed in \cite{NicolaTilli}, if the set $\Omega$ is fixed and has finite measure, Theorem \ref{thm:FKforSTFT} (and consequently
also Theorem  \ref{thm:Stability}) can be interpreted in terms of the 
well-known \emph{localization operator}  \cite{Berezin1971,Daubechies} defined,
in terms of the STFT operator $\STFT \colon L^2(\R)\to L^2(\R^2)$ with Gaussian window, by 
$$L_{\Omega } := \STFT^*\, 1_{\Omega}\, \STFT,\qquad L_\Omega\colon  L^2(\R)\to L^2(\R).$$
This is a positive trace-class operator, hence its norm coincides with its largest eigenvalue
\begin{equation}
    \label{eqn:FKforSTFTdefnPhi}
    \lambda_1(\Omega):=
    \max_{f\in L^2(\R)\backslash \{0\}}
\frac {\langle L_\Omega \, f,f\rangle}{\Vert f\Vert_{L^2(\R)}^2}    
    = \max_{f\in L^2(\R)\backslash \{0\}} \frac{\int_{\Omega} |\STFT f(x,\omega)|^2 \diff x \diff \omega}{\int_{\R^2} |\STFT f(x,\omega)|^2 \diff x \diff \omega}.
\end{equation}
In particular, due to the arbitrariness of $f$,  \eqref{eqn:FKInequality} entails
that 
\begin{equation}
\label{eq:FK2}    
\lambda_1(\Omega)\leq 1-e^{-|\Omega|},
\end{equation}
with equality if and only if $\Omega$ is a ball, and so we call \eqref{eq:FK2} a Faber--Krahn inequality, by analogy with the Dirichlet Laplacian.
Clearly, for any fixed $\Omega$, the functions $f_\Omega$ that achieve the
maximum in \eqref{eqn:FKforSTFTdefnPhi}
(i.e.\ the eigenfunctions of $L_{\Omega}$ associated with its first eigenvalue $\lambda_1(\Omega)$) are those functions whose STFT
is optimally concentrated in that particular set $\Omega$.
When $\Omega$ is a ball, these eigenfunctions are the functions
described in \eqref{eqn:FKEqualityFunctions} and appearing also in
\eqref{eqn:StabilityFunction}: therefore, 
specifying Theorem \ref{thm:Stability} to the case where $f = f_\Om$ is the first eigenfunction of $L_{\Omega}$, normalized so that $\|f_\Om\|_{L^2}=1$, we obtain the following 
stability result for the first eigenvalue and eigenfunction of localization
operators:
\begin{numcor}\label{cor:eigenvalue} Let $\Omega \subset \R^2$ be a measurable set of finite Lebesgue measure, and let $\lambda_1(\Omega)$ be the first eigenvalue of the localization operator
$L_\Omega$ as in \eqref{eqn:FKforSTFTdefnPhi}, with 
unit-norm  eigenfunction $f_\Omega$. Then \eqref{eq:FK2} holds true,
and
\begin{equation}
\label{eqn:stabfOmega}
    \min_{z_0\in \C, |c|=1} \|f_{\Om} - c\, \varphi_{z_0} \|_2 \leq C e^{|\Omega|/2} \left(1-\frac{\lambda_1(\Omega)}{1-e^{-|\Omega|}}\right)^{1/2},
\end{equation}
for some universal (explicitly computable) constant $C$. Moreover,
for some explicit constant $K(|\Omega|)$ we also have
    \begin{equation*}
        \mcalA(\Om) \leq K(|\Omega|)  \left(1-\frac{\lambda_1(\Omega)}{1-e^{-|\Omega|}}\right)^{1/2} .
    \end{equation*}
\end{numcor}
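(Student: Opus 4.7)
The plan is to derive Corollary \ref{cor:eigenvalue} as a direct specialisation of Theorem \ref{thm:Stability} to $f = f_\Om$, the unit-norm first eigenfunction of the localisation operator $L_\Om$. No new analysis is needed; the entire argument reduces to substituting the optimal $f$ into the deficit \eqref{eqn:Deficit} and reading off the consequences.

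First, I would dispatch the Faber--Krahn inequality \eqref{eq:FK2} as an immediate consequence of Theorem \ref{thm:FKforSTFT}: for every $f\in L^2(\R)\setminus\{0\}$ the bound \eqref{eqn:FKInequality} reads
\[
\frac{\int_\Om |\STFT f(x,\om)|^2\diff x\diff\om}{\|f\|_{L^2(\R)}^2}\le 1-e^{-|\Om|},
\]
and taking the supremum over such $f$ in the variational characterisation \eqref{eqn:FKforSTFTdefnPhi} yields $\lambda_1(\Om)\le 1-e^{-|\Om|}$.

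Next, I would compute the combined deficit $\delta(f_\Om;\Om)$ in closed form. Since $f_\Om$ realises the maximum in \eqref{eqn:FKforSTFTdefnPhi} and satisfies $\|f_\Om\|_{L^2}=1$, one has $\int_\Om |\STFT f_\Om|^2 = \lambda_1(\Om)$, and inserting this into \eqref{eqn:Deficit} gives the key identity
\[
\delta(f_\Om;\Om)=1-\frac{\lambda_1(\Om)}{1-e^{-|\Om|}}.
\]

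Finally, I would apply Theorem \ref{thm:Stability} to the pair $(f_\Om,\Om)$. Substituting the above identity into \eqref{eqn:StabilityFunction} yields exactly \eqref{eqn:stabfOmega}, where the minimisation constraint $|c|=\|f_\Om\|_{2}$ simplifies to $|c|=1$ because $f_\Om$ is normalised; similarly, \eqref{eqn:StabilitySet} produces the claimed bound $\mcalA(\Om)\le K(|\Om|)\bigl(1-\lambda_1(\Om)/(1-e^{-|\Om|})\bigr)^{1/2}$ on the Fraenkel asymmetry. There is no real obstacle in this argument: the only observation required is that, along the optimiser $f_\Om$, the deficit $\delta(f_\Om;\Om)$ coincides with the normalised spectral gap, at which point Theorem \ref{thm:Stability} delivers both assertions of the corollary with the same explicit constants.
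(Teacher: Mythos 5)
Your proposal is correct and is exactly the derivation the paper intends: the corollary is stated as a direct specialisation of Theorem \ref{thm:Stability} to the unit-norm eigenfunction $f_\Om$, using that $\mcalV$ is an isometry so that $\int_\Om|\STFT f_\Om|^2=\lambda_1(\Om)$ and hence $\delta(f_\Om;\Om)=1-\lambda_1(\Om)/(1-e^{-|\Om|})$. Nothing is missing.
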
 
This result is the analogue of
the  stability results for the Faber--Krahn inequality for the Dirichlet Laplacian \cite{AllenKriventsovNeumayer,BrascoDePhilippisVelichkov,FTV}.
Note, however, that the stability estimate \eqref{eqn:StabilityFunction}
is more general than \eqref{eqn:stabfOmega}, because
it holds for arbitrary functions $f \in L^2(\R)$ which are not assumed to be eigenfunctions of the localization operator $L_{\Om}$. 
Indeed, the
results of Theorem \ref{thm:Stability} are stronger than the available stability results for the Faber--Krahn inequality for the Dirichlet Laplacian 
also in that, contrarily to  \cite{AllenKriventsovNeumayer,BrascoDePhilippisVelichkov}, our proof of Theorem \ref{thm:Stability} is \textit{quantitative} and does not rely on compactness arguments, as in the penalization method  \cite{CicaleseLeonardi}. It is for this reason that the constants in estimates \eqref{eqn:StabilityFunction}--\eqref{eqn:StabilitySet} can
be made \textit{explicit}. Note, moreover, that the set $\Omega$ in Theorem \ref{thm:Stability} is not assumed to be smooth; in fact, 
since $\STFT f$ is essentially an entire function via
the Bargmann transform, we can replace $\Omega$ with a suitable super-level set of a holomorphic function, which in Section \ref{subsec:GeometryOfSuperLevelSets} we prove to be very well-behaved (we then use the rigidity of the problem to come back from super-level sets of holomorphic functions to the original set $\Omega$).

We saw in Remark \ref{rmk:sharpness} that \eqref{eqn:StabilityFunction} is sharp, but whether Corollary \ref{cor:eigenvalue} is sharp as well is a more delicate question. To answer it, one would need to either (i) compute the first eigenfunctions of $L_{\Omega}$ for domains $\Omega$ close to a ball,  or (ii) given a function $f$ close to the Gaussian $\varphi$, construct a domain $\Omega_f \subset \R^2$ such that $f$ is the first eigenfunction of $L_{\Omega}$. Strategy (i) appears rather difficult: to the best of our knowledge, the eigenfunctions of $L_{\Omega}$ are not known even in the simple case where $\Omega$ is an ellipse of small eccentricity; see \cite{Abreu2012,Daubechies}.  Implementing strategy (ii) involves tools essentially disjoint from those of this manuscript and so we decided not to address the question of optimality of Corollary \ref{cor:eigenvalue} here; instead, this is one of the main goals of an upcoming work by the third author \cite{RamosLocalization}.

To discuss the main ideas behind the proofs of our results, we now briefly recall some facts
and background notions from \cite{NicolaTilli}, which we shall
use throughout the paper. We point out, however, that the proof
of Theorem \ref{thm:FKforSTFT} in \cite{NicolaTilli} cannot
be readily adapted to yield quantitative results such as
\eqref{eqn:StabilityFunction} or \eqref{eqn:StabilitySet}. Instead, the proof of these inequalities requires a set of new geometric ideas 
and estimates in the Fock space, which are the core of the present paper
and which (often being of a general character, such as Lemma
\ref{lemma:super-level-new} or the results in Section \ref{subsec:GeometryOfSuperLevelSets}), are of interest on their own. 

\subsection{Proof strategy in the Bargmann--Fock space}
\label{subsec:12}

As shown in \cite{NicolaTilli}, 
energy concentration problems for the STFT can be
very cleanly formulated (and dealt with) in terms
of the Fock space
\cite{Zhu2012}, i.e.\ the Hilbert space $\Fock(\C)$ of all holomorphic functions $F \colon \C \to \C$ for which
\[ \Fnorm{F} \coloneqq \of{\int_{\C} \abs{F(z)}^2 e^{- \pi \abs{z}^2} \diff z}^{1/2} \ls \infty, \]
endowed with the natural scalar product
\[
\langle F,G\rangle_{\Fock}=
\int_{\C} F(z)\overline{G(z)}\, e^{- \pi \abs{z}^2} \diff z.
\]
Here and throughout, 
$z=x+i y$ and $\diff z=\diff x\diff y$ denotes Lebesgue measure on $\C$, 
always identified with $\R^2$.
This Hilbert space is closely connected to the STFT through the Bargmann transform $\Barg\colon L^2(\R)\to \Fock(\C)$, defined for $f \in L^2(\R)$ as
\begin{equation}\label{defB}
\Barg f(z) = \int_{\R} f(t) e^{2 \pi t z -\pi t^2 - \frac{\pi}{2}z^2} \diff t , \quad z \in \C ,
    \end{equation}
see e.g.\ \cite[Section~3.4]{Grochenig}.
The Bargmann transform is a unitary isomorphism which maps the orthonormal basis of Hermite functions on $\R$ onto the orthonormal basis of $\Fock(\C)$ given by the normalized monomials
\begin{equation}
    \label{monomials}
 e_k(z) = \of{\frac{\pi^k}{k!}}^{1/2} z^k , \quad k=0,1,2,\ldots .
 \end{equation}
More importantly for us, the definition of $\Barg$ encodes the 
crucial property that
\begin{equation*}
    \STFT f (x,-\om) = e^{\pi i x \om} \Barg f(z) e^{- \pi \abs{z}^2/2} , \quad z = x+i\om,
\end{equation*}
which allows us to express the energy concentration in the time-frequency plane in terms of functions in the Fock space, since
\begin{equation}
    \label{concB}
 \frac{\int_{\Om} \abs{\STFT f(x,\om)}^2 \diff x \diff \om}{\norm{f}_{L^2}^2} = \frac{\int_{\Om '} \abs{\Barg f(z)}^2 e^{- \pi \abs{z}^2} \diff z}{\Fnorm{\Barg f}^2} ,
 \end{equation}
where $\Om ' = \inset{(x, \om): (x,-\om) \in \Om}$. 
In this new setting, the image via $\Barg$ of the functions $\varphi_{z_0}$ defined in \eqref{eqn:FKEqualityFunctions} 
takes the form
\begin{equation}\label{eqn:DefnFz0}
    \Barg \varphi_{z_0} = F_{z_0}, \qquad F_{z_0}(z)= e^{-\frac{\pi}{2} \abs{z_0}^2} e^{\pi z \conj{z_0}},
\end{equation}
and therefore 
Theorem~\ref{thm:FKforSTFT} can be rephrased in terms of the Fock space as follows, cf.\ \cite[Theorem~3.1]{NicolaTilli}:
\begin{theorem}\label{thm:FKforSTFTFock}
  If $\Omega\subset \R^2$ is a measurable set with positive and finite Lebesgue measure, 
    and if
    $F \in \Fock(\C) \setminus \inset{0}$
is an arbitrary function,
    then
    \begin{equation}\label{eqn:FKInequalityFock}
        \frac{\int_{\Om} \abs{F(z)}^2 e^{-\pi \abs{z}^2} \diff z}{\Fnorm{F}^2} \leq 1-e^{- \vol{\Om}} .
    \end{equation}
    Moreover, 
    equality is attained  if and only if $\Om$ coincides (up to a set of measure zero) with a ball
    centered at some $z_0\in\C $  and, at the same time, 
$F = c F_{z_0}$
    for some $c\in \C\setminus\{0\}$.
\end{theorem}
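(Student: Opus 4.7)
The plan is to deduce Theorem \ref{thm:FKforSTFTFock} directly from its time-frequency counterpart, Theorem \ref{thm:FKforSTFT}, by transferring everything through the Bargmann isomorphism $\Barg\colon L^2(\R)\to \Fock(\C)$. Since $\Barg$ is a unitary bijection, given any $F \in \Fock(\C)\setminus\{0\}$ I can set $f := \Barg^{-1} F \in L^2(\R)\setminus\{0\}$, with $\|f\|_{L^2(\R)} = \|F\|_{\Fock}$. Given $\Omega\subset\R^2$, I further introduce its reflection $\Omega' := \{(x,\omega)\in\R^2 : (x,-\omega)\in\Omega\}$, which has the same Lebesgue measure as $\Omega$.

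The pointwise STFT-Bargmann identity
\[
\STFT f(x,-\omega) = e^{\pi i x \omega}\,\Barg f(x+i\omega)\, e^{-\pi|x+i\omega|^2/2}
\]
recalled in the paper yields, after squaring and integrating together with the change of variables $\omega\mapsto -\omega$, the concentration identity \eqref{concB}, namely
\[
\frac{\int_{\Omega} |F(z)|^2 e^{-\pi|z|^2}\,\diff z}{\|F\|_{\Fock}^2}
= \frac{\int_{\Omega'} |\STFT f(x,\omega)|^2\,\diff x\,\diff\omega}{\|f\|_{L^2(\R)}^2}.
\]
Applying Theorem \ref{thm:FKforSTFT} to the pair $(f,\Omega')$ immediately bounds the right-hand side by $1-e^{-|\Omega'|} = 1-e^{-|\Omega|}$, which is exactly \eqref{eqn:FKInequalityFock}.

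For the equality case, I would run the same equivalence in reverse. If equality holds in \eqref{eqn:FKInequalityFock}, then it holds in \eqref{eqn:FKInequality} for $(f,\Omega')$, so Theorem \ref{thm:FKforSTFT} forces $\Omega'$ to be a ball centered at some point and $f$ to be of the form $c\,\varphi_{z_0}$. Reflecting $\Omega'$ back across the real axis shows that $\Omega$ itself is a ball in $\C$ centered at the corresponding point, and the formula \eqref{eqn:DefnFz0} identifies $F = \Barg f$ with $c F_{z_0}$ for the matching complex parameter. Conversely, a direct computation with the Gaussian $F_{z_0}$, using $|F_{z_0}(z)|^2 e^{-\pi|z|^2} = e^{-\pi|z-z_0|^2}$, verifies that equality is indeed attained on a ball centered at $z_0$.

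The only real difficulty is a notational one: keeping careful track of the reflection $\omega\mapsto-\omega$ that relates the STFT plane $\R\times\R$ to the complex variable $z = x+i\omega$, and of the resulting identification between the center of the extremal ball in the two settings. There is no additional analytic content beyond unitarity of $\Barg$ and the identity \eqref{concB}; once these are taken for granted, Theorem \ref{thm:FKforSTFTFock} is a one-line consequence of Theorem \ref{thm:FKforSTFT}.
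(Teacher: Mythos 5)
Your proposal is correct, but it is not the argument the paper actually relies on. The paper introduces Theorem \ref{thm:FKforSTFTFock} as the Bargmann-transform rephrasing of Theorem \ref{thm:FKforSTFT} via \eqref{concB} (which is exactly your reduction), yet the proof it recalls from \cite{NicolaTilli} in Subsection \ref{subsec:12} runs in the \emph{opposite} logical direction: one works directly in $\Fock(\C)$ with $u_F(z)=|F(z)|^2e^{-\pi|z|^2}$, establishes the differential inequality \eqref{eqn:diffineq} for its distribution function, deduces convexity of $\sigma\mapsto I(-\log\sigma)$ to obtain \eqref{eq:boundI} and hence \eqref{eqn:FKInequalityFock}, and settles the equality case through $I'(0)=T$ together with the reproducing-kernel bound \eqref{eq:repkernel}; the STFT statement is then the corollary. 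Since Theorem \ref{thm:FKforSTFT} is imported here as a cited black box, your deduction is not circular within this paper, but it buys the result only by pushing all the analytic content into the citation, whereas the paper's recalled argument is the one whose ingredients ($\mu$, $u^*$, $I(s)$, the case of equality in \eqref{eq:repkernel}) are reused throughout the rest of the work.

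One point you dismiss as "purely notational" does require an explicit check, namely that after the reflection the center of the ball and the parameter of the extremal function still coincide, as the statement of Theorem \ref{thm:FKforSTFTFock} demands. If equality holds for $(f,\Omega')$ with $\Omega'$ a ball centered at $w=(x_0,\omega_0)$ and $f=c\,\varphi_{w}$, then $\Omega=(\Omega')'$ is a ball centered at $(x_0,-\omega_0)=\bar w$, while a direct computation with \eqref{defB} shows $\Barg\varphi_{w}(z)$ is a constant multiple of $e^{\pi z w}$, i.e.\ proportional to $F_{\bar w}$, so that $u_{\Barg\varphi_w}(z)=e^{-\pi|z-\bar w|^2}$. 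The two conjugations cancel: both the ball center and the subscript of the extremal $F_{z_0}$ become $\bar w$, and the conclusion of Theorem \ref{thm:FKforSTFTFock} holds with $z_0=\bar w$. Quoting \eqref{eqn:DefnFz0} literally, as you do, would instead leave the ball centered at $\bar w$ but the function equal to $cF_{w}$, an apparent mismatch; so this is the one step of your reduction that cannot be waved away and should be written out.
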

Similarly, we 
can rephrase Theorem \ref{thm:Stability} 
over the Bargmann--Fock space,
as follows:
\begin{numthm}[Fock space version of Theorem \ref{thm:Stability}]\label{thm:StabilityFockSpace}
    There is an explicitly computable constant $C>0$ such that, for all  measurable sets $\Om \subset \R^2$ with finite measure and all functions $F\in \Fock(\C)\backslash\{0\}$, we have 
    \begin{equation}\label{eqn:StabilityFunctionFock}
        \min_{\substack{|c|=\|F\|_{\Fock},\\ z_0\in \C}} \frac{\Fnorm{F-cF_{z_0}}}{\Fnorm{F}} \leq C
        \left(e^{\vol{\Om}} \delta(F;\Om)\right)^{1/2},
    \end{equation}
where
\begin{equation}
\label{defdeltaFO}
\delta(F;\Omega) \coloneqq 1-\frac{\int_\Omega |F(z)|^2 e^{-\pi |z|^2}\diff z}
{(1-e^{-|\Omega|})\Vert F\Vert_{\Fock}^2}.
\end{equation}
Moreover, for some universal explicit constant $K(|\Omega|)$ we also have
    \begin{equation}\label{eqn:StabilitySet2}
        \mcalA(\Om) \leq K(|\Omega|) \delta(F;\Om)^{1/2} .
    \end{equation}
\end{numthm}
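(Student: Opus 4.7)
The plan is to work in the Bargmann--Fock picture throughout and to exploit the rigidity of the equality case in Theorem \ref{thm:FKforSTFTFock}: an optimizer $F=cF_{z_0}$ has density $u(z):=|F(z)|^2 e^{-\pi|z|^2}=|c|^2 e^{-\pi|z-z_0|^2}$, a radial Gaussian whose super-level sets are discs centered at $z_0$. Writing $\Om^*=\{u>t\}$ with $|\Om^*|=|\Om|$, the layer-cake identity
\begin{equation*}
\int_{\Om^*}u\,\diff z-\int_{\Om}u\,\diff z=\int_{\Om^*\setminus\Om}(u-t)\,\diff z+\int_{\Om\setminus\Om^*}(t-u)\,\diff z\;\geq\;0
\end{equation*}
gives $\delta(F;\Om^*)\leq\delta(F;\Om)$ while simultaneously controlling $|\Om\triangle\Om^*|$ quantitatively by the deficit drop; this last property is what will let us transfer the set-stability estimate from $\Om^*$ back to $\Om$ at the end.

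\textbf{Spectral-gap computation on a disc.} The Fock space carries a unitary family of Heisenberg shifts acting transitively on $\{F_{z_0}\}_{z_0\in\C}$; using these I would reduce to the case where a near-maximum of $u$ sits at the origin, so that the optimal Gaussian in \eqref{eqn:StabilityFunctionFock} becomes $F_0\equiv 1$. Expanding $F=\sum_{k\geq 0}a_k e_k$ in the monomial basis \eqref{monomials}, the off-diagonal terms $\int_{B(0,r)} z^k\overline{z^j}e^{-\pi|z|^2}\diff z$ vanish on any disc centered at the origin by polar symmetry, and a direct computation with incomplete gamma functions yields, for $\pi r^2=|\Om|$,
\begin{equation*}
(1-e^{-|\Om|})\Fnorm{F}^2-\int_{B(0,r)}u\,\diff z=e^{-|\Om|}\sum_{k\geq 1}|a_k|^2\sum_{j=1}^{k}\frac{|\Om|^j}{j!}\;\geq\;|\Om|e^{-|\Om|}\sum_{k\geq 1}|a_k|^2.
\end{equation*}
Since $(e^{|\Om|}-1)/|\Om|\leq e^{|\Om|}$, when $\Om^*$ is exactly a disc this already delivers $\sum_{k\geq 1}|a_k|^2/\Fnorm{F}^2\leq e^{|\Om|}\delta(F;\Om^*)$; combined with the elementary bound $\min_{|c|=\Fnorm F,z_0\in\C}\Fnorm{F-cF_{z_0}}^2\leq 2\sum_{k\geq 1}|a_k|^2$, this proves \eqref{eqn:StabilityFunctionFock} with the sharp exponential factor $e^{|\Om|/2}$ and a universal constant.

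\textbf{General super-level sets and Fraenkel asymmetry.} For a general $\Om^*$ the off-diagonal terms no longer vanish, and this is where the geometric estimates of Section \ref{subsec:GeometryOfSuperLevelSets} enter: since $F$ is entire, the set $\Om^*=\{u>t\}$ is open with a priori boundedness, connectedness and boundary-regularity controlled by $|\Om|$ and the deficit. Writing $\Om^*=(B(0,r)\cup(\Om^*\setminus B))\setminus(B\setminus\Om^*)$, one estimates the error integrals $\int_{\Om^*\triangle B}|F|^2 e^{-\pi|z|^2}\diff z$ using pointwise bounds on the monomials together with $|\Om^*\triangle B|\leq|\Om|\mcalA(\Om^*)$, and splits $|F|^2=|a_0|^2+2\Re(a_0\overline{\tilde F})+|\tilde F|^2$: the $|a_0|^2$ contribution is handled by a quantitative Gaussian isoperimetric inequality producing a term $c_2(|\Om|)|a_0|^2\mcalA(\Om^*)^2$, while the remaining cross and higher-order contributions are absorbed via Cauchy--Schwarz into the spectral-gap term. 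The outcome is a coupled lower bound
\begin{equation*}
\delta(F;\Om^*)\;\geq\;c_1(|\Om|)\,e^{-|\Om|}\,\frac{\sum_{k\geq 1}|a_k|^2}{\Fnorm{F}^2}\;+\;c_2(|\Om|)\,\mcalA(\Om^*)^2,
\end{equation*}
which simultaneously closes \eqref{eqn:StabilityFunctionFock} and yields $\mcalA(\Om^*)\leq K(|\Om|)\delta(F;\Om^*)^{1/2}$; combining with $\mcalA(\Om)\leq\mcalA(\Om^*)+|\Om\triangle\Om^*|/|\Om|$ and the Step~1 bound on $|\Om\triangle\Om^*|$ then gives \eqref{eqn:StabilitySet2}. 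The main obstacle is precisely the geometric step: quantifying how super-level sets of the entire function $F$ may deviate from discs in terms of the deficit is the technical heart of the argument, and is what Section \ref{subsec:GeometryOfSuperLevelSets} is designed to provide.
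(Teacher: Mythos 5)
Your reduction to the super-level set $\Om^*=\{u>u^*(|\Om|)\}$ and the exact computation on a disc centered at the maximum of $u$ are both correct; the identity
\[
(1-e^{-|\Om|})\Fnorm{F}^2-\int_{B(0,r)}u\,\diff z
=e^{-|\Om|}\sum_{k\geq 1}|a_k|^2\sum_{j=1}^{k}\frac{|\Om|^j}{j!}
\geq |\Om|\,e^{-|\Om|}\sum_{k\geq 1}|a_k|^2
\]
is a clean spectral way to see the sharp factor $e^{|\Om|/2}$, and it is genuinely different from the paper's route (which compares $u^*(s)$ with $e^{-s}$ via the differential inequality $(u^*)'+u^*\geq 0$ and a sharp upper bound on $\mu(t)$). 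But the theorem is only nontrivial because $\Om^*$ is \emph{not} a disc, and your third step --- where all the content lies --- has a genuine gap. Writing $F=a_0+\tilde F$ with $\eps^2=\sum_{k\geq1}|a_k|^2$, the cross term $2\Re\big(a_0\int_{\Om^*\triangle B}\overline{\tilde F}\,e^{-\pi|z|^2}\diff z\big)$ is, by any pointwise or Cauchy--Schwarz bound, of size $|a_0|\,\eps\,|\Om^*\triangle B|$; since the level sets of $u_F$ move at first order in the perturbation, $|\Om^*\triangle B|$ is itself of order $\eps$, so this term is of the \emph{same} order $\eps^2$ as the spectral gap you are trying to preserve, with no control on its sign or constant. ``Absorbing it via Cauchy--Schwarz'' therefore amounts to a discriminant condition among three constants ($c_1\sim|\Om|e^{-|\Om|}$, $c_2$, and the cross-term constant) that you have not verified and that does not hold for free. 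What actually closes the argument is a first-order \emph{cancellation}: the harmonic function $2\Re(\tilde F/a_0)$ has vanishing mean on circles centered at the maximum, so the first-order displacement of the level set does not change the relevant quantity --- this is precisely the $f'(0)=0$ step in Lemma \ref{lemma:super-level-new}, equivalently the vanishing of the first variation $\nabla\K[1](G)=0$ in Section \ref{sec:alternative-proof}. Without exhibiting this cancellation, your coupled lower bound is unproven.

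A second gap concerns the set stability. In Step 1 you assert that the layer-cake identity ``simultaneously controls $|\Om\triangle\Om^*|$ by the deficit drop.'' It does not by itself: on $\Om^*\setminus\Om$ one only knows $u>t$ and on $\Om\setminus\Om^*$ only $u\leq t$, so a large symmetric difference concentrated where $u$ is nearly flat at the level $t$ costs arbitrarily little deficit. Converting the deficit drop into a measure bound requires a quantitative lower bound on $|u-t|$ away from $\{u=t\}$, which the paper obtains in Section \ref{sec:set-stability} only \emph{after} the function stability is established, by writing $F=1+\e G$ and comparing $u$ to the Gaussian along a transport map. Your outline uses this measure bound as an input at the very start, so the logical order needs to be reversed, and the non-degeneracy of $u$ near the level $t$ must be proved rather than assumed.
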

We will provide two different proofs of this theorem, 
based on a careful study of the real analytic function
\begin{equation}\label{eqn:Defnu}
    u_F(z) = u(z) \coloneqq \abs{F(z)}^2 e^{- \pi \abs{z}^2}
\end{equation}
and the properties of its super-level sets
\begin{equation}\label{eqn:DefnSuperLevelSetsOfu}
    A_t \coloneqq \inset{u \gs t} = \inset{z \in \C \colon u(z) \gs t},
\end{equation}
where $F$ is an arbitrary function in  $\Fock(\C)\setminus\{0\}$.
This study was initiated in \cite{NicolaTilli}, where it was proved
that the distribution function
\begin{equation}
    \label{def:mut}
    \mu_F(t)=\mu(t) \coloneqq |A_t|,\quad t\geq 0
\end{equation}
is locally absolutely continuous on $(0,\infty)$ and satisfies
\begin{equation}
    \label{eqn:diffineq} \mu'(t)\leq - \,\frac 1 t\quad
    \text{for a.e. $t\in (0,T),\qquad T \coloneqq \max_{z\in\C} u(z)$,}
\end{equation}
from which one readily obtains that
\begin{equation}
    \label{estmut1} \mu(t) \geq \log_+ \frac T t\quad\text{for all }
    t>0,\qquad\text{where  $\log_+ x \coloneqq \max\{0,\log x\}$}.
\end{equation}
Notice that, when $F=c F_{z_0}$ as in the last part
of Theorem \ref{thm:FKforSTFTFock}, then 
$T=|c|$ and $\mu(t)=\log_+ T/t$. In \cite{NicolaTilli},
\eqref{eqn:diffineq} can be found in the equivalent form
\begin{equation}\label{eqn:DiffInequ}
    u^*(s) + (u^*)'(s) \geq 0, \text{ for almost every } s \ge 0,
\end{equation}
where
$u^*\colon \R^+\to (0,T]$ is the decreasing rearrangement of $u$, usually
defined as
\begin{equation}\label{eqn:defu*}
    u^*(s) \coloneqq \sup \inset{t \geq 0 \colon \mu(t) \gs s},\qquad
    s\geq 0.
\end{equation}
The function $u^*$ is proved to be invertible, with $\mu|_{(0,T]}$ as inverse function (see \cite{Kulikov} for a direct
usage of \eqref{eqn:diffineq} in this form).
This fact
enables one to find, for any
number $s\geq 0$,
a unique super-level set $A_{t}=A_{u^*(s)}$ of measure $s$, which is the set
where $u$ is most concentrated among all sets of measure $s$, namely
\begin{equation}\label{eqn:DefnI}
    I(s) \coloneqq \int_{\inset{u \gs u^*(s)}} u(z) \diff z
    \geq \int_\Omega u(z) \diff z,\quad\text{whenever $|\Omega|=s$.}
\end{equation}
Based on \eqref{eqn:DiffInequ}, it was proved in \cite{NicolaTilli}
that the function
 $G(\sigma) \coloneqq  I(- \log \sigma)$ is
\emph{convex} on $[0,1]$. Since
$$G(0) = \lim_{s \to \infty} I(s) = \int_{\C} u(z) \diff z = \|F\|_{\mathcal{F}^2}^2,\qquad G(1) = I(0) = 0,$$ 
the convexity of $G$ yields the upper bound 
$G(\sigma) \le \|F\|_{\mathcal{F}^2}^2(1-\sigma)$ or, equivalently,
\begin{equation}
    \label{eq:boundI}
    I(s) \le \|F\|_{\mathcal{F}^2}^2(1-e^{-s}),
\end{equation}
which, combined with \eqref{eqn:DefnI},
proves
\eqref{eqn:FKInequalityFock}. 

It was then observed in
\cite{NicolaTilli} that, if equality holds in \eqref{eqn:FKInequalityFock}, then by convexity 
we must have $G(\sigma)\equiv \|F\|_{\Fock}^2 (1-\sigma)$ on
$[0,1]$ or, equivalently, 
$I(s)=\|F\|_{\Fock}^2 ( 1- e^{-s})$ for every $s\geq 0$, and
in particular
\begin{equation}
\label{eq:Iprimezero}    
I'(0)=\Vert F\Vert_{\Fock}^2.
\end{equation}
But since $\mathcal{F}^2(\C)$ is a Hilbert space with reproducing kernel $K_{w}(z) = e^{\frac{\pi}{2} \abs{w}^2} F_{w}(z)$, we have
\begin{equation}
    \label{eq:repkernel}
    |F(z)|^2 e^{-\pi|z|^2} \le \|F\|_{\mathcal{F}^2}^2
\end{equation}
for all $F\in \Fock(\C)$,
with equality at some $z=z_0$ if and only if $F = c F_{z_0}$ for some $c \in \C$ (see e.g.\ \cite[Proposition 2.1]{NicolaTilli}). Since in
any case $I'(0)=T:=\max_{z \in \C} |F(z)|^2 e^{-\pi|z|^2}$,
\eqref{eq:Iprimezero} shows that equality in \eqref{eqn:FKInequalityFock} forces equality (for at least one $z$) also in \eqref{eq:repkernel},
and this proves the last part of Theorem \ref{thm:FKforSTFTFock}. 

Without loss of generality, we always assume the normalization condition ${\|F\|_{\Fock}=1}$.
A simple but fundamental observation to both our proofs of Theorem \ref{thm:StabilityFockSpace} is that equality in \eqref{eq:repkernel} can be precisely quantified: indeed,  \begin{equation}
    \label{eq:kernintro}
    \min_{\substack{z_0\in\C \\ |c|=1}} \Vert F-cF_{z_0}\Vert_{\Fock}^2 = 2(1-\sqrt{T})\leq 2(1-T),
\end{equation}
cf.\ Lemma \ref{lemma:kern} below. Thus, to prove estimate \eqref{eqn:StabilityFunctionFock} in Theorem \ref{thm:StabilityFockSpace}, we need to show that the deficit controls $(1-T)$.

Our first proof of Theorem \ref{thm:StabilityFockSpace} is based on a careful study of the area between the graphs of $s\mapsto u^*(s)$ and $s\mapsto e^{-s}$. Consider a parameter $s^*>0$, defined to be a solution of the equation
$$u^*(s^*)=e^{-s^*}.$$
Such a solution always exists and, as soon as $T<1$, it is unique. An argument relying on the convexity inequality \eqref{eqn:DiffInequ} yields
\begin{equation}
    \label{eq:quantmaxintro}
    \int_0^{s^*} \left(e^{-s} - u^*(s)\right) \diff s \leq e^{|\Om|} \delta, \qquad \delta:=\delta(F;\{u>u^*(|\Om|)\}),
\end{equation}
cf.\ Lemma \ref{lemma:QuantitativeMaxuNew}.
Thus, to prove the desired stability estimate \eqref{eqn:StabilityFunctionFock}, by \eqref{eq:kernintro} and \eqref{eq:quantmaxintro} it is enough to show that the integral above controls $(1-T)$. In fact, it is not difficult to see that this integral controls $(1-T)$ to a \textit{suboptimal} power, as we have
\begin{equation}
    \label{eq:suboptimal}
    \frac{(1-T)^2}{2} =  \int_0^{s_*} (1-s-T)_+ \diff s\leq  \int_0^{s_*} \left(e^{-s} - u^*(s)\right) \diff s \leq  e^{|\Om|} \delta.
\end{equation}
Thus, by \eqref{eq:kernintro}, \eqref{eq:suboptimal} already yields a suboptimal form of stability.

To upgrade \eqref{eq:suboptimal} to an \textit{optimal} estimate, we need to estimate the integral in \eqref{eq:quantmaxintro} much more precisely, and our approach is to give a precise quantification of the equality cases in  \eqref{estmut1}.
By passing to the inverse functions we have
\begin{equation}
    \label{eq:betterlowerbound}
    \int_{e^{-s^*}}^T \left(\log \frac 1 t - \mu(t)\right) \diff t \leq \int_0^{s_*} \left(e^{-s} - u^*(s)\right) \diff s,
\end{equation}
cf.\ \eqref{eq1002} below, and our proof proceeds by establishing a sharp estimate for the distribution function $\mu(t)$: precisely, there is a universal constant $C>0$ such that
\begin{equation}
    \label{eq:sharplevelsets}
    \mu(t) \leq (1+C(1-T)) \log \frac T t,
\end{equation}
provided that $t$ and $T$ are sufficiently close to 1 (see Lemma \ref{lemma:super-level-new}). Note that, by the suboptimal estimate \eqref{eq:suboptimal}, this restriction on $t,T$ does not restrict generality. Establishing \eqref{eq:sharplevelsets} is the most delicate part of the whole argument, as this estimate relies on a cancellation effect due to analyticity of $F$. The desired estimate \eqref{eqn:StabilityFunctionFock} then follows by an elementary analysis, after plugging in \eqref{eq:sharplevelsets} into \eqref{eq:betterlowerbound} and using again \eqref{eq:kernintro} and \eqref{eq:quantmaxintro}.

Concerning the stability of the set in \eqref{eqn:StabilitySet2}, we note that it is not clear how to quantify inequality \eqref{eqn:DefnI} used in the proof of Theorem \ref{thm:FKforSTFTFock} described above, at least for general sets $\Om$. Nonetheless, since we already have estimate \eqref{eqn:StabilityFunctionFock}, we know that $u$ is close to a Gaussian. This allows us to first compare $\Om$ with $A_{u^*(|\Om|)}$, and then compare $A_{u^*(|\Om|)}$ with a ball.

The described strategy also works to show the stability of a similar Faber-Krahn inequality for \emph{wavelet transforms} (see \cite{RamosTilli}), after adapting the current arguments. We plan to address this in a future work. 



\subsection{The geometry of super-level sets and a variational approach}\label{sec:variationalintro}

As mentioned above, we will give two different proofs of Theorem \ref{thm:StabilityFockSpace}, the first one having been described in the previous subsection. We now describe our second proof, which is variational in nature and based on the following result, which is of independent interest:

\begin{numprop}\label{prop:levelsets}
 There are small explicit constants $\delta_0,c>0$ such that the following holds: for all $F\in \Fock(\C)$ such that 
 $$e^{s} \delta(F;A_{u^*(s)}) \leq \delta_0$$
 and for all $s<c\log(1/\delta_0)$, the super-level set
 $$A_{u^*(s)}=\{z\in \C:u(z)>u^*(s)\}$$
 has smooth boundary and convex closure.
\end{numprop}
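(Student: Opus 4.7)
My strategy is to reduce the problem to a perturbative regime in which $F$ is close to a translated Gaussian, and then to use pointwise Cauchy estimates in $\Fock(\C)$ to show strict log-concavity of $u$ on a ball containing the super-level set.

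First, I would extract a suboptimal form of the stability: combining Lemma \ref{lemma:QuantitativeMaxuNew} with the elementary identity $\tfrac{1}{2}(1-T)^2=\int_0^{s_*}(1-s-T)_+\,\diff s$ and the reproducing-kernel estimate \eqref{eq:kernintro}, the hypothesis $e^s\delta(F;A_{u^*(s)})\le\delta_0$ yields $1-T\lesssim\delta_0^{1/2}$ and hence $\min_{|c|=1,\,z_0\in\C}\Vert F-cF_{z_0}\Vert_{\Fock}^2\lesssim\delta_0^{1/2}$. Applying the unitary of $\Fock(\C)$ that sends the minimizing $F_{z_0}$ to the constant function $1$, I may assume $F=1+G$ with $\Vert G\Vert_{\Fock}\lesssim\delta_0^{1/4}$. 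To localize the super-level set I would use the pointwise bound $|G(z)|\le\Vert G\Vert_{\Fock}e^{\pi|z|^2/2}$, which yields $u(z)\le 2e^{-\pi|z|^2}+2\Vert G\Vert_{\Fock}^2$, while the monotonicity of $s\mapsto e^s u^*(s)$ implied by \eqref{eqn:DiffInequ}, together with $u^*(0)=T\ge 1-C\delta_0^{1/2}$, gives $u^*(s)\ge (1-C\delta_0^{1/2})e^{-s}$. Comparing these, the condition $u(z)>u^*(s)$ forces $|z|^2\le s/\pi+O(1)$ as long as $\delta_0^{1/2}e^s\ll 1$, so $\overline{A_{u^*(s)}}\subset B_R(0)$ with $R^2=s/\pi+O(1)$.

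Next, on a slightly enlarged ball, Cauchy's integral formula upgrades $\Vert G\Vert_{\Fock}\lesssim\delta_0^{1/4}$ to the uniform estimate
\[
\sup_{z\in B_R(0)}|G^{(k)}(z)|\lesssim\delta_0^{1/4}\,e^{\pi(R+1)^2/2},\qquad k=0,1,2,
\]
which, provided $s<c\log(1/\delta_0)$ with $c$ sufficiently small, is arbitrarily small. Consequently $F$ is zero-free on $B_R(0)$, $\log F$ is single-valued there, and $|(\log F)''(z)|=|F''/F-(F'/F)^2|<\pi$ uniformly. Since $\log|F|^2$ is harmonic on $B_R(0)$, the Hessian of $-\log u=\pi|z|^2-\log|F|^2$ equals $2\pi I-H(\log|F|^2)$, whose two eigenvalues are $2\pi\mp 2|(\log F)''|$; hence $H(-\log u)$ is uniformly positive definite on $B_R(0)$, i.e.\ $u$ is strictly log-concave there. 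Since $\overline{A_{u^*(s)}}$ lies in the convex domain $B_R(0)$, it is convex as a sub-level set of a strictly convex function, and as $u^*(s)<T$ for $s>0$, the level $\{u=u^*(s)\}$ avoids the unique critical point of $-\log u$ in $B_R(0)$, so $\partial A_{u^*(s)}$ is smooth.

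The main difficulty, and the source of the explicit threshold $s<c\log(1/\delta_0)$, is the careful calibration between the radius $R\sim\sqrt{s/\pi}$ of the super-level set and the exponential factor $e^{\pi R^2/2}\sim e^{s/2}$ that enters the transfer from the Fock norm to the sup-norm of derivatives: this factor must still be dominated by the suboptimal rate $\Vert G\Vert_{\Fock}\lesssim\delta_0^{1/4}$, which forces $s$ to stay comfortably below $\tfrac12\log(1/\delta_0)$.
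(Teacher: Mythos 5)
Your argument is correct, and for the convexity part it takes a genuinely different route from the paper. The opening reduction is the same: both proofs use the suboptimal stability (Lemma \ref{lemma:QuantitativeMaxuNew} plus Lemma \ref{lemma:kern}) to write $F=1+G$ with $\|G\|_{\Fock}\lesssim \delta_0^{1/4}$ after a Fock-space translation, and both trap $A_{u^*(s)}$ inside a ball of radius $R$ with $\pi R^2=s+O(1)$ by comparing the pointwise bound $|G(z)|\le\|G\|_{\Fock}e^{\pi|z|^2/2}$ with the lower bound $u^*(s)\ge Te^{-s}$ coming from \eqref{eqn:DiffInequ}; the calibration $s<c\log(1/\delta_0)$, $c<1/2$, enters identically in both arguments through the factor $e^{\pi R^2/2}\sim e^{s/2}$ in the Cauchy estimates. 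Where you diverge is in how convexity is extracted. The paper first proves radial monotonicity of $r\mapsto u(re^{i\alpha})$ (Lemma \ref{lemma:Level-Set-Regularity}), deduces star-shapedness and smoothness of the boundary (Lemma \ref{lemma:star-shaped-levels}), and then in Proposition \ref{prop:convexity} compares the curvature $\kappa_s$ of $\partial A_{u^*(s)}$ with that of the circle $\{u_0=u^*(s)\}$ via a $C^2$-estimate $\|u-u_0\|_{C^2}\lesssim C_s\e(F)$ and a lower bound on $|\nabla u_0|$, concluding local convexity and invoking Tietze--Nakajima to globalize. You instead prove that $-\log u=\pi|z|^2-2\Re\log F$ is uniformly strictly convex on all of $B_R$, using that the Hessian of the harmonic function $2\Re\log F$ is trace-free with eigenvalues $\pm 2|(\log F)''|$ and that $|(\log F)''|\le 2|G''|+4|G'|^2<\pi$ there; convexity of the super-level set and regularity of the level $u^*(s)<T$ then follow in one stroke, with no separate star-shapedness step and no Tietze--Nakajima. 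Your route is shorter and more structural (strict log-concavity of $u$ on $B_R$ is in fact a stronger conclusion than convexity of the single level set), while the paper's intermediate lemmas on radial monotonicity and graphicality of the level curves are reused heavily elsewhere (in the set-stability proof and in the variational Section \ref{sec:alternative-proof}), which is why the paper proceeds the way it does. One small point worth spelling out if you write this up: a boundary point of $A_{u^*(s)}$ where $\nabla u$ vanished would be the unique minimizer of the strictly convex $-\log u$ on $B_R$, hence a maximizer of $u$ there, contradicting $u=u^*(s)<\sup_{A_{u^*(s)}}u$; this closes the smoothness claim without needing to know where the global maximum $T$ is attained.
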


Proposition \ref{prop:levelsets} shows in particular that level sets of $u$ sufficiently close to its maximum can be seen as smooth graphs over a circle, thus they can be deformed to a circle through an appropriate flow. This observation, in turn, allows us to give a variational approach to Theorem \ref{thm:StabilityFockSpace}, in the spirit of Fuglede's computation \cite{Fuglede1989} for the quantitative isoperimetric inequality. We refer the reader to \cite{Henrot2005} for a detailed introduction to variational methods in shape-optimization problems. 

To be precise, and comparing with \eqref{eq:boundI}, for some fixed $s>0$ we consider the functional 
\begin{equation*}\label{eq:def-variation-and-u} 
\K \colon F \mapsto \frac{I_F(s)}{\|F\|_{\Fock}^2}.
\end{equation*}
We study perturbations of $F_0\equiv 1$, i.e.\ we consider $F=1+\e G$ for some small $\e>0$. Taking $\Om=A_{u^*(s)}$, we note that by a formal Taylor expansion we have
$$ \delta(1+\e G;\Om) 
= \frac{1}{1-e^{-|\Om|}}\big(\K[1]-\K[1+\e G]\big) 
= \frac{1}{1-e^{-|\Om|}} \Big(\frac{\e^2}{2} \nabla^2 \K[1](G,G) + o(\e^2)\Big),$$
since $\nabla \K[1](G)=0$ for all $G\in \Fock(\C)$ satisfying the orthogonality conditions 
$$\langle 1,G\rangle_{\Fock} = \langle z, G\rangle_{\Fock} = 0,$$ according to Theorem \ref{thm:FKforSTFTFock}. Thus, once the Taylor expansion above has been justified (and this is achieved in Appendix \ref{sec:appendix}), we see that for small perturbations of $F_0\equiv 1$ the deficit is governed by the second variation of $\K$. For stability to hold, this variation ought to be uniformly negative definite, since $\e$ is essentially the right-hand side in \eqref{eqn:StabilityFunctionFock}. In Proposition \ref{prop:negdef} we show that, under the above orthogonality conditions, we have
\begin{equation}
    \label{eq:secondvarintro}
     \nabla^2 \K[1](G,G) \leq - s e^{-s}\|G\|^2_{\Fock(\C)}.
\end{equation}
This inequality is interesting for several reasons. Firstly, it is sharp, as highlighted by taking $G$ to be any polynomial of degree 1. Secondly, by the suboptimal stability result \eqref{eq:suboptimal}, to prove \eqref{eqn:StabilityFunctionFock} it is enough to consider functions with small deficit. Therefore, the above Taylor expansion, combined with \eqref{eq:secondvarintro}, easily yields the stability estimate \eqref{eqn:StabilityFunctionFock}, although with a \textit{suboptimal} dependence of the constant on $|\Omega|$.
Finally, the non-degeneracy of $\nabla^2 \K$ provided by \eqref{eq:secondvarintro}, combined once again with the above Taylor expansion, shows that the deficit behaves quadratically near $F_0\equiv 1$, which leads to a direct proof of the optimality of our estimates, as claimed in Remark \ref{rmk:sharpness}.

\subsection*{Outline}
In Section \ref{sec:firstproof} we give a first proof of \eqref{eqn:StabilityFunctionFock}, following the strategy described in Section \ref{subsec:12} above. In Section \ref{subsec:GeometryOfSuperLevelSets} we study the geometry of the super-level sets of functions with small deficit and, in particular, we prove Proposition \ref{prop:levelsets} above. In Section \ref{sec:set-stability} we prove the set stability estimate \eqref{eqn:StabilitySet}. Section \ref{sec:alternative-proof} contains the variational proof described in Section \ref{sec:variationalintro} and in particular the proof of \eqref{eq:secondvarintro}. In Section \ref{sec:sharpness} we prove the claims from Remark \ref{rmk:sharpness}. Finally, in Section \ref{sec:Generalize} we extend our results to the higher-dimensional setting, as claimed in Remark \ref{rmk:higherd}.
 
\subsection*{Acknowledgements} A.G. was supported by Dr.\ Max R\"ossler, the Walter Haefner Foundation and the ETH Z\"urich Foundation. J.P.G.R. acknowledges financial support the European Research Council under the Grant Agreement No. 721675 “Regularity and Stability in Partial Differential Equations (RSPDE)''.




\section{First proof of the function stability part}\label{sec:firstproof}

\newcommand\donotshow[1]{}

The goal of this section is to prove \eqref{eqn:StabilityFunctionFock}, by combining a series
of new results (potentially of independent interest)
valid for arbitrary functions $F\in\Fock$, which for convenience
will be assumed to be normalized by
\begin{equation}
    \label{eq:normalized}
    \Vert F\Vert_{\Fock}=1.
\end{equation}
In these statements, we will make extensive use of the notation
and the background results recalled in Subsection \ref{subsec:12}, 
concerning the functions $u(z)$, $\mu(t)$ and $u^*(s)$ 
that can be associated with a given $F\in\Fock$. In particular,
as in \eqref{eqn:diffineq}, in our statements we will let
\begin{equation}
    \label{def:T}
    T \coloneqq \max_{z\in\C} u(z)=u^*(0)=\max_{z\in\C} |F(z)|^2
    e^{-\pi |z|^2},
\end{equation}
recalling that $T\in [0,1]$ whenever \eqref{eq:normalized}
is assumed. 

We also note that, since $u^*$ is (by its definition) 
equimeasurable with $u$ and decreasing, there holds
\begin{equation}
    \label{equimeas}
    \int_{\{u>u^*(s_0)\}} u(z)\diff z =\int_0^{s_0} u^*(s)\diff s
    \quad\forall s_0\geq 0.
\end{equation}
Moreover, as recalled in Subsection \ref{subsec:12}, when
$F=c F_{z_0}$ (with $|c|=1$) is one of the optimal functions described in
Theorem \ref{thm:FKforSTFTFock}, one has $\mu(t)=\log_+ \frac 1 t$
or, equivalently, $u^*(s)=e^{-s}$. For this reason, 
a careful comparison between $e^{-s}$ and $u^*(s)$
(for an arbitrary $F$ satisfying \eqref{eq:normalized})
will be the core of the results of this section. 
Since, when \eqref{eq:normalized} holds, letting
$s_0\to \infty$ in \eqref{equimeas} we have
\begin{equation}
    \label{massone}
    1=\int_0^\infty u^*(s)\diff s =\int_0^\infty e^{-s}\diff s,
\end{equation}
as noted in \cite{KNOT} there exists at least
one value $s^*>0$ for which
\begin{equation}
    \label{defs*}
    u^*(s)\quad \begin{cases}
        \quad\leq e^{-s} & \text{if $s \in [0,s^*]$}\\
        \quad\geq e^{-s} & \text{if $s\geq s^*$}
    \end{cases}
\end{equation}
or, equivalently, in terms of the inverse functions, a
value $t^* \in (0,T)$ for which
\begin{equation}
    \label{deft*}
    \mu(t) \quad\begin{cases}
        \quad\geq \log\frac 1 t & \text{if $t \in (0,t^*]$}\\
        \quad\leq \log\frac 1 t & \text{if $t\in [t^*,T]$}
    \end{cases}
\end{equation}
(note $\mu(t)=0$ for $t\geq T$). When $T=1$ (or, equivalently,
if $F$ is one of the optimal functions described in Theorem 
\ref{thm:FKforSTFTFock}, see \cite[Proposition 2.1]{NicolaTilli})
and hence $u^*(s)=e^{-s}$, clearly all values of $s^*$
(or $t^*$) have this property, but when $T<1$
we will prove in Corollary \ref{cor:nondegen} that $s^*$
and $t^*$ are in fact \emph{unique}, with an unexpected
universal upper
bound on $t^*$ (or lower bound on $s^*$).

With this background, we are now ready to state and prove
the results of this section, starting with
a sharp estimate for $\mu(t)$, which shows that
\eqref{estmut1} becomes almost an equality when $T$ is close to $1$.

\donotshow{

Let us begin with a simple remark in the flavor of the introduction to \cite{Maggi} about working with the small deficit assumption. We may always be able to assume that $F$ and $\Om$ have a small deficit $\delta(F;\Om)$, as small as required. Indeed, if we look for a result of the type
\[ \Fnorm{F - c F_{z_0}} \leq K \Fnorm{F} \delta(F;\Om)^{\alpha} \]
for some $K, \alpha \gs 0$, and we have already found a constant $C$ for which it holds assuming that $\delta(F;\Om)$ is smaller than some $\delta_0$, then since $\Fnorm{F-cF_{z_0}} \leq 2\Fnorm{F}$, we can take the constant $K$ to be
\[ K = \max \inset{C, 2\delta_0^{- \alpha}} .\]
The same can be said about \eqref{eqn:StabilitySet}, since $\mcalA(\Om) \leq 2$ as well. For this reason, we will often look for an ``asymptotic'' result, that is, we will assume that $\delta(F;\Om)$ is smaller than some given $\delta_0$. By homogeneity, we can and will assume that
\[ \|F\|_{\Fock}=1. \]

Furthermore, as Theorem \ref{thm:StabilityFockSpace} directly implies estimate \eqref{eqn:StabilityFunction} in Theorem~\ref{thm:Stability}, by use of the Bargmann transform, we will restrict ourselves to proving the former.

Here and henceforth, we will use the notation $u_F$ as in \eqref{eq:def-variation-and-u} and the notation $u$ as in \eqref{eqn:Defnu} interchangeably. By our normalization assumption, we have $u^*(0)=1$.

Let us fix $s_0 \gs 0$ as the measure of $\Om$, thus $s_0=|\Om|$. In what follows, we will be interested in a particular case of the problem, that is, when we take $\Om$ to be a super-level set of $u$. For this reason, given $s \gs 0$, we define
\begin{equation}\label{defn:LevelSetDeficit}
    \delta_{s}(F) \coloneqq \delta(F ; \inset{u \gs u^*(s)}) = \frac{1-e^{-s} - I(s)}{1-e^{-s}}.
\end{equation}
Thus $\delta_s(F)$ is the deficit measuring the failure of $F$ in achieving the equality in \eqref{eqn:FKInequalityFock} over its super-level set of measure $s$.

\begin{figure}[H]\label{fig:LemmaMaxuSetting}
    \centering
    \def\svgwidth{0.6\textwidth}
    \import{files/figures/}{LemmaMaxuSetting.pdf_tex}
\end{figure}

Notice that 
\[ \int_0^{\infty} e^{-s} \diff s = 1 = \lim_{s \to \infty} I(s) = \int_{0}^{\infty} I'(s) \diff s = \int_0^{\infty} u^*(s) \diff s .\]
Since $1-\max u = u^*(0) \leq 1$, by the continuity of $u^*$ there exists $s^* \gs 0$ for which $u^*(s^*) = e^{-s^*}$. We can take advantage of this in proving a quantitative version of the inequality $\max u \leq 1$, as shown in the following lemma.

}

\begin{numlemma}\label{lemma:super-level-new} 
For every $t_0\in (0,1)$, there exists a threshold $T_0\in (t_0,1)$ and a constant
$C_0>0$ with the following property.
If $F\in \Fock$ is such that $\Vert F\Vert_{\Fock}=1$ and
$T \geq T_0$,
then
\begin{equation}
    \label{newestmu}
    \mu(t)\leq \left( 1+{C_0 (1-T)}\right)\log \frac T t\quad\forall t\in [t_0,T].
\end{equation}
\end{numlemma}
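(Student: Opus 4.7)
The plan is to translate so the maximum of $u := |F|^2 e^{-\pi|z|^2}$ sits at the origin, then parametrize each level curve $\{u = t\}$ as a polar graph $|z| = R_\theta(t)$ and exploit the harmonicity of $h := \log|F|^2 - \log T$ via angular averaging to extract the sharp $(1-T)$ dependence.

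By the translation covariance of the Bargmann--Fock space (the map $F(z)\mapsto e^{\pi\bar{z}_0 z - \pi|z_0|^2/2}F(z-z_0)$ being an isometry of $\Fock$), we reduce to the case where $u$ attains its maximum $T$ at $z_0 = 0$; then $F(0) = \sqrt{T}$ (after fixing a phase) and $F'(0) = 0$ from $\nabla u(0) = 0$. Writing $F(z) = \sqrt{T} + \sum_{k\ge 2} b_k z^k$, the normalization $\|F\|_\Fock = 1$ with the orthonormal basis \eqref{monomials} gives $\sum_{k\ge 2}|b_k|^2 k!/\pi^k = 1-T$, hence $|b_k|^2 \le (1-T)\pi^k/k!$. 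Cauchy--Schwarz then yields the pointwise estimate $|F(z) - \sqrt{T}|^2 \le (1-T)\bigl(e^{\pi|z|^2} - 1 - \pi|z|^2\bigr)$, which forces the zeros of $F$ to sit at distance $\gtrsim \sqrt{(1/\pi)\log(T/(1-T))}$ from $0$. Therefore, for $T$ close enough to $1$, every super-level set $\{u > t\}$ with $t \ge t_0$ is confined to a disk where $F$ has no zeros and, by continuity from the ideal case $F \equiv \sqrt{T}$, is radially star-shaped about $0$.

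On such a star-shaped super-level set, the defining equation $|F(R_\theta e^{i\theta})|^2 e^{-\pi R_\theta^2} = t$ rearranges to $\pi R_\theta^2 = \log(T/t) + h(R_\theta e^{i\theta})$; integrating over $\theta$ and using $\mu(t) = \tfrac{1}{2}\int_0^{2\pi} R_\theta^2\,d\theta$ produces the identity $\mu(t) - \log(T/t) = (2\pi)^{-1}\int_0^{2\pi} h(R_\theta e^{i\theta})\,d\theta$. Writing $h = \Re H$ with $H(z) = 2\log(F(z)/\sqrt{T}) = \sum_{n\ge 2} c_n z^n$ holomorphic on the relevant disk (where $|c_n|^2 = 4|b_n|^2/T + O((1-T)^2)$), the mean value property makes the angular integral vanish when $R_\theta$ is constant; the oscillation $\delta(\theta) := R_\theta^2 - \bar R^2$ (with $\bar R^2 := \mu(t)/\pi$) is in turn determined by $h$ through the equation above, giving $\delta(\theta) \approx h(\bar R e^{i\theta})/\pi$ at leading order. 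A short Fourier computation using orthogonality of $\{e^{in\theta}\}$ then yields
\[ \mu(t) - \log(T/t) \approx \frac{1}{4\pi}\sum_{n\ge 2} n|c_n|^2 \bar R^{2n-2} \le \frac{1-T}{T}\Big(\frac{T}{t} - 1\Big), \]
where in the last step one sums the resulting geometric-type series using $\pi\bar R^2 = \log(T/t)$. Finally, the elementary monotonicity of $x \mapsto (e^x - 1)/x$ bounds $(T/t - 1)/\log(T/t)$ uniformly on $t \in [t_0, T]$ by a constant $C_0(t_0, T_0)$, giving the desired bound $\mu(t) \le (1 + C_0(1-T))\log(T/t)$.

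The main obstacle is the rigorous control of the higher-order corrections in the Fourier expansion of $\delta(\theta)$ and in the nonlinear expansion $H = 2\log(1 + (F/\sqrt{T} - 1))$, as well as the verification that the level curve is indeed a star-shaped polar graph on the relevant range. The underlying mechanism is the ``cancellation effect due to analyticity'' alluded to in the introduction: the linear-in-$b_k$ terms in $h$ integrate to zero on circles by the mean value property, so only the \emph{quadratic-in-$b_k$} (hence $O(1-T)$) contribution survives, giving the sharp dependence on $(1-T)$ rather than the lossy $\sqrt{1-T}$ bound that would follow from a naive triangle inequality.
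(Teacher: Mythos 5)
Your proposal identifies the correct mechanism --- the mean value property of a harmonic function kills the first-order contribution of the perturbation, so only a quadratic (hence $O(1-T)$) correction survives --- and your Step I (normalization at the maximum, power series, Cauchy--Schwarz giving $|F-\sqrt T|^2\le (1-T)(e^{\pi|z|^2}-1-\pi|z|^2)$) coincides with the paper's. From there the implementations diverge. You work with the \emph{exact} level curve $\{u=t\}$ as a polar graph $R_\theta$, use the identity $\mu(t)-\log\frac Tt=\frac1{2\pi}\int_0^{2\pi}h(R_\theta e^{i\theta})\,\diff\theta$ with $h=\log(|F|^2/T)$, and expand around the mean radius. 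The paper instead never parametrizes $\{u>t\}$ itself: it encloses it in an explicit comparison set $E_1$ built from the inequality \eqref{eq99}, introduces an artificial homotopy parameter $\sigma$ interpolating between a genuine circle $E_0$ and $E_1$, and applies Taylor's formula to $f(\sigma)=|E_\sigma|$ with $f'(0)=0$ (harmonicity of $h=2\Re R$) and $|f''|\lesssim \delta^2 f(0)$. The paper's route buys two things: (i) its $h=2\Re R$ is harmonic on all of $\C$, so no zero-free region argument is needed; (ii) star-shapedness and smoothness of $E_\sigma$ follow from an explicit monotonicity computation on $g_\theta$ in \eqref{defg}, whereas you must establish these for the actual level set of $u$. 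Your route buys an exact, transparent identity and makes the origin of the $(1-T)\log(T/t)$ term visible as a Parseval sum.

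Two points in your sketch need real work, and one of them is where the sharpness lives. First, ``by continuity from the ideal case'' is not a proof that $\{u>t\}$ is a star-shaped graph: you need a quantitative lower bound $-\partial_r u>0$ on the relevant annulus, which requires the derivative estimates on $R$ (as in \eqref{Rprime}) in addition to the $C^0$ bound. Second, and more importantly, every error term you discard must be $O\big((1-T)\log\frac Tt\big)$, \emph{not merely} $O(1-T)$: the target \eqref{newestmu} degenerates as $t\to T$ since $\log\frac Tt\to0$ there. Your leading term $\frac1{4\pi}\sum_{n\ge2}n|c_n|^2\bar R^{2n-2}$ does vanish like $\bar R^2\sim\frac1\pi\log\frac Tt$ because the sum starts at $n=2$, and the second-order remainders can be shown to carry the same factor using $|h(z)|\lesssim\delta\,|z|^2e^{\pi|z|^2/2}$ (the analogue of \eqref{esth2}), which gives $|R_\theta^2-\bar R^2|\lesssim\delta\bar R^2$ rather than just $\lesssim\delta$. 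A naive bound on the oscillation that loses this $\bar R^2$ factor would produce only $\mu(t)\le\log\frac Tt+C(1-T)$, which is strictly weaker than \eqref{newestmu} near $t=T$ and insufficient for the application in Lemma \ref{reinforcedlemma}. With these two gaps filled, your argument goes through and is a legitimate alternative to the paper's $\sigma$-interpolation.
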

We note, before proving such a result, that the proof presented below shows that one can choose $C_0=C/t_0^3$, where $C$ is some universal constant.

\newcommand\fock{{\mathcal F}^2}
\newcommand{\expz}{e^{-\pi  |z|^2}}
\newcommand{\expzz}{e^{-\pi \frac {|z|^2}2}}
\newcommand\rz{r_0}
\newcommand\ru{r_1}
\newcommand\rs{r_{\!\sigma}}
\newcommand\desqr{\delta^2}

\begin{proof}

Given  $t_0\in (0,1)$ and $F$ as in the statement,
we split the proof into several steps.

\noindent\textsc{Step I. } We may assume that $u(z)$ achieves its absolute maximum $T$ at $z=0$ and that $F(0)$ is a real number, so that
$F(0)=\sqrt T$. Expanding $F$ with respect to the orthonormal basis of monomials \eqref{monomials},
 we have
\begin{equation}
  \label{series1}
\frac {F(z)}{\sqrt{T}}=1 +R(z),\quad z\in\C,
\end{equation}
where $R(z)$ is the entire function
\begin{equation}
  \label{defR}
R(z) \coloneqq \sum_{n=2}^\infty \frac{a_n}{\sqrt{T}}\, \frac {\pi^{n/2} z^n}{\sqrt{n!}},\quad z\in\C
\end{equation}
The fact that $a_1=0$, i.e. $F'(0)=0$, follows easily from our assumption that $u(z)$ has
 a critical point at $z=0$, which by \eqref{eqn:Defnu} forces a critical point for
$|F(z)|^2$ and ultimately for $F(z)$.
The assumption that $1=\Vert F\Vert_{\fock}^2$ takes the form $1=T+\sum_{n=2}^\infty |a_n|^2$,
which we record in the form
\begin{equation}
\label{estcoda}
\sum_{n=2}^\infty \frac{|a_n|^2}T=\frac {1-T}T =:\desqr,
\end{equation}
hereby defining $\delta$. In the sequel we will often tacitly assume that $\delta$ is small enough, depending only on $t_0$; in
the end, the required smallness of $\delta$ will determine the threshold $T_0$ in the statement of Lemma \ref{lemma:super-level-new}.

From \eqref{defR}, Cauchy-Schwarz and \eqref{estcoda} we obtain
\begin{equation}
\label{eq99}
|R(z)|^2\leq \left(\sum_{n=2}^\infty \frac{|a_n|^2}T\right)
\left(\sum_{n=2}^\infty \frac {\pi^n |z|^{2n}}{n!}\right)
= \desqr \left(e^{\pi |z|^2}-1-\pi |z|^2\right).
\end{equation}
In particular, $|R(z)|^2\leq \desqr \left(e^{\pi |z|^2}-1\right)$, hence squaring \eqref{series1}
we have
\begin{equation}
  \label{estmodsq}
  \frac {|F(z)|^2}T \leq 1 + \desqr \left(e^{\pi |z|^2}-1\right) + h(z),
\end{equation}
where $h(z)$  is the
real valued harmonic function
\begin{equation}
  \label{defh}
h(z):=2\mathop{\textrm{Re}} R(z),\qquad z\in\C.
\end{equation}

\noindent\textsc{Step II: } Estimates for $h$.
Since $|h(z)|\leq 2 |R(z)|$, the elementary inequality $e^x-1-x\leq \frac {x^2}2 e^x$, written with $x=\pi|z|^2$ and combined
with \eqref{eq99}, implies
\begin{equation}
  \label{esth2}
  |h(z)|\leq \sqrt{2}\pi \,\delta |z|^2 e^{\frac{\pi |z|^2}2},\qquad\forall z\in\C.
\end{equation}
Differentiating \eqref{defR}, 
and then using Cauchy-Schwarz and \eqref{estcoda} as in \eqref{eq99}, we have
\begin{equation}
\label{Rprime}
|R'(z)|\leq
\sum_{n=2}^\infty  \frac {|a_n|}{\sqrt T}
\frac {n \pi^{n/2} |z|^{n-1}}{\sqrt {n!}}
\leq  \delta\left( \sum_{n=2}^\infty
\frac {n^2 \pi^{n} |z|^{2(n-1)}}{n!}
\right)^{\frac 1 2}\leq
\delta \sqrt{2}  \pi |z| e^{\frac {\pi |z|^2}2} ,
\end{equation}
having used the inequality $\frac {n^2}{n!}\leq \frac 2 {(n-2)!}$ in the last passage.
Similarly, differentiating \eqref{defR} twice, using Cauchy-Schwarz and estimating the resulting power series, we find
\begin{equation}
\label{Rsecond}
|R''(z)|\leq 
\delta\left( \sum_{n=2}^\infty
\frac {n^2(n-1)^2 \pi^{n} |z|^{2(n-2)}}{n!}
\right)^{\frac 1 2}
\leq
\delta C (1+|z|^2) e^{\frac {\pi |z|^2}2}.
\end{equation}
By \eqref{defh} and the Cauchy-Riemann equations $|\nabla h(z)|=2 |R'(z)|$
and $|D^2 h(z)|=2\sqrt 2 |R''(z)|$, we obtain the following  
uniform estimates with respect to the angular variable $\theta$ for
the first and second
radial derivatives of $h(r e^{i\theta})$:
\begin{equation}
\label{esthr}
\begin{aligned}
\left\vert \frac {\partial h(r e^{i\theta})}{\partial r} \right\vert\leq
\delta 2\sqrt{2}  \pi r e^{\frac {\pi r^2}2},
\end{aligned}
\end{equation}
and
\begin{equation}
\label{esthrr}
\left\vert \frac {\partial^2 h(r e^{i\theta})}{\partial r^2} \right\vert
\leq 
\delta C (1+r^2) e^{\frac {\pi r^2}2}.
\end{equation}

\noindent\textsc{Step III: } Definition of  $E_\sigma$ and $\rs(\theta)$.
Assuming $T>t_0$, we consider any $t\in [t_0,T)$ and
any complex number $z=r e^{i\theta}$ ($r\geq 0$),
and we observe that
\[
u(r e^{i\theta})>t\quad\iff\quad
\frac  {t e^{\pi r^2}}T < \frac {|F(r e^{i\theta})|^2}T.
\]
Hence, by virtue of \eqref{estmodsq}, we obtain the implication
\begin{equation}
\label{implication}
u(r e^{i\theta})>t\quad\Longrightarrow\quad
g_\theta(r,1)
<1,
\end{equation}
where, for every fixed $\theta\in [0,2\pi]$, $g_\theta$ is defined as
\begin{equation}
\label{defg}
  g_\theta(r,\sigma):=  e^{\pi r^2}\left(\frac t T -\desqr\right)+\desqr  - \sigma \,h(r e^{i\theta}),\quad
  r\geq 0,\quad \sigma\in [0,1].
\end{equation}
The variable $\sigma\in [0,1]$ plays the role of a parameter that defines the
family of planar sets
\[
E_\sigma:=\left\{ r e^{i\theta}\in\C\,|\,\, g_\theta(r,\sigma)<1\right\},\quad
\sigma\in [0,1].
\]
Since \eqref{implication} is equivalent to the set inclusion $\{u>t\}\subseteq E_1$, \eqref{newestmu} will be proved if we
show that
\begin{equation}
  \label{eqtesi2}
  |E_1|\leq \left(1+\frac {C\desqr}{t_0^3}\right)\log\frac T t.
\end{equation}
The advantage of the parameter $\sigma$ is that we can easily prove the analogous estimate for $E_0$ -- which is
a circle, since $g_\theta(r,0)$ is independent of $\theta$ --, and then show that this estimate is inherited
by every $E_\sigma$ (including $E_1$),
by exploiting a cancellation effect due to the harmonicity of $h$.

We first show that each set $E_\sigma$ is \emph{star-shaped} with respect to the origin, by showing that
$g_\theta(r,\sigma)$ is  increasing in $r$ (for fixed $\theta$ and $\sigma$).
Using \eqref{esthr} and assuming e.g. that $\desqr+\sqrt 2\delta \leq t_0/2$, we have from \eqref{defg}
\begin{equation}
\label{estgr}
\begin{aligned}
  \frac {\partial g_\theta(r,\sigma)}{\partial r} & =
 2\pi r e^{\pi r^2}\left(\frac t T -\desqr\right)-\sigma \frac {\partial h(r e^{i\theta})}{\partial r} \\
 &\geq 2\pi r e^{\pi r^2}\left(t_0  -\desqr\right) -  \delta 2\sqrt{2}  \pi r e^{\frac {\pi r^2}2} \\
 &\geq 2\pi r e^{\pi r^2}\left(t_0  -\desqr-\sqrt 2 \delta\right) \geq \pi t_0 r e^{\pi r^2}>0.
  \end{aligned}
\end{equation}
Since $g_\theta(0,\sigma)=t/T\geq t_0$, integrating the previous bound  we also obtain that
\begin{equation}
\label{growthg}
g_\theta(r,\sigma)\geq \frac {t_0}2\bigl(1+ e^{\pi r^2}\bigr),\quad \forall r\geq 0,\quad \forall \sigma\in [0,1],
\end{equation}
and hence, since $g_\theta(0)=t/T< 1$, for every $\sigma\in [0,1]$ the equation in $r>0$
\begin{equation}
\label{defrs}
g_\theta(r,\sigma)=1
\end{equation}
has a unique solution $\rs> 0$, which we shall also denote by $\rs(\theta)$
 when the dependence of $\rs$ on the angle $\theta$ is to be stressed, as in \eqref{deffs} below.
 Since $E_\sigma$ is star-shaped, using polar coordinates
we can compute its area $|E_\sigma|$ in terms of $\rs$,
  as
\begin{equation}
\label{deffs}
f(\sigma):= |E_\sigma|=\frac 1 2 \int_0^{2\pi}\rs(\theta)^2\diff \theta,\quad \sigma\in [0,1].
\end{equation}
Notice that  $f(1)$ is the area of $E_1$ that we want to estimate
as in \eqref{eqtesi2},  while
 \begin{equation}
   \label{fzez}
 f(0)=\frac 1 2\int_0^{2\pi} \rz(\theta)^2\diff \theta=\pi \rz^2,
\end{equation}
since when $\sigma=0$, equation  \eqref{defrs} simplifies to
\begin{equation}
  \label{defrz}
  e^{\pi \rz^2}\left(\frac t T -\desqr\right)+\desqr
  =1,
\end{equation}
so $\rz$ is independent of $\theta$ and $E_0$ is a ball of radius $\rz$. Note that the sets $E_\sigma$ are
uniformly bounded, since \eqref{growthg} and \eqref{defrs} entail that
\begin{equation}
  \label{boundrs}
   \pi\rs^2 \leq \log\frac 2 {t_0},\quad\forall \sigma\in [0,1].
\end{equation}

\noindent\textsc{Step IV: } Estimates for $\rs'$ and $\rs''$.
By \eqref{estgr} and the implicit function theorem,
 $\rs$ is, for every fixed value of $\theta\in  [0,2\pi]$, a smooth, bounded function of the
parameter $\sigma\in [0,1]$.
Denoting for simplicity by $\rs'$ its derivative with respect to $\sigma$,
we have
\begin{equation}
  \label{rprime1}
\rs'=\frac{\partial \rs(\theta)}{\partial \sigma}=-\,\,
\frac {\frac {\partial g_\theta}{\partial\sigma}(\rs,\sigma)}
{\frac{\partial g_\theta}{\partial r}(\rs,\sigma)}=
\frac {h(\rs e^{i\theta})}{\frac {\partial g_\theta}{\partial r}(\rs,\sigma)},\quad
\sigma\in [0,1],
\end{equation}
and using \eqref{esth2}
and \eqref{estgr} we find the bound
\begin{equation}
\label{boundrsp}
|\rs'|
\leq
\frac {\sqrt 2 \,\delta\rs}{t_0}e^{-\frac {\pi \rs^2}2 }.
\end{equation}
In particular, this implies that
\begin{equation}
  \label{cfrrsrz}
  \rs^2 \leq 2 \rz^2\quad\forall \sigma\in [0,1],
\end{equation}
since by \eqref{boundrsp} $|\rs'|/\rs\leq  \sqrt 2\,\delta/t_0 \leq \log\sqrt{2}$ provided $\delta$
is small enough, we have for every $\sigma\in [0,1]$
\[
\log \rs =\log \rz + \int_0^\sigma \frac {r_s'}{r_s}\diff s\leq
\log \rz +\sigma\log\sqrt{2}\leq\log\rz+\log\sqrt{2},
\]
and \eqref{cfrrsrz} follows.

Differentiating \eqref{rprime1} with respect to $\sigma$, we have
\begin{equation}
  \label{rsecond}
\rs''=\frac
{\frac {\partial h(r e^{i\theta})}{\partial r} \rs'}{ \frac {\partial g_\theta}{\partial r}}
-\frac
{h(r e^{i\theta})\left( \frac {\partial^2 g_\theta}{\partial\sigma\partial r}+
  \frac {\partial^2 g_\theta}{\partial r^2}\rs'\right)}
{\left(\frac{\partial g_\theta}{\partial r}\right)^2}.
\end{equation}
Since by \eqref{defg} and \eqref{esthr}
\begin{align*}
  \left\vert  \frac {\partial^2 g_\theta}{\partial\sigma\partial r}  \right\vert
= \left\vert  \frac {\partial h(r e^{i\theta})}{\partial r}  \right\vert
\leq \delta C  r e^{\frac{\pi r^2}2},
  \end{align*}
while by \eqref{defg} and \eqref{esthrr}
\begin{align*}
  \left\vert  \frac {\partial^2 g_\theta}{\partial r^2}  \right\vert
  \leq
  (2\pi +4\pi^2 r^2)e^{\pi r^2}\left(\frac t T-\desqr\right)+
  \sigma \left\vert \frac {\partial^2 h(r e^{i\theta})}{\partial r^2}  \right\vert
  \\
  \leq \frac t T (2\pi +4\pi^2 r^2)e^{\pi r^2}+
  \delta C (1+ r^2)e^{\frac {\pi r^2}2}
  \leq
C(1+ r^2)e^{\pi r^2},
\end{align*}
from \eqref{rsecond} and \eqref{esth2} we see that
\begin{align*}
|\rs''|&\leq \frac
{\delta 2\sqrt 2\pi \rs e^{\frac{\pi \rs^2}2}}
{\pi t_0 \rs e^{\pi \rs^2}} |\rs'|  +
\frac
{\sqrt 2\pi\delta \rs^2 e^{\frac{\pi\rs^2}2 }}  {(\pi t_0 \rs e^{\pi \rs^2})^2}
\left(\delta C  \rs e^{\frac{\pi \rs^2}2}+  C(1+\rs^2)e^{\pi \rs^2} |\rs'|
\right)\\
&\leq \frac{\delta C e^{-\frac {\pi \rs^2} 2}}{ t_0} |\rs'|+\frac{\delta C e^{-\frac {3 \pi \rs^2} 2}}{ t_0^2}
\left(\delta  \rs e^{\frac{\pi \rs^2}2}+  (1+ \rs^2)e^{\pi \rs^2} |\rs'|
\right).
\end{align*}
Combining with \eqref{boundrsp},
\begin{equation}
 \label{boundrsecond}
|\rs''|\leq \frac{ \desqr  C \rs e^{-\pi \rs^2}}{ t_0^2 }
+\frac{\delta C e^{-\frac {3 \pi \rs^2} 2}}{ t_0^2}
\left(\delta  \rs e^{\frac{\pi \rs^2}2}+  (1+ \rs^2)\frac {\delta C\rs}{t_0}e^{\frac{\pi \rs^2}2}
\right)
\leq \frac {\desqr C\rs}{t_0^3}.
\end{equation}

\noindent\textsc{Step V: } Proof of \eqref{eqtesi2}.
Now, recalling the bounds \eqref{boundrs} and \eqref{boundrsp}, one can
differentiate under the integral in \eqref{deffs}, obtaining
\begin{equation}
\label{fprime}
f'(\sigma)=\int_0^{2\pi} \rs(\theta)\frac{\partial \rs(\theta)}{\partial\sigma}\diff \theta.
\end{equation}
Differentiating \eqref{fprime}
again, and then using \eqref{boundrsecond} and \eqref{boundrsp},  we obtain the estimate
\begin{align*}
|f''(\sigma)|\leq\int_0^{2\pi} \left( |\rs'|^2 +|\rs\rs''|\right)\diff \theta\leq
\frac {C\desqr}{t_0^3}\int_0^{2\pi} \rs^2 \diff \theta,\quad \sigma\in [0,1].
\end{align*}
This, combined with \eqref{cfrrsrz} and recalling \eqref{fzez}, gives
\begin{equation}
  \label{estfsecond}
  |f''(\sigma)|\leq \frac{C\desqr}{t_0^3} f(0), \quad\forall\sigma\in [0,1].
\end{equation}

We now claim that $f'(0)=0$, which is the crucial step of the proof. Indeed, when $\sigma=0$,
we see from \eqref{defg} and \eqref{estgr} that $\rz$ and $\partial g_\theta/\partial r$ are
independent of $\theta$, and therefore, by \eqref{rprime1}, when $\sigma=0$ we may write
\[
\left.\rs(\theta)\frac{\partial \rs(\theta)}{\partial\sigma}\right\vert_{\sigma=0}=
\rz \, \frac {h(\rz \, e^{i\theta})}{\displaystyle\frac {\partial g_\theta}{\partial r}(\rz,0)}=
\phi(\rz) h(\rz \, e^{i\theta}),
\]
where $\phi(r_0)\not=0$ depends on $\rz$ but is independent of $\theta$. Therefore, from \eqref{fprime},
\begin{align*}
  f'(0)=\int_0^{2\pi} \rs(\theta)\frac{\partial \rs(\theta)}{\partial\sigma}\Big\vert_{\sigma=0}\diff \theta
  =\phi(\rz)\int_0^{2\pi} h(\rz \, e^{i\theta})\diff \theta.
\end{align*}
On the other hand, the last integral vanishes, since the mean value theorem applied to the harmonic function $h$ gives
\[
\frac 1 {2\pi \rz}\int_0^{2\pi} h(\rz e^{i\theta})\diff \theta=h(0)=0.
\]
Hence, as $f'(0)=0$, we may write, through Taylor's formula,
\[
f(s)=f(0)+ \frac{f''(\sigma)}{2}s^2\quad \text{for some $\sigma\in (0,s)$},
\]
and taking  $s=1$ and using \eqref{estfsecond} gives
\begin{equation}
\label{estfs}
|E_1|=
f(1)\leq f(0)+\frac {C\desqr}{t_0^3} f(0)=\left(1+\frac {C\desqr}{t_0^3}\right)f(0).
\end{equation}
Now, as we may assume that $2\desqr\leq t_0$, we claim that
\begin{equation}
\label{estrz}
\pi r_0^2\leq \left(1+\frac{2\desqr}{t_0}\right)\log\frac T t,
\end{equation}
which, according to \eqref{defrz}, is equivalent to
\begin{equation}
\label{estrz2}
e^{\pi\rz^2}=\frac {1-\desqr}{\frac t T-\desqr}\leq \left(\frac T t\right)^{1+\frac {2\desqr}{t_0}}.
\end{equation}
Setting for convenience $\kappa=1/t_0$ and defining the function
\begin{equation}
\label{defpsi}
\psi(\tau):= \desqr+ \left(1-\desqr \tau \right)
\tau^{2\desqr\kappa },\quad\tau\in [1,\kappa],
\end{equation}
we observe that \eqref{estrz2} is equivalent to $\psi(T/t)\geq 1$. Since
$\psi(1)=1$, $T/t\in [1,\kappa]$
and $\psi$ is concave (note that $2\desqr\kappa\leq 1$ by assumption), it suffices to
prove that $\psi(\kappa)\geq 1$. Indeed, we have
\begin{align*}
\psi(\kappa) &=  \desqr+ \left(1-\desqr \kappa\right)e^{2\desqr \kappa \log \kappa}
  \geq \desqr+ \left(1-\desqr \kappa\right)(1+2\desqr \kappa \log \kappa)\\
  &=1+\desqr\left(1 + 2\kappa (1 -\desqr \kappa)  \log \kappa - \kappa \right)\geq
  1+\desqr\left(1 + \kappa  \log \kappa - \kappa \right),
\end{align*}
having used $1-\desqr\kappa\geq \frac 1 2$ in the last passage. This shows that $\psi(\kappa)\geq 1$,
hence \eqref{estrz} is established.

Thus, \eqref{eqtesi2} follows by combining \eqref{estfs} with \eqref{fzez} and \eqref{estrz}.\qedhere

\donotshow{

We may assume that $u(z)$ achieves its absolute maximum $T$ at $z=0$ and that $F(0)$ is real valued, so that
$F(0)=\sqrt T$. Expanding $F$ w.r.to the orthonormal basis of monomials $\{\pi^{n/2}z^n/\sqrt{n!}\}$, we have
\begin{equation}
  \label{series1}
\frac {F(z)}{\sqrt{T}}=1 +R(z),\quad z\in\C,
\end{equation}
where $R(z)$ is the entire function
\begin{equation}
  \label{defR}
R(z):=\sum_{n=2}^\infty \frac{a_n}{\sqrt{T}}\, \frac {\pi^{n/2} z^n}{\sqrt{n!}},\quad z\in\C.
\end{equation}
The assumption that $1=\Vert F\Vert_{\fock}^2$ takes the form $1=T+\sum_{n=2}^\infty |a_n|^2$, 
which we record in the form 
\begin{equation}
\label{estcoda}
\sum_{n=2}^\infty \frac{|a_n|^2}T=\frac {1-T}T =:\desqr,
\end{equation}
hereby defining $\delta$
(in the sequel we will often tacitly assume that $\delta$ is small enough, depending only on $t_0$; in
the end, the required smallness of $\delta$ will determine the threshold $T_0$ mentioned in the theorem).

From \eqref{defR}, Cauchy-Schwarz and \eqref{estcoda} we obtain
\begin{equation}
\label{eq99}
|R(z)|^2\leq \left(\sum_{n=2}^\infty \frac{|a_n|^2}T\right)
\left(\sum_{n=2}^\infty \frac {\pi^n |z|^{2n}}{n!}\right)
= \desqr \left(e^{\pi |z|^2}-1-\pi |z|^2\right).
\end{equation}
In particular, $|R(z)|^2\leq \desqr \left(e^{\pi |z|^2}-1\right)$, hence squaring \eqref{series1}
we have
\begin{equation}
  \label{estmodsq}
  \frac {|F(z)|^2}T \leq 1 + \desqr \left(e^{\pi |z|^2}-1\right) + h(z),
\end{equation}
where $h(z)$ is the 
real valued harmonic function
\begin{equation}
  \label{defh}
h(z):=2\mathop{\textrm{Re}} R(z),\qquad z\in\C.
\end{equation}
Since $|h(z)|\leq 2 |R(z)|$, the elementary inequality $e^x-1-x\leq \frac {x^2}2 e^x$, written with $x=\pi|z|^2$ and combined
with \eqref{eq99},  gives after taking square roots
\begin{equation}
  \label{esth2}
  |h(z)|\leq \sqrt{2}\pi \,\delta |z|^2 e^{\frac{\pi |z|^2}2},\qquad\forall z\in\C.
\end{equation}
Working in polar coordinates, we also need to estimate (uniformly w.r.to the angular variable $\theta$) the first and second
radial derivatives of $h(r e^{i\theta})$. Since by \eqref{defh}
and \eqref{defR} 
\begin{equation}
\label{powersh}
h(r e^{i\theta})= 2 \sum_{n=2}^\infty \left( \frac {\mathop{\textrm{Re}} a_n \, e^{i n\theta}}{\sqrt T} \right)
\frac {\pi^{n/2} r^n}{\sqrt {n!}},
\end{equation}
differentiating, and then using Cauchy--Schwarz and \eqref{estcoda} as in \eqref{eq99}, we have
\[
\left\vert \frac {\partial h(r e^{i\theta})}{\partial r} \right\vert
\leq 2 \sum_{n=2}^\infty  \frac {|a_n|}{\sqrt T}
\frac {n \pi^{n/2} r^{n-1}}{\sqrt {n!}}
\leq 2 \delta\left( \sum_{n=2}^\infty
\frac {n^2 \pi^{n} r^{2(n-1)}}{n!}
\right)^{\frac 1 2}
\]
and therefore, since when $n\geq 2$ one has $\frac {n^2}{n!}\leq \frac 2 {(n-2)!}$, it is easy to see that
\begin{equation}
\label{esthr}
\begin{aligned}
\left\vert \frac {\partial h(r e^{i\theta})}{\partial r} \right\vert\leq
\delta 2\sqrt{2}  \pi r e^{\frac {\pi r^2}2}.
\end{aligned}
\end{equation}
In a similar fashion, differentiating \eqref{powersh} twice, we find
\begin{equation}
\label{esthrr}
\left\vert \frac {\partial^2 h(r e^{i\theta})}{\partial r^2} \right\vert
\leq 2 \delta\left( \sum_{n=2}^\infty
\frac {n^2(n-1)^2 \pi^{n} r^{2(n-2)}}{n!}
\right)^{\frac 1 2}< 
\delta C (1+r^2) e^{\frac {\pi r^2}2}
\end{equation}
(the power series evaluates to $(2+4\pi r^2+\pi^2 r^4)\pi^2 e^{\pi r^2}$).

Now fix an arbitrary $t_0\in (0,1)$ and, assuming $T>t_0$, consider any $t\in [t_0,T)$ and 
any complex number $z=r e^{i\theta}$ ($r\geq 0$),
and observe that
\[
u(r e^{i\theta})>t\quad\iff\quad
\frac  {t e^{\pi r^2}}T < \frac {|F(r e^{i\theta})|^2}T. 
\]
Hence, by virtue of \eqref{estmodsq}, we obtain the implication
\begin{equation}
\label{implication}
u(r e^{i\theta})>t\quad\Longrightarrow\quad
g_\theta(r,1)
<1
\end{equation}
where, for every fixed $\theta\in [0,2\pi]$, $g_\theta$ is the function of two variables
\begin{equation}
\label{defg}
  g_\theta(r,\sigma):=  e^{\pi r^2}\left(\frac t T -\desqr\right)+\desqr  - \sigma \,h(r e^{i\theta}),\quad
  r\geq 0,\quad \sigma\in [0,1].
\end{equation}
The variable $\sigma\in [0,1]$ plays the role of a parameter, that defines the
family of planar sets
\[
E_\sigma:=\left\{ r e^{i\theta}\in\C\,|\,\, g_\theta(r,\sigma)<1\right\},\quad
\sigma\in [0,1].
\]
Since \eqref{implication} is equivalent to the set inclusion $\{u>t\}\subseteq E_1$, \eqref{newestmu} will be proved if we
show that
\begin{equation}
  \label{eqtesi2}
  |E_1|\leq \left(1+\frac {C\desqr}{t_0^4}\right)\log\frac T t.
\end{equation}
The advantage of the parameter $\sigma$ is that we can easily prove a similar estimate for $E_0$ (which is
a circle, because $g_\theta(r,0)$ is independent of $\theta$), and then show that this estimate is inherited
by every $E_\sigma$ (including $E_1$), 
by exploiting a cancellation effect due to the harmonicity of $h$.

We first show that each set $E_\sigma$ is \emph{star-shaped} with respect to the origin, by showing that
$g_\theta(r,\sigma)$ is  increasing in $r$ (for fixed $\theta$ and $\sigma$).
Using \eqref{esthr} and assuming e.g. that $\desqr+\sqrt 2\delta \leq t_0/2$, we have from \eqref{defg} that
\begin{equation}
\label{estgr}
\begin{aligned}
  \frac {\partial g_\theta(r,\sigma)}{\partial r} & =
 2\pi r e^{\pi r^2}\left(\frac t T -\desqr\right)+\sigma \frac {\partial h(r e^{i\theta})}{\partial r} \\
 &\geq 2\pi r e^{\pi r^2}\left(t_0  -\desqr\right) -  \delta 2\sqrt{2}  \pi r e^{\frac {\pi r^2}2} \\
 &\geq 2\pi r e^{\pi r^2}\left(t_0  -\desqr-\sqrt 2 \delta\right) \geq \pi t_0 r e^{\pi r^2}>0.
  \end{aligned}
\end{equation}
Since $g_\theta(0,\sigma)=t/T\geq t_0$, integrating the previous bound  we also obtain that
\begin{equation}
\label{growthg}
g_\theta(r,\sigma)\geq \frac {t_0}2\bigl(1+ e^{\pi r^2}\bigr),\quad \forall r\geq 0,\quad \forall \sigma\in [0,1]
\end{equation}
and hence, since $g_\theta(0,\sigma)=t/T< 1$, for every $\sigma\in [0,1]$ the equation
\begin{equation}
\label{defrs}
g_\theta(\rs,\sigma)=1
\end{equation}
has a unique solution $\rs> 0$ (also denoted by $\rs(\theta)$
 when the dependence of $\rs$ on the angle $\theta$ is to be stressed, as in \eqref{deffs} below). 
 Since $E_\sigma$ is star-shaped, using polar coordinates
we can compute its area $|E_\sigma|$ in terms of $\rs$,
  as
\begin{equation}
\label{deffs}
f(\sigma):= |E_\sigma|=\frac 1 2 \int_0^{2\pi}\rs(\theta)^2\diff \theta,\quad \sigma\in [0,1].
\end{equation}
Notice that  $f(1)$ is the area of $E_1$ that we want to estimate
as in \eqref{eqtesi2},  while
 \begin{equation}
   \label{fzez}
 f(0)=\frac 1 2\int_0^{2\pi} \rz(\theta)^2\diff \theta=\pi \rz^2
\end{equation}
because, when $\sigma=0$, equation  \eqref{defrs} simplifies to
\begin{equation}
  \label{defrz}
  e^{\pi \rz^2}\left(\frac t T -\desqr\right)+\desqr
  =1,
\end{equation}
so $\rz$ is independent of $\theta$ and $E_0$ is a ball of radius $\rz$. Note that the sets $E_\sigma$ are
uniformly bounded, since \eqref{growthg} and \eqref{defrs} entail that
\begin{equation}
  \label{boundrs}
   \pi\rs^2 \leq \log\frac 2 {t_0},\quad\forall \sigma\in [0,1].
\end{equation}
Moreover,
by \eqref{estgr} and the implicit function theorem,
 $\rs$ is (for every fixed value of $\theta\in  [0,2\pi]$) a smooth, bounded function of the
parameter $\sigma\in [0,1]$.
Denoting for simplicity by $\rs'$ its derivative w.r.to $\sigma$,
we have
\begin{equation}
  \label{rprime1}
\rs'=\frac{\partial \rs(\theta)}{\partial \sigma}=-\,\,
\frac {\frac {\partial g_\theta}{\partial\sigma}(\rs,\sigma)}
{\frac{\partial g_\theta}{\partial r}(\rs,\sigma)}=
\frac {h(\rs e^{i\theta})}{\frac {\partial g_\theta}{\partial r}(\rs,\sigma)},\quad
\sigma\in [0,1]
\end{equation}
and, using \eqref{esth2} 
and \eqref{estgr}, we find the bound
\begin{equation}
\label{boundrsp}
|\rs'|
\leq
\frac {\sqrt 2 \,\delta\rs}{t_0}e^{-\frac {\pi \rs^2}2 }.
\end{equation}
In particular, this implies that
\begin{equation}
  \label{cfrrsrz}
  \rs^2 \leq 2 \rz^2\quad\forall \sigma\in [0,1]
\end{equation}
because, since by \eqref{boundrsp} $|\rs'|/\rs\leq  \sqrt 2\,\delta/t_0 \leq \log\sqrt{2}$ (provided $\delta$
is small enough), we have for every $\sigma\in [0,1]$
\[
\log \rs =\log \rz + \int_0^\sigma \frac {r_s'}{r_s}\diff s\leq
\log \rz +\sigma\log\sqrt{2}\leq\log\rz+\log\sqrt{2}
\]
and \eqref{cfrrsrz} follows.

Differentiating \eqref{rprime1} w.r.to $\sigma$, we have
\begin{equation}
  \label{rsecond}
\rs''=\frac
{\frac {\partial h(r e^{i\theta})}{\partial r} \rs'}{ \frac {\partial g_\theta}{\partial r}}
-\frac
{h(r e^{i\theta})\left( \frac {\partial^2 g_\theta}{\partial\sigma\partial r}+
  \frac {\partial^2 g_\theta}{\partial r^2}\rs'\right)}
{\left(\frac{\partial g_\theta}{\partial r}\right)^2}.
\end{equation}
Since by \eqref{defg} and \eqref{esthr}
\begin{align*}
  \left\vert  \frac {\partial^2 g_\theta}{\partial\sigma\partial r}  \right\vert
= \left\vert  \frac {\partial h(r e^{i\theta})}{\partial r}  \right\vert
\leq \delta C  r e^{\frac{\pi r^2}2},
  \end{align*}
while by \eqref{defg} and \eqref{esthrr}
\begin{align*}
  \left\vert  \frac {\partial^2 g_\theta}{\partial^2 r}  \right\vert
  &\leq (2\pi +4\pi^2 r^2)e^{\pi r^2}\left(\frac t T-\desqr\right)+
  \sigma \left\vert \frac {\partial^2 h(r e^{i\theta})}{\partial r^2}  \right\vert
  \\
  & \leq \frac t T (2\pi +4\pi^2 r^2)e^{\pi r^2}+
  \delta C (1+ r^2)e^{\frac {\pi r^2}2}
  \leq
C(1+ r^2)e^{\pi r^2},
\end{align*}
from \eqref{rsecond} and \eqref{esth2} we see that
\begin{align*}
|\rs''|&\leq \frac
{\delta 2\sqrt 2\pi \rs e^{\frac{\pi \rs^2}2}}
{\pi t_0 \rs e^{\pi \rs^2}} |\rs'|  +
\frac
{\sqrt 2\pi\delta \rs^2 e^{\frac{\pi\rs^2}2 }}  {(\pi t_0 \rs e^{\pi \rs^2})^2}
\left(\delta C  \rs e^{\frac{\pi \rs^2}2}+  C(1+\rs^2)e^{\pi \rs^2} |\rs'|
\right)\\
&\leq \frac{\delta C e^{-\frac {\pi \rs^2} 2}}{ t_0} |\rs'|+\frac{\delta C e^{-\frac {3 \pi \rs^2} 2}}{ t_0^2}
\left(\delta  \rs e^{\frac{\pi \rs^2}2}+  (1+ \rs^2)e^{\pi \rs^2} |\rs'|
\right).
\end{align*}
Combining with \eqref{boundrsp},
\begin{equation}
 \label{boundrsecond}
|\rs''|\leq \frac{ \desqr  C \rs e^{-\pi \rs^2}}{ t_0^2 }
+\frac{\delta C e^{-\frac {3 \pi \rs^2} 2}}{ t_0^2}
\left(\delta  \rs e^{\frac{\pi \rs^2}2}+  (1+ \rs^2)\frac {\delta C\rs}{t_0}e^{\frac{\pi \rs^2}2}
\right)
\leq \frac {\desqr C\rs}{t_0^3}.
\end{equation}
Now, recalling the bounds \eqref{boundrs} and \eqref{boundrsp}, one can
differentiate under the integral in \eqref{deffs}, obtaining
\begin{equation}
\label{fprime}
f'(\sigma)=\int_0^{2\pi} \rs(\theta)\frac{\partial \rs(\theta)}{\partial\sigma}\diff \theta
=\int_0^{2\pi} \rs \rs' \diff \theta
\end{equation}
(in the last integral, the dependence of $\rs$ and $\rs'$ on $\theta$ is understood). 
Differentiating \eqref{fprime}
again, and then using \eqref{boundrsecond} and \eqref{boundrsp},  we obtain the estimate
\begin{align*}
|f''(\sigma)|\leq\int_0^{2\pi} \left( |\rs'|^2 +|\rs\rs''|\right)\diff \theta\leq
\frac {C\desqr}{t_0^3}\int_0^{2\pi} \rs^2 \diff \theta,\quad \sigma\in [0,1].
\end{align*}
This, combining with \eqref{cfrrsrz} and recalling \eqref{deffs}, gives
\begin{equation}
  \label{estfsecond}
  |f''(\sigma)|\leq \frac{C\desqr}{t_0^3} f(0)\quad\forall\sigma\in [0,1].
\end{equation}

We now claim that $f'(0)=0$, which is the crucial step of the proof. Indeed, when $\sigma=0$,
we see from \eqref{defg} and \eqref{estgr} that $\rz$ and $\partial g_\theta/\partial r$ are
independent of $\theta$, and therefore, by \eqref{rprime1}, when $\sigma=0$ we may write
\[
\left.\rs(\theta)\frac{\partial \rs(\theta)}{\partial\sigma}\right\vert_{\sigma=0}=
\rz \, \frac {h(\rz \, e^{i\theta})}{\displaystyle\frac {\partial g_\theta}{\partial r}(\rz,0)}=
\phi(\rz) h(\rz \, e^{i\theta}),
\]
where $\phi(r_0)\not=0$ depends on $\rz$ but is independent of $\theta$. Therefore, from \eqref{fprime},
\begin{align*}
  f'(0)=\int_0^{2\pi} \rs(\theta)\frac{\partial \rs(\theta)}{\partial\sigma}\bigl\vert_{\sigma=0}\diff \theta
  =\phi(\rz)\int_0^{2\pi} h(\rz \, e^{i\theta})\diff \theta,
\end{align*}
but the last integral vanishes because, $h$ being harmonic, the mean value theorem
for harmonic functions gives
\[
\frac 1 {2\pi \rz}\int_0^{2\pi} h(\rz e^{i\theta})\diff \theta=h(0)=0.
\]
Hence,  applying Taylor's formula 
\[
f(s)=f(0)+ f'(0)s +\frac{f''(\sigma)}{2}s^2\quad \text{for some $\sigma\in (0,s)$}
\]
with $s=1$, recalling that  $f'(0)=0$  and
using \eqref{estfsecond}  gives
\begin{equation}
\label{estfs}
|E_1|=
f(1)\leq f(0)+\frac {C\desqr}{t_0^3} f(0)=\left(1+\frac {C\desqr}{t_0^3}\right)f(0).
\end{equation}
Now, assuming (as we may) that $2\desqr\leq t_0$, we claim that
\begin{equation}
\label{estrz}
\pi r_0^2\leq \left(1+\frac{2\desqr}{t_0}\right)\log\frac T t
\end{equation}
which, according to \eqref{defrz}, is equivalent to
\begin{equation}
\label{estrz2}
e^{\pi\rz^2}=\frac {1-\desqr}{\frac t T-\desqr}\leq \left(\frac T t\right)^{1+\frac {2\desqr}{t_0}}.
\end{equation}
Setting for convenience $\kappa=1/t_0$ and defining the function 
\begin{equation}
\label{defpsi}
\psi(\tau):= \desqr+ \left(1-\desqr \tau \right)
\tau^{2\desqr\kappa },\quad\tau\in [1,\kappa],
\end{equation}
we observe that \eqref{estrz2} is equivalent to $\psi(T/t)\geq 1$. Since 
$\psi(1)=1$, $T/t\in [1,\kappa]$ 
and $\psi$ is concave (note that $2\desqr\kappa\leq 1$ by assumption), it suffices to
prove that $\psi(\kappa)\geq 1$. Indeed, we have
\begin{align*}
\psi(\kappa) &=  \desqr+ \left(1-\desqr \kappa\right)e^{2\desqr \kappa \log \kappa}
  \geq \desqr+ \left(1-\desqr \kappa\right)(1+2\desqr \kappa \log \kappa)\\
  &=1+\desqr\left(1 + 2\kappa (1 -\desqr \kappa)  \log \kappa - \kappa \right)\geq
  1+\desqr\left(1 + \kappa  \log \kappa - \kappa \right),
\end{align*}
having used $1-\desqr\kappa\geq \frac 1 2$ in the last passage. This shows that $\psi(\kappa)\geq 1$,
hence \eqref{estrz} is established.

Then \eqref{eqtesi2} follows, on combining \eqref{estfs} with \eqref{fzez} and \eqref{estrz}.

} 

\end{proof}

\donotshow{

\begin{numlemma}\label{lemma:super-level} 
Let $F \in \mathcal{F}^2$ be such that $\|F\|_{\mathcal{F}^2} = 1.$ There is an absolute, computable constant $C>0$  such that the following holds: if $T := \max_{z \in \C} |F(z)|^2 e^{-\pi|z|^2}$ satisfies $1-T < \frac{1}{C},$ then, letting
\[\mu_F(t) := |\{ z \in \C \colon |F(z)|^2 e^{-\pi|z|^2} > t\}|,
\] 
we have  
\[ 
\mu_F(t) < (1+C(1-T)) \log \left( \frac{T}{t}\right),
\]
whenever $\frac{t}{T} > 1 - \frac{1}{C}.$
\end{numlemma}

\begin{proof} Without loss of generality, we may assume that the maximum of $u_F(z) = |F(z)|^2 e^{-\pi|z|^2}$ occurs at $z =0.$ Moreover, we may also assume that $F(0)$ is a real number, and thus we have $T = F(0)^2.$ \medskip

\noindent\textsc{Step 1.} We will prove first the sub-optimal inclusion 
\[
\{ z \in \C \colon u_F(z) > (1-\varepsilon)^2 T \} \subset D(0,C \sqrt{\varepsilon}), 
\]
for some absolute constant $C>0.$ Indeed, if we write $F$ as 
\begin{equation}\label{eq:power-series-F}  
F(z) = \sum_{k =0}^{\infty} a_k \frac{\pi^{k/2}}{\sqrt{k!}} z^k,
\end{equation} 
then the estimate 
\begin{equation}\label{eq:triangular-F}  
|F(z)|e^{-\pi \frac{|z|^2}{2}} \le e^{-\pi \frac{|z|^2}{2}} \left( \sum_{k=0}^{\infty} |a_k| \frac{\pi^{k/2}}{\sqrt{k!}} |z|^k\right) 
\end{equation} 
follows at once. In addition to this, one easily observes that, if $u_F$ has a maximum at $z=0,$ then $|F(z)|^2$ must have a critical point there as well. On the other hand, 
\[
|F(z)|^2 = T + \pi^{1/2} T^{1/2} \cdot \Re(a_1 z) + O(|z|^2)
\]
holds in a neighbourhood of the origin, which implies that $a_1 =0$; see also the discussion at the beginning of Section \ref{subsec:GeometryOfSuperLevelSets}. Hence, by \eqref{eq:triangular-F}, if we restrict $z$ to the set $\{ z \in \C \colon u_F(z) > (1-\varepsilon)^2 T \},$ using that $a_1 = 0$ and the Cauchy-Schwarz inequality, 
\begin{align}\label{eq:chain-bound-F}
\begin{split}
(1-\varepsilon) T^{1/2} &\le |F(z)| e^{-\pi\frac{|z|^2}{2}} \cr 
                & \le T^{1/2} \cdot e^{-\pi \frac{|z|^2}{2}} + e^{-\pi \frac{|z|^2}{2}} \left(\sum_{k =2} |a_k| \frac{\pi^{k/2}}{\sqrt{k!}} |z|^k \right) \cr 
                & \le T^{1/2} \cdot e^{-\pi \frac{|z|^2}{2}} + e^{-\pi \frac{|z|^2}{2}} (1-T)^{1/2} \left( e^{\pi |z|^2} - 1 - \pi |z|^2 \right)^{1/2} \cr 
                & \le T^{1/2} \cdot e^{-\pi \frac{|z|^2}{2}} + (1-T)^{1/2} \left( 1- (1+\pi|z|^2)\cdot e^{-\pi |z|^2} \right)^{1/2}. 
\end{split}
\end{align} 
Let $\tau := \pi |z|^2$, so that \eqref{eq:chain-bound-F} may be rewritten as 
\[
(1-\varepsilon) \le e^{-\tau/2} + \left( \frac{1-T}{T}\right)^{1/2}\left(1- (1+\tau) e^{-\tau} \right)^{1/2}. 
\]
Using the elementary algebraic inequality 
$\left(1- (1+\tau) e^{-\tau} \right)^{1/2} \le 1 - e^{-\tau}$, valid for all $\tau \ge 0$, and writing $\delta=\left( \frac{1-T}{T}\right)^{1/2}$, we conclude that 
\[
1-\varepsilon \le e^{-\tau/2} + \delta (1-e^{-\tau}). 
\]
Finally, if we set $ x := 1- e^{-\tau/2}$, the last inequality can be rewritten as 
$$x - \varepsilon \le \delta ( 2x - x^2) \le  2\delta x,$$
which, in turn, implies $x \le \frac{\varepsilon}{1-2\delta}$ or, equivalently, 
$$\tau \le -2\log \left(  1- \frac{\varepsilon}{1-2\delta} \right)\leq C \frac{\e}{1-2\delta}.$$ Since $1-T<1/C$ we have $\delta \leq (\frac{1}{C-1})^{1/2}$ and, as $\tau = \pi |z|^2,$ one easily concludes the assertion of the first step. \medskip

\noindent\textsc{Step 2.} We now refine the computations above, in order to obtain the conclusion of the lemma. 

We first assume that $t$ is arbitrary, but in a moment it will be chosen to be so that $T/t$ is very small, depending on $\delta$, as in the statement. Suppose then that $u_F(z) > t.$ We use the power-series expansion \eqref{eq:power-series-F} in explicit form: indeed, we let 
\[
R(z) := \frac{F(z)}{F(0)} - 1 = \sum_{k=2}^\infty a_k \frac{\pi^{k/2}}{\sqrt{k!}} z^k.
\]
Notice first that, by the estimates in \eqref{eq:chain-bound-F}, we have 
\begin{equation}\label{eq:first-estimate-R} 
|R(z)|^2 \le (1-T) \left( e^{\pi |z|^2} - 1 - \pi |z|^2 \right) \le \delta^2 \left( e^{\pi |z|^2} - 1 - \pi |z|^2 \right)
\end{equation}
where $\delta = \left(\frac{1-T}{T}\right)^{1/2},$ as in \textsc{Step 1.} The condition $u_F(z) > t$ can be rewritten as
\[
\pi |z|^2 + \log(t/T) < \log\left( \frac{|F(z)|^2}{T}\right),
\]
and so
\begin{equation}\label{eq:fundamental-R}
\pi |z|^2 + \log(t/T) < \log \left( 1 + 2 \Re R(z) + |R(z)|^2\right) \le |R(z)|^2 + 2 \Re R(z). 
\end{equation}
We now relabel $h(z) := - 2\Re R(z).$ This, together with \eqref{eq:first-estimate-R}, implies 
\begin{equation}\label{eq:first-h} 
\pi |z|^2 + h(z) < \log(T/t) + \delta^2 \left( e^{\pi |z|^2} - 1 - \pi |z|^2 \right). 
\end{equation}
Choosing $t$ so that
\[\frac t T = (1-\e_0)^2 T,\]
\textsc{Step 1} readily implies that $\{u_F>t\}\subset D(0,C\sqrt \e_0)$; hence, if $\e_0$ is chosen sufficiently small, then $\pi |z|^2 \le 1$ whenever $u_F(z) > t.$  Thus, for such $z$ we have $e^{\pi |z|^2} - 1 - \pi |z|^2 < \pi |z|^2,$ and from \eqref{eq:first-h}, 
\begin{equation}\label{eq:def-V}
(1-\delta^2)\pi |z|^2 + h(z) < \log(T/t). 
\end{equation}

We now claim that there is an absolute constant $C>0$ such that, for $z \in B(0,1/\sqrt{\pi}),$ we have 
\begin{equation}\label{eq:estimates-h-derivatives} 
|h(z)| \le C \delta |z|^2, \quad |\nabla h(z)| \le C \delta |z|, \quad |\nabla^2 h(z)| \le C \delta. 
\end{equation}
Note that the bound \eqref{eq:first-estimate-R} yields
\begin{equation}
    \label{eq:boundR}
    |R(z)|\leq \pi \delta |z|^2.
\end{equation}
The first inequality in \eqref{eq:estimates-h-derivatives} now follows directly from the definition of $h$ and \eqref{eq:boundR}.
In order to show the estimates for the derivatives of $h$, note that $|\nabla h| \le 2|R'(z)|, \, \, |\nabla^2 h| \le 4|R''(z)|.$ But, by the Cauchy integral formula,
\begin{equation}\label{eq:Cauchy-integral-R}  
|R^{(n)}(z)| \le \frac{n!}{2\pi} \int_{\gamma_z} \frac{|R(w)|}{|w-z|^{n+1}} |\diff w|, 
\end{equation} 
where $\gamma_z$ is a closed, simple curve containing $z.$  Applying \eqref{eq:Cauchy-integral-R} with $\gamma_z=\p D(0,2|z|)$ and using \eqref{eq:boundR} we obtain the estimate for $\nabla h$, while applying \eqref{eq:Cauchy-integral-R} with $\gamma_z=\p B(0,2/\sqrt{\pi})$ and using again \eqref{eq:boundR}, the estimate for $\nabla^2 h$ also follows.

Inspired by these considerations, we let 
\[
V_{\sigma}(r,\theta) : = (1-\delta^2)\pi r^2 + \frac{\sigma}{\delta} h(re^{i\theta}) 
\]
where $\sigma \in (-2\delta,2\delta)$.
By \eqref{eq:estimates-h-derivatives}, we see that 
\begin{equation}
    \label{eq:invertibilityrsigma}
    \partial_r V_{\sigma} \ge  2r(1-\delta^2) - C \delta r > 0
\end{equation}
as long as $0<r<1/\sqrt{\pi}$ and $
\delta$ is sufficiently small. Moreover, $\lim_{r\to0} V_{\sigma}(r,\theta) = 0 < \log(T/t),$ and, for each fixed $\theta \in [0,2\pi],$ we have
\[
V_{\sigma}(1/\sqrt \pi,\theta) > (1-\delta^2) - C\delta > \log(T/t) 
\]
 since we are assuming that $\log(T/t)$ is sufficiently small. Therefore, given $t > 0$ with $T/t$ sufficiently close to 1, for each $\theta \in [0,2\pi)$ there is a unique number $r_{\sigma}(\theta) > 0$ such that 
\begin{equation}\label{eq:def-implicit-r}
V_{\sigma}(r_{\sigma}(\theta),\theta) = \log(T/t).
\end{equation}
Due to \eqref{eq:invertibilityrsigma}, the implicit function theorem shows that $\sigma \mapsto r_{\sigma}(\theta)$ is a smooth function of $\sigma$ for each fixed $\theta \in [0,2\pi).$ We can thus differentiate \eqref{eq:def-implicit-r} with respect to $\sigma$ to obtain 
\begin{equation}\label{eq:equation-r-derivative}
2(1-\delta^2)\pi r_\sigma r_\sigma' + \frac{1}{\delta} h(r_\sigma e^{i\theta}) + \frac{\sigma}{\delta} \langle \nabla h(r_{\sigma} e^{i \theta}), e^{i\theta}\rangle r_\sigma' = 0. 
\end{equation}
Solving for $r_\sigma',$ we obtain 
\begin{equation}\label{eq:formula_r_sigma} 
    r_\sigma' = - \frac{ \frac{1}{\delta} h(r_\sigma e^{i \theta})}{2(1-\delta^2)\pi r_\sigma + \frac{\sigma}{\delta} \langle \nabla h(r_{\sigma} e^{i \theta}), e^{i\theta}\rangle}
\end{equation}
and so, by \eqref{eq:estimates-h-derivatives}, we have $
|r_\sigma'| \le C r_\sigma.$ We now compute the second derivative of $r_\sigma$: differentiating \eqref{eq:equation-r-derivative} once more with respect to $\sigma,$ we obtain 
\begin{align}
\begin{split}
\label{eq:second-derivative-radius} 
 2(1-\delta^2)\pi\left( (r_\sigma')^2  + r_\sigma r_\sigma'' \right) & + \frac{2}{\delta}\langle\nabla h(r_\sigma e^{i\theta}), e^{i \theta}\rangle r_\sigma' \\
& + \frac{\sigma}{\delta}  \nabla^2 h(r_{\sigma} e^{i \theta})[e^{i\theta}, e^{i\theta}] + \frac{\sigma}{\delta} \langle \nabla h(r_{\sigma} e^{i \theta}), e^{i\theta}\rangle r_{\sigma}'' = 0. 
\end{split}
\end{align}
Solving for $r_\sigma''$ in \eqref{eq:second-derivative-radius} and using \eqref{eq:estimates-h-derivatives} and $|r_\sigma'| \le C r_\sigma$ repeatedly, we obtain that $|r_\sigma''| \le C r_\sigma.$ We summarize these in the following: 

\begin{lemma}\label{prop:estimate-r-derivatives}
Let $r_\sigma$ be defined through \eqref{eq:def-implicit-r}. Then there is an absolute, computable constant $C> 0$ such that, for $\sigma \in (-2\delta,2\delta),$ we have 
\begin{equation}\label{eq:bounds-derivatives-r} 
|r_{\sigma}'| + |r_{\sigma}''| \le C r_{\sigma}.
\end{equation}
\end{lemma}

We will use these new estimates in order to estimate the area of the super-level set $\{ u_F > t\}.$ Indeed, we trivially have $\{ u_F > t \} \subset \{ z: 0 < |z| < r_\delta(\theta)\},$ and thus 
\[
\mu_F(t) \le \frac{1}{2} \int_0^{2\pi} r_\delta(\theta)^2 \diff \theta, 
\]
which motivates the definition of
\[ f(\sigma) \coloneqq \frac{1}{2} \int_0^{2\pi} r_{\sigma}(\theta)^2 \diff \theta,  \]
where as before we assume that $\sigma \in (-2\delta,2\delta).$ Differentiating $f$ and passing the derivative inside the integral (which, due to the bound \eqref{eq:bounds-derivatives-r}, is allowed), we obtain 
\begin{equation}
    \label{eq:boundsecondderf}
    |f''(\sigma)| \le C \int_0^{2\pi} r_\sigma(\theta)^2  \diff \theta. 
\end{equation}
Furthermore, notice that $r_\sigma'|_{\sigma = 0} = \frac{h(r_0 e^{i\theta})}{\delta(1-\delta^2)\pi r_0}$ by \eqref{eq:formula_r_sigma}. Recall that, by definition, $h$ is harmonic and so, by the mean value theorem, 
$$f'(0) = \frac{2}{\delta(1-\delta^2)} h(0) = 0.$$
The Taylor expansion of $f$ becomes
\[
f(\delta) = f(0) +  \frac{f''(\sigma)}{2} \delta^2,
\]
for some  $\sigma\in(0,\delta)$.
Using \eqref{eq:boundsecondderf}, we get 
\begin{equation}\label{eq:function-bound}  
f(\delta) \le f(0) + C\delta^2 \int_0^{2\pi} r_\sigma(\theta)^2 \, \diff \theta. 
\end{equation} 
On the other hand, by \eqref{eq:def-implicit-r} evaluated at $0$ and $\sigma,$ we have 
\[
(1-\delta^2)\pi(r_\sigma^2 - r_0^2) = - \frac{\sigma}{\delta} h(r_\sigma e^{i\theta}).
\]
By using \eqref{eq:estimates-h-derivatives} and taking $\delta$ sufficiently small, we have $ r_\sigma^2 \le 2 r_0^2$ for all $\sigma \in (-2\delta,2\delta).$  Thus, 
$$\mu_F(t) \le f(\delta) \le \frac{1+ C \delta^2}{2} \int_0^{2\pi} r_0^2  \diff \theta = \frac{1+C\delta^2}{1-\delta^2} \log\frac{T}{t},$$
since $(1-\delta^2) \pi r_0^2 = \log \frac{T}{t}$.
\end{proof}

} 

\begin{numcor}[Uniqueness and non-degeneracy of $t^*$]\label{cor:nondegen}
    If $F\in\Fock$ is such that
    $\Vert F\Vert_{\Fock}=1$ and $T<1$, then
there is a unique value $t^*\in (0,T)$
satisfying \eqref{deft*}. Moreover,
\begin{equation}
    \label{eq:nondegen}
t^* \leq \tau^*,
\end{equation}
for some universal constant $\tau^*\in (0,1)$.
\end{numcor}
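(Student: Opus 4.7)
The plan is to address existence, uniqueness, and the upper bound $t^*\leq \tau^*$ separately. Existence of (at least one) $t^*$ is already established in the text preceding the corollary (mass conservation combined with $u^*(0)=T<1=e^0$ forces a crossing of $u^*$ and $e^{-s}$). I therefore focus on uniqueness and on the non-degeneracy bound.

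\textbf{Uniqueness.} I argue by contradiction. Suppose $0<t_1^*<t_2^*<T$ both satisfy \eqref{deft*}. Combining the two inequalities on $[t_1^*,t_2^*]$ gives $\mu(t)\equiv \log(1/t)$ on this interval, equivalently $u^*(s)\equiv e^{-s}$ on the nontrivial interval $[s_1,s_2]=[-\log t_2^*,-\log t_1^*]$. Hence equality holds a.e.\ in \eqref{eqn:DiffInequ} on $(s_1,s_2)$. Tracing the proof of \eqref{eqn:DiffInequ} from \cite{NicolaTilli,Kulikov}, equality on an interval of $s$ is equivalent to the super-level sets $\{u>t\}$ being disks for every $t\in(e^{-s_2},e^{-s_1})$ and $F$ being zero-free in them (coming from equality in the isoperimetric inequality and in the subharmonicity of $\log|F|^2$, respectively). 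Since these disks are nested, they share a common center $z_0$, so $u$ is a radial function of $|z-z_0|$ on the annular region they cover. As $u=|F|^2 e^{-\pi|z|^2}$ is real-analytic, the identity principle propagates radiality around $z_0$ to all of $\C$.

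Passing to $G(z):=F(z)/F_{z_0}(z)$, which is entire since $F_{z_0}$ is nowhere vanishing, its squared modulus $|G(z)|^2=u_F(z)\,e^{\pi|z-z_0|^2}$ is radial in $|z-z_0|$. A classical Cauchy--Riemann argument (matching Taylor coefficients of $|G|^2$ as a function of the angular variable) forces $G(z)=c(z-z_0)^n$ for some $c\in\C\setminus\{0\}$ and some non-negative integer $n$. If $n=0$, then $F=cF_{z_0}$ and $\Fnorm{F}=|c|=1$ forces $T=1$, contradicting $T<1$. If $n\geq 1$, then $F(z_0)=0$, so $u(z_0)=0$ and the super-level sets $\{u>t\}$ are annuli (not disks) for every $t>0$, again a contradiction. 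Hence $t^*$ is unique.

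\textbf{The bound $t^*\leq \tau^*$.} Fix once and for all $t_0\in(0,1)$, say $t_0=1/2$, and let $T_0\in(t_0,1)$ and $C_0>0$ be the constants from Lemma \ref{lemma:super-level-new} associated with this $t_0$. If $T\leq T_0$ then $t^*\leq T\leq T_0$; and if $t^*\leq t_0$ the bound is also immediate. Otherwise $T>T_0$ and $t^*\in(t_0,T]$, so Lemma \ref{lemma:super-level-new} applied at $t=t^*$ yields
\[
\log(1/t^*) \;=\; \mu(t^*) \;\leq\; \bigl(1+C_0(1-T)\bigr)\log(T/t^*).
\]
Rewriting as $|\log T|\leq C_0(1-T)\log(T/t^*)$ and using the elementary bound $|\log T|\geq 1-T$ for $T\in(0,1]$, we obtain $\log(T/t^*)\geq 1/C_0$, and hence $t^*\leq T e^{-1/C_0}\leq e^{-1/C_0}$. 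Setting $\tau^*:=\max\{T_0,t_0,e^{-1/C_0}\}\in(0,1)$ produces a universal bound valid in all cases.

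The main technical point is the rigidity step in the uniqueness argument: making precise that a.e.\ equality on an interval of $s$ in \eqref{eqn:DiffInequ} forces disk-shaped super-level sets of $u$ and $F$ zero-free therein. This is a local version of the equality analysis carried out in \cite{NicolaTilli} for the full Faber--Krahn rigidity, and it requires tracking which of the sharp inequalities (isoperimetric, Cauchy--Schwarz on level sets, subharmonicity of $\log|F|^2$) used to prove \eqref{eqn:DiffInequ} become equalities. Once this local rigidity is secured, the remainder of the uniqueness argument and the upper bound reduce to a short application of the structure of entire functions with radial modulus and of Lemma \ref{lemma:super-level-new}, respectively.
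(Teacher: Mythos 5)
Your proposal follows essentially the same route as the paper: uniqueness via the rigidity of the equality case in \eqref{eqn:DiffInequ} (deferring, as the paper does, to the equality analysis behind \cite[Lemma~3.4]{NicolaTilli}), and the bound $t^*\leq\tau^*$ by applying Lemma~\ref{lemma:super-level-new} at $t=t^*$ with $t_0=1/2$ together with the elementary inequality $-\log T\geq 1-T$, which is equivalent to the paper's use of $x^{1/(1-x)}\leq e^{-1}$. The one imprecision is the claim that nested disks automatically share a common center — they need not; concentricity must be extracted from the rigidity analysis itself (constancy of $|\nabla u|$ on each boundary level), which is exactly what the cited equality case supplies, so this does not affect the validity of the argument.
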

Note that the uniqueness of $t^*$ implies the uniqueness of
$s^*$ defined in \eqref{defs*}, and $t^*=e^{-s^*}$. We also
note that there cannot be any universal lower bound on
$t^*$, since $t^*\leq T$ and $T$ can be arbitrarily small.
\begin{proof}
 If \eqref{deft*} were true for two distinct values $t_1<t_2<T$
 of $t^*$, then we would have $\mu(t)=\log 1/t$
 for every $t\in [t_1,t_2]$, whence
  $\mu'(t)=-1/t $ for every $t\in (t_1,t_2)$.
 But the proof of \cite[Lemma~3.4]{NicolaTilli} shows that 
 this happens if  and only if the corresponding
 sets $\{ u > t \}$ are balls,  $|\nabla u|$ being constant on each boundary $\partial \{ u > t \} = \{ u = t \}$: this in turn 
 implies that $u(z) = e^{-\pi|z-z_0|^2},$ for some $z_0 \in \C$ and hence $u(z_0) = 1$ (or equivalently that $u^*(s)\equiv e^{-s}$), contradicting to our assumption
 that $T<1$.
 
Now let $T_0$ and $C_0$ be the constants provided by Lemma \ref{lemma:super-level-new}
when $t_0=\frac 1 2$, and define
\[
\tau^*:= \max\left\{\frac 1 2, T_0,e^{-\,\frac 1 {C_0}} \right\}.
\]
Given $F$ as in our statement, if $t^*\leq 1/2$ then clearly $t^*\leq \tau^*$, and
the same is true if $T< T_0$, because certainly $t^*\leq T$. Finally, if $t^*>1/2$
and $T\geq T_0$, then \eqref{newestmu} written with $t=t^*$ becomes
\[
\log \frac 1 {t^*}=\mu(t^*) \leq (1+C_0(1-T))\log \frac T {t^*},
\]
which is equivalent to
\[
\frac 1 {t^*} \leq \left(\frac T {t^*}\right)^{1+C_0(1-T))},\quad\text{that is,}\quad
t^* \leq T ^{1+\frac 1 {C_0(1-T)}}.
\]
But then, since $T\leq 1$, we obtain
\[
t^* \leq T ^{\,\frac 1 {C_0(1-T)}} \leq e^{-\,\frac 1 {C_0}}
\]
and $t^*\leq\tau^*$ also in this case (notice that $x^{\frac 1 {1-x}}\leq e^{-1}$ for every  $x\in (0,1)$).
\end{proof}

We are now ready to start the comparison between $u^*(s)$
and $e^{-s}$, where the number $s^*$,  
uniquely defined  by \eqref{defs*} if $T<1$,
will play a crucial role. In the next two lemmas, however,
it is not necessary to assume that $T<1$, since when $T=1$ (and
$u^*(s)=e^{-s}$) their claims remain true (though trivial) for
all values of $s^*$.

\begin{numlemma}\label{lemma:QuantitativeMaxuNew}
    For every $F\in\Fock$ such that $\Vert F\Vert_{\Fock}=1$ and every $s_0>0$, there holds
    \begin{equation}
        \label{estlemma1}
    \frac{(1- T)^2}2 \leq \int_0^{s^*} \bigl(e^{-s}-u^*(s)\bigr)\diff s\leq  \delta_{s_0} e^{s_0},
    \end{equation}
where $T$ is as in \eqref{def:T} and
\begin{equation}
    \label{defdeltas0}
\delta_{s_0}:=1- \frac{\int_{\{u>u^*(s_0)\}} u(z)\diff z}{1-e^{-s_0}}
=1- \frac{\int_0^{s_0} u^*(s)\diff s}{1-e^{-s_0}}.
\end{equation}
\end{numlemma}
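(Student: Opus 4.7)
My plan is to sandwich $A:=\int_0^{s^*}\bigl(e^{-s}-u^*(s)\bigr)\diff s$ between the two required bounds using only two structural properties of $u^*$: the pointwise inequality $u^*(s)\le u^*(0)=T$ (from monotonicity), and the convexity on $[0,1]$ of $G(\sigma)=I(-\log\sigma)$ established in \cite{NicolaTilli} and recalled around \eqref{eq:boundI}, or equivalently the concavity of $\psi(\sigma):=(1-\sigma)-G(\sigma)$, which vanishes at both endpoints.

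For the lower bound I would first observe $s^*\ge\log(1/T)$: this is immediate from $e^{-s^*}=u^*(s^*)\le u^*(0)=T$. On the subinterval $[0,\log(1/T)]\subseteq[0,s^*]$ one has $e^{-s}\ge T\ge u^*(s)$, hence the integrand is $\ge e^{-s}-T\ge 0$. Since the integrand is non-negative on all of $[0,s^*]$, restricting the domain can only decrease the integral, so
$$A\ \ge\ \int_0^{\log(1/T)}\bigl(e^{-s}-T\bigr)\diff s\ =\ 1-T+T\log T.$$
A one-line calculus check ($1-T+T\log T$ and $(1-T)^2/2$ and their first derivatives both vanish at $T=1$, while their second derivatives are $1/T$ and $1$, with $1/T\ge 1$ on $(0,1]$) yields $1-T+T\log T\ge(1-T)^2/2$, settling the lower bound.

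For the upper bound I introduce $\phi(s):=(1-e^{-s})-I(s)$, so $\phi'(s)=e^{-s}-u^*(s)$. By the sign pattern \eqref{defs*}, $\phi$ increases on $[0,s^*]$ and decreases on $[s^*,\infty)$, with $\phi(0)=0=\lim_{s\to\infty}\phi(s)$; therefore its global maximum is $\phi(s^*)=\int_0^{s^*}\phi'(s)\diff s = A$, and by \eqref{defdeltas0} one has $\phi(s_0)=\delta_{s_0}(1-e^{-s_0})$. Substituting $\sigma=e^{-s}$ transforms $\phi$ into $\psi$, which is concave on $[0,1]$ with $\psi(0)=\psi(1)=0$, $\psi(\sigma^*)=A$ (where $\sigma^*:=e^{-s^*}$) and $\psi(\sigma_0)=\delta_{s_0}(1-\sigma_0)$. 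Concavity forces $\psi$ to lie above the two chords joining $(0,0)$ and $(1,0)$ to the peak $(\sigma^*,A)$, giving $\psi(\sigma_0)\ge A\sigma_0/\sigma^*$ for $\sigma_0\le\sigma^*$ and $\psi(\sigma_0)\ge A(1-\sigma_0)/(1-\sigma^*)$ for $\sigma_0\ge\sigma^*$. Since $\sigma^*,\,1-\sigma^*\le 1$, both inequalities strengthen to $\psi(\sigma_0)\ge A\min(\sigma_0,1-\sigma_0)$. A short case split on whether $\sigma_0\lessgtr\tfrac12$, combined with the identity $(1-\sigma_0)/\sigma_0=e^{s_0}-1$, then gives $A\le\delta_{s_0}\max(e^{s_0}-1,1)\le\delta_{s_0}e^{s_0}$, as required.

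The main obstacle is conceptual rather than technical: one has to recognise that the convexity of $G$ (equivalently the concavity of $\psi$ with its two zero boundary values) is precisely the ingredient needed to compare the maximum $\psi(\sigma^*)=A$ with the single value $\psi(\sigma_0)$ at an arbitrary point, without any information about where $s_0$ lies relative to $s^*$. Once that chord picture is in place, the lower bound is a pointwise comparison and the upper bound is an elementary algebraic manipulation.
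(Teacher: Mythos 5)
Your proof is correct, and it packages the argument differently from the paper. For the upper bound, the paper works at the ``derivative'' level: it uses directly that the ratio $r(s)=e^{s}u^*(s)$ is increasing (an immediate consequence of \eqref{eqn:DiffInequ}) and runs a two-case comparison of integrals over $[0,s_0]$, $[s_0,s^*]$ (or $[s^*,s_0]$, $[s_0,\infty)$). You instead work at the level of the antiderivative $\phi(s)=(1-e^{-s})-I(s)$ and invoke the convexity of $G(\sigma)=I(-\log\sigma)$, recalled in Subsection \ref{subsec:12}; note that this convexity is \emph{exactly equivalent} to the monotonicity of $r$ (since $G'(\sigma)=-e^{s}u^*(s)$ at $\sigma=e^{-s}$), so the underlying input is the same, but your chord picture for the concave function $\psi=1-\sigma-G$ vanishing at both endpoints unifies the two cases into a single geometric statement and reproduces precisely the paper's intermediate bounds $\delta_{s_0}(e^{s_0}-1)$ and $\delta_{s_0}$. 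For the lower bound, the paper linearizes via $e^{-s}\geq 1-s$ and integrates $(1-s-T)_+$, whereas you keep the exact exponential on $[0,\log(1/T)]$ (using $s^*\geq\log(1/T)$) and compare $1-T+T\log T$ with $(1-T)^2/2$ by a second-derivative argument; both are elementary and correct, and yours in fact yields the slightly stronger intermediate bound before relaxing it. All the facts you use ($I'=u^*$, $\phi(0)=\lim_{s\to\infty}\phi(s)=0$, $\phi(s_0)=\delta_{s_0}(1-e^{-s_0})$, the sign pattern \eqref{defs*}) are available at this point in the paper, so the argument is complete.
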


Note that $\delta_{s_0}$ coincides with the deficit
$\delta(F;\Omega)$ of Theorem \ref{thm:FKforSTFTFock} when
$\Omega=\{u>u^*(s_0)\}$ is the super-level set of $u$, with measure $s_0$.

\begin{proof}
    Instead of writing explicitly $e^{-s}$, we will use the notation
    \begin{equation}
        \label{tempv*}
        \ee(s):=e^{-s},\quad s\geq 0.
    \end{equation}
This will be particularly useful in Section \ref{sec:Generalize}, when we adapt the current proof to higher
dimensions. 

Since $u^*(x)\leq T$ and $\ee(s)\geq 1-s$, the first inequality in \eqref{estlemma1}
follows from
\begin{equation}
    \label{eq2003}
    \int_0^{s^*} \bigl(\ee(s)-u^*(s)\bigr)\diff s\geq
    \int_0^{s^*} \bigl(1-s-T\bigr)_+\diff s=\int_0^{1-T}(1-s-T)\diff s=\frac {(1-T)^2}2.
\end{equation}
To prove the second inequality,
note that $1-e^{-s_0}=\int_0^{s_0} \ee(s)\diff s,$ and hence
we can rewrite \eqref{defdeltas0} as
\begin{equation}\label{eq2001}
\eps:= \delta_{s_0} \int_0^{s_0} \ee(s)\diff s =        \int_{s_0}^\infty  \left( u^*(s)-\ee(s)  \right) \diff s = \int_0^{s_0} \left(v^*(s) - u^*(s)\right) \diff s.
    \end{equation}
The key of the proof is that the ratio
\begin{equation}
    \label{ratiodecr}
    r(s):=\frac{u^*(s)}{\ee(s)}\quad \text{is an increasing function  on $[0,+\infty)$,}
\end{equation}
as follows immediately from \eqref{eqn:DiffInequ} since $r(s)=e^{s}u^*(s)$. In order to implement such an idea, we must now
distinguish between some cases:

\textsc{Case 1:} $ s_0 > s^*$. Since $r(s^*)=1$ and $r(s)$ is increasing by the convexity inequality \eqref{eqn:DiffInequ}, we have from \eqref{eq2001}
\begin{align*}
 \eps
    = \int_{s_0}^\infty  u^*(s)\left( 1 -\frac 1 {r(s)}  \right) \diff s
    \geq \left( 1 -\frac 1 {r(s_0)}  \right) \int_{s_0}^\infty  u^*(s) \diff s.
    \end{align*}
On the other hand, for the same reason,
\[
\int_{s^*}^{s_0} \left(u^*(s)-\ee(s)\right)\diff s
= \int_{s^*}^{s_0} u^*(s) \left(1-\frac 1 {r(s)}\right)\diff s
\leq \left(1-\frac 1 {r(s_0)}\right) \int_{s^*}^{s_0} u^*(s) \diff s,
\]
which, combined with the previous estimate, gives
\[
\int_{s^*}^{s_0} \left(u^*(s)-\ee(s)\right)\diff s
\leq \eps \,\, \frac{\int_{s^*}^{s_0} u^*(s) \diff s}{\int_{s_0}^\infty  u^*(s) \diff s}.
\]
Thus, recalling \eqref{eq2001} and using the last inequality, we find that
\begin{equation}
    \label{eq2005}
\int_{s^*}^{\infty} \left(u^*(s)-\ee(s)\right)\diff s\leq \eps+\eps \,\, \frac{\int_{s^*}^{s_0} u^*(s) \diff s}{\int_{s_0}^\infty  u^*(s) \diff s}=
\eps\, \frac {\int_{s^*}^{\infty} u^*(s) \diff s}{\int_{s_0}^\infty  u^*(s) \diff s}
\leq \frac \eps{\int_{s_0}^\infty  \ee(s) \diff s},
\end{equation}
having used \eqref{massone} for the numerator, and the fact that $u^*(s)\geq\ee(s)$ when $s\geq s^*$,
for the denominator. Given that clearly $\eps\leq\delta_{s_0}$, 
the second inequality in \eqref{estlemma1} follows immediately since 
$\int_0^{s^*} \ee(s)\diff s=1-e^{-s}.$

\textsc{Case 2:} $s_0 \leq s^*$. As $r(s^*)=1$ and $r(s)$ is increasing, we have from
\eqref{eq2001} again that
\begin{align*}
 \eps
    = \int_0^{s_0}  \ee(s)\left( 1 -r(s)  \right) \diff s
    \geq \left( 1 -r(s_0)  \right) \int_0^{s_0}  \ee(s) \diff s.
    \end{align*}
On the other hand, for the same reason,
\[
\int_{s_0}^{s^*} \left(\ee(s)-u^*(s)\right)\diff s=
\int_{s_0}^{s^*}  \ee(s)\left( 1 -r(s)  \right) \diff s
    \leq \left( 1 -r(s_0)  \right) \int_{s_0}^{s^*}  \ee(s) \diff s,
\]
which combined with the previous estimate gives
\[
\int_{s_0}^{s^*} \left(\ee(s)-u^*(s)\right)\diff s
\leq \eps \,\, \frac{\int_{s_0}^{s^*} \ee(s) \diff s}{\int_0^{s_0} \ee(s) \diff s}.
\]
Thus, using the last inequality, we find
\begin{equation}
    \label{eq2006}
\int_0^{s^*}\left(\ee(s)-u^*(s)\right)\diff s\leq \eps+\eps \,\, \frac{\int_{s_0}^{s^*} \ee(s) \diff s}{\int_0^{s_0}  \ee(s) \diff s}=
\eps\, \frac {\int_0^{s^*} \ee(s) \diff s}{\int_0^{s_0} \ee(s) \diff s}
\leq \frac \eps{\int_0^{s_0}  \ee(s) \diff s}=\delta_{s_0},
\end{equation}
and the second inequality in \eqref{estlemma1} follows also in this case.
\end{proof}

\donotshow{

\begin{numlemma}\label{lemma:QuantitativeMaxuNew}
    For every $F\in\Fock$ such that $\Vert F\Vert_{\Fock}=1$ and every $s_0>0$, there holds
    \begin{equation}
        \label{estlemma1}
    \frac{(1- T)^2}2 \leq \int_0^{s^*} \bigl(e^{-s}-u^*(s)\bigr)\diff s\leq 2 \delta_{s_0} e^{s_0},
    \end{equation}
where $T$ is as in \eqref{def:T} and
\begin{equation}
    \label{defdeltas0}
\delta_{s_0}:=1- \frac{\int_{u>u^*(s_0)} u(z)\diff z}{1-e^{-s_0}}
=1- \frac{\int_0^{s_0} u^*(s)\diff s}{1-e^{-s_0}}
\end{equation}
\end{numlemma}

Note that $\delta_{s_0}$ coincides with the deficit
$\delta(F;\Omega)$ of Theorem \ref{thm:FKforSTFTFock}, when
$\Omega=\{u>u^*(_0)\}$ is the super-level set of $u$, with measure $s_0$.

\begin{proof} The first inequality in \eqref{estlemma1} follows immediately from $e^{-s}\geq 1-s$
(see Figure \ref{fig:LemmaMaxuFirstCaseAreas}, \textbf{to be updated!}). To prove the second inequality,
we distinguish two cases:

    \textsc{Case 1:} $\quad s_0 \geq s^*$. Define the function
    \begin{equation}\label{eq:horizontal-L}  
    L(t) = \mu(t) + \log(t) ,
    \end{equation} 
    which is the length of the horizontal segment joining the graphs of $e^{-s}$ and $u^*(s)$ at height $t \ls e^{-s^*}$ (see Figure~\ref{fig:LemmaMaxuFirstCaseL}).
    
    \begin{figure}[H]
        \centering
        \def\svgwidth{0.55\textwidth}
        \import{files/figures/}{LemmaMaxuFirstCaseL.pdf_tex}
        \caption{The function $L(t)$}
        \label{fig:LemmaMaxuFirstCaseL}
    \end{figure}  
    
    Observe that $L(t)$ is decreasing, since $L'(t) \leq 0$ by \eqref{eqn:DiffInequ}. Rearranging the expression for $\delta_{s_0}$ we see that
    \begin{equation}\label{eqn:InLemmaMaxuProof1}
        \int_0^{s_0} \left( u^*(s) - e^{-s} \right) \diff s = - \delta_{s_0} (1-e^{-s_0}) 
    \end{equation}
which, recalling \eqref{massone}, 
 means that
    \begin{equation}\label{eqn:InLemmaMaxuProof2}
        A \coloneqq \int_{s_0}^{\infty} \left( u^*(s) - e^{-s}\right) \diff s = \delta_{s_0} (1-e^{-s_0}) . 
    \end{equation}
    Define now $A_1$ to be the area of the region enclosed between the graphs of $e^{-s}$ and $u^*(s)$, and below the horizontal line $t=e^{-s_0}$ (see Figure~\ref{fig:LemmaMaxuFirstCaseAreas}), that is
    \[ A_1 \coloneqq \int_0^{-e^{s_0}} L(t) \diff t .\]
    Then $A \geq A_1$ and, since $L$ is decreasing,
    \[ A_1 \geq L(e^{-s_0}) e^{-s_0} = (s_1-s_0) e^{-s_0} ,\]
    where $s_1$ is such that $u^*(s_1) = e^{-s_0}$. Combining this with \eqref{eqn:InLemmaMaxuProof2} we get
    \begin{equation*}
        s_1-s_0 \leq \delta_{s_0} e^{s_0} (1-e^{-s_0}) .
    \end{equation*}
    If we now set $A_2$ to be the area of the region enclosed between the graphs of $e^{-s}$ and $u^*(s)$, but this time above the line $t=e^{-s_0}$ and below the line $t=e^{-s^*}$, then we find that
    \begin{align*}
        A_2 \coloneqq \int_{e^{-s_0}}^{e^{-s^*}} L(t) \diff t  & \leq L(e^{-s_0}) (e^{-s^*} - e^{-s_0})\\
        & = (s_1 - s_0) (e^{-s^*} - e^{-s_0})\leq \delta_{s_0} e^{s_0} (1-e^{-s_0}) (e^{-s^*} - e^{-s_0}).
    \end{align*}
    We can now combine \eqref{eqn:InLemmaMaxuProof2} with the above estimate on $A_2$ to get
    \begin{align}\label{eq:bound-trapped-area}  
    \begin{split}
        \int_{s^*}^{\infty} (u^*(s) - e^{-s}) \diff s &= \int_{s^*}^{s_0} (u^*(s)-e^{-s}) \diff s + \int_{s_0}^{\infty} (u^*(s)-e^{-s}) \diff s \cr
        &\leq A_2 + \delta_0 (1-e^{-s_0}) \leq \delta_0 (1-e^{-s_0}) e^{s_0-s^*}.
    \end{split}
    \end{align}
Finally, again from \eqref{massone},  we find
    \begin{equation}
    \label{eq:nonsharplemmaaux}
    \int_0^{s^*} \left( e^{-s} - u^*(s) \right) \diff s = \int_{s^*}^{\infty} \left( u^*(s) - e^{-s} \right) \diff s \leq \delta_{s_0} (1-e^{-s_0}) e^{s_0-s^*} 
    \end{equation}
and the second inequality in \eqref{estlemma1} follows.

    \textsc{Case 2:} $s_0 < s^*.\quad$  Since by \eqref{eqn:DiffInequ}
    \[
    \frac d {ds}\bigl( e^{-s}-u^*(s)\bigr)=
    -e^{-s}-(u^*)'(s)\leq - \bigl(e^{-s} - u^*(s)\bigr)\quad\text{for a.e. $s\geq 0$,}
    \]
    on the interval $[0,s^*]$ (where $e^{-s}\geq u^*(s)$) the function $e^{-s}-u^*(s)$
    is decreasing, hence from \eqref{eqn:InLemmaMaxuProof1}
    \begin{equation}
    \label{eq1000}
     \delta_{s_0} (1-e^{-s_0}) = \int_0^{s_0}  \bigl( e^{-s}-u^*(s)\bigr)\diff s
     \geq s_0 \bigl( e^{-s_0}-u^*(s_0)\bigr).
    \end{equation}
   On the other hand, since by \eqref{eqn:DiffInequ} 
   also $1-e^s u^*(s)$  is a decreasing function,
   we have
   \begin{align*}
   \int_{s_0}^{s^*}  \bigl( e^{-s}-u^*(s)\bigr)\diff s
   &=
   \int_{s_0}^{s^*}  e^{-s}\bigl( 1- e^s u^*(s)\bigr)\diff s\leq 
   \bigl( 1- e^{s_0} u^*(s_0)\bigr)
   \int_{s_0}^{s^*}  e^{-s}\diff s\\
   &  
   \leq \bigl( 1- e^{s_0} u^*(s_0)\bigr) e^{-s_0}=e^{-s_0} -u^*(s_0),
   \end{align*}
which combined with the previous inequality yields
\[
\int_{s_0}^{s^*}  \bigl( e^{-s}-u^*(s)\bigr)\diff s \leq \delta_{s_0} \frac{1-e^{-s_0}}{s_0}\leq 
\delta_{s_0}.
\]
This inequality, combined with the equality in \eqref{eq1000}, yields
\[
\int_{0}^{s^*}  \bigl( e^{-s}-u^*(s)\bigr)\diff s\leq 
\delta_{s_0} (1-e^{-s_0})  +\delta_{s_0} \leq 2\delta_{s_0},
\]
and the second inequality in \eqref{estlemma1} follows.

    \begin{figure}[H]
        \centering
        \def\svgwidth{0.6\textwidth}
        \import{files/figures/}{LemmaMaxuSecondCaseb.pdf_tex}
        \caption{$A_4$ and $A_5$.}
        \label{fig:LemmaMaxuSecondCaseb}
    \end{figure}

\end{proof}

} 

We are now ready to show that, in \eqref{estlemma1}, the first inequality holds in fact in a much stronger form.

\begin{numlemma}
    \label{reinforcedlemma} 
    Under the same assumptions as in Lemma \ref{lemma:QuantitativeMaxuNew}, there holds
\begin{equation}
\label{estreinforced}
    1-T \leq C \int_0^{s^*} \bigl(e^{-s}-u^*(s)\bigr)\diff s,
\end{equation}
where $C>0$ is a universal constant.
\end{numlemma}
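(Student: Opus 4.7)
The plan is to upgrade the quadratic inequality from Lemma \ref{lemma:QuantitativeMaxuNew} to the linear bound \eqref{estreinforced} by exploiting the sharp control on the distribution function established in Lemma \ref{lemma:super-level-new}. The starting point is a Fubini rewriting: since $u^*$ and $\mu$ are mutual inverses as decreasing functions, one has
\begin{equation*}
\int_0^{s^*}\bigl(e^{-s}-u^*(s)\bigr)\diff s \,=\, \int_{t^*}^T\bigl(\log(1/t)-\mu(t)\bigr)\diff t \,+\, \int_T^1\log(1/t)\diff t,
\end{equation*}
where $t^*=e^{-s^*}$. By \eqref{deft*}, the first integrand is non-negative on $[t^*,T]$, so for any $t_1\in[t^*,T]$,
\begin{equation*}
\int_0^{s^*}\bigl(e^{-s}-u^*(s)\bigr)\diff s \,\ge\, \int_{t_1}^T\bigl(\log(1/t)-\mu(t)\bigr)\diff t.
\end{equation*}

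I would then argue by a dichotomy on the size of $1-T$. Fix a universal threshold $c_0>0$: when $1-T\ge c_0$, the conclusion is immediate from Lemma \ref{lemma:QuantitativeMaxuNew}, since $(1-T)^2/2 \le \int_0^{s^*}(e^{-s}-u^*(s))\diff s$ yields $1-T \le (2/c_0)\int_0^{s^*}(e^{-s}-u^*(s))\diff s$. The threshold $c_0$ will be chosen small enough that, in the complementary regime $1-T<c_0$, one has $T\ge \max(T_0,1/2)$, where $T_0,C_0$ are the constants of Lemma \ref{lemma:super-level-new} applied with some small universal $t_0$ (e.g.\ $t_0=1/4$); by Corollary \ref{cor:nondegen}, $t^*\le \tau^*<1$ universally, so $t_0\ge t^*$ can be ensured.

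In this main regime, I would pick $t_1 = T e^{-1/(2C_0)}$, which satisfies $\log(T/t_1)=1/(2C_0)$ and lies in $[t_0,T]$ after possibly enlarging $C_0$ to satisfy $C_0\ge 1$. For $t\in[t_1,T]$, Lemma \ref{lemma:super-level-new} gives
\begin{equation*}
\log(1/t)-\mu(t) \,\ge\, \log(1/T) - C_0(1-T)\log(T/t) \,\ge\, (1-T) - \tfrac{1}{2}(1-T) \,=\, \tfrac{1}{2}(1-T),
\end{equation*}
where I used the elementary bound $-\log T\ge 1-T$ for $T\in(0,1]$. Integrating over $[t_1,T]$, whose length $T(1-e^{-1/(2C_0)})$ is bounded below by a universal positive constant, yields $\int_{t_1}^T(\log(1/t)-\mu(t))\diff t \ge c_1(1-T)$ for an explicit $c_1>0$; combining with the Fubini inequality above proves \eqref{estreinforced} with constant $C=\max(2/c_0,1/c_1)$.

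The main work has already been carried out in Lemma \ref{lemma:super-level-new}: the sharpness of the bound $\mu(t)\le(1+C_0(1-T))\log(T/t)$ is exactly what is needed to obtain a linear, rather than merely quadratic, lower bound on the integral of $e^{-s}-u^*(s)$. The only delicate point in stitching the pieces together is the choice of integration interval: the error term $C_0(1-T)\log(T/t)$ is dominated by the main term $-\log T\ge 1-T$ only when $\log(T/t)$ is of order $1/C_0$, which is why the natural lower endpoint $t_1$ must be placed at controlled logarithmic distance from $T$, rather than at a fixed distance from $1$ such as $1/2$.
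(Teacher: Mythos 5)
Your proof is correct and follows essentially the same route as the paper's: pass to the inverse functions, dispose of the case where $1-T$ is bounded below via the quadratic bound of Lemma \ref{lemma:QuantitativeMaxuNew}, and in the regime $T$ close to $1$ integrate the sharp estimate \eqref{newestmu} over an interval whose left endpoint sits at logarithmic distance of order $1/C_0$ from $T$ (the paper uses a fixed $\tau_1$ close to $1$, you use $t_1=Te^{-1/(2C_0)}$ — a cosmetic difference). The only loose point is your claim that $t_1\in[t^*,T]$ (your justification ``$t_0\ge t^*$ can be ensured'' with $t_0=1/4$ does not follow from Corollary \ref{cor:nondegen}, since $\tau^*$ may be close to $1$), but this hypothesis is not actually needed: by the sign structure \eqref{deft*} the integrand $\log(1/t)-\mu(t)$ is nonpositive on $(0,t^*]$ and nonnegative on $[t^*,T]$, so $\int_{t^*}^T\ge\int_{t_1}^T$ for every $t_1\in(0,T]$.
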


\begin{proof}
Passing to the inverse functions, and recalling that $\mu(t)$, restricted to $(0,T)$, is
the inverse of $u^*(s)$, we have
\begin{equation}
    \label{eq1002}
\int_0^{s^*} \bigl(e^{-s}-u^*(s)\bigr)\diff s
\geq  \int_0^{s^*} \bigl(\min\{T,e^{-s}\}-u^*(s)\bigr)\diff s
=\int_{t^*}^T \bigl(\log\frac 1 t -\mu(t)\bigr)\diff t.
\end{equation}
Observe that, given any universal constant $\tau\in (0,1)$,
in proving \eqref{estreinforced} we may assume
(if convenient) that 
\begin{equation}
\label{eq1004}
    T \geq \tau,  
\end{equation}
because otherwise
\eqref{estreinforced} would
 immediately follow
from
the first inequality in \eqref{estlemma1},
as soon as
$C\geq 2/(1-\tau)$.
In particular,
letting $T_0$ and $C_0$ be the constants provided by Lemma \ref{lemma:super-level-new}
when $t_0=\tau^*$, where $\tau^*$ is the constant obtained in Corollary \ref{cor:nondegen}, we may assume
that $T\geq T_0$, so that \eqref{newestmu}
reads
\begin{equation}
\label{eq1005}    
\mu(t)\leq (1+C_0(1-T))\log\frac T t\quad\forall t\in[\tau^*,T].
\end{equation}
Relying on \eqref{eq:nondegen}, we now use \eqref{eq1005}
to minorize the last integral
in \eqref{eq1002}.
More precisely, 
letting $\tau_1\in [\tau^*,1)$ denote a universal constant to be
chosen later, and further assuming
(in addition to $T\geq T_0$) that \eqref{eq1004} holds also with $\tau=\tau_1$,
from \eqref{eq:nondegen},   
\eqref{eq1005} and \eqref{eq1002} we find
\begin{equation}
    \label{eq1010}
\int_0^{s^*} \bigl(e^{-s}-u^*(s)\bigr)\diff s
\geq  \int_{\tau_1}^T \bigl(\log\frac 1 t -
(1+C_0(1-T))\log\frac T t\bigr)\diff t.
\end{equation}
Using $-\log T\geq 1-T$, for every $t\in (\tau_1,T)$ we have
\[
\log\frac 1 t - (1+C_0(1-T))\log\frac T t=-\log T
- C_0(1-T)\log\frac T t \geq 1-T- C_0(1-T)\log\frac 1{\tau_1},
\]
and choosing now $\tau_1\in[\tau^*,1)$ sufficiently close to $1$ in such a way that
\begin{equation}
    \label{epspos}
\eps_1:=1-C_0 \log\frac 1{\tau_1}>0,
\end{equation}
from \eqref{eq1010} and the subsequent estimate we obtain
\begin{equation}
    \label{eq1021}
\int_0^{s^*} \bigl(e^{-s}-u^*(s)\bigr)\diff s
\geq
\int_{\tau_1}^T (1-T)\left(1-C_0\log\frac 1 {\tau_1}\right)\diff t\geq \eps_1(1-T)(T-\tau_1).
\end{equation}
Finally, choosing a larger number $\tau_2\in (\tau_1,1)$ and further assuming that \eqref{eq1004}
holds also with $\tau=\tau_2$, we obtain
\[
\int_0^{s^*} \bigl(e^{-s}-u^*(s)\bigr)\diff s
\geq
\eps_1(\tau_2-\tau_1)(1-T)
\]
and \eqref{estreinforced} follows, by letting $C^{-1}=\eps_1(\tau_2-\tau_1)$.
\end{proof}

\donotshow{

We use the previous estimates in order to obtain a better version of Lemma \ref{lemma:QuantitativeMaxu} from the previous section. 

\begin{numlemma}\label{lemma:QuantitativeMaxuSharper}
    Let $F \in \mathcal{F}^2$ be such that $\|F\|_{\mathcal{F}^2} = 1.$
    There is an absolute, computable constant $C>0$ such that
    \[ 
    1- \max u \leq
            C \cdot \max\{e^{s_0}-1,1\} \delta_{s_0}(F).
    \] 
\end{numlemma}

\begin{proof}
    Before starting, note that we can and do assume that  $T \ls e^{-s_0}$, since in the complementary regime Lemma \ref{lemma:QuantitativeMaxu} already yields optimal bounds.

    \textsc{Step 1.} With the aid of Lemma \ref{lemma:super-level}, we find a suitable lower bound on $s^*$, as defined just before the proof of Lemma \ref{lemma:QuantitativeMaxu}. 
    
    Observe first that the function $L(t)$ defined in \eqref{eq:horizontal-L} increases as $t$ decreases, since $L'(t) \leq 0$ by \eqref{eqn:DiffInequ}. We now claim that this implies that either $u^*$ equals $e^{-s}$ \emph{everywhere}, or there is a \emph{unique} solution $s^*$ to the equation $u^*(s) = e^{-s}.$  Indeed, suppose $u^*(s) \not\equiv e^{-s}.$ If there are two $s_1^* < s_2^*$ such that $u^*(s_i^*) = e^{-s_i^*}, \, i=1,2,$ then, since $L' \le 0$ almost everywhere, we must have $u^*(s) = e^{-s}$ for each $s \in [s_1^*,s_2^*].$ But then this implies $L'(t) = 0$ for $t \in [e^{-s_2^*},e^{-s_1^*}].$ But the proof of \cite[Lemma~3.4]{NicolaTilli} shows that there is equality in $L'(t) \ge 0$ in a point where $\mu$ is differentiable if, and only if, the set $\{ u > t \}$ is a ball, and $|\nabla u|$ is constant on the boundary $\partial \{ u > t \} = \{ u = t \}.$ But this directly implies that $u(z) = e^{-\pi|z-z_0|^2},$ for some $z_0 \in \C,$ and hence $u^*(s) = e^{-s},$ for each $s \ge 0,$ which contradicts the assumption that they do not coincide everywhere. 

    By Lemma \ref{lemma:super-level} we have:
    \begin{equation}\label{eq:bound-level-for-s*}  
    \mu_F(t) \le  (1+C(1-T)) \cdot \log\left(\frac{T}{t}\right). 
    \end{equation} 
    Set then $t = e^{-s^*}$ in \eqref{eq:bound-level-for-s*} above. As $t = u^*(s^*),$ we obtain 
    \[
    s^* \le (1+C(1-T)) \cdot \log\left(\frac{T}{t}\right) = (1+C(1-T)) \cdot(\log T + s^*).
    \]
    Rearranging, one obtains 
    \begin{equation}
        \label{eq:lowerbounds*}
        C(1-T)\cdot s^* \ge (1+C(1-T))\log\left(\frac{1}{T}\right), 
    \end{equation}
    which implies that $s^* \ge c_0$ for some $c_0 > 0$ absolute and computable. 

    \textsc{Step 2.}  
    The purpose of this step is to use the bound from Lemma~\ref{lemma:super-level} to find a comparison function for $e^{-s}$ that can substitute $u^*$ and improves the control on $1-\max u$ found in Lemma~\ref{lemma:QuantitativeMaxu}.

    Notice that said estimate reads $s \leq (1+C(1-T)) \log ({T}/{u^*(s)})$, which may be rewritten as
    \[ u^*(s) \leq T e^{-\frac{s}{1+C(1-T)}} . \]
    Define the point $\tilde{s}$ such that $T e^{-\frac{\tilde{s}}{1+C(1-T)}} = e^{-\tilde{s}};$
    that is, 
    \[
    \tilde{s} = \frac{1+C(1-T)}{C(1-T)}\log\left(\frac{1}{T}\right).
    \]
    Observe in passing that, from \eqref{eq:lowerbounds*}, we have
    $$s^* \geq c_1 \geq \tilde{s} \geq c_0$$ whenever $T$ satisfies the conditions of Lemma \ref{lemma:super-level}.
    From the definition of $\tilde s$ we can compute
    \begin{align}\label{eq:lower-bound-integral}
    \begin{split}
    \int_0^{\tilde{s}} \left(e^{-s} - T \cdot e^{-\frac{s}{1+C(1-T)}}\right) \diff s 
    & = (1-T)(1+ C(e^{-\tilde{s}}-T)) \\
     & = (1-T)(1 + C \cdot T (e^{\frac{\log T}{C(1-T)}} - 1)). 
    \end{split}
    \end{align}
    In order to estimate the right-hand side, we analyze the function 
    \begin{equation}\label{eq:final-analysis-function} 
    \mathcal{R}(C,T) = C \cdot T (1-e^{\frac{\log T}{C(1-T)}} ).
    \end{equation}
    Notice that, for fixed $C>0,$ 
    $$\lim_{T\to 1} \mathcal{R}(C,T)= C(1-e^{-1/C})<1,\qquad T\mapsto \mathcal{R}(C,T) \text{ is decreasing}.$$
     Since $C$ is computable, absolute and finite, if we suppose that $1-T$ is small enough (depending only on $C$ from Lemma \ref{lemma:super-level}) then $\mathcal{R}(C,T) < 1-\rho_0,$ where $\rho_0 > 0$ is some absolute and computable constant. Thus, by \eqref{eq:lower-bound-integral},
    \begin{align}\label{eq:lower-bound-integral-tight} 
    \begin{split}
    \int_0^{\tilde s} \left(e^{-s} - T \cdot e^{-\frac{s}{1+C(1-T)}}\right)  \diff s
    & \ge (1-T) \left(1 - T \cdot \mathcal{R}(C,T)\right)\\
    & \ge (1-T)\left( (1-T) + \rho_0 T \right) > \frac{\rho_0}{2}(1-T).
    \end{split}
    \end{align}
    
    \textsc{Step 3}. To conclude the desired estimate, we divide the analysis depending on the relative positions between $s^*, \tilde s, s_0$; recall that $s^*\geq \tilde s.$ Before beginning the case analysis, recall also from the proof of Lemma \ref{lemma:QuantitativeMaxu}, and \eqref{eqn:InLemmaMaxuProof1} in particular, that the deficit $\delta_0$ satisfies the relation
    \begin{equation}
        \label{eq:auxdefdelta0}
        \int_0^{s_0} \left( e^{-s}-u^*(s) \right) \diff s = \delta_0 (1-e^{-s_0}).
    \end{equation}
    
    \textit{Case 1:} $s_0 \geq s^*\geq \tilde s$. 
Since $\mu_F(u^*(s))=s$, \eqref{eq:bound-level-for-s*} implies directly that $u^*(s)  \le T \cdot e^{-\frac{s}{1+C(1-T)}}$ and hence, by \eqref{eq:lower-bound-integral-tight},
    \begin{align*}\label{eq:lower-bound-integral}
    \begin{split} 
    \int_0^{s^*} \left(e^{-s} - u^*(s)\right) \diff s 
     \ge \int_0^{\tilde{s}} \left(e^{-s} - T \cdot e^{-\frac{s}{1+C(1-T)}}\right) \diff s
     \geq \frac {\rho_0}{2}  (1-T),
    \end{split}
    \end{align*}
and the conclusion follows from \eqref{eq:nonsharplemmaaux}, since
    $$\delta_0 (1-e^{-s_0}) e^{s_0-s^*} \geq \int_0^{s^*} \left(e^{-s} - u^*(s)\right) \diff s.$$

    \textit{Case 2: $s^*\geq s_0 \geq \tilde{s}$.} This case follows directly from 
    \eqref{eq:lower-bound-integral-tight} and \eqref{eq:auxdefdelta0}, since
    $$\int_0^{s_0} \left(e^{-s} - u^*(s)\right) \diff s \geq
     \int_0^{\tilde{s}} \left(e^{-s} - T \cdot e^{-\frac{s}{1+C(1-T)}}\right) \diff s
     \geq \frac {\rho_0}{2}  (1-T).$$
    
    
    \textit{Case 3: $ s^*\geq \tilde{s}\geq s_0$.} In this case, notice that if $s\in [0,\tilde s]$, then
    \[
    \partial_s(e^{-s} - T e^{-\frac{s}{1+C(1-T)}}) = -e^{-s} + \frac{T}{1+C(1-T)}e^{-\frac{s}{1+C(1-T)}} < 0.
    \]
    Hence, again by \eqref{eq:lower-bound-integral-tight},
    \begin{align*}\label{eq:final-bound-between-rearrangements} 
    \begin{split}
    (e^{s_0} - 1)\delta_0 
    & \geq \int_0^{s_0} \left(e^{-s} - u^*(s)\right) \diff s \\
    &\ge \frac{s_0}{\tilde{s}} \int_0^{\tilde{s}} \left(e^{-s} - T e^{-\frac{s}{1+C(1-T)}} \right) \diff s  \geq s_0 \frac{\rho_0}{c_1} (1-T).
     \end{split}
    \end{align*} 
     This finishes the proof, since $s_0 \mapsto \frac{e^{s_0} - 1}{s_0}$ is bounded from above and below by positive, absolute constants.  
\end{proof} 

} 

The final ingredient we need  is a well-known lemma, whose statement and proof are well-known in the theory of Reproducing Kernel Hilbert spaces. For completeness, we provide its proof here. 

\begin{numlemma}\label{lemma:kern}
 If $F\in\Fock$ and $\Vert F\Vert_{\Fock}=1$, then
 \begin{equation}
     \label{eq:kern}
     \min_{\substack{z_0\in\C \\ |c|=1}}
     \Vert F-cF_{z_0}\Vert_{\Fock}^2=2\left( 1-\sqrt T \,\right)
     \leq 2(1-T).
 \end{equation}
\end{numlemma}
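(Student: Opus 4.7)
The plan is to exploit the reproducing-kernel structure of $\Fock$ to reduce the left-hand side of \eqref{eq:kern} to an explicit function of $F(z_0)$, and then optimize in $z_0$ using the definition of $T$.

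First I would record two elementary facts about the functions $F_{z_0}$ defined in \eqref{eqn:DefnFz0}. Since the reproducing kernel is $K_w(z)=e^{\pi z\bar w}=e^{\pi|w|^2/2}F_w(z)$, we have on the one hand $\|F_{z_0}\|_{\Fock}^2=e^{-\pi|z_0|^2}K_{z_0}(z_0)=1$, so each $F_{z_0}$ is a unit vector; and on the other hand
\[
\langle F,F_{z_0}\rangle_{\Fock}=e^{-\pi|z_0|^2/2}\langle F,K_{z_0}\rangle_{\Fock}=e^{-\pi|z_0|^2/2}F(z_0).
\]
Using $\|F\|_{\Fock}=1$ and $|c|=1$, the Hilbert-space expansion gives
\[
\|F-cF_{z_0}\|_{\Fock}^2 = 2-2\Re\bigl(\bar c\,\langle F,F_{z_0}\rangle_{\Fock}\bigr).
\]

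Next I would minimize in $c$ and then in $z_0$. For fixed $z_0$, the choice of unimodular $c$ maximizing $\Re(\bar c\langle F,F_{z_0}\rangle_{\Fock})$ is $c=\langle F,F_{z_0}\rangle_{\Fock}/|\langle F,F_{z_0}\rangle_{\Fock}|$ (or any unimodular $c$ when the inner product vanishes), giving
\[
\min_{|c|=1}\|F-cF_{z_0}\|_{\Fock}^2 = 2-2|\langle F,F_{z_0}\rangle_{\Fock}| = 2-2\sqrt{u(z_0)},
\]
by the formula for $\langle F,F_{z_0}\rangle_{\Fock}$ above and the definition \eqref{eqn:Defnu} of $u$. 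Minimizing over $z_0\in\C$ is now immediate: since $0\le u(z_0)\le T$ with the supremum $T$ attained by definition \eqref{def:T}, the minimum equals $2(1-\sqrt T)$, proving the equality in \eqref{eq:kern}.

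Finally, the inequality $2(1-\sqrt T)\le 2(1-T)$ follows from $T\in[0,1]$ (a consequence of the normalization $\|F\|_{\Fock}=1$ and the pointwise bound \eqref{eq:repkernel}), since $T\le\sqrt T$ on this range. No step is genuinely hard: the only conceptual point is recognizing that $cF_{z_0}$ (with $|c|=1$) are precisely the unit vectors obtained by unimodular rescaling of the normalized reproducing kernels, so the extremal problem reduces to maximizing $|\langle F,K_{z_0}\rangle_{\Fock}|\cdot\|K_{z_0}\|^{-1}$, which is exactly $\sqrt{u(z_0)}$.
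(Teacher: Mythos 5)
Your proof is correct and follows essentially the same route as the paper's: expand $\|F-cF_{z_0}\|_{\Fock}^2$ using $\|F\|_{\Fock}=\|F_{z_0}\|_{\Fock}=1$, identify $\langle F,F_{z_0}\rangle_{\Fock}=F(z_0)e^{-\pi|z_0|^2/2}$ via the reproducing kernel, optimize the unimodular $c$ to get $2-2\sqrt{u(z_0)}$, and minimize over $z_0$ using the definition of $T$. The final inequality from $T\in[0,1]$ is also exactly as in the paper.
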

\begin{proof} 
    Since $\Vert F_{z_0}\Vert_{\Fock}=1$ for every $z_0\in\C$,
    for any $c$ with $|c|=1$ we have
\begin{equation}\label{eq:Fock-Distance-reduce}
\Vert F-cF_{z_0}\Vert_{\Fock}^2 =
2- 2 \tRe \inner{\overline{c} F,  F_{z_0}}_{\mc F^2},
\end{equation}
and, since $\Fock(\C)$ is a reproducing kernel Hilbert space with kernel $K_{w}(z) = e^{\frac{\pi}{2} \abs{w}^2} F_{w}(z)$, we have $\inner{F,F_{z_0}}_{\mc F^2} = F(z_0) e^{- \frac{\pi}{2} \abs{z_0}^2}$.
Therefore, 
\[
\Vert F-cF_{z_0}\Vert_{\Fock}^2 =
2- 2 \tRe \overline{c} F(z_0) e^{- \frac{\pi}{2} \abs{z_0}^2},
\]
and choosing the unimodular $c$ that minimizes the last term, 
we obtain for every $z_0$
\[
\min_{|c|=1} \Vert F-cF_{z_0}\Vert_{\Fock}^2 =
2- 2  |F(z_0)| e^{- \frac{\pi}{2} \abs{z_0}^2}
=2-2 \sqrt{u(z_0)}.
\]
The equality in \eqref{eq:kern} then follows by minimizing over $z_0\in\C$, while the inequality is a direct consequence thereof.
\end{proof}

We are now ready to prove \eqref{eqn:StabilityFunctionFock}.

\begin{proof}[Proof of \eqref{eqn:StabilityFunctionFock}] 
By homogeneity, in \eqref{eqn:StabilityFunctionFock} one
can assume that $F\in\Fock$ and $\Vert F\Vert_{\Fock}=1$.
Then, 
given $\Omega$ as in Theorem \ref{thm:StabilityFockSpace}
and letting $s_0=|\Omega|$,
on combining \eqref{eq:kern} with \eqref{estreinforced}
and the second inequality in \eqref{estlemma1}, one finds
\begin{equation}
    \label{eq1008}
\min_{\substack{z_0\in\C \\ |c|=1}}
     \Vert F-cF_{z_0}\Vert_{\Fock}^2
     \leq C\delta_{s_0} e^{s_0},
\end{equation}
where $\delta_{s_0}$ is the deficit defined in \eqref{defdeltas0},
relative to the super-level set $\{u>u^*(s_0)\}$. But
\eqref{eqn:DefnI} (rewritten with $s=s_0$) reveals that
$\delta_{s_0}\leq \delta(F;\Omega)$,
where $\delta(F;\Omega)$ is the deficit relative to $\Omega$ as defined in \eqref{defdeltaFO}. Then 
\eqref{eqn:StabilityFunctionFock} follows
from \eqref{eq1008}, taking square roots.
\end{proof}

 \donotshow{

In following the structure of the original proof, a quantitative version of \cite[Proposition~2.1]{NicolaTilli} is now needed. Luckily, the structure of $\Fock(\C)$ as a reproducing kernel Hilbert space \cite{Grochenig}, with kernel $K_{\om}(z) = e^{\frac{\pi}{2} \abs{\om}^2} F_{\om}(z)$, provides an easy proof. Define
\[ \Delta_F(z) = \frac{\Fnorm{F}^2 - \abs{F(z)}^2 e^{- \pi \abs{z}^2}}{\Fnorm{F}^2} . \]
The next proposition allows us to conclude a non-sharp stability result from the previous lemma. 

\begin{numprop}\label{prop:QuantitativeFock}
    Let $F \in \Fock(\C)$. Then for every $z_0 \in \C$ there exists a constant $c \in \C$ with $\abs{c} = \Fnorm{F}$ such that
    \[ \frac{\Fnorm{F - c F_{z_0}}}{\Fnorm{F}} \leq \sqrt{2} \Delta_F(z_0)^{1/2} .\]
\end{numprop}
\begin{proof}
    We may assume that $\Fnorm{F} = 1$ and compute
    \begin{equation}\label{eqn:InProofQuantitativeProp}
    \begin{split}
        \Fnorm{F-cF_{z_0}}^2 &= \Fnorm{F}^2 + \abs{c}^2 \Fnorm{F_{z_0}}^2 - 2 \tRe \inner{F, c F_{z_0}}_{\mc F^2} \\
        &= 1 + \abs{c}^2 -2 \tRe \inner{F,cF_{z_0}}_{\mc F^2}  .
    \end{split}
    \end{equation}
    Since $\Fock(\C)$ is a reproducing kernel Hilbert space with kernel $K_{w}(z) = e^{\frac{\pi}{2} \abs{w}^2} F_{w}(z)$, we have $\inner{F,F_{z_0}}_{\mc F^2} = F(z_0) e^{- \frac{\pi}{2} \abs{z_0}^2}$. By choosing $c = F(z_0)/\abs{F(z_0)}$ we find that
    \[ \R \ni \inner{F , cF_{z_0}}_{\mc F^2} = \abs{F(z_0)} e^{- \frac \pi 2 \abs{z_0}}\geq 
    \abs{F(z_0)}^2 e^{- \pi \abs{z_0}^2} = 1- \Delta_F (z_0) ,\]
    since $0\leq 1-\Delta_F(z_0)\leq 1.$
    Plugging this in \eqref{eqn:InProofQuantitativeProp} gives
    \[ \Fnorm{F - cF_{z_0}}^2 = 2-2 \inner{F,cF_{z_0}}_{\mc F^2} \leq 2 \Delta_F (z_0) \]
    and the conclusion follows by taking the square-root.
\end{proof}

From Proposition \ref{prop:QuantitativeFock} above, we conclude directly, by using Lemma \ref{lemma:QuantitativeMaxu}, that, if $F$ satisfies the conditions of Theorem \ref{thm:StabilityFockSpace}, then there are $z_0 \in \C$ and $c \in \C, |c|=1$ with 
\[
\|F-c F_{z_0}\|_{\mathcal{F}^2} \le 8 (e^{|\Omega|} - 1)^{1/4} \delta_{|\Omega|}(F)^{1/4},
\]
whenever $\delta_{|\Omega|}(F) < 1/2.$ This readily proves a weaker version of Theorem \ref{thm:StabilityFockSpace}, with the exponent $1/2$ replaced by $1/4.$

}  

\section{The geometry of super-level sets}\label{subsec:GeometryOfSuperLevelSets}


In this section we study, for a fixed number $t>0$, some basic geometric properties of the super-level sets $\{z\in \C:u_F(z)>t\}$. In the proof of Lemma \ref{lemma:super-level-new} we saw that the function $g_\theta(r,\sigma)$, defined in \eqref{defg}, is monotone increasing in $r$ and, in particular, its sub-level sets are star-shaped. We will soon see that, by doing a finer analysis, we can prove a stronger version of this result, namely Proposition \ref{prop:levelsets}.

We begin by discussing some useful normalizations that we will use throughout the next sections. Let us first consider the quantity
$$\rho(F) : =\min_{z_0 \in \C, c \in \C} \frac{ \|F - c \cdot F_{z_0}\|_{\mc F^2}}{\|F\|_{\mc F^2}}.$$
Without loss of generality, we will assume that 
\[
\rho(F) =\min_{c\in \C} \frac{\| F - c \|_{\mc F^2}}{\|F\|_{\mc F^2}},
\]
that is, the closest function to $F$ in $\{c F_{z_0}\}_{z_0 \in \C, c\in \C}$ is a multiple of the constant function $F_0\equiv 1$. This follows by \eqref{eq:Fock-Distance-reduce}, since $\rho(F)^2 = \min_{z_0 \in \C} \frac{\|F\|_{\mc F^2}^2 - |F(z_0)|^2 e^{-\pi|z_0|^2}}{\|F\|_{\mc F^2}^2}.$
Moreover, we can also assume that
$$F(0) = 1.$$
Now, we note that, by the previous assumptions, we have 
\begin{align}
\label{eq:max}
\begin{split}
\|F - c \cdot F_{z_0}\|_{\mc F^2}^2 & = \|F\|_{\mc F^2}^2 + |c|^2 - 2 \Re(\overline{c} F(z_0))e^{-\pi|z_0|^2/2} \cr 
& \ge \|F\|_{\mc F^2}^2 + |c|^2 - 2|c||F(z_0)|e^{-\pi|z_0|^2/2} \ge \|F\|_{\mc F^2}^2 - \max_{z_0 \in \C} |F(z_0)|^2e^{-\pi|z_0|^2}.
\end{split}
\end{align} 
This shows that $\rho(F)$ is attained at $z_0 = 0$ if and only if $0$ is a maximum for $u_F$; hence, our normalization also implies
$$F'(0)=0.$$

Observe that $\rho$ differs slightly from the distance to the extremizing class used in \eqref{eqn:StabilityFunctionFock}, due to the condition on $c$. However, it is equivalent to this distance: indeed, by Lemma \ref{lemma:kern}, we have
\begin{equation}
    \label{eq:equivalentqtties}
    \rho(F) \leq \min_{z_0 \in \C, |c|=\|F\|_{\Fock(\C)}} \frac{ \|F - c \cdot F_{z_0}\|_{\mc F^2}}{\|F\|_{\mc F^2}} = \min_{|c|=\|F\|_{\Fock(\C)}} \frac{ \|F - c \|_{\mc F^2}}{\|F\|_{\mc F^2}} \leq \sqrt{2} \rho(F).
\end{equation}
Lemma \ref{lemma:kern} also shows that
\begin{equation}
    \label{eq:controllednorm1}
    2\frac{\|F\|_{\Fock}-1}{\|F\|_{\Fock}} =2\left(1-\frac{|F(0)|}{\|F\|_{\Fock}} \right)= \min_{|c|=\|F\|_{\Fock(\C)}} \frac{ \|F - c \|_{\mc F^2}^2}{\|F\|_{\mc F^2}^2}.
\end{equation}
Note that, by our normalizations, we have $F(0)=1\leq \|F\|_{\mc F^2}$. One the other hand, \eqref{eq:controllednorm1}, Lemma \ref{lemma:kern}, and \eqref{estlemma1} show that 
\begin{equation}
    \label{eq:controllednorm2}
    1\leq \|F\|_{\Fock}\leq \frac{2}{2-C(e^{|\Omega|}\delta(F;\Omega))^{1/4}}\leq 2,
\end{equation}
provided that the deficit is sufficiently small. 

In addition to $\rho(F)$, it will be convenient to consider the slightly different quantity
$$\e(F):= \|F-1\|_{\mc F^2}.$$
This allows us to write
\begin{equation}
    \label{eq:decompFG}
    F=1+\e G, \qquad \text{where } \|G\|_{\mc F^2} = 1 \text{ and } \e = \e(F),
\end{equation}
and then the above assumptions are  translated as 
\begin{equation}
    \label{eq:normG}
    \quad \langle G, 1\rangle_{\mc F^2} = \langle G, z\rangle_{\mc F^2} = 0.
\end{equation}
Note that, by \eqref{eq:controllednorm2}, we can assume that $\e$ is sufficiently small: indeed, if $e^{|\Omega|}\delta(F;\Omega)$ is sufficiently small, then
\begin{equation}
    \label{eq:estimateeps}
    \frac{\e(F)}{2} \leq \frac{\e(F)}{\|F\|_{\Fock}}=  \rho(F) \leq\min_{|c| = \|F\|_{\mc F^2}}\frac{\| F - c \|_{\mc F^2}}{\|F\|_{\Fock}}\leq C \Big(e^{|\Omega|}\delta(F;\Omega)\Big)^{1/4}.
\end{equation}

We are now ready to begin the main part of this section. We begin with a key technical lemma which shows that, above a certain threshold, all level sets of the function $u_F$ behave like those of the standard Gaussian, as long as $\e(F)$ is sufficiently small. 

\newcommand{\mcg}{\mathcal{G}}

\begin{numlemma}\label{lemma:Level-Set-Regularity} Let $F \in \mathcal{F}^2$ satisfy the normalizations in the beginning of this section. There are constants $\e_0, c_1 >0$ with the following property: if $\e(F)\leq \e_0$, then for any $\alpha \in [0,2\pi],$ the function 
\[
\mcg_{\alpha}(r) \coloneqq u_F(r e^{i\alpha}) = |F(r e^{i\alpha})|^2 e^{-\pi r^2}
\]
is strictly decreasing on the interval $\left[0,c_1 \sqrt{\log(1/\e(F))}\right].$
\end{numlemma}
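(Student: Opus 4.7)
The plan is to exploit the decomposition $F = 1 + \e G$ afforded by the normalizations, where $\Fnorm{G} = 1$, $\e = \e(F)$, and
\[
\langle G, 1\rangle_{\Fock} = \langle G, z\rangle_{\Fock} = 0.
\]
Expanding $G$ in the orthonormal basis of monomials \eqref{monomials}, the two orthogonality conditions force the series to start at degree two:
\[
G(z) = \sum_{n\geq 2} b_n \frac{\pi^{n/2} z^n}{\sqrt{n!}}, \qquad \sum_{n\geq 2} |b_n|^2 = 1.
\]
Exactly as in Step I of the proof of Lemma \ref{lemma:super-level-new}, Cauchy--Schwarz and the inequality $e^x - 1 - x \leq \tfrac{x^2}{2}e^x$ then yield
\[
|G(z)| \leq \tfrac{\pi}{\sqrt{2}}\, |z|^2\, e^{\pi|z|^2/2}, \qquad |G'(z)| \leq \sqrt{2}\,\pi\, |z|\, e^{\pi|z|^2/2}.
\]

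Next I would differentiate $\mcg_\alpha$ via its logarithm: writing $z = r e^{i\alpha}$, a direct computation gives
\[
\frac{\mcg_\alpha'(r)}{\mcg_\alpha(r)} = 2\Re\!\left(\frac{\e\, e^{i\alpha}\, G'(z)}{1 + \e\, G(z)}\right) - 2\pi r,
\]
so the goal becomes to show that the first term is strictly smaller than $2\pi r$ on the target range. Fixing any $c_1 < \sqrt{2/\pi}$, for $r \leq c_1\sqrt{\log(1/\e)}$ one has $e^{\pi r^2/2} \leq \e^{-\pi c_1^2/2}$ with exponent $\pi c_1^2/2 < 1$. Feeding this into the bound on $|G|$ gives $\e\, |G(z)| \leq \tfrac{\pi c_1^2}{\sqrt 2}\log(1/\e)\,\e^{1 - \pi c_1^2/2}$, which is at most $1/2$ as soon as $\e \leq \e_0$ with $\e_0 = \e_0(c_1)$ small enough; in particular $|1 + \e\, G(z)| \geq 1/2$. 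The same substitution in the bound on $|G'|$ then yields
\[
\left|\frac{\e\, e^{i\alpha}\, G'(z)}{1 + \e\, G(z)}\right| \leq 2\sqrt 2\,\pi r\,\e^{1-\pi c_1^2/2} < \pi r,
\]
provided $\e_0$ is further reduced in terms of $c_1$. This proves $\mcg_\alpha'(r) < 0$ for $r \in (0, c_1\sqrt{\log(1/\e)}]$, and since $\mcg_\alpha'(0) = 0$ (a direct consequence of $F'(0) = 0$), the strict monotonicity on the whole closed interval follows.

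The main point of the argument is the balance in the exponents: the favourable factor $\e$ outside the absolute value must beat the bad Gaussian factor $e^{\pi r^2/2}$ coming from Cauchy--Schwarz, and this is exactly what fixes the scaling $r \lesssim \sqrt{\log(1/\e)}$ together with the constraint $c_1 < \sqrt{2/\pi}$. Everything else -- non-vanishing of $1 + \e\, G$, dominance of $2\pi r$ over the perturbative term, and the behaviour at $r = 0$ -- reduces to elementary manipulations once $\e_0$ is chosen sufficiently small in terms of $c_1$, so I do not expect serious technical obstacles beyond this exponent counting. Notice in particular that, contrary to Lemma \ref{lemma:super-level-new}, no cancellation via the harmonic mean-value property of $\Re G$ is needed here, because we only seek a one-sided monotonicity statement and can afford to bound by absolute values throughout.
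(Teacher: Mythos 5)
Your argument is correct, and it takes a genuinely different route from the paper precisely where the lemma is delicate. The paper also starts from $F=1+\e G$, but it bounds $G'$ and $G''$ via the Cauchy integral formula together with the crude pointwise bound $|G(z)|\le e^{\pi|z|^2/2}$; those bounds do not see the second-order vanishing of $G$ at the origin, so the resulting estimate on the perturbative term does not vanish as $r\to 0$ while the main term $-2\pi r\, e^{-\pi r^2}$ does. The paper is therefore forced into a two-case analysis: for $r>1/10$ a direct first-derivative estimate, and for $0<r\le 1/10$ a separate computation showing $\mcg_0''<0$, combined with $\mcg_0'(0)=0$. You avoid this entirely by (i) passing to the logarithmic derivative, which isolates the competition between $2\Re\bigl(\e e^{i\alpha}G'/(1+\e G)\bigr)$ and $2\pi r$, and (ii) using the Cauchy--Schwarz/power-series bounds (exactly as in Step I--II of Lemma \ref{lemma:super-level-new}) which, thanks to $\langle G,1\rangle_{\Fock}=\langle G,z\rangle_{\Fock}=0$, carry the extra factor $|z|$ in the bound for $G'$. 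The perturbation then scales like $r$ uniformly down to $r=0$, and one uniform inequality $2\sqrt2\,\pi r\,\e^{1-\pi c_1^2/2}<\pi r$ settles the whole interval. The exponent bookkeeping ($c_1<\sqrt{2/\pi}$, $\e\,|G|\le \tfrac12$ so the denominator is harmless and $\mcg_\alpha>0$) is all in order, and strict negativity of $\mcg_\alpha'$ on the open interval suffices for strict monotonicity on the closed one. In short: your proof is shorter and more uniform; what the paper's version buys is independence from the basis expansion of $G$ (it only uses the reproducing-kernel sup bound), but since the paper already exploits the power series of the remainder heavily in Lemma \ref{lemma:super-level-new}, nothing is lost by your approach here. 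One cosmetic caveat: downstream results (e.g.\ Lemma \ref{lemma:star-shaped-levels}) additionally assume $\pi c_1^2\le \tfrac12$, so in context you would simply fix $c_1$ at the paper's smaller value rather than the full range $c_1<\sqrt{2/\pi}$ your argument allows.
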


\begin{proof} Without loss of generality we will take $\alpha=0$. In order to prove the desired assertion, we shall divided our analysis in two cases. 

\textsc{Case 1:} $1/10 < r < c_1 \sqrt{\log(1/\e)}.$ We differentiate the function $\mcg_0$ in terms of $r,$ which gives us 
\begin{align} 
\begin{split}
\mcg_0'(r) & = -2\pi r |F(r)|^2 e^{-\pi r^2} + 2\Re(F'(r) \overline{F(r)} ) e^{-\pi r^2} \cr 
        & = -2 \pi r(1 + 2 \e \Re(G(r)) + \e^2 |G(r)|^2) e^{-\pi r^2} + 2 \e \Re(G'(r)\overline{(1+\e G(r))}) e^{-\pi r^2},  
        \label{eq:g0'}
        \end{split}
\end{align} 
where in the last line we used \eqref{eq:decompFG}.
In order to bound the last term we note that, by the Cauchy integral formula, 
    \begin{equation}\label{eq:bound-derivative-G}
    |G'(w)| \le \frac{2}{|w|} \sup_{|z| = 2|w|} |G(z)| \leq 2 \frac{e^{2\pi |w|^2}}{|w|},
    \end{equation} 
    since $|G(z)|\leq e^{\pi |z|^2/2}$, and in addition  
    $$|1+\e G(r)|\leq \sqrt{2}e^{\frac \pi 2 r^2},$$ since $\|1+\e G\|_{\mc F^2}\le 2$.
Thus, as the $\e^2$-term in \eqref{eq:g0'} is negative, and $|\Re G(r)|e^{-\pi r^2}\leq e^{-\pi r^2}\leq 1$ as $r>1/10$, we can estimate
\[
\mcg_0'(r) \leq -2 \pi r(e^{-\pi r^2} - 2 \e) + \frac{8 \e}{r} e^{\frac{3\pi}{2}r^2}. 
\]
Since $ r < c_1 \sqrt{\log(1/\e)},$ we obtain that $e^{\pi r^2} \le e^{\pi c_1^2 \log(1/\e)} = \e^{-\pi c_1^2}$. For all $\e$ small enough, we have $e^{-\pi r^2}- 2 \e \geq \e^{\pi c_1^2}- 2 \e > 0$ provided that $\pi c_1^2 < 1$. Since also $1/10\leq r$, we have
\[
\mcg_0'(r) \leq -2 \pi r(\e^{\pi c_1^2}-2 \e) +  80 \e^{1-\frac{3\pi}{2} c_1^2}. 
\]
Hence, as long as 
\begin{equation}
    \label{eq:c1small}
    \frac{5\pi}{2} c_1^2<1,
\end{equation} 
the term $-2 \pi r \e^{\pi c_1^2}$ dominates over the others.  Thus, for sufficiently small $\e$,  we have $\mcg_0'(r) < 0.$\medskip 

\textsc{Case 2:} $0 < r \leq 1/10.$ Notice that this case is more subtle, as $\mcg_0'(r) \to 0$ when $r \to 0.$ We will show that the \emph{second derivative} of $\mcg_0$ is strictly negative for $r \in (0,1/10)$: thus the first derivative decreases in $(0,1)$ and, as $\mcg_0'(0) = 0$, it follows that $\mcg_0'(r) < 0$ in this interval, proving the claim. 

Starting from \eqref{eq:g0'}, we compute: 
\begin{align*}
\mcg_0''(r)  = & -2 \pi (1 + 2 \e \Re(G(r)) + \e^2 |G(r)|^2) e^{-\pi r^2} \cr 
        & - 4 \pi \e r (\Re(G'(r))+2 \e \Re(G'(r)\overline{G(r)}))e^{-\pi r^2} \cr 
        &+ 4\pi^2 r^2 (1 + 2 \e \Re(G(r)) + \e^2 |G(r)|^2) e^{-\pi r^2}  \cr 
        & + 2 \e \Re(G''(r)\overline{(1+\e G(r))}) e^{-\pi r^2} + 2\e^2 |G'(r)|^2e^{-\pi r^2}  \cr 
        & - 2 \pi r \e \Re(G'(r)\overline{(1+\e G(r))}) e^{-\pi r^2}. 
\end{align*}
We now follow the same strategy as in the first case. For $|w|\leq 1$, we find the estimates
\begin{align}\label{eq:bound-G-der-local} 
    |G'(w)|\leq  \frac{1}{2\pi} \max_{|z|=2} |G(z)|\leq 2 e^{2\pi},\qquad
    |G''(w)| \leq  \frac 1 \pi \max_{|z|=2} |G(z)| \leq 4 e^{2\pi}.
\end{align}
Therefore we have
\begin{align*} 
\mcg_0''(r) & \leq -2\pi(1-2\pi r^2)e^{-\pi r^2} + \e \mathfrak{h}(\e),
\end{align*} 
where $\mathfrak{h} \colon \R\to [0,\infty)$ is a smooth function. Since $r<1/10$, we have $1-2\pi r^2>0$ and so the first term above is negative. Hence, if $\e$ is sufficiently small, it holds that $\mcg_0''(r) < 0$ for all $r \in (0,1/10)$, and the conclusion follows. 
\end{proof}

In spite of its simple nature, we can derive several important conclusions from  Lemma \ref{lemma:Level-Set-Regularity}, such as the following result.

\begin{numlemma}\label{lemma:star-shaped-levels}
Under the same hypotheses of Lemma \ref{lemma:Level-Set-Regularity}, one may find a small constant $c_2> 0$ such that, for $t > \e(F)^{c_2},$ the level sets 
$$\{z \in \C \colon u_F(z) > t \}$$
are all star-shaped with respect to the origin. Moreover, for such $t,$ the boundary $\partial\{ u_F > t\} = \{ u_F = t \}$ is a smooth, closed curve. 
\end{numlemma}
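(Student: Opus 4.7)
The plan is to bootstrap Lemma \ref{lemma:Level-Set-Regularity} by pairing its radial monotonicity with an outer decay estimate for $u_F$. Lemma \ref{lemma:Level-Set-Regularity} already gives strict monotonicity of $\mcg_\alpha$ on the inner interval $[0, c_1\sqrt{\log(1/\e)}]$ for every angle $\alpha$; what remains is to verify that for $t > \e^{c_2}$, with $c_2$ chosen appropriately, the entire super-level set $\{u_F > t\}$ is contained in the open disk $\{|z| < c_1\sqrt{\log(1/\e)}\}$. Once both ingredients are in place, on each ray from the origin the equation $u_F = t$ has a unique root $r_\alpha(t)$, and the intersection of $\{u_F > t\}$ with that ray is the interval $[0, r_\alpha(t))$, giving star-shapedness with respect to the origin.

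For the outer estimate, I would use the decomposition $F = 1 + \e G$ with $\|G\|_{\Fock}=1$ from \eqref{eq:decompFG}, together with the sharp reproducing kernel bound $|G(z)|\, e^{-\pi|z|^2/2}\leq 1$. Taking square roots in $u_F(z) = |F(z)|^2 e^{-\pi|z|^2}$ and applying the triangle inequality gives
\[
u_F(z)^{1/2} \leq e^{-\pi|z|^2/2} + \e\, |G(z)| e^{-\pi|z|^2/2} \leq e^{-\pi|z|^2/2} + \e.
\]
For $|z| \geq c_1\sqrt{\log(1/\e)}$ this yields $u_F(z) \leq (\e^{\pi c_1^2/2} + \e)^2 \leq 4\, \e^{\pi c_1^2}$ for all sufficiently small $\e$, since $\pi c_1^2 < 1$ by \eqref{eq:c1small}. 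Picking any constant $c_2$ with $0 < c_2 < \pi c_1^2$, the hypothesis $t > \e^{c_2}$ forces $t > 4\e^{\pi c_1^2}$ uniformly for small $\e$, so no point in $\{u_F > t\}$ can have $|z| \geq c_1\sqrt{\log(1/\e)}$.

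With star-shapedness established, smoothness of $\partial\{u_F > t\}$ follows from differentiating in the radial direction: along the ray at angle $\alpha$ one has $\partial_r u_F(r e^{i\alpha}) = \mcg_\alpha'(r)$, which is strictly negative at $r = r_\alpha(t)$ by Lemma \ref{lemma:Level-Set-Regularity}. Consequently $\nabla u_F$ does not vanish on $\{u_F = t\}$, and since $u_F$ is real-analytic the implicit function theorem promotes $\alpha \mapsto r_\alpha(t)$ to a real-analytic $2\pi$-periodic function; the boundary is then parametrized as the smooth closed curve $\alpha \mapsto r_\alpha(t) e^{i\alpha}$.

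The only delicate point in the argument is the calibration of $c_2$ against $c_1$ from Lemma \ref{lemma:Level-Set-Regularity}, since the outer decay must beat the threshold $t = \e^{c_2}$ uniformly for all small $\e$; the computation above shows that any $c_2 < \pi c_1^2$ (and hence any $c_2$ strictly less than $2/5$, by \eqref{eq:c1small}) will do. All the real analytical work has already been invested in Lemma \ref{lemma:Level-Set-Regularity}; the remainder is bookkeeping.
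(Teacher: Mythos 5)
Your argument is correct and follows essentially the same route as the paper's: an outer decay bound $u_F(z)<4\e^{\pi c_1^2}$ for $|z|\ge c_1\sqrt{\log(1/\e)}$ (obtained there by expanding $|1+\e G|^2$ rather than via the triangle inequality on square roots, and with the explicit choice $c_2=\pi c_1^2/2$), combined with the radial monotonicity of Lemma \ref{lemma:Level-Set-Regularity} for star-shapedness and the non-vanishing of $\nabla u_F$ on the level curve for smoothness. The only slip is the final parenthetical that any $c_2<2/5$ works: since \eqref{eq:c1small} only gives $\pi c_1^2<2/5$, the implication goes the wrong way, and the operative condition remains $c_2<\pi c_1^2$.
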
 

\begin{proof}
Let $c_1 >0$ be given by Lemma \ref{lemma:Level-Set-Regularity}. We first prove the following assertion: if $|z| > c_1 \sqrt{\log(1/\e)}$ then 
\begin{equation}
    \label{eq:boundsu}
u_F(z) < 4\e^{\pi c_1^2}.
\end{equation}
As before, we use the decomposition \eqref{eq:decompFG} to write 
\[ 
u_F(z) = (1 + 2 \e \Re(G(z)) + \e^2 |G(z)|^2)e^{-\pi \abs{z}^2}. 
\]
For $|z| > c_1 \sqrt{\log(1/\e)},$ and $\e$ sufficiently small, since $\|G\|_{\mc F^2}=1$ one readily sees that $$u_F(z) \le \e^{\pi c_1^2}(1 + 2 \e + \e^2) < 4 \e^{\pi c_1^2},$$
since we can choose $\pi c_1^2 \leq \frac 1 2,$ cf.\ \eqref{eq:c1small}.

We now claim that the conclusion of the lemma holds with $c_2 =\frac{\pi c_1^2}{2}.$ If this is not the case, there is $t_0 > \e^{c_2}$ such that $A_{t_0} \coloneqq \{z \in \C \colon u_F(z) > t_0 \}$ is \emph{not} star-shaped with respect to $0.$ Thus, there would be a point $w_0 \in A_{t_0},$ such that, for some $r \in (0,1),$ $r\cdot w_0 \not \in A_{t_0}.$ By \eqref{eq:boundsu}, we must have that 
\begin{equation}
    \label{eq:boundw0}
    |w_0| < c_1 \sqrt{\log(1/\e)};
\end{equation}
indeed, if $|w_0|>c_1 \sqrt{\log(1/\e)}$ then, by choosing $\e$ even smaller if need be, we would have
$u(w_0) < 4 \e^{\pi c_1^2} < \e^{\pi c_1^2/2}<t_0$,
contradicting the fact that $w_0\in A_{t_0}$. However, \eqref{eq:boundw0} leads to a contradiction already: if we write $e^{i \alpha_0} = \frac{w_0}{|w_0|}$ then Lemma \ref{lemma:Level-Set-Regularity} ensures that the function $s \mapsto |F(s e^{i \alpha_0})|^2 e^{-\pi s^2}$ is strictly decreasing for $s < c_1 \sqrt{\log(1/\e)}$ and thus we would have 
$$t_0 > u_F(r  w_0) > u_F(w_0) > t_0,$$
which is a contradiction. Hence $A_{t_0}$ is star-shaped with respect to the origin. 

The final claim of the lemma, concerning the smoothness of the boundary $\partial \{u_F > t\} = \{ u_F =t\}$, follows from the Inverse Function Theorem. Indeed, by \eqref{eq:boundsu} we see that if $z$ is such that $u_F(z) = t > \e^{c_2}$ then $|z| < c_1 \sqrt{\log(1/\e)},$ and Lemma \ref{lemma:Level-Set-Regularity} then guarantees that $\nabla u_F(z) \neq 0.$ Thus $t$ is a regular value of $u_F$ and the set $\{u_F=t\}$ is a smooth curve.
\end{proof}

Lemmata \ref{lemma:Level-Set-Regularity} and \ref{lemma:star-shaped-levels} already show that the super-level sets of $u_F$ are regular and have controlled geometry. We now show that they are in fact \emph{convex}:

\begin{numprop}\label{prop:convexity}
Under the same assumptions as in Lemma \ref{lemma:Level-Set-Regularity}, there are small constants $\e_0,c_3 > 0$ such that, as long as $\e(F)\leq \e_0$ and $s<-c_3 \log(\e(F)),$ the set 
\[
A_{u_F^*(s)} \coloneqq \{ z \in \C \colon u_F(z) > u_F^*(s) \}
\]
has convex closure. 
\end{numprop}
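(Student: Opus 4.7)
The plan is to show that, above a suitable threshold, the function $-\log u_F$ is strictly convex on a ball containing every relevant super-level set; convexity of $A_{u_F^*(s)}$ will then be immediate.

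Let $c_1$ be as in Lemma \ref{lemma:Level-Set-Regularity}, chosen so small that $\pi c_1^2/2<1$, and write $R_\e:=c_1\sqrt{\log(1/\e(F))}$ and $c_2:=\pi c_1^2/2$. By Lemma \ref{lemma:star-shaped-levels} (more precisely, by the bound \eqref{eq:boundsu} in its proof), every super-level set $\{u_F>t\}$ with $t>\e^{c_2}$ is contained in $B(0,R_\e)$. Fix any $c_3\in(0,c_2)$. Since $u_F^*(0)\geq u_F(0)=|F(0)|^2=1$, the convexity inequality \eqref{eqn:DiffInequ} yields $u_F^*(s)\geq u_F^*(0)e^{-s}\geq e^{-s}$ for every $s\geq 0$; hence, whenever $s<-c_3\log\e$ one has $u_F^*(s)\geq\e^{c_3}>\e^{c_2}$ for $\e$ small, so $A_{u_F^*(s)}\subset B(0,R_\e)$.

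Next, write $F=1+\e G$ as in \eqref{eq:decompFG}. Since $|\e G(z)|\leq \e\, e^{\pi R_\e^2/2}=\e^{1-\pi c_1^2/2}$ on $B(0,R_\e)$, for $\e$ small we have $|F|\geq 1/2$ on this ball, so in particular $F$ has no zeros there. Writing
\begin{equation*}
-\log u_F(z)=\pi|z|^2-2\log|F(z)|,
\end{equation*}
the function $\log|F|$ is harmonic on $B(0,R_\e)$, and locally $\log|F|=\Re\phi$ for a holomorphic branch $\phi=\log F$. A direct computation in real coordinates shows that the Hessian of $\log|F|$ has eigenvalues $\pm|(\log F)''|$, so that
\begin{equation*}
D^2(-\log u_F)(z)\geq \bigl(2\pi-2|(\log F)''(z)|\bigr)\,I\qquad\text{for all }z\in B(0,R_\e).
\end{equation*}
It remains to control $|(\log F)''|=|F''/F-(F'/F)^2|$ on $B(0,R_\e)$. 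Since $F'=\e G'$, $F''=\e G''$ and $|F|\geq 1/2$, the task reduces to bounding $|G'|$ and $|G''|$. Applying Cauchy--Schwarz to the power series of $G$ exactly as in \eqref{Rprime}--\eqref{Rsecond} of the proof of Lemma \ref{lemma:super-level-new}, we obtain $|G'(z)|,|G''(z)|\leq C(1+|z|^2)e^{\pi|z|^2/2}$, whence
\begin{equation*}
|(\log F)''(z)|\leq C\,\e^{1-\pi c_1^2/2}\log(1/\e),\qquad z\in B(0,R_\e).
\end{equation*}
This tends to $0$ as $\e\to 0$ (using $\pi c_1^2/2<1$), so for $\e$ sufficiently small $D^2(-\log u_F)>0$ on the convex ball $B(0,R_\e)$, i.e.\ $-\log u_F$ is strictly convex there. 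Since $A_{u_F^*(s)}$ coincides with the sub-level set $\{z\in B(0,R_\e):-\log u_F(z)<-\log u_F^*(s)\}$ thanks to the inclusion of the first paragraph, it is convex, and its closure is convex as well.

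The main obstacle is balancing the growth of $G$ and its derivatives, which can be as large as $e^{\pi|z|^2/2}$, against the smallness of the prefactor $\e$: this forces us to restrict to balls of radius at most $c_1\sqrt{\log(1/\e)}$ with $c_1$ small enough that $\pi c_1^2/2<1$. Reassuringly, the same smallness condition already appears in Lemma \ref{lemma:Level-Set-Regularity}, so no further constraint on $c_1$ is needed.
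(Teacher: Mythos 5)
Your proof is correct, and it takes a genuinely different route from the paper's. You establish that $-\log u_F=\pi|z|^2-2\log|F|$ is uniformly convex on the ball $B(0,c_1\sqrt{\log(1/\e)})$ containing all the relevant super-level sets, exploiting the fact that $\log|F|$ is harmonic there (so its Hessian is trace-free with eigenvalues $\pm|(\log F)''|$, which you show are $o(1)$ via the power-series bounds on $G'$, $G''$ and the lower bound $|F|\geq 1/2$); convexity of $A_{u_F^*(s)}$ as a sub-level set of a convex function is then immediate, and no separate local-to-global step is needed. The paper instead estimates $\|u_F-e^{-\pi|\cdot|^2}\|_{C^2}$ on the super-level set, deduces a positive lower bound on the curvature of the boundary curve by comparison with the curvature of the corresponding circle, and then invokes the Tietze--Nakajima theorem to pass from local to global convexity. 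Your argument is more elementary and somewhat stronger: it gives log-concavity of $u_F$ on the whole ball, hence simultaneous strict convexity of all super-level sets above the threshold, and it sidesteps both the curvature computation and Tietze--Nakajima. Two small points worth making explicit if this were to replace the paper's proof: the bound $u_F^*(s)\geq e^{-s}$ you use rests on the monotonicity of $e^su^*(s)$ from \eqref{eqn:DiffInequ} together with $u^*(0)=u_F(0)=1$, which holds precisely because the normalization places the maximum of $u_F$ at the origin with $F(0)=1$; and the branch of $\log F$ exists globally on the ball since $F$ is zero-free there and the ball is simply connected (though only the local statement is needed for the pointwise Hessian identity).
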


\begin{proof} Choosing $\e_0$ appropriately, we can apply Lemmas \ref{lemma:Level-Set-Regularity} and \ref{lemma:star-shaped-levels} to conclude that, for $t > \e(F)^{c_2},$ the level sets $\{z \in \C \colon u_F(z) > t\}$ are all star-shaped with respect to the origin and have smooth boundary. 

We write, for shortness, $u=u_F$ and $u_0=e^{-\pi |\cdot|^2}$ throughout the rest of this proof. By the triangle inequality, we have 
\begin{align}
\label{eqn:EstimateOnAbsDifference}
\begin{split}
    |u -u_0| & = \left| (|F|^2 -1) e^{-\pi |z|^2}\right| \\
    & \leq |F-1| (|F|+1) e^{-\pi |z|^2} \leq \e(F) (\|F\|_{\mc F^2} + \|1\|_{\mc F^2})= 2 \e(F)
\end{split}
\end{align}
and so
\begin{equation}\label{eq:first-set-comparison}         
\inset{u_0 \gs u^*(s) + 2\e(F)} \subset \inset{u \gs u^*(s)} \subset \inset{u_0 \gs u^*(s) - 2\e(F)}.
\end{equation} 
This implies that
\[
s = |\{u>u^*(s)\}| \geq |\{u_0 > u^*(s) + 2 \e(F)\}| = -\log(u^*(s) + 2 \e(F))
\]
or, rearranging, 
\begin{equation}
    \label{eq:estimateu*sfirst}
    u^*(s)\geq e^{-s} - 2 \e(F),
\end{equation}
In particular, if $e^{-s} >\e(F)^{c_3},$ then 
\begin{equation}
    \label{eq:estimateu*s}
    u^*(s) \geq \frac 12 \e(F)^{c_3} \geq \e(F)^{c_2}
\end{equation}
 provided  $c_3$ and $\e_0$ are chosen sufficiently small. Thus, for our choice of parameters, the set $A_{u^*(s)}$ is star-shaped and has a smooth boundary. 

Arguing similarly to Lemma \ref{lemma:Level-Set-Regularity} we see that, by further shrinking $c_3$ if needed, we have
\begin{equation}
    \label{eq:C2estimate}
    \|u-u_0\|_{C^2(A_{u^*(s)})} \le C_s \e(F),
\end{equation}
whenever $s \le - c_3 \log(\e(F)).$ Indeed, recalling again \eqref{eq:decompFG}, \eqref{eq:C2estimate} is equivalent to 
\[
\left\|\left(\Re(G) + \frac{\e}{2} |G|^2\right) \cdot u_0 \right\|_{C^2(A_{u^*(s)})} \le \frac{C_s}{2}.
\]
Using \eqref{eq:bound-derivative-G}, \eqref{eq:bound-G-der-local}, a suitable version of the first of those estimates for the second derivative, and \eqref{eqn:EstimateOnAbsDifference}--\eqref{eq:estimateu*s}, we see that 
\begin{equation}\label{eq:C-2-norm-difference} 
\left\|\left(\Re(G) + \frac{\e}{2} |G|^2\right) \cdot u_0 \right\|_{C^2(A_{u^*(s)})} \le C \sup_{w \in A_{u*(s)}} e^{10\pi |w|^2}. 
\end{equation} 
If $w\in A_{u^*(s)}$, by \eqref{eq:first-set-comparison} and similarly to \eqref{eq:estimateu*sfirst}, we have
$$e^{-\pi|w|^2} \ge u^*(s) - 2 \e(F) \geq \frac{e^{-s}}{2},$$
and hence \eqref{eq:C-2-norm-difference} implies \eqref{eq:C2estimate} with $C_s = C \cdot e^{4s},$ where $C$ is an absolute constant. 

Let then $\kappa_s$ denote the curvature of $\p A_{u^*(s)}= \{ u = u^*(s)\}$, thus
\begin{equation*} 
\kappa_s = -\frac{\nabla^2 u[\nabla u, \nabla u]}{|\nabla u|^3}.
\end{equation*} 
For $0<s<-c_3 \log(\e(F))$, by \eqref{eq:first-set-comparison} and \eqref{eq:estimateu*s} we have $\{u_0>\frac 1 2\e(F)^{c_3}\}\subset A_{u^*(s)}$, and hence 
\begin{equation}
    \label{eq:lowerbound}
    |\nabla u_0(z)|= 2 \pi |z| e^{-\pi |z|^2} \geq C \e(F)^{c_3} \quad \text{in } \p A_{u^*(s)}.
\end{equation}
 Let us denote by $\tilde \kappa_s>0$ the curvature of the circle $\{u_0 = u^*(s) \}$ and notice that, by \eqref{eq:estimateu*s}, $\tilde \kappa_s\to \infty$ as $s\to 0$. By \eqref{eq:C2estimate} and \eqref{eq:lowerbound}, choosing $c_3$ and $\e_0$ sufficiently small, we have an estimate
$$\left|\kappa_s- \tilde \kappa_s\right| \leq C C_s \frac{\e(F)}{\e(F)^{c_3}} \leq \e(F)^{\frac 1 2},$$
where we used the bound on $s$ in the last inequality. Combining the last two facts, we see that we can first choose $s_0$ small enough so that
$\e_0^{1/4} \leq \tilde \kappa_s \text{ if } s\leq s_0,$
and then choose $\e_0$ even smaller so that we have both $\e_0^{1/4}\leq \tilde \kappa_s$ for $s_0<s<\frac 1 4$ and $\e_0^{1/2}\leq \frac 1 2 \e_0^{1/4}.$ These choices ensure that
$$\frac{\e_0^{1/4} }{2} \leq \tilde \kappa_s - \e_0^{1/2}\leq \kappa_s$$
for all $s<-c_3 \log \e(F)$. This lower bound implies that $A_{u^*(s)}$ is locally convex. We then use the well-known Tietze--Nakajima theorem (see \cite{Tietze, Nakajima}) which asserts that, as $\overline{A_{u^*(s)}}$ is a closed, connected set, its local convexity implies its convexity, and the assertion is proved. 
\end{proof} 

\begin{proof}[Proof of Proposition \ref{prop:levelsets}]
Proposition \ref{prop:levelsets} follows immediately from Proposition \ref{prop:convexity} and \eqref{eq:estimateeps}, taking $\Omega=A_{u_F^*(s)}$ as usual.
\end{proof}

\section{Proof of the set stability}\label{sec:set-stability}

In this section we complete the proof of our main Theorem \ref{thm:Stability}. As explained in the introduction, it suffices to prove its Fock space analogue,  Theorem \ref{thm:StabilityFockSpace}.

\newcommand{\mcT}{\mathcal{T}}

\begin{proof}[Proof of Theorems \ref{thm:Stability} and \ref{thm:StabilityFockSpace}]
Since the stability for the function has already been proved in Section \ref{sec:firstproof}, it remains to prove stability of the set, i.e.\ estimate \eqref{eqn:StabilitySet}.

Fix  $f \in L^2$ as in the statement of Theorem \ref{thm:Stability}, let $F = \mathcal{B}f$, $u_F(z) = |F(z)|^2 e^{-\pi \abs{z}^2}$ and let us write $\delta = \delta(F;\Om)$ for simplicity. Clearly we may assume that $\delta\leq \delta_0$, for some arbitrarily small constant $\delta_0$. We may also suppose that $F$ is normalized as at the beginning of Section \ref{subsec:GeometryOfSuperLevelSets} and so, as in \eqref{eq:decompFG}, we can write $F=1+\e G,$ where $\|G\|_{\Fock}=1$ satisfies \eqref{eq:normG} and $\e$ satisfies \eqref{eq:estimateeps}.

Let $A_{\Omega} \coloneqq A_{u_F^*(|\Om|)}$, as in \eqref{eqn:DefnSuperLevelSetsOfu}. Let $\mcT$ be any transport map $\mcT \colon A_{\Om} \setminus \Om \to \Om \setminus A_{\Om}$, that is,
$$1_{\Om \setminus A_{\Om}}(\mcT (x)) \det \nabla \mcT (x) = 1_{A_{\Om}\setminus \Om }(x),$$
cf.\ \cite[page 12]{Figalli2021} for details on the existence of such a map. Define 
\[
B \coloneqq \inset{ x \in A_{\Om} \setminus \Om \colon |\mcT(x)|^2 -|x|^2 > C_{|\Om|} \gamma },
\]
where $C_{|\Om|}, \gamma$ are constants to be chosen later. Since $\mcT$ is a transport map,
\begin{equation}\label{eq:difference-transp}  
\int_B \left(u(z) - u(\mcT(z)) \right) \diff z = \int_B u - \int_{\mcT(B)} u \le \int_{A_\Om} u - \int_\Om u =: d(\Omega). 
\end{equation}
In \eqref{eq:difference-transp}, the inequality holds by the fact that, for $z \in A_\Om \setminus \Om, \quad u_F(z) > u_F^*(|\Omega|),$ and the reverse inequality holds for $z \in \Om \setminus \Om.$ Note that from \eqref{eq:boundI} we have the bound
\begin{equation}
    \label{eq:boundd}
    d(\Omega) \leq \|F\|_{\Fock}^2(1-e^{-|\Omega|})- \int_\Omega u = \|F\|_{\Fock}^2 (1-e^{-|\Omega|}) \delta \leq 2 (1-e^{-|\Omega|})\delta,
\end{equation}
by \eqref{eq:controllednorm2} and the assumption that $\delta$ is sufficiently small.

\textsc{Step I}. \textit{Control over $B.$} In this step, we will show that
\begin{equation}
    \label{eq:lower-bound-u}
    u(z)-u(\mcT(z))\geq 5 \gamma \quad \text{for } z\in B,
\end{equation}
after choosing $C_{|\Omega|}$ and $\gamma$ correctly. To see this, we begin by writing
\begin{align*}
u(z) - u(\mcT(z)) & = e^{-\pi |z|^2} - e^{-\pi |\mcT(z)|^2} \cr 
 &  + 2\e \left( \Re(G(z)e^{-\pi |z|^2}) - \Re(G(\mcT(z))e^{-\pi |\mcT(z)|^2}) \right) \cr 
   &  + \e^2 \left(|G(z)|^2 e^{-\pi |z|^2} - |G(\mcT(z))|^2 e^{-\pi |\mcT(z)|^2} \right). 
\end{align*} 
Since $|G|^2e^{-\pi|\cdot|^2} \le 1,$  we have promptly 
\begin{align}
\label{eq:lowerboundutgeneral}
\begin{split}
u(z) - u(\mcT(z)) & \ge e^{- \pi |z|^2 } - e^{-\pi |\mcT(z)|^2} - (4\e + \e^2) \cr 
                & = e^{-\pi |z|^2} \left( 1 - e^{-\pi\left(|\mcT(z)|^2 - |z|^2\right)} \right) - 4\e - \e^2, 
\end{split}                
\end{align} 
whenever $z \in B.$ If $\pi (|\mcT(z)|^2-|z|^2) \geq 1$ then we find
$$u(z)- u(\mcT(z)) \geq \frac{e^{-|\Omega|}}{2} (1- e^{-1}) - 4 \e - \e^2 \geq \frac{e^{-|\Omega|}}{4} - 4 \e - \e^2.$$
On the other hand, if $\pi (|\mcT(z)|^2-|z|^2) \leq 1$, we argue as follows: since $z \in B\subset A_{\Om},$ \eqref{eqn:EstimateOnAbsDifference} shows that 
$$e^{-\pi|z|^2} \ge u(z) - 2 \e \ge u^*(|\Om|) -  2 \e \ge e^{-|\Om|} - 4 \e > \frac{e^{-|\Om|}}{2};$$
here we used also 
\begin{equation}
    \label{eq:boundu*stabilityset}
    e^{-|\Om|} - 2 \e \leq u^*(|\Omega|)\leq e^{-|\Omega|} + 2 \e,
\end{equation}
cf.\ \eqref{eq:first-set-comparison} and \eqref{eq:estimateu*sfirst}.
Thus, from \eqref{eq:lowerboundutgeneral},
\begin{equation*} 
    \label{eq:lower-bound-u-T}
u(z) - u(\mcT(z)) \ge \frac{e^{-|\Om|} \left(|\mcT(z)|^2 - |z|^2\right)}{2} - 4 \e - \e^2  \ge C_{|\Om|}e^{-|\Om|} \frac{\gamma}{2} - 4\e - \e^2.
\end{equation*}
Choosing $C_{|\Om|} = 20 e^{|\Om|}$ and $\gamma \ge \e$, the previous estimates yield the desired \eqref{eq:lower-bound-u}.

\textsc{Step II.}\textit{Showing that $\Omega$ is close to $A_\Om.$} Note the identities
$$|\Omega|-|B|= |\Omega|- |\mcT(B)| = |\Omega\setminus \mcT(B)| = |\Omega|-|A_{\Omega}\setminus \Om| + |(\Om \setminus \mcT(B))\setminus A_\Omega|,$$
hence
\begin{equation}
    \label{eq:symdiffidentity}
    \frac 12|\Om\Delta A_\Om| = |A_\Om \setminus \Om| = |B|+ |(\Om\setminus \mcT(B))\setminus A_\Omega|.
\end{equation}
In this step, we want to estimate both terms on the right-hand side. The estimate for the first term follows by combining \eqref{eq:difference-transp}, \eqref{eq:boundd} and  \eqref{eq:lower-bound-u}: 
\begin{equation}
    \label{eq:estimateB}
    |B| \le \frac{d(\Om)}{5\gamma} \le \frac{2\delta(1-e^{-|\Om|})}{5 \gamma}.
\end{equation}
To estimate the second term, note that $\Om \setminus \mcT(B)$ is contained in a $C_{|\Omega|} \gamma$-neighborhood of $A_\Omega$; in turn, by  \eqref{eq:first-set-comparison}, $A_\Omega$ is nested between two concentric balls:
\begin{equation}
    \label{eq:nestedstabilityset}
    \{z:e^{-\pi |z|^2} > u^*(|\Omega|)+ 2 \e\}\subset A_\Om \subset \{z:e^{-\pi |z|^2} > u^*(|\Omega|)- 2 \e\}=: E_\Omega.
\end{equation}
Combining this information with \eqref{eq:boundu*stabilityset}, we can estimate
\begin{align}
    \label{eq:estimatesmallset}
    \begin{split}
    |(\Om \setminus \mcT(B))\setminus A_\Omega| & \leq 4 C_{|\Om|}\gamma \sqrt{-\log(e^{-|\Om|} - 4 \e)} + C_{|\Om|}^2 \gamma^2 \\
    & \leq 4 C_{|\Om|} \gamma \sqrt{|\Om| + 8 \e e^{|\Om|}} + C_{|\Om|}^2 \gamma^2 \\
    & \leq 4 C_{|\Om|} \gamma \left(|\Om|^{1/2} + 4 \e \frac{e^{|\Om|}}{|\Om|^{1/2}}\right) + C_{|\Om|}^2 \gamma^2  \\
    & \leq 4 C_{|\Om|} \gamma \left(|\Om|^{1/2} + 4 C\delta^{1/2} \frac{e^{2|\Om|}}{|\Om|^{1/2}}\right) + C_{|\Om|}^2 \gamma^2, 
    \end{split}
\end{align}
provided that $\e$ is sufficiently small, depending on $|\Om|.$ Choosing $\e\leq \gamma= C (e^{|\Omega|}\delta)^{1/2}$, where $C$ is the constant provided by Theorem \ref{thm:StabilityFockSpace}, and combining \eqref{eq:symdiffidentity}, \eqref{eq:estimateB} and \eqref{eq:estimatesmallset}, we get
$$|\Om \Delta A_\Omega| \leq C\delta^{1/2},$$
for some new but still explicitly computable  constant $C=C(|\Om|)$.

\textsc{Step III}. \textit{Conclusion.} To conclude, we just need to compare $\Omega$ with the ball $S_\Omega:=\{z:e^{-\pi |z|^2}\geq e^{-|\Omega|}\}$. 
%
By \eqref{eq:boundu*stabilityset} and \eqref{eq:nestedstabilityset}, we have $S_\Om \subset E_\Om$ and 
\begin{equation}
    \label{eq:estimateEomSom}
    |E_\Om \setminus S_\Om| \le C \e \leq C(|\Om|) \delta^{\frac 1 2},
\end{equation}
where we also used \eqref{eq:estimateeps}. It follows that
\begin{align*}
    |S_\Omega\Delta \Omega| &\leq |\Omega\setminus E_\Omega| + |E_\Omega\setminus S_\Omega|+|S_\Omega\setminus \Omega|\\
    & \leq |\Omega\setminus E_\Omega| + |E_\Omega\setminus S_\Omega|+|E_\Omega\setminus \Omega| \leq |E_\Omega\Delta\Omega| + C\delta^{1/2}
\end{align*}
and so it is enough to bound $|E_\Om \triangle \Om|.$ We then estimate
\begin{align*}
    |E_\Om \triangle \Om| = |E_\Om \setminus \Om | + |\Om \setminus E_\Om| 
     \leq  |E_\Om \setminus A_\Om| + |A_\Om \setminus \Om| + |\Om \setminus A_\Om| 
     \leq C \delta^{1/2},
\end{align*}
where in the last inequality we estimate $|E_\Omega\setminus A_\Omega|$ as in \eqref{eq:estimateEomSom} and we also used the estimate from the last step.
We have now proved \eqref{eqn:StabilitySet2} and thus also \eqref{eqn:StabilitySet}.
\end{proof} 

We remark that, in spite of the sharp exponent of $\delta$ in the result above, the asymptotic growth of the constant $K(|\Om|)$ in \eqref{eqn:StabilitySet} from the proof above is likely not sharp: as we shall see in Section \ref{sec:sharpness}, one expects, from the functional stability part, that the sharp growth of the constant should be of the form $\sim e^{|\Omega|/2},$ while the proof above yields $K(|\Om|) \sim e^{2|\Omega|}.$ 

Although there is room for improving such a constant with the current methods, it is unlikely that these will suffice in order to upgrade $K(|\Om|)$ to the aforementioned conjectured optimal growth rate. For that reason, we consider this to be a genuinely interesting problem, which we wish to revisit in a future work. 

\section{An alternative variational approach to the function stability}
\label{sec:alternative-proof}


The purpose of this section is to give a variational proof of Theorem \ref{thm:StabilityFockSpace}. Fix $s>0$ and consider the functional
$$\K\colon \Fock(\C)\to \R, \qquad \K[F] := \frac{I_F(s)}{\|F\|_{\mathcal F^2}^2},$$
where we recall that $I_F(s)$ is the integral of $u_F$ over its superlevel set of measure $s$, cf.\ \eqref{eqn:DefnI}. We will prove the following result:

\begin{numthm}\label{thm:optimalperturbative}
Fix $s\in (0,\infty)$. There are explicit constants $\e_0(s),C(s)>0$ such that, for all $\e \in (0,\e_0),$ we have
$$\K[1] - \K[1+ \e G]  \geq C(s)\e^2, $$
whenever $\|G\|_{\mathcal F^2}=1$ satisfies \eqref{eq:normG}.    
\end{numthm}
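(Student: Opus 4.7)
The plan is to Taylor-expand $\K[1+\e G]$ around $\e=0$ to second order, with the validity of the expansion deferred to Appendix~\ref{sec:appendix}, and to show that the Hessian is strictly negative-definite on admissible perturbations. Since $\langle G,1\rangle_{\Fock}=0$, one has $\|F\|_\Fock^2=1+\e^2$ and hence $\K[F]=I_F(s)(1-\e^2+O(\e^4))$, so it suffices to expand $I_F(s)$. By Proposition~\ref{prop:levelsets}, for small $\e$ the super-level set $\Omega_\e:=\{u_F>u_F^*(s)\}$ is star-shaped with smooth boundary, parameterizable in polar coordinates as $r=R_\e(\theta)$, with $R_0\equiv r_s:=\sqrt{s/\pi}$. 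The approach is to expand $R_\e=r_s+\e\rho_1+\e^2\rho_2+O(\e^3)$ and $t_\e:=u_F^*(s)=e^{-s}(1+\e\tau_1+\e^2\tau_2+O(\e^3))$, then determine the coefficients from the boundary condition $u_F(R_\e e^{i\theta})=t_\e$ and the area constraint $|\Omega_\e|=s$. At first order the boundary equation reads $\tau_1=2g(\theta)-2\pi r_s\rho_1(\theta)$ with $g(\theta):=\Re G(r_s e^{i\theta})$; combined with $\int_0^{2\pi}\rho_1\,d\theta=0$ (from the area constraint) and the key vanishing $\int_0^{2\pi}G(r_s e^{i\theta})\,d\theta=0$, which follows from the orthogonalities $\langle G,1\rangle=\langle G,z\rangle=0$ and the monomial expansion $G=\sum_{k\geq 2}a_ke_k$, this forces $\tau_1=0$ and $\rho_1=g/(\pi r_s)$. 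The second-order area constraint then gives $\int_0^{2\pi} r_s\rho_2\,d\theta=-\tfrac12\int_0^{2\pi}\rho_1^2\,d\theta$.

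Differentiating $I_F(s)=\int_0^{2\pi}d\theta\int_0^{R_\e(\theta)} u_F(\rho e^{i\theta};\e)\,\rho\,d\rho$ twice at $\e=0$, and using the second-order area constraint to eliminate $\rho_2$, yields
\begin{equation*}
\tfrac12\partial_\e^2 I_F|_{\e=0} = -se^{-s}\int_0^{2\pi}\!\rho_1^2\,d\theta + 2r_se^{-s}\int_0^{2\pi}\!g\rho_1\,d\theta + \int_{B_{r_s}}|G|^2 e^{-\pi|z|^2}\,dz,
\end{equation*}
while the first-order counterpart vanishes by the same orthogonality argument, confirming that $F_0\equiv 1$ is critical. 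Expanding $G=\sum_{k\geq 2}a_ke_k$ with $\sum|a_k|^2=1$, polar orthogonality of $\{e^{ik\theta}\}$ on the circle of radius $r_s$ gives $\int_0^{2\pi}g^2\,d\theta=\pi\sum_{k\geq 2}|a_k|^2 s^k/k!$, and the lower incomplete gamma identity $\gamma(k+1,s)/k!=1-e^{-s}\sum_{j=0}^{k}s^j/j!$ gives $\int_{B_{r_s}}|G|^2 e^{-\pi|z|^2}\,dz=1-e^{-s}\sum_{k\geq 2}|a_k|^2\sum_{j=0}^{k}s^j/j!$. Substituting and simplifying produces the clean identity
\begin{equation*}
(1-e^{-s})-\tfrac12\partial_\e^2 I_F|_{\e=0} = e^{-s}\sum_{k\geq 2}|a_k|^2\sum_{j=1}^{k-1}\frac{s^j}{j!} \geq s e^{-s},
\end{equation*}
where the lower bound uses $\sum_{j=1}^{k-1}s^j/j!\geq s$ for every $k\geq 2$ together with $\sum|a_k|^2=1$.

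Combining the Taylor expansion $\K[1]-\K[1+\e G]=\e^2[(1-e^{-s})-\tfrac12\partial_\e^2 I_F|_{\e=0}]+o(\e^2)$ with the bound above gives $\K[1]-\K[1+\e G]\geq s e^{-s}\e^2+o(\e^2)$, so the theorem will follow with $C(s)=s e^{-s}/2$ once $\e$ is taken small enough (depending on $s$) that the error term is controlled. The main technical difficulty is making the Taylor expansion rigorous: $\Omega_\e$ depends implicitly on $\e$ through $u_F^*(s)$, so one must verify that $R_\e$ and $t_\e$ are smooth functions of $\e$. This will be accomplished by combining Proposition~\ref{prop:levelsets} (which ensures smoothness and star-shapedness of $\Omega_\e$) with the implicit function theorem applied to the boundary equation $u_F(R_\e e^{i\theta})=t_\e$ and the area constraint; the complete justification is deferred to Appendix~\ref{sec:appendix}.
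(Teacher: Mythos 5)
Your proposal is correct and follows essentially the same route as the paper: reduce to the uniform negative-definiteness of the second variation of $\K$ at $F_0\equiv 1$ under the orthogonality conditions \eqref{eq:normG}, defer the rigorous control of the Taylor remainder to Appendix \ref{sec:appendix}, and diagonalize the Hessian in the monomial basis to obtain $-\tfrac12\nabla^2\K[1](G,G)=e^{-s}\sum_{k\ge 2}|a_k|^2\sum_{j=1}^{k-1}s^j/j!\ge se^{-s}$, which is exactly the paper's Proposition \ref{prop:negdef}. The only (cosmetic) difference is that you compute the second variation by directly expanding the polar radius $R_\e(\theta)$ and the level $u_F^*(s)$ in powers of $\e$, whereas the paper packages the same perturbation of $\partial\Omega_\e$ into a divergence-free flow and applies Reynolds' transport theorem (Lemmas \ref{lemma:flows}--\ref{lemma:second-variation-K}); the resulting formula for $\tfrac12\partial_\e^2 I_F|_{\e=0}$ agrees with \eqref{eq:second-variation}.
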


The proof of Theorem \ref{thm:optimalperturbative} is almost independent of the results of Section \ref{sec:firstproof}, as we will only rely on the suboptimal stability result from Lemma \ref{lemma:QuantitativeMaxuNew}. This lemma, in turn, does not rely on the other results from that section.

Let us first note that Theorem \ref{thm:optimalperturbative} indeed implies the function stability part Theorem \ref{thm:StabilityFockSpace}, although without the optimal dependence of the constant on $|\Omega|$.

\begin{proof}[Alternative proof of \eqref{eqn:StabilityFunctionFock}, assuming Theorem \ref{thm:optimalperturbative}] Without loss of generality, we can assume the normalizations detailed at the beginning of Section \ref{subsec:GeometryOfSuperLevelSets}. By the same argument as in \eqref{eq:estimateeps}, if the deficit is sufficiently small we see that
$$\|F-1\|_{\Fock(\C)} = \e(F)= \|F\|_{\Fock(\C)} \rho(F)\leq C  \big(e^{|\Omega|}\delta(F;\Omega)\big)^{1/4}.$$
where now the last inequality follows by combining Lemma \ref{lemma:QuantitativeMaxuNew} with the simple Lemma \ref{lemma:kern}, instead of using \eqref{eqn:StabilityFunctionFock}. Here, we take $\Omega=A_{u_F^*(t)}=\{u_F>u_F^*(t)\}.$ Hence, we can write 
$$F = 1 + \e  G,\qquad \|G\|_{\mc F^2} = 1,$$ 
where $G$ satisfies \eqref{eq:normG},
and we can assume that $\e$ is sufficiently small. Theorem \ref{thm:optimalperturbative} then implies that 
\[
(1-e^{-\vol{\Om}})\delta(F;\Omega) = \mathcal{K}[1] - \mathcal{K}[F] \ge C(|\Omega|) \e^2 = C(|\Omega|) \|F-1\|_{\mc F^2}^2.
\]
To complete the proof it suffices to note that, by our normalizations, $F(0)=1\leq \|F\|_{\mc F^2}$. Thus
$$\|F-1\|_{\Fock} \geq \rho(F) \geq 2^{-1/2} \min_{z_0 \in \C, |c|=\|F\|_{\Fock(\C)}} \frac{ \|F - c \cdot F_{z_0}\|_{\mc F^2}}{\|F\|_{\mc F^2}},$$
where the last inequality follows from \eqref{eq:equivalentqtties}.
\end{proof}

The proof of Theorem \ref{thm:optimalperturbative} is based on the following technical result:

\begin{numlemma}\label{lemma:control3rdder}
There is $\e_0=\e_0(s)$ and a modulus of continuity $\eta$, depending only on $s$, such that
    $$|\K[1+ \e G] - \K[1]| \le  \left| \frac{\e^2}{2}\nabla^2 \K[1](G,G)\right| + \eta(\e)\e^2 $$
    for all $0\leq \e \leq \e_0(t)$ and $G\in \mc F^2(\C)$ such that $\|G\|_{\mc F^2}=1$ and which satisfy \eqref{eq:normG}. Here we have defined
    \begin{equation*}\label{eq:definition-second-variation-K}
\nabla^2 \mathcal{K}[1](G,G) \coloneqq \frac{\diff^{\,2}} 
{\diff \e^{\,2}}  \mathcal{K}[1+\e G]\Big|_{\e = 0}.
\end{equation*}
\end{numlemma}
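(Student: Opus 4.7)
The plan is to establish a Taylor expansion of $\K[1+\e G]$ to second order in $\e$ with remainder $o(\e^2)$ uniform in $G$ of unit $\Fock$-norm satisfying \eqref{eq:normG}. The conclusion then follows from the triangle inequality, since the first-order term vanishes by the discussion in Subsection~\ref{sec:variationalintro}.

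Setting $F = 1+\e G$, the starting point is the identity
\begin{equation*}
I_F(s) - I_1(s) = \int_{\Omega_1}(u_F - u_1)\,dz + \int_{\Omega_F \triangle \Omega_1}|u_F - t_F|\,dz,
\end{equation*}
where $\Omega_1 = B(0,r_s)$ with $\pi r_s^2 = s$, $t_F := u_F^*(s)$, and $\Omega_F := \{u_F > t_F\}$; this is an immediate consequence of the equality of measures $|\Omega_F| = |\Omega_1| = s$. For the bulk integral, the pointwise identity $u_F - u_1 = (2\e\,\Re G + \e^2|G|^2)e^{-\pi|z|^2}$ combined with the orthogonality $\langle G, 1\rangle_\Fock = 0$ (which makes the angular integral of $\Re G$ on every circle centred at the origin vanish, since $G(0) = 0$ and the monomials $z^n$ with $n \ge 1$ integrate to zero in $\theta$) gives exactly
\begin{equation*}
\int_{\Omega_1}(u_F - u_1)\,dz = \e^2 \int_{B(0,r_s)} |G|^2\, e^{-\pi|z|^2}\,dz.
\end{equation*}

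The main work lies in the boundary term. For $\e$ small enough in terms of $s$ alone, Lemmas~\ref{lemma:Level-Set-Regularity}--\ref{lemma:star-shaped-levels} ensure that $\partial \Omega_F$ is a smooth star-shaped curve admitting a polar parametrization $r = r_s + h_\e(\theta)$. Applying the implicit function theorem to the equation $u_F(re^{i\theta}) = t_F$ together with the area constraint $|\Omega_F| = s$, and using the mean-value property for the harmonic function $\Re G$ to deduce $t_F = e^{-s} + O(\e^2)$, one finds
\begin{equation*}
h_\e(\theta) = \frac{\e\,\Re G(r_s e^{i\theta})}{\pi r_s} + O(\e^2).
\end{equation*}
A Taylor expansion of $u_F$ in the radial direction around $r_s$ then gives
\begin{equation*}
\int_{\Omega_F \triangle \Omega_1}|u_F - t_F|\,dz = \frac{\e^2 e^{-s}}{\pi}\int_0^{2\pi}\bigl(\Re G(r_s e^{i\theta})\bigr)^2\,d\theta + O(\e^3).
\end{equation*}

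Combining the two expansions and dividing by $\|F\|_\Fock^2 = 1+\e^2$ yields the desired expansion
\begin{equation*}
\K[1+\e G] = \K[1] + \frac{\e^2}{2}\nabla^2\K[1](G,G) + O(\e^3),
\end{equation*}
whence the lemma follows with $\eta(\e) = C(s)\e$. The crucial point, and the main obstacle, is the \emph{uniformity} of the $O(\e^3)$ remainder in $G$. For this, one needs: (i) that $\partial \Omega_F$ be a smooth graph over $\partial \Omega_1$ for every admissible $G$, furnished by Section~\ref{subsec:GeometryOfSuperLevelSets}; and (ii) uniform $C^k$-estimates on $G$ on compact sets, furnished by the reproducing-kernel bound $|G(z)| \le e^{\pi|z|^2/2}$ from \eqref{eq:repkernel} together with Cauchy's integral formula.
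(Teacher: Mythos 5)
Your proposal is correct, and it takes a genuinely different route from the paper's. The paper writes $K_\e = K_0+\tfrac{\e^2}{2}K_0''+\int_0^\e(\e-\tau)(K_\tau''-K_0'')\,\diff\tau$ and reduces everything to the continuity bound $|K_\e''-K_0''|\le\eta(\e)$; this in turn requires constructing the volume-preserving flows $\Phi_\e$ deforming $\Omega_0$ into $\Omega_\e$ (Lemmas \ref{lemma:flows} and \ref{lemma:flows-specific}), differentiating $I_\e$ and $J_\e$ twice via Reynolds' theorem, and proving the quantitative estimates on $X_\e$, $\Phi_\e$ and $|\Omega_\e\triangle\Omega_0|$ of Lemma \ref{lemma:flows-specific-estimate}. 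You instead start from the exact decomposition of $I_F(s)-I_1(s)$ into a bulk term over $B(0,r_s)$ --- which the orthogonality $\langle G,1\rangle_{\Fock}=0$ and the mean value property evaluate \emph{exactly} as $\e^2\int_{B(0,r_s)}|G|^2e^{-\pi|z|^2}\diff z$ --- plus a symmetric-difference term, handled through the polar graph $r=r_s+h_\e(\theta)$ of $\p\Omega_F$. Your formula for $h_\e$, your derivation of $t_F=e^{-s}+O(\e^2)$ (the area constraint forces $\int_0^{2\pi} h_\e\,\diff\theta=O(\e^2)$ while $\int_0^{2\pi}\Re G(r_se^{i\theta})\,\diff\theta=2\pi\Re G(0)=0$, which pins down $t_F$), and the resulting quadratic form all check out and reproduce \eqref{eq:second-variation}, using $\fint_{\p\Omega_0}(\Re G)^2=\tfrac12\fint_{\p\Omega_0}|G|^2$; the uniformity in $G$ is correctly sourced from Section \ref{subsec:GeometryOfSuperLevelSets} and the pointwise bound $|G|\le e^{\pi|\cdot|^2/2}$. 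What your route buys is a shorter, flow-free argument with the stronger remainder $O(\e^3)$, i.e.\ $\eta(\e)=C(s)\e$, and it yields Lemma \ref{lemma:second-variation-K} as a by-product; what it gives up is the flow machinery, which the paper reuses in Section \ref{sec:sharpness}. One small caveat: a two-sided expansion $\K[1+\e G]=\K[1]+\tfrac{\e^2}{2}Q(G)+O(\e^3)$ does not by itself guarantee that $\frac{\diff^{\,2}}{\diff\e^{\,2}}\K[1+\e G]\big|_{\e=0}$ exists, so to match the lemma as literally stated you should either invoke Lemma \ref{lemma:second-variation-K} to identify $Q(G)$ with $\nabla^2\K[1](G,G)$, or observe that every subsequent use of the lemma only needs the expansion with the explicit form $Q$.
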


The proof of Lemma \ref{lemma:control3rdder} is rather technical and standard, for which reason we moved it to Appendix \ref{sec:appendix}. Lemma \ref{lemma:control3rdder} shows that  $\mc K[1+\e G]-\mc K[1]$ is essentially controlled by the second variation of $\mc K$ at 1, in the direction of $G$. Since $1$ is a local maximum for $\mc K$, this variation is negative definite, but to prove Theorem \ref{thm:optimalperturbative} we need to show that it is \textit{uniformly} negative definite. This is the content of the next proposition, which is is the main result of this section.


\begin{numprop}\label{prop:negdef}
For all $G\in \mc F^2(\C)$ such that $\|G\|_{\mc F^2}=1$ and which satisfy \eqref{eq:normG}, we have
    $$\frac 1 2\nabla^2 \K[1](G,G)\leq -s  e^{-s}.$$
\end{numprop}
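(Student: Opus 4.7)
The plan is to compute $\frac{d^2}{d\varepsilon^2}\mathcal{K}[1+\varepsilon G]\big|_{\varepsilon=0}$ directly by expanding both the numerator $I_{F_\varepsilon}(s)$ and the denominator $\|F_\varepsilon\|_{\Fock}^2$ to second order in $\varepsilon$ (with $F_\varepsilon = 1+\varepsilon G$), and then to show that in the orthonormal monomial basis the resulting quadratic form in $G$ diagonalizes with a uniform spectral gap. Writing $G = \sum_{k\geq 2} a_k e_k$ with $\sum_{k\geq 2}|a_k|^2 = 1$ (which is exactly what \eqref{eq:normG} says), the denominator is trivially $\|F_\varepsilon\|_{\Fock}^2 = 1+\varepsilon^2$; the work lies in expanding $I_{F_\varepsilon}(s) = \int_{\Omega_\varepsilon} u_{F_\varepsilon}\,dz$, with $\Omega_\varepsilon = \{u_{F_\varepsilon} > u_{F_\varepsilon}^*(s)\}$, around the unperturbed set $\Omega_0 = \{|z|^2 < s/\pi\}$ of radius $r_0 = \sqrt{s/\pi}$.

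For $\varepsilon$ small (using star-shapedness from Section \ref{subsec:GeometryOfSuperLevelSets} and $\partial_r u_1(r_0) = -2\pi r_0 e^{-s}\neq 0$), the implicit function theorem yields a smooth parametrization $\partial\Omega_\varepsilon = \{r_\varepsilon(\theta) e^{i\theta}\}$ with $r_\varepsilon(\theta) = r_0 + \varepsilon r_1(\theta) + O(\varepsilon^2)$. Writing $t_\varepsilon = u_{F_\varepsilon}^*(s)$ and imposing both the level-set equation $u_{F_\varepsilon}(r_\varepsilon e^{i\theta}) = t_\varepsilon$ and the area constraint $|\Omega_\varepsilon|=s$, the orthogonality $a_0=0$ forces $t_\varepsilon = e^{-s}+O(\varepsilon^2)$ and
\begin{equation*}
r_1(\theta) \;=\; \frac{\Re G(r_0 e^{i\theta})}{\pi r_0}.
\end{equation*}
I then split $I_{F_\varepsilon}(s) = \int_{\Omega_0}u_{F_\varepsilon}\,dz + \int_{A_\varepsilon}u_{F_\varepsilon}\,dz - \int_{B_\varepsilon}u_{F_\varepsilon}\,dz$ with $A_\varepsilon = \Omega_\varepsilon\setminus\Omega_0$ and $B_\varepsilon = \Omega_0\setminus\Omega_\varepsilon$. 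Expanding the bulk term in polar coordinates and using the monomial expansion of $G$ together with
\begin{equation*}
\frac{1}{k!}\int_0^s u^k e^{-u}\,du \;=\; \gamma(k,s) \;:=\; 1-e^{-s}\sum_{j=0}^k \frac{s^j}{j!},
\end{equation*}
the $O(\varepsilon)$ term vanishes (because $a_0=0$) and the $O(\varepsilon^2)$ coefficient equals $\sum_{k\geq 2}|a_k|^2\gamma(k,s)$. For the boundary term, $|A_\varepsilon|=|B_\varepsilon|$ combined with the Taylor expansion $u_{F_\varepsilon}-t_\varepsilon \approx -2\pi r_0 e^{-s}(r-r_\varepsilon(\theta))$ in the $O(\varepsilon)$-width strips gives an $O(\varepsilon^2)$ contribution equal to
\begin{equation*}
\pi r_0^2 e^{-s}\int_0^{2\pi} r_1(\theta)^2\,d\theta \;=\; e^{-s}\sum_{k\geq 2}|a_k|^2 \frac{s^k}{k!},
\end{equation*}
after substituting $r_1$ and using $\int_0^{2\pi}(\Re G(r_0 e^{i\theta}))^2\,d\theta = \pi\sum_{k\geq 2}|a_k|^2 s^k/k!$.

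Adding the two contributions and using the recursion $\gamma(k,s)+e^{-s}s^k/k! = \gamma(k-1,s)$ (integration by parts in the incomplete Gamma integral) telescopes everything into
\begin{equation*}
I_{F_\varepsilon}(s) \;=\; (1-e^{-s}) + \varepsilon^2\sum_{k\geq 2}|a_k|^2 \gamma(k-1,s) + O(\varepsilon^3).
\end{equation*}
Dividing by $\|F_\varepsilon\|_{\Fock}^2 = 1+\varepsilon^2$ and reading off the coefficient of $\varepsilon^2$ yields
\begin{equation*}
\frac{1}{2}\nabla^2\mathcal{K}[1](G,G) \;=\; \sum_{k\geq 2}|a_k|^2\bigl(\gamma(k-1,s)-\gamma(0,s)\bigr) \;=\; -e^{-s}\sum_{k\geq 2}|a_k|^2\sum_{j=1}^{k-1}\frac{s^j}{j!}.
\end{equation*}
Since $\sum_{j=1}^{k-1}s^j/j!\geq s$ for every $k\geq 2$ (with equality at $k=2$), the right-hand side is bounded above by $-se^{-s}\sum_{k\geq 2}|a_k|^2 = -se^{-s}$, proving the claim. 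Equality is attained for $G = e_2$, witnessing sharpness.

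The main obstacle is not the formal calculation above but the rigorous justification of the $O(\varepsilon^3)$ remainders, uniformly in $G$ on the unit sphere of $\Fock$; this is exactly what Lemma \ref{lemma:control3rdder} and the computations of Appendix \ref{sec:appendix} provide. The decisive computational input is the telescoping identity $\gamma(k,s)+e^{-s}s^k/k! = \gamma(k-1,s)$, which merges the bulk and boundary contributions into a single mode-diagonal quadratic form whose $k$-th eigenvalue is the explicit negative quantity $-e^{-s}\sum_{j=1}^{k-1}s^j/j!$, with the lowest mode $k=2$ furnishing the sharp gap $se^{-s}$.
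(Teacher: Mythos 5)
Your proposal is correct and follows essentially the same route as the paper: it reproduces the second-variation formula \eqref{eq:second-variation} (your strip-integral expansion of $I_{F_\varepsilon}(s)$ with $r_1(\theta)=\Re G(r_0e^{i\theta})/(\pi r_0)$ is the direct-computation version of Lemma \ref{lemma:second-variation-K}, where the same identification appears as $\pi\langle X_0,z\rangle=\Re G$ via the flow and Reynolds' theorem), and your telescoped eigenvalues $\gamma(k-1,s)-\gamma(0,s)=-e^{-s}\sum_{j=1}^{k-1}s^j/j!$ coincide with the paper's $V_k(s)$, with the gap attained at $k=2$. The deferral of the uniform remainder control to Lemma \ref{lemma:control3rdder} and Appendix \ref{sec:appendix} is exactly how the paper handles it as well.
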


It is clear that Theorem \ref{thm:optimalperturbative} is an immediate consequence of the above two results:

\begin{proof}[Proof of Theorem \ref{thm:optimalperturbative}]
    Combining Lemma \ref{lemma:control3rdder} and Proposition \ref{prop:negdef}, we have
    $$\K[1]-\K[1+\e G] \geq - \e^2\Big(\frac 1 2 \nabla^2 \K[1](G,G) + \eta(\e)\Big)
    \geq \e^2\Big(\frac{C(s)}{2}-\eta(\e)\Big).
    $$
    The conclusion now follows by choosing $\e_0=\e_0(s)$ even smaller so that $\frac{C(s)}{4}\geq \eta(\e_0).$
\end{proof}

The rest of this section is dedicated to the proof of Proposition \ref{prop:negdef}. Clearly we first need to compute the second variation of $\mc K$ and, in order to do so, our strategy is to consider the sets
\begin{equation}
    \label{eq:defOmegae}
    \Omega_\e := \{u_\e>u_\e^*(s)\}, \qquad u_\e :=u_{1+\e G} = |1+\e G|^2 e^{-\pi |\cdot|^2},
\end{equation}
and to write 
$$\Omega_\e = \Phi_\e(\Omega_0),$$
for a suitable volume-preserving flow $\Phi_\e$. In order to construct such a flow, we first prove a general lemma which allows us to build a flow that deforms the unit disk into a given family of graphical domains over the unit circle. This type of result is well-known, and we refer the reader for instance to \cite[Theorem~3.7]{Acerbietal} for a more general statement. 

\begin{numlemma}\label{lemma:flows} 
Denote by $D_0 \subset \R^2$ the unit disk, and suppose that we are given a one-parameter family $\{D_\e\}_{\e \in [0,\e_0]}$ of domains, whose boundaries are given by smooth graphs over the unit circle:
$$\p D_\e = \{(1+g_\e(\omega))\omega:\omega\in \mb S^1\}.$$
Suppose, additionally, that the family $\{g_\e\}_{\e \in [0,\e_0]}$ depends smoothly on $\e.$ 

Then there exists a family $\{Y_\e\}_{\e \in [0,\e_0]}$ of smooth vector fields, which depends smoothly on the parameter $\e,$ such that, if $\Psi_\e$ denotes the flow associated with $Y_\e$, i.e. if
$$\frac{\diff}{\diff \e} \Psi_\e = Y_\e(\Psi_\e),$$
 then $\Psi_\e(D_0) = D_\e.$ In addition, $Y_\e$ is such that $\ddiv(Y_\e) = 0$ in a neighbourhood of $\mb S^1.$
\end{numlemma}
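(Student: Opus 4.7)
The plan is to construct $Y_\e$ by first prescribing its normal component on $\partial D_\e$ to match the velocity of the deforming boundary, then extending this boundary datum to a divergence-free field on a tubular neighborhood of $\mathbb{S}^1$ via a stream function, and finally truncating to obtain a globally smooth field.

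First I would parametrize $\partial D_\e$ by $\gamma_\e(\omega) := (1+g_\e(\omega))\omega$ for $\omega \in \mathbb{S}^1$, and define the normal velocity $V_\e(\omega) := \langle \partial_\e \gamma_\e(\omega), n_\e(\omega)\rangle$, where $n_\e$ is the outer unit normal at $\gamma_\e(\omega)$. In the intended application (Section \ref{sec:alternative-proof}), all domains $D_\e = \Omega_\e$ share the same area $s$, so by the classical formula for the first variation of volume,
\begin{equation*}
\int_{\partial D_\e} V_\e\,d\mathcal{H}^1 = \partial_\e|D_\e| = 0;
\end{equation*}
I will take advantage of this compatibility condition, which turns out to be necessary for the construction. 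Under this condition, $V_\e$ admits a scalar primitive on the closed curve $\partial D_\e$: one defines $\psi_\e$ on $\partial D_\e$ by $\partial_{\tau_\e} \psi_\e = V_\e$ (unique up to an additive constant, with $\tau_\e$ the positively-oriented tangent), and then extends $\psi_\e$ smoothly to a tubular neighborhood $U$ of $\mathbb{S}^1$ containing $\partial D_\e$ for all small $\e$, e.g.\ by first extending normally off $\partial D_\e$ and then by cutoff; by smoothness of $g_\e$ in $\e$, this extension is smooth in $(z,\e)$ jointly.

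Setting $Y_\e := \nabla^\perp \psi_\e$ on $U$ produces a divergence-free field, and a direct computation with $\nabla^\perp \psi_\e = (-\partial_y \psi_\e,\partial_x \psi_\e)$ shows that on $\partial D_\e$ its normal component equals the tangential derivative $\partial_{\tau_\e}\psi_\e = V_\e$, i.e.\ the prescribed boundary velocity. To pass to a globally defined field I would multiply $Y_\e$ by a smooth cutoff $\chi$ supported in $U$ and identically equal to $1$ on a smaller sub-neighborhood $U' \subset U$ of $\mathbb{S}^1$ still containing $\partial D_\e$ for every small $\e$; then $\chi Y_\e$ is smooth, jointly smooth in $\e$, compactly supported, and divergence-free on $U'$ as required.

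Finally, to verify the flow identity $\Psi_\e(D_0) = D_\e$, one observes that, since the normal component of $Y_\e$ on $\partial D_\e$ equals $V_\e$, the characteristics of the flow transport $\partial D_0$ onto $\partial D_\e$ for each small $\e$. Because $\Psi_\e$ is a diffeomorphism of $\mathbb{R}^2$, its image $\Psi_\e(D_0)$ is a bounded planar domain whose boundary is $\partial D_\e$; continuity in $\e$ together with $\Psi_0 = \mathrm{id}$ rules out the unbounded complementary component of $\partial D_\e$, leaving $\Psi_\e(D_0) = D_\e$. The hard part will be producing the stream function $\psi_\e$ with the correct tangential-derivative data along the closed curve $\partial D_\e$: this hinges precisely on the solvability condition $\partial_\e |D_\e| = 0$, which is not stated as a hypothesis but is satisfied in our setting and is, in any case, unavoidable since a divergence-free vector field in a neighborhood of both $\partial D_0$ and $\partial D_\e$ generates an area-preserving flow on that neighborhood.
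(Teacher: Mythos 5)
Your construction via a stream function is a legitimate way to produce divergence-free fields, but it proves a strictly weaker statement than the lemma, and the claim you use to justify the extra restriction is false. You impose the compatibility condition $\int_{\partial D_\e} V_\e\,\diff\mathcal H^1=\partial_\e|D_\e|=0$, which is indeed necessary for $V_\e$ to admit a single-valued primitive $\psi_\e$ along the closed curve $\partial D_\e$; without it your $\psi_\e$, and hence $Y_\e=\nabla^\perp\psi_\e$, simply does not exist. But the lemma does not assume that $|D_\e|$ is constant in $\e$, and the paper applies it to families where it is not: in the proof of Lemma \ref{lemma:second-variation-K} the lemma is invoked for $D_\e=\{u_\e>t\}$ with $t$ fixed, whose measure $\mu_\e(t)$ is not constant in $\e$ (the point of \eqref{eq:dermu} is precisely that its derivative vanishes \emph{at} $\e=0$, and this is a conclusion obtained from harmonicity, not a hypothesis). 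Your closing remark, that area preservation is ``unavoidable'' because a divergence-free field near $\mb S^1$ generates an area-preserving flow, is incorrect: the neighbourhood of $\mb S^1$ is an annulus, hence not simply connected, and a divergence-free field there can have nonzero flux through $\mb S^1$ (its flux through any circle homotopic to $\mb S^1$ inside the annulus is the same, but need not vanish). The model example is $Y=r^{-1}e_r$, divergence-free on $\C\setminus\{0\}$ yet dilating every circle centred at the origin.

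The paper avoids the issue entirely by writing the field explicitly in polar coordinates, $Y_\e(r,\omega)=\tfrac1r(1+g_\e(\omega))\,\p_\e g_\e(\omega)\,\omega$: for a radial field $h\,e_r$ one has $\ddiv(h\,e_r)=\tfrac1r\p_r(rh)$, and here $rh=(1+g_\e)\p_\e g_\e$ is independent of $r$, so the field is divergence-free away from the origin with no constraint on the sign or mean of $\p_\e g_\e$. Moreover its flow is computable in closed form, $\Psi_\e(r,\omega)=\bigl(r^2+(1+g_\e(\omega))^2-1\bigr)^{1/2}\omega$, which visibly carries $r=1$ to $r=1+g_\e(\omega)$, i.e.\ $\p D_0$ to $\p D_\e$. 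To repair your argument you must either add the hypothesis $\p_\e|D_\e|=0$ to the lemma (and then redo the reduction in the proof of Lemma \ref{lemma:second-variation-K}, where that hypothesis fails), or replace the stream-function step by an explicit radial construction of this type.
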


\begin{proof} 
By translating into polar coordinates $r=|z|$ and $ \omega= z/|z|$ we see that, if we define a vector field $Y_\e$ locally on a neighbourhood of $\mb S^1$ by 
$$Y_\e(r,\omega) = \frac 1 r (1+g_\e(\omega))\p_\e g_\e(\omega) \omega,$$
then $Y_\e$ satisfies $\ddiv(Y_\e) = 0$ in a neighbourhood of $\mb S^1,$  Moreover, in the same neighbourhood of $\mb S^1,$ we may write the flow $\Psi_\e$ of $Y_\e$ explicitly as 
\begin{equation}
    \label{eq:defPsi}
    \Psi_\e(r,\omega) = (r^2 + (1+g_\e(\omega))^2-1)^{\frac 1 2} \omega.
\end{equation}
We then extend $\Psi_\e$ from the neighbourhood of $\mb S^1$ to the whole complex plane, in such a way that $\Psi_\e(D_0) = D_\e$ and the map $(\e,x) \mapsto \Psi_\e(x)$ is smooth. Taking $\tilde{Y}_\e$ to be the flow of the extended version of $\Psi_\e,$ we see that $\tilde{Y}_\e$ is an extension of $Y_\e$ to the whole space, and moreover, the map $(\e,x) \mapsto Y_\e(x)$ is smooth. 
\end{proof} 

Using the results of Section \ref{subsec:GeometryOfSuperLevelSets} we can readily apply Lemma \ref{lemma:flows} to the sets $\Omega_\e$:

\begin{numlemma}
    \label{lemma:flows-specific}
Let $G\in \mc F^2(\C)$ satisfy \eqref{eq:normG}. 
There is $\e_0 =\e_0(s,\|G\|_{\mc F^2})> 0$ such that, for all $\e\in [0,\e_0],$ there are globally defined smooth vector fields  $X_\e$, with associated flows $\Phi_\e$, such that 
$$\Omega_\e = \Phi_\e(\Omega_0).$$ 
Moreover, $X_\e$ depends smoothly on $\e$ and is divergence-free in a neighborhood of $\partial \Omega_0$. We also have
\begin{equation}
\label{eq:zeromeanXeps}
\int_{\partial \Omega_\e} \langle X_{\varepsilon} , \nu_\e \rangle = 0,
\end{equation}
where $\nu_\e$ denotes the outward-pointing unit vector field on $\p \Omega_\e.$
\end{numlemma}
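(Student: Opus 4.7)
The plan is to reduce the statement to a direct application of Lemma \ref{lemma:flows} after an appropriate rescaling. Since $u_0(z)=e^{-\pi|z|^2}$, one computes $u_0^*(s)=e^{-s}$, so $\Omega_0$ is the disk of radius $r_0:=\sqrt{s/\pi}$ centered at the origin. For sufficiently small $\e$ (depending on $s$ and $\|G\|_{\Fock}$), the results of Section \ref{subsec:GeometryOfSuperLevelSets} apply to $F=1+\e G$: Lemma \ref{lemma:star-shaped-levels} shows that $\Omega_\e$ is star-shaped with respect to the origin and has smooth boundary, while Proposition \ref{prop:convexity} ensures convexity. In particular, one can write $\partial\Omega_\e=\{\rho_\e(\omega)\omega:\omega\in S^1\}$ for some smooth positive function $\rho_\e$ on $S^1$.

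The next step is to verify that $(\e,\omega)\mapsto\rho_\e(\omega)$ depends smoothly on $\e$. This follows from the implicit function theorem applied to the equation $u_\e(\rho_\e(\omega)\omega)=u_\e^*(s)$, using two ingredients: first, the strict radial monotonicity $\partial_r u_\e(r\omega)<0$ in a neighborhood of $\partial\Omega_\e$ provided by Lemma \ref{lemma:Level-Set-Regularity}; second, the smooth dependence of the level $u_\e^*(s)$ on $\e$, which one obtains from the identity $|\{u_\e>u_\e^*(s)\}|=s$ together with the aforementioned non-degeneracy of $\nabla u_\e$ on the level set and the real-analytic dependence of $u_\e$ on $\e$.

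Once the smoothness of $\rho_\e$ is in place, setting $g_\e(\omega):=\rho_\e(\omega)/r_0-1$ shows that the rescaled family $\{\Omega_\e/r_0\}$ satisfies the hypotheses of Lemma \ref{lemma:flows}. This produces smooth vector fields $\tilde Y_\e$ with associated flows $\tilde\Psi_\e$ such that $\tilde\Psi_\e(D_0)=\Omega_\e/r_0$ and $\ddiv\tilde Y_\e=0$ in a neighborhood of $S^1$. Defining
\[
\Phi_\e(z):=r_0\tilde\Psi_\e(z/r_0),\qquad X_\e(z):=r_0\tilde Y_\e(z/r_0),
\]
one checks that $\frac{\diff}{\diff\e}\Phi_\e=X_\e\circ\Phi_\e$, that $\Phi_\e(\Omega_0)=\Omega_\e$, and that the scaling preserves the divergence-free property, so $\ddiv X_\e=0$ holds in a neighborhood of $\partial\Omega_0$.

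Finally, the zero-mean condition \eqref{eq:zeromeanXeps} is a direct consequence of volume preservation. Indeed, by the definition of $u_\e^*$ we have $|\Omega_\e|=s$ for all $\e$, so differentiating and applying the Reynolds transport formula gives
\[
0=\frac{\diff}{\diff\e}|\Omega_\e|=\int_{\partial\Omega_\e}\langle X_\e,\nu_\e\rangle\,\diff\mathcal{H}^1,
\]
which is precisely \eqref{eq:zeromeanXeps}. The main obstacle in the plan is the smooth $\e$-dependence of $\rho_\e$: everything hinges on the quantitative non-degeneracy of $\partial_r u_\e$ on $\partial\Omega_\e$, which is exactly what Lemma \ref{lemma:Level-Set-Regularity} was designed to provide; once that is invoked, the remainder is bookkeeping plus the black-box Lemma \ref{lemma:flows}.
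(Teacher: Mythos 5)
Your argument is correct and follows essentially the same route as the paper: star-shapedness and boundary regularity from Section \ref{subsec:GeometryOfSuperLevelSets}, a graphical representation of $\partial\Omega_\e$ over a circle whose smooth dependence on $\e$ is obtained from the implicit function theorem together with the non-degeneracy of $\partial_r u_\e$ from Lemma \ref{lemma:Level-Set-Regularity}, and then Lemma \ref{lemma:flows} applied after rescaling. The only (harmless) deviation is in the last step: you obtain \eqref{eq:zeromeanXeps} by applying the Reynolds transport formula directly on $\partial\Omega_\e$, whereas the paper first shows that the flux of $X_\e$ through $\partial\Omega_0$ vanishes (by differentiating the area in polar coordinates) and then transfers this to $\partial\Omega_\e$ via the divergence theorem using $\ddiv X_\e=0$; both derivations are valid.
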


\begin{proof}
Up to dilating by a constant (which depends only on $s$) we can assume that $\Omega_0=B_1$. Lemma \ref{lemma:star-shaped-levels} shows that, if $\e_0$ is chosen sufficiently small, the boundaries $\p\Omega_\e$ are smooth and the sets $\Omega_\e$ are star-shaped with respect to zero, hence they can be written as graphs over $\mb S^1$:
\begin{equation}
    \label{eq:deffeps}
    \p \Omega_\e = \{ (1+f_\e(\omega)) \, \omega \colon \omega \in \mb S^1\}.
\end{equation}
We now claim that the function $(\e,\omega) \mapsto f_\e(\omega)$ is \emph{smooth} as long as $\e$ is sufficiently small. 

Indeed, for fixed $\e,$ the function $\omega \mapsto f_{\e}(\omega)$ is smooth, by Lemma \ref{lemma:star-shaped-levels}, since it is implicitly defined by $u_{\e}((1+f_\e(\omega))\cdot \omega) = u_\e^*(s).$ Moreover, since $\nabla u_{\e}$ is \emph{bounded} by a constant depending only on $s$ when restricted to $\{u_\e = u_\e^*(s)\}$ (this follows, for instance, from the proof of Lemma \ref{lemma:Level-Set-Regularity}), any careful quantification of the proof of the implicit function theorem (cf. \cite{Krantz}) implies that there is a universal $\e_0(s) > 0$ such that, if $\e < \e_0(s),$ then $\e \mapsto f_\e(\omega)$ is smooth for any fixed $\omega \in \mathbb{S}^1.$ This proves the desired smoothness claim.


By Lemma \ref{lemma:flows}, the associated vector fields are explicitly given in a neighbourhood of $\mb S^1$ by
\begin{equation}
    \label{eq:defXeps}
    X_\e(r,\omega) = \frac 1 r (1+f_\e(\omega))\p_\e f_\e(\omega) \omega,
\end{equation}
and they are divergence-free in a neighbourhood of $\mb S^1.$ Their smoothness then follows from the smoothness of $f_\e$ in $\e$.     

To prove the final claim we note that, since $\Omega_{\varepsilon}$ has constant measure equal to $s$ for all $\varepsilon\in [0,\e_0]$,  by a calculation in polar coordinates we see that the function
$$ \mathcal{A}(\varepsilon) :=  \int_{\partial \Omega_0} (1+f_{\varepsilon}(\omega))^2 \diff \mathcal{H}^1(\omega)$$
is constant in the interval $[0,\e_0]$. Thus,
\[
0 = \frac{\diff \mc A(\e)}{\diff \e} = 2 \int_{\partial \Omega_0} \partial_{\varepsilon} f_{\varepsilon}(\omega) (1 + f_{\varepsilon}(\omega) ) \diff \mathcal{H}^1(\omega)
= 2 \int_{\partial \Omega_0} \langle X_{\varepsilon}, \nu \rangle \diff \mathcal{H}^1(\omega),
\]
and so the integral above has to vanish; here we used the fact that $\Omega_0$ is a ball. Since $\ddiv(X_{\e}) = 0$ in a neighbourhood of $\partial \Omega_0,$ and as $\int_{\partial \Omega_0} \langle X_{\e}, \nu \rangle \diff \mathcal{H}^1= 0,$ the divergence theorem shows that, for any Lipschitz Jordan curve $\gamma$ in the same neighbourhood of $\partial \Omega_0,$ we have 
\begin{equation*}\label{eq:almost-full-div-free} 
\int_{\gamma} \langle X_{\e}, \nu_{\gamma} \rangle \diff \mc H^1 = 0 ,
\end{equation*}
where $\nu_{\gamma}$ denotes the outward-pointing normal field on $\gamma.$ Thus \eqref{eq:zeromeanXeps} follows.
\end{proof}

Having the previous lemma at our disposal, we can now obtain an explicit formula for $\nabla^2 \K[1]$.

\begin{numlemma}\label{lemma:second-variation-K}
For all $G\in \Fock(\C)$ which satisfy \eqref{eq:normG}, we have
    \begin{equation}\label{eq:second-variation} 
 \frac 1 2\nabla^2 \K[1](G,G)
 = \int_{\Omega_0} |G|^2 e^{-\pi |z|^2}\diff z - \|G\|_{\mc F^2}^2\int_{\Omega_0} e^{-\pi |z|^2}\diff z + e^{-s} \fint_{\p \Omega_0} |G|^2 \diff \mc H^1(z).
\end{equation}
\end{numlemma}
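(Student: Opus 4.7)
The plan is to compute $\nabla^2 \K[1](G,G) = \tfrac{d^2}{d\e^2}\K[1+\e G]|_{\e=0}$ by combining direct differentiation under the integral sign with the volume-preserving flow $\Phi_\e$ provided by Lemma \ref{lemma:flows-specific} to account for the moving domain $\Omega_\e$. Writing $\K[1+\e G] = N(\e)/D(\e)$ with $N(\e) = \int_{\Omega_\e} u_\e \diff z$ and $D(\e) = \|1+\e G\|_{\Fock}^2$, the orthogonality condition $\langle 1, G\rangle_{\Fock}=0$ (equivalent to $G(0)=0$) simplifies the denominator to $D(\e) = 1 + \e^2\|G\|_{\Fock}^2$, so $D(0)=1$, $D'(0) = 0$ and $D''(0) = 2\|G\|_{\Fock}^2$. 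A routine quotient-rule computation, combined with the fact that $\K'(0)=0$ (to be verified below), reduces the task to showing
\[
\tfrac{1}{2}\K''(0) = \tfrac{1}{2}N''(0) - \|G\|_{\Fock}^2\, N(0),
\]
where $N(0) = \int_{\Omega_0} e^{-\pi|z|^2}\diff z$ is precisely the middle term in the claimed formula.

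To compute $N'$ and $N''$, I would apply Reynolds' transport theorem along the flow $\Phi_\e$, giving
\[
N'(\e) = \int_{\Omega_\e}\partial_\e u_\e \diff z + \int_{\partial\Omega_\e} u_\e \,\langle X_\e, \nu_\e\rangle \diff\mc H^1.
\]
Since $u_\e$ is constant (equal to $u_\e^*(s)$) on $\partial\Omega_\e$, and the area constraint $|\Omega_\e|\equiv s$ forces $\int_{\partial\Omega_\e}\langle X_\e,\nu_\e\rangle\diff\mc H^1 = 0$ (by \eqref{eq:zeromeanXeps}), the boundary contribution vanishes identically. Differentiating $N'(\e) = \int_{\Omega_\e}\partial_\e u_\e \diff z$ once more and evaluating at $\e=0$ using $\partial_\e u_\e|_{\e=0} = 2 \Re G \cdot e^{-\pi|z|^2}$, $\partial_\e^2 u_\e|_{\e=0} = 2 |G|^2 e^{-\pi|z|^2}$ and $u_0 \equiv e^{-s}$ on $\partial \Omega_0$, yields
\[
N''(0) = 2\int_{\Omega_0}|G|^2 e^{-\pi|z|^2} \diff z + 2 e^{-s} \int_{\partial\Omega_0}\Re G(z)\, \langle X_0,\nu_0\rangle \diff\mc H^1.
\]

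The key technical step will be identifying $\langle X_0, \nu_0\rangle$ on $\partial\Omega_0 = \partial B_R$, with $R = \sqrt{s/\pi}$. Differentiating the level-set identity $u_\e(\Phi_\e(x)) = u_\e^*(s)$ in $\e$ at $\e=0$ and using $\nabla u_0(z) = -2\pi z\, e^{-s}$ on $\partial B_R$, one arrives at
\[
\langle X_0, \nu_0\rangle(z) = \frac{\Re G(z)}{\pi R} - \frac{e^s}{2\pi R}\dot u^*, \qquad \dot u^* := \tfrac{d}{d\e} u_\e^*(s)\Big|_{\e=0}.
\]
Integrating this identity over $\partial B_R$ and using both the volume constraint $\int_{\partial B_R}\langle X_0,\nu_0\rangle \diff\mc H^1 = 0$ and the mean-value property $\int_{\partial B_R}\Re G \diff\mc H^1 = 2\pi R\,\Re G(0) = 0$ (enabled by $G(0)=0$), I would conclude $\dot u^* = 0$ and hence $\langle X_0,\nu_0\rangle = \Re G(z)/(\pi R)$. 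The same mean-value argument applied to $N'(0) = 2\int_{\Omega_0}\Re G \cdot e^{-\pi|z|^2}\diff z = 2(1-e^{-s})\Re G(0)$ also shows $\K'(0)=0$, closing the loop in the first paragraph.

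To close, I would use the elementary fact that, for holomorphic $G = \sum_{k\geq 1} c_k z^k$ vanishing at the origin, expanding in Fourier modes on the circle $\partial B_R$ gives $\int_{\partial B_R}(\Re G)^2 \diff\mc H^1 = \tfrac{1}{2}\int_{\partial B_R}|G|^2 \diff\mc H^1$. Substituting this and the expression for $\langle X_0,\nu_0\rangle$ back into the formula for $N''(0)$, the boundary contribution becomes $2 e^{-s}\fint_{\partial\Omega_0}|G|^2 \diff\mc H^1$ after absorbing the factor $|\partial B_R|^{-1} = (2\pi R)^{-1}$, and assembling the pieces yields \eqref{eq:second-variation}. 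The main obstacle will be organizing the boundary computation cleanly: the formula for $\langle X_0,\nu_0\rangle$ has to be extracted from the level-set constraint (since the flow is otherwise only determined up to tangential components), and the cancellation producing the clean final expression rests delicately on the harmonicity of $\Re G$ and its mean-value property at the center of the disk $\Omega_0$.
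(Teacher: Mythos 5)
Your proposal is correct and follows essentially the same architecture as the paper's proof: the quotient decomposition $N(\e)/D(\e)$, the Reynolds transport computation with the boundary term killed at first order by \eqref{eq:zeromeanXeps} and the constancy of $u_\e$ on $\p\Omega_\e$, the identification $\pi\langle X_0,z\rangle=\Re G$ on $\p\Omega_0$ from the level-set constraint (the paper's \eqref{eq:vector-field-X_0}), and the harmonicity/mean-value cancellation turning $\fint(\Re G)^2$ into $\tfrac12\fint|G|^2$. The one genuine divergence is how you establish $\dot u^*=0$: you integrate the differentiated level-set identity over $\p B_R$ and invoke the zero-flux condition together with $\fint_{\p B_R}\Re G=\Re G(0)=0$, whereas the paper first proves $\tfrac{\diff}{\diff\e}\big|_{\e=0}\mu_\e(t)=0$ and then runs a regularity argument for $\mu_\e$ in both variables plus the implicit function theorem to get $u_\e^*(s)=(1+O(\e^2))e^{-s}$. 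Your route is shorter and self-contained given Lemma \ref{lemma:flows-specific} (which already supplies the smoothness of $\e\mapsto f_\e$, hence of $\e\mapsto u_\e^*(s)$, needed to write $\dot u^*$ at all), so this is a legitimate simplification of one sub-step rather than a gap.
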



\begin{proof}
Setting $\diff \sigma(z) := e^{-\pi |z|^2} \diff z$ for brevity, let us introduce the auxiliary functions
\begin{equation}
    \label{eq:defIJ}
    I_\e := \int_{\Omega_\e} |1+\e G|^2 \diff \sigma, \qquad J_{\e}:=\int_\C |1+\e G|^2 \diff \sigma;
\end{equation}
we also write $K_\e \coloneqq \K[1+\e G]$, where we recall  that $\Omega_\e \coloneqq \{u_\e>u_\e^*(s)\}$, cf.\ \eqref{eq:defOmegae}. We will always take $\e\leq \e_0$, where $\e_0$ is as in Lemma \ref{lemma:flows-specific}.
Here and henceforth, we shall denote derivatives of the quantities $K_\e, \quad I_\e, \quad J_\e$ in the $\e$ variable with primes, that is, $K_\e',\quad J_\e',\quad I_\e',$ etc. With that in mind, we have:
\begin{align}
\begin{split}
\label{eq:derK}
K_\e' & = \Big(I'_\e - I_\e \frac{J'_\e}{J_\e}\Big) \frac{1}{J_\e},\\
K_\e'' &= \Big(I''_\e - I_\e \frac{J''_\e}{J_\e}\Big) \frac{1}{J_\e}- \frac{2 J'_\e}{J_\e^2} \Big(I_\e'-I_\e \frac{J'_\e}{J_\e}\Big),
\end{split}
\end{align}
and using Reynold's theorem we further compute
\begin{align}
\label{eq:derIJ}
I'_\e & = 2 \int_{\Omega_\e} \text{Re}(G \cdot \overline{1+\e G}) \diff \sigma,  
& J'_\e  = 2\int_\C \text{Re} (G \cdot \overline{1+ \e G}) \diff \sigma,\\
\label{eq:derIJsecond}
I''_\e & = 2 \int_{\Omega_\e} |G|^2 \diff \sigma + 2\int_{\p \Omega_\e} \text{Re}(G \cdot \overline{1+\e G}) \langle X_\e,\nu_\e\rangle e^{-\pi |z|^2},
& J''_\e  = 2 \int_\C |G|^2 \diff \sigma.
\end{align}
Here and in what follows, we write $X_\e$ to be the vector fields built in Lemma \ref{lemma:flows-specific}. Note that, to obtain \eqref{eq:derIJ}, we used the fact that $u_\e$ is constant on $\p \Omega_\e$, together with the cancelling property \eqref{eq:zeromeanXeps} of the vector fields.

Since $\langle G, 1\rangle_{\mc F^2}=0$, $\Omega_0$ is a ball and $G$ is holomorphic, from \eqref{eq:derIJ} it is easy to see that 
\begin{equation}
    \label{eq:derK0}
    I'_0=J'_0=0 \qquad \implies \qquad \frac{\diff}{\diff \e} \K[1+\e G] \Big|_{\e=0} = K'_0 = 0,
\end{equation}
where the implication follows from the first equation in \eqref{eq:derK}.

Combining \eqref{eq:derK}--\eqref{eq:derK0}, we arrive at
\begin{equation}\label{eqn:second-variation-first-formula} 
 \frac{\diff^{\,2}}{\diff \e^{\,2}}\mc K[1+\e G]\Bigg|_{\e = 0 } 
=2\Big( \int_{\Omega_0} |G|^2 e^{-\pi |z|^2} - \|G\|_{\mc F^2}^2\int_{\Omega_0} e^{-\pi |z|^2} +  \int_{\p \Omega_0} \Re(G(z))  \langle X_0, \nu\rangle e^{-\pi |z|^2}  \Big).
\end{equation}
Since $\p\Omega_0$ is a circle of radius $r_0$, where $\pi r_0^2 =s$, our main task is to simplify the last term: specifically, we want to show that
\begin{equation}
    \label{eq:goalsecondderK}
    \int_{\p \Omega_0} \Re G \langle X_0,\nu \rangle \diff \mc H^1 = \fint_{\p \Omega_0} |G|^2 \diff \mc H^1.
\end{equation}
In order to prove \eqref{eq:goalsecondderK}, we have to understand how to write $X_0$ in terms of $G$ on $\partial \Omega_0$.

We first claim that
\begin{equation}
\label{eq:dermu}
\frac{\diff}{\diff \e}\Big|_{\e=0} \mu_\e(t) = 0,
\end{equation}
where $\mu_\e(t) \coloneqq \mu_{1+\e G}(t) =|\{u_\e>t\}|$.
To prove this claim we build, exactly as in Lemma \ref{lemma:flows-specific}, a family of vector fields $Y_\e$ with associated flows $\Psi_\e$ such that $\Psi_\e(\{u_0>t\}) = \{u_\e>t\}$ (note that, by Lemma \ref{lemma:star-shaped-levels}, these sets have smooth boundaries, hence we can apply Lemma \ref{lemma:flows}).  We compute, for $z \in \partial \{ u_0 > t \} = \{u_0 = t\},$
\begin{equation}\label{eqn:boundary-vector-expansion} 
t \equiv u_\e(\Psi_\e(z))= \left(1 + 2 \e \Re G(z)- 2\pi \e \langle Y_0(z),z\rangle + O(\e^2)\right )e^{-\pi |z|^2},
\end{equation}
and thus, since the first order term in $\e$ vanishes, we have
\begin{equation}\label{eqn:condition-vector-field-boundary} 
\Re G(z) = \pi \langle Y_0,z\rangle.
\end{equation}
We can now prove \eqref{eq:dermu}: again by Reynold's formula, we have
\begin{equation}\label{eqn:condition-derivative-measure}
\frac{\diff}{\diff \e}\Big|_{\e=0} \mu_\e(t) = \int_{\p\{u_0>t\}} \langle Y_0,\nu\rangle 
= 2 \pi \fint_{\p\{u_0>t\}} \langle Y_0,z\rangle 
= 2 \fint_{\p\{u_0>t\}} \Re G = 2 \Re G(0)=0,
\end{equation}
since $\p\{u_0>t\}$ is a circle and  $\Re G$ is harmonic, where the last equality follows from \eqref{eq:normG}.

We now make the following remark:  the function $\e \mapsto \mu_\e(t)$ is smooth in $\e,$ whenever $\e$ is sufficiently small, and also smooth in $t,$ for $t \in (\e^{c_2},\max u_\e)$, where $c_2$ is as in Lemma \ref{lemma:star-shaped-levels}. This can be seen explicitly as follows: the smoothness in $\e$ follows by Lemma \ref{lemma:flows} and the fact that $\Psi_\e(\{u_0 > t\}) = \{u_\e > t\}.$ On the other hand, by \cite[Lemma~3.2]{NicolaTilli} we have
\[
-\partial_t \mu_\e(t) = \int_{\inset{u_\e = t}} |\nabla u_\e|^{-1} \, \diff \mathcal{H}^1 \,\, \text{for almost every } t \text{ in } (0,\max u_\e). 
\]
By the proof of Lemma \ref{lemma:Level-Set-Regularity} we see that 
\begin{equation}
    \label{eq:lowerboundforsmoothnessmu}
    |\nabla u_\e(z)| \geq C(\e^{c_2}, \|G\|_{\Fock(C)})|z| ,
\end{equation}  hence $\mu_\e\in C^{0,1}_\textup{loc}(\e^{c_2}, \max u_\e)$. Moreover, the divergence theorem allows us to write 
\begin{align*} 
-\partial_t\mu_\e(t) + \partial_t\mu_\e(t_0) & = \int_{\inset{u_\e = t}} |\nabla u_\e|^{-1} \, \diff \mathcal{H}^{1} - \int_{\inset{u_\e = t_0}} |\nabla u_\e|^{-1} \, \diff \mathcal{H}^{1} \cr 
 & = \int_{\inset{u_\e = t}} \frac{\nabla u_\e}{|\nabla u_\e|^2} \cdot \frac{\nabla u_\e}{|\nabla u_\e|} \, \diff \mathcal{H}^{1} - \int_{\inset{u_\e = t_0}} \frac{\nabla u_\e}{|\nabla u_\e|^2} \cdot \frac{\nabla u_\e}{|\nabla u_\e|} \, \diff \mathcal{H}^{1} \cr 
 & = - \int_{\inset{t_0 > u_\e > t}} \ddiv\left( \frac{\nabla u_\e}{|\nabla u_\e|^2}  \right) \diff z,
\end{align*} 
whenever $t < t_0.$ By \eqref{eq:lowerboundforsmoothnessmu}, $|\nabla u_\e|^{-2}$ is bounded and smooth in the set $\{t_0 > u_\e > t\},$ which shows that $\partial_t \mu_\e\in C^{0,1}_\textup{loc}(\e^{c_2}, \max u_\e).$ By a straightforward use of the coarea formula, iterating such an argument yields the desired smoothness property of $\mu_\e.$ Moreover, we also have that $\partial_t\mu_\e(t) \le - \frac{1}{t}$, cf.\ \eqref{eqn:diffineq}, and so by the Implicit Function Theorem the functions $u_\e^*(s)$ are differentiable in the variable $\e$ for all fixed $s,$ for $\e < \e_0(s)$ sufficiently small. 

Now let us fix $s>0$ and recall that $G(0)=0$. Using the smoothness of $\e \mapsto \mu_\e$ first and then the smoothness of $\mu_0$ on a neighbourhood of $u_0^*(s),$ we obtain:  
\begin{align*}
s & = \mu_\e(u_\e^*(s)) \\
& = \mu_0(u_\e^*(s))+ 2\e \Re(G(0)) + O(\e^2)\\
& = s + (u_\e^*(s)-u_0^*(s))\frac{\diff}{\diff t}\Big|_{t=u^*(s)} \mu_0(t) + O(\e^2)\\
& = s - \frac{u_\e^*(s)-u_0^*(s)}{u_0^*(s)}  + O(\e^2),
\end{align*}
where we used the fact that $G(0)=0$ by \eqref{eq:normG}.
Thus, after rearranging, we find
$$u_\e^*(s) =  (1 + O(\e^2))e^{-s}.$$

Since $\Phi_\e$ is the flow of $X_\e$, we have
\begin{equation}
    \label{eq:expansionPhieps}
    \Phi_\e(z) = \Phi_0(z) + \e X_0(\Phi_0(z)) + O(\e^2) = z+ \e X_0(z) + O(\e^2).
\end{equation}
We now compare the two expansions
\begin{align*}
u_\e(\Phi_\e(z)) & = (1 + 2 \e \Re G(z)- 2\pi \e \langle X_0(z),z\rangle + O(\e^2) )e^{-\pi |z|^2},\\
 u_\e^*(s) & = (1 + O(\e^2)) e^{-s},
\end{align*}
and we deduce that, on $\p \{u_0>u_0^*(s)\} = \p \Omega_0$, the first order terms in $\e$ must be the same, thus
\begin{equation}\label{eq:vector-field-X_0} 
\pi \langle X_0,z\rangle = \Re G(z) \text{ on } \partial \Omega_0. 
\end{equation}
Finally, since $G$ is holomorphic and $G(0)=0$, we have
\begin{align*}
\fint_{\p \Omega_0} (\Re G)^2 \diff \mc H^1 
 = \fint_{\p \Omega_0} \Re(G^2) \diff \mc H^1+ \fint_{\p \Omega_0} (\Im G)^2 \diff \mc H^1
=  \fint_{\p \Omega_0} (\Im G)^2 \diff \mc H^1.
\end{align*}
Now \eqref{eq:goalsecondderK} follows by combining this identity with \eqref{eq:vector-field-X_0}:
\begin{align*}
   \int_{\p \Omega_0} \Re G \langle X_0,\nu \rangle \diff \mc H^1 
    & = 2\pi \fint_{\p \Omega_0} \Re G \langle X_0,z\rangle \diff \mc H^1 \\
    & = 2 \fint_{\p \Omega_0} (\Re G)^2 \diff \mc H^1 
    = 2 \fint_{\p \Omega_0} (\Re G)^2 \diff \mc H^1 
    = \fint_{\p \Omega_0} |G|^2 \diff \mc H^1 ,
\end{align*}
as wished.
\end{proof}

\begin{proof}[Proof of Proposition \ref{prop:negdef}]
Since $G\in \mc F^2$ satisfies \eqref{eq:normG}, we can write
$$G(z) = \sum_{k=2}^\infty a_k \Big(\frac{\pi^k}{k!}\Big)^\frac 1 2 z^k,
\qquad \|G\|_{\mc F^2}^2 = \sum_{k=2}^{\infty} |a_k|^2.$$
 It is direct to see that \eqref{eq:second-variation} can be rewritten using the power series for $G$ as
\begin{align}\label{eq:expansion-second-variation-coeff} 
\begin{split}
 \frac 1 2\nabla^2 \K[1](G,G) =& \sum_{k=2}^\infty |a_k|^2 V_k(s),
\end{split}
\end{align} 
where
\begin{equation}\label{eq:definition-V-k} 
V_k(s)\coloneqq \frac{\pi^k}{k!} \int_{B(0,\sqrt{\frac s \pi})} |z|^{2k} e^{-\pi |z|^2}
-  \int_{B(0,\sqrt{\frac s \pi})} e^{-\pi |z|^2} + e^{- s}\frac{s^k}{k!}.
\end{equation} 

We claim that $V_k(s) \le 0$ for all $s \ge 0.$ This follows by a simple calculus observation:
\begin{align*} 
V_k(s) &= - \frac{\pi^k}{k!} \int_{\C \setminus B(0,\sqrt{s/\pi})} |z|^{2k} e^{-\pi|z|^2} \, \diff z + \left(1 + \frac{s^k}{k!}\right)e^{-s} \cr 
& = - \frac{\Gamma(k+1,s)}{k!} +\left(1 + \frac{s^k}{k!}\right)e^{-s} = - \left( \sum_{j=1}^{k-1} \frac{s^j}{j!} \right) e^{-s},
\end{align*} 
where $\Gamma(a,s) \coloneqq \int_s^{\infty} r^{a-1} e^{-r} \diff r$  denotes the upper incomplete Gamma function. 
In order to conclude the desired bound, notice that $\lim_{k \to \infty} V_k(s) = e^{-s} - 1 <0,$ and, since $V_k(s)$ is \emph{decreasing} in $k$ for $s>0$ fixed,
$$\inf_{k \ge 2} (-V_k(s)) = -V_2(s) = se^{-s}.$$ 
The conclusion of Proposition \ref{prop:negdef} follows then directly from \eqref{eq:expansion-second-variation-coeff}. 
\end{proof}

It is important to note that \eqref{eq:normG} is crucial as a normalization for the above proof to work: as a matter of fact, many of the cancellations in the proof of Lemma \ref{lemma:second-variation-K} only appeared since $G(0) = \langle G, 1 \rangle = 0.$  Moreover, if $\langle G,z \rangle \neq 0,$ it could happen that  $\nabla^1 \mathcal{K}[1](G,G) = 0,$ which would cause the proof of sharp stability to collapse. 

The argument implicit in the reduction to \eqref{eq:normG} is hence a vital part of the proof: heuristically, it plays the pivotal role of providing us with a single point $z_0$ -- which, through translations, may be assumed to be the origin -- for which one can compare the level sets of the functions $u_\e$ to balls centered at $z_0.$ The fact that $z_0$ is given by the point where each $u_\e$ attains its maximum allows thus for a connection between the analytic and geometric natures of the problem, highlighting further the importance of the aforementioned reduction. 



\section{Sharpness of the stability estimates}
\label{sec:sharpness}

In this short section we prove the sharpness claimed in Remark \ref{rmk:sharpness} concerning the estimates in Theorem \ref{thm:Stability}. We will see that the variational approach of the previous section is quite useful in this regard. The following is the key proposition we require:

\begin{numprop}\label{prop:estimate-eigenvalues} Let $s>0$ be a fixed positive real number. For each $\varepsilon > 0$ sufficiently small there is a constant $C>0$ and sequences $\{\Omega_\e\}_{\e}$ and $\{\tilde F_\e\}_{\e} \subset \mc F^2(\C) $ with $\|\tilde F_\e \|_{\mc F^2} = 1, \quad \forall \, \e > 0,$ and such that:
\begin{enumerate}
    \item  $\Omega_0$ a ball and $|\Omega_\e| = s$;
    \item $\inf_{c,z_0 \in \C} \|\tilde F_\e - c \cdot F_{z_0}\|_{\mc F^2} \ge \frac{\e}{C}$;
    \item the deficit satisfies $\delta(\tilde F_\e;\Omega_\e) \le C \frac{s e^{-s}}{1-e^{-s}} \e^2.$
\end{enumerate}
\end{numprop}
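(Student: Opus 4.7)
My plan is to exploit the equality case in Proposition \ref{prop:negdef}: the bound $\frac{1}{2}\nabla^2 \K[1](G,G) \leq -s e^{-s} \|G\|_\Fock^2$ is attained when $G = e_2$ is the second normalized monomial $e_2(z) = \frac{\pi}{\sqrt 2}z^2$, since the computation at the end of Section \ref{sec:alternative-proof} gives $V_2(s) = -se^{-s} = \inf_k V_k(s)$. Accordingly, I would define
$$F_\e := 1 + \e\,e_2, \qquad \tilde F_\e := \frac{F_\e}{\|F_\e\|_\Fock} = \frac{F_\e}{\sqrt{1+\e^2}}, \qquad \Omega_\e := \{u_{F_\e} > u_{F_\e}^*(s)\}.$$
By the definition of the decreasing rearrangement, $|\Omega_\e| = s$; moreover $\Omega_0$ is the ball $\{\pi |z|^2 < s\}$. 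This settles item (1).

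For item (2), the first step is to show that $T_\e := \max_z u_{\tilde F_\e}(z) = 1/(1+\e^2)$, attained at $z = 0$. In polar coordinates $z = re^{i\theta}$, a direct computation gives
$$u_{F_\e}(re^{i\theta}) = \left(1 + \sqrt 2\,\e\,\pi r^2 \cos 2\theta + \tfrac{1}{2}\e^2 \pi^2 r^4\right) e^{-\pi r^2} \leq P_+(r)\,e^{-\pi r^2},$$
with $P_+(r) := 1 + \sqrt 2\,\e\pi r^2 + \frac{1}{2}\e^2\pi^2 r^4$. For $\e < 1/\sqrt 2$, the Taylor series of $e^{\pi r^2} - P_+(r)$ in powers of $r^2$ has all positive coefficients (the $r^2$-coefficient being $\pi(1-\sqrt 2\,\e) > 0$, and the higher coefficients being at least $\pi^k/k!$ modulo a single $\e^2$-correction at order $r^4$), so $\max u_{F_\e} = 1 = u_{F_\e}(0)$, hence $T_\e = 1/(1+\e^2)$. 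Minimizing the identity in \eqref{eq:max} first in $c$ (optimal $c = F(z_0)e^{-\pi|z_0|^2/2}$) and then in $z_0$ yields
$$\inf_{c, z_0 \in \C} \|\tilde F_\e - c F_{z_0}\|_\Fock^2 \,=\, 1 - T_\e \,=\, \frac{\e^2}{1+\e^2} \,\geq\, \frac{\e^2}{2}$$
for $\e \leq 1$, which proves item (2).

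For item (3), since $\K$ is scale-invariant and $\|\tilde F_\e\|_\Fock = 1$, $|\Omega_\e| = s$, $\K[1] = 1 - e^{-s}$, we have
$$\delta(\tilde F_\e;\Omega_\e) = 1 - \frac{\K[F_\e]}{1-e^{-s}} = \frac{\K[1] - \K[F_\e]}{1-e^{-s}}.$$
Lemma \ref{lemma:control3rdder} applied with $G = e_2$, combined with the identity $\frac{1}{2}\nabla^2\K[1](e_2,e_2) = V_2(s) = -se^{-s}$ (read off from \eqref{eq:expansion-second-variation-coeff} and \eqref{eq:definition-V-k}), gives
$$\K[1] - \K[F_\e] \leq \Big|\tfrac{\e^2}{2}\nabla^2\K[1](e_2,e_2)\Big| + \eta(\e)\e^2 = \big(se^{-s} + \eta(\e)\big)\e^2.$$
For $\e$ small enough that $\eta(\e) \leq se^{-s}$, this gives $\delta(\tilde F_\e;\Omega_\e) \leq \frac{2 s e^{-s}}{1-e^{-s}}\,\e^2$, proving item (3).

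The only calculation that is not routine machinery is the polynomial--exponential comparison $P_+(r) < e^{\pi r^2}$ ensuring that the maximum of $u_{F_\e}$ remains pinned at the origin; this is elementary. All remaining work is already packaged into Lemma \ref{lemma:control3rdder} and the explicit evaluation of $V_2(s)$ performed inside the proof of Proposition \ref{prop:negdef}. The conceptual point is that the construction selects the unique direction $G = e_2$ in which equality holds in Proposition \ref{prop:negdef}, producing a family whose deficit is exactly quadratic in the parameter $\e$, which itself is comparable to the distance $\rho(\tilde F_\e)$ to the extremizer class.
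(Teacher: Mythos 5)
Your proposal is correct and follows essentially the same route as the paper's proof: the same test family $F_\e=1+\e\, c\, z^2$ (you use the normalized monomial $e_2$, the paper uses $z^2$, which only rescales $\e$), the same use of Lemma \ref{lemma:control3rdder} together with the explicit value $V_2(s)=-se^{-s}$ for item (3), and the same argument via \eqref{eq:max} plus the elementary comparison $1+2\e\Re(z^2)+\e^2|z|^4<e^{\pi|z|^2}$ to pin the maximum of $u_{F_\e}$ at the origin for item (2). No gaps.
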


\begin{proof} Let $F_{\eps}(z) = 1 + \eps z^2$ and as usual let us write $u_\eps(z):= |F_\eps(z)|^2 e^{-\pi |z|^2}.$ Consider, as in \eqref{eq:defOmegae}, the domains 
\[
\Omega_\e = \{ z \in \C \colon u_\eps(z) > u_\e^*(s)\},
\]
where $s> 0$ is fixed. We then have 
\begin{align}\label{eq:upper-bound-difference-eigenvalues} 
 \begin{split}
 (1-e^{-s})\delta(F_\e;\Omega_\e)  & =  \mathcal{K}[1] - \mathcal{K}[F_\eps]\\ & \leq  -\frac{\e^2}{2} \nabla^2 \mathcal{K}[1](z^2,z^2) + \e^2\eta(\e) = \frac{2se^{-s}}{\pi^2} \e^2 - \eta(\e)\e^2,
 \end{split}
\end{align} 
where we used Lemma \ref{lemma:control3rdder} to pass to  the second line and also \eqref{eq:second-variation} in the last equality. Now note that taking $\e$ sufficiently small yields the desired upper bound if we choose $\tilde F_\e = \frac{F_\e}{\|F_\e\|_{\mc F^2}}.$ For the lower bound on $\|\tilde F_\e - c F_{z_0}\|_{\mc F^2},$ we recall from \eqref{eq:max} that
\begin{align*} 
\|\tilde F_\e - c \cdot F_{z_0}\|_{\mc F^2}^2\ge 1 - \max_{z_0 \in \C} |\tilde F_\e(z_0)|^2e^{-\pi|z|^2}.
\end{align*} 
In order to finish, we  only need to show that the only global maximum of $|F_\e(z)|^2 e^{-\pi|z|^2}$ occurs at $z=0,$ which is equivalent to showing that
\[
(1+ 2 \e(x^2 - y^2) + \e^2 \abs{z}^4) < e^{\pi |z|^2}, 
\]
for each $z \in \C.$ As $1 + \pi|z|^2 + \frac{\pi^2}{2} |z|^4<e^{\pi|z|^2},$ this inequality is true if $\e < \frac{\pi}{4}.$ Thus, for such $\e$,
\[
\|\tilde F_\e - c F_{z_0}\|_{\mc F^2}^2 \ge 1 - \frac{1}{1 + \frac{2}{\pi^2}\e^2} \ge \frac{\e^2}{\pi^2},
\]
which concludes the proof.
\end{proof}

We are now ready to prove the claims in Remark \ref{rmk:sharpness}.

\begin{numcor}\label{cor:sharpness} The following assertions hold:
\begin{enumerate}
    \item\label{it:optimaldelta} The factor $\delta(f;\Omega)^{1/2}$ cannot be replaced by $\delta(f;\Om)^{\beta},$ for any $\beta > 1/2,$ in \eqref{eqn:StabilityFunction} and \eqref{eqn:StabilitySet};
    \item\label{it:optimalconstant} There is no $c\in(0,1)$ such that, for all measurable sets $\Omega \subset \C$ of finite measure, we have
    $$ \min_{z_0\in \C, |c|=\|f\|_{2}} \frac{\|f - c\, \varphi_{z_0} \|_2}{\|f\|_2} \leq C \Big(e^{c |\Omega|} \delta(f;\Om)\Big)^{1/2}. $$
\end{enumerate}
\end{numcor}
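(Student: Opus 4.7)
My approach is to exploit the explicit sequence built in Proposition \ref{prop:estimate-eigenvalues}, arising from the choice $F_\e(z) = 1 + \e z^2$, and to pass between the Fock space and $L^2(\R)$ via the Bargmann unitarity. Writing $f_\e := \mathcal{B}^{-1}\tilde F_\e$, one has $\inf_{c,z_0} \|f_\e - c\varphi_{z_0}\|_{L^2}/\|f_\e\|_{L^2} \geq \e/C$, while the deficit on $\Omega_\e := \{u_{\tilde F_\e} > u_{\tilde F_\e}^*(s)\}$ satisfies $\delta(f_\e;\Omega_\e) \leq C\frac{se^{-s}}{1-e^{-s}}\e^2$, with $|\Omega_\e| = s$. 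Both sharpness claims then reduce to letting the appropriate one of the two parameters $s,\e$ approach its limit.

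For the function part of (\ref{it:optimaldelta}), I fix $s > 0$ and suppose that the exponent $1/2$ in \eqref{eqn:StabilityFunction} could be improved to some $\beta > 1/2$. Substituting the sequence above yields $\e/C \leq C_0(s)\,\e^{2\beta}$ for all $\e$ small enough, and sending $\e \to 0^+$ contradicts $2\beta - 1 > 0$. The set version of (\ref{it:optimaldelta}) requires the additional input of a linear lower bound $\mathcal{A}(\Omega_\e) \geq c_s\,\e$ on the Fraenkel asymmetry. I plan to obtain this by expanding $\partial \Omega_\e$ as a normal graph over the unperturbed circle $\partial \Omega_0$ of radius $r_0 = \sqrt{s/\pi}$: the identities \eqref{eqn:boundary-vector-expansion}--\eqref{eqn:condition-vector-field-boundary}, combined with $\Psi_\e(z) = z + \e Y_0(z) + O(\e^2)$, yield an expansion of the form $r(\theta) = r_0 + \e\,\frac{r_0}{\pi}\cos(2\theta) + O(\e^2)$. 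Since $\cos(2\theta)$ is non-constant and orthogonal in $L^2([0,2\pi])$ to $\mathrm{span}\{1,\cos\theta,\sin\theta\}$---and the latter span captures precisely the first-order area correction and translations of the comparison ball---a direct calculation will give $|\Omega_\e \triangle B(z_0, r)| \geq c_s\,\e + O(\e^2)$ uniformly over admissible pairs $(z_0, r)$. Combined with the quadratic deficit bound, this closes the argument. I expect this Fraenkel asymmetry computation, standard in flavour (cf.\ \cite{Fuglede1989}), to be the main technical obstacle.

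For (\ref{it:optimalconstant}) the roles of $s$ and $\e$ reverse: I fix a hypothetical $c \in (0,1)$ and let $s \to \infty$. For each $s$, I choose $\e(s) > 0$ small enough (depending on $s$) so that Proposition \ref{prop:estimate-eigenvalues} applies with absolute constants---this is feasible because the error term $\eta(\e)\e^2$ in Lemma \ref{lemma:control3rdder} can be absorbed into the leading contribution $se^{-s}\e^2$ for $\e$ sufficiently small. Plugging into the hypothesized improved inequality yields
\[
\frac{\e(s)}{C} \;\leq\; C_0\sqrt{e^{cs}\,\delta(f_{\e(s)};\Omega_{\e(s)})} \;\leq\; C_0\,\e(s)\sqrt{\frac{C\,s\,e^{(c-1)s}}{1-e^{-s}}},
\]
so after dividing through by $\e(s)$ we obtain $1/C \leq C_0\sqrt{C\,s\,e^{(c-1)s}/(1-e^{-s})}$. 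Since $c - 1 < 0$, the right-hand side tends to $0$ as $s \to \infty$, contradicting the hypothesized bound.
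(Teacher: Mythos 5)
Your proposal is correct and follows essentially the same route as the paper: both parts rest on the sequence $F_\e = 1+\e z^2$ from Proposition \ref{prop:estimate-eigenvalues}, with the quadratic deficit bound played off against the linear lower bounds on the distance to the optimizers (for \ref{it:optimaldelta}) and against the factor $e^{(c-1)s}\to 0$ as $s\to\infty$ (for \ref{it:optimalconstant}), and the set part uses the same $\cos(2\theta)$ boundary perturbation coming from \eqref{eq:vector-field-X_0}. If anything, your treatment of the set asymmetry is slightly more complete than the paper's, which only bounds $|\Omega_\e\triangle\Omega_0|$ from below for the single ball $\Omega_0$, whereas you explicitly use the orthogonality of $\cos(2\theta)$ to $\mathrm{span}\{1,\cos\theta,\sin\theta\}$ to control the infimum over all admissible balls appearing in the definition of $\mathcal{A}(\Omega_\e)$.
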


\begin{proof} 
Notice that \ref{it:optimalconstant} follows directly from the statement of Proposition \ref{prop:estimate-eigenvalues} by taking $s \to \infty$, so we just have to prove 
\ref{it:optimaldelta}. The fact that one cannot improve the exponent in \eqref{eqn:StabilityFunction} follows directly from Proposition \ref{prop:estimate-eigenvalues} above.

To see that one cannot improve the exponent in \eqref{eqn:StabilitySet} we argue as follows. For the domains $\Omega_\e$ built in Proposition \ref{prop:estimate-eigenvalues},  we may use Lemma \ref{lemma:flows-specific} to write $\Omega_\e = \Phi_\e(\Omega_0)$, provided that $\e$ is small enough. As we saw in \eqref{eq:expansionPhieps} we may write 
$$\Phi_\e(z) = z + \e X_0(z) + O(\e^2),\qquad \text{where } X_0(z) = h_0(z) z,$$ for some scalar function $h_0\colon \C\backslash \{0\}\to \R$. Indeed, that $X_0$ has this form follows from its explicit formula \eqref{eq:defXeps} in Lemma \ref{lemma:flows-specific}. Since $\pi \langle X_0(z), z \rangle = \Re(z^2)$ on $\partial \Omega_0$ by \eqref{eq:vector-field-X_0},  we have 
$$h_0(z) = \frac{\Re(z^2)}{\pi |z|^2} = \frac{\cos(2 \theta)}{\pi}$$ for $z = r(\Omega_0)e^{i \theta} \in \partial \Omega_0,$ where $r(\Omega_0)$ denotes the radius of the ball $\Omega_0$. Hence, 
\[
|\Omega_\e \triangle \Omega_0| \geq |\Omega_\e \setminus \Omega_0| \ge  \Big|\Big\{z=re^{i \theta} \colon r(\Omega_0)< r <r(\Omega_0)+\e\frac{\cos(2\theta)}{\pi} -C\e^2 \Big\}\Big|> c \, r(\Omega_0)^2 \e, 
\]
which concludes the proof.
\end{proof}

\section{Generalizations to higher dimensions}\label{sec:Generalize} 

In this section we will provide the generalization of Theorems~\ref{thm:Stability} and \ref{thm:StabilityFockSpace} to 
higher dimensions $d\geq 1$.
Given a  window function $g \in L^2(\R^d)$,
the STFT of a function $f \in L^2(\R^d)$ is defined as
\begin{equation*}
\label{eqn:DefnSTFTDimensiond}
   V_g  f (x, \om) \coloneqq \int_{\R^d} e^{-2 \pi i y \cdot \om} f(y) \conj{g(x-y)} \diff y , \quad x , \om \in \R^d,
\end{equation*}
coherently with \eqref{defSTFT}. As in dimension 1, we will only be interested in the case where $g(x)=\varphi(x)$ is the
standard Gaussian window
defined as
\begin{equation}\label{eqn:DefnGaussianDimensiond}
    \varphi(x) \coloneqq 2^{d/4} e^{- \pi \abs{x}^2} , \quad x \in \R^d,
\end{equation}
so as before we set $\STFT f \coloneqq V_{\varphi} f.$ Note that \eqref{eqn:DefnGaussianDimensiond} reduces to \eqref{eqn:GaussianWindow} when $d=1$.


The $d$-dimensional version of Theorem~\ref{thm:FKforSTFT}, proved in \cite{NicolaTilli}, can be stated as follows.
\begin{theorem}[\cite{NicolaTilli}; Faber-Krahn inequality for the STFT in dimension $d$]\label{thm:FKforSTFTDimensiond}
   If $\Omega\subset \R^{2d}$ is a measurable set with finite Lebesgue measure $|\Omega|>0$, 
    and $f\in L^2(\R^{d})\setminus\{0\}$ is an arbitrary function,
    then
    \begin{equation}\label{eqn:FKforSTFTDimensiond}
    \frac{\int_{\Omega} |\STFT f(x,\omega)|^2 \diff x \diff \omega}
    {\Vert f\Vert_{L^2(\R^d)}^2}
    \leq 
    \int_0^{|\Omega|} e^{-(d!\, s)^{\frac 1 d}} \diff s.
    \end{equation}
    Moreover, 
    equality is attained  if and only if $\Om$ coincides (up to a set of measure zero) with a ball
    centered at some $z_0=(x_0, \om_0)\in\R^{2d}$  and, at the same time, for some $c\in \C\setminus\{0\}$
    \begin{equation}\label{eqn:FKEqualityFunctionsd}
        f(x) = c\, \varphi_{z_0}(x) , \qquad  \varphi_{z_0}(x) := e^{2 \pi i \om_0\cdot  x} \varphi(x-x_0),
    \end{equation}
    where $\varphi$ is the Gaussian defined in \eqref{eqn:DefnGaussianDimensiond}.
\end{theorem}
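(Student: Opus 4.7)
The plan is to emulate the one-dimensional argument behind Theorem~\ref{thm:FKforSTFT}, lifting each ingredient to $\C^d$ via the higher-dimensional Bargmann transform $\Barg\colon L^2(\R^d)\to\Fock(\C^d)$: this is a unitary isomorphism sending the coherent states $\varphi_{z_0}$ of \eqref{eqn:FKEqualityFunctionsd} to the reproducing kernels $F_{z_0}(z)=e^{-\pi|z_0|^2/2}e^{\pi z\cdot\overline{z_0}}$, and satisfying $|\STFT f(x,-\omega)|^2=|\Barg f(z)|^2 e^{-\pi|z|^2}$ for $z=x+i\omega\in\C^d$. This reduces \eqref{eqn:FKforSTFTDimensiond} to showing that, for every $F\in\Fock(\C^d)\setminus\{0\}$,
\[
\int_{\Omega}|F(z)|^2 e^{-\pi|z|^2}\,\diff z\leq \Fnorm{F}^{\,2}\int_0^{|\Omega|}e^{-(d!s)^{1/d}}\,\diff s,
\]
with equality precisely when $\Omega$ is (up to null sets) a ball centered at some $z_0\in\C^d$ and $F=cF_{z_0}$.

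The heart of the proof is the sharp differential inequality
\[
\mu'(t)\leq -\frac{d\,\mu(t)^{(d-1)/d}}{(d!)^{1/d}\,t}\qquad\text{for a.e. } t\in(0,T),
\]
where $u(z)=|F(z)|^2 e^{-\pi|z|^2}$, $\mu(t)=|\{u>t\}|$, and $T=\max u$. To establish it, I would study $v(z):=-\log u(z)=\pi|z|^2-\log|F(z)|^2$, whose distributional Laplacian satisfies $\Delta v\leq 4\pi d$, because $\Delta(\pi|z|^2)=4\pi d$ on $\R^{2d}$ and $\log|F|^2$ is plurisubharmonic on $\C^d$ (pluriharmonic, hence harmonic, on $\{F\neq 0\}$, with a non-negative singular contribution at the zero set of $F$). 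The divergence theorem then yields $\int_{\{u=t\}}u^{-1}|\nabla u|\,\diff\hausdorff^{2d-1}\leq 4\pi d\,\mu(t)$; combining this with Cauchy--Schwarz and the coarea identity $-\mu'(t)=\int_{\{u=t\}}|\nabla u|^{-1}\,\diff\hausdorff^{2d-1}$ gives a lower bound on $\hausdorff^{2d-1}(\{u=t\})$ in terms of $\mu(t)$ and $\mu'(t)$, which, matched against the Euclidean isoperimetric inequality in $\R^{2d}$ with sharp constant involving $\omega_{2d}=\pi^d/d!$, produces the displayed inequality. When $d=1$ this reduces to \eqref{eqn:DiffInequ}.

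To conclude, letting $\phi(s):=e^{-(d!s)^{1/d}}$ and $\Phi(s):=\int_0^s\phi(\tau)\,\diff\tau$ (note $\Phi(\infty)=1$), one rewrites the inequality as $(\log u^*)'(s)\geq(\log\phi)'(s)$, so that $u^*/\phi$ is non-decreasing on $(0,\infty)$. Setting $G(\sigma):=I_F(\Phi^{-1}(1-\sigma))$ for $\sigma\in[0,1]$ with $I_F(s):=\int_0^s u^*(\tau)\,\diff\tau$, a direct computation gives $G'(\sigma)=-(u^*/\phi)\circ\Phi^{-1}(1-\sigma)$, which is non-decreasing since $\Phi^{-1}(1-\sigma)$ is non-increasing in $\sigma$; hence $G$ is convex on $[0,1]$, with $G(0)=\Fnorm{F}^{\,2}$ and $G(1)=0$. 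Convexity yields $G(\sigma)\leq(1-\sigma)\Fnorm{F}^{\,2}$, equivalently $I_F(s)\leq \Phi(s)\Fnorm{F}^{\,2}$, and combining with Hardy--Littlewood ($\int_\Omega u\leq I_F(|\Omega|)$) produces the bound. Equality forces $\Omega$ to coincide a.e.\ with a super-level set of $u$ and $u^*\equiv\phi$, which in turn forces $T=\Fnorm{F}^{\,2}$; by the pointwise reproducing-kernel inequality $|F(z)|^2 e^{-\pi|z|^2}\leq \Fnorm{F}^{\,2}$, this pins $F=cF_{z_0}$, and the super-level sets of $u_{F_{z_0}}(z)=e^{-\pi|z-z_0|^2}$ are Euclidean balls centered at $z_0$.

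The main obstacle is the sharp differential inequality in dimension $d$: one must produce the precise constant $(d!)^{1/d}$ on the right-hand side, which requires aligning the pluriharmonicity bound $\Delta v\leq 4\pi d$ with the exact value of the Euclidean isoperimetric constant $\omega_{2d}^{1/(2d)}=(\pi^d/d!)^{1/(2d)}$ in $\R^{2d}$, and invoking Cauchy--Schwarz with the right choice of integrands in the coarea identity. Handling the zero set of $F$ is delicate but ultimately free of charge, since plurisubharmonicity makes $\Delta\log|F|^2$ a non-negative measure, which preserves the distributional upper bound on $\Delta v$.
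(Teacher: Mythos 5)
Your proposal is correct and follows essentially the same route as the source of this theorem (the paper only states it, citing \cite{NicolaTilli}, and recalls exactly these ingredients in Section \ref{sec:Generalize}): the reduction to $\Fock(\C^d)$, the differential inequality \eqref{diffineqd} obtained from $\Delta(-\log u)\le 4\pi d$ together with Cauchy--Schwarz, the coarea formula and the sharp isoperimetric inequality in $\R^{2d}$, and the convexity of $G$ built from $\ee(s)=e^{-(d!\,s)^{1/d}}$. Your constants check out (in particular $-\mu'(t)\ge d\,\mu(t)^{(d-1)/d}/((d!)^{1/d}t)$ and $\Phi(\infty)=1$), and the equality analysis via $I'(0)=T=\Fnorm{F}^2$ and the reproducing-kernel bound is the same as in the one-dimensional sketch of Subsection \ref{subsec:12}.
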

We point out that in \cite[Theorem 4.1]{NicolaTilli} the right hand side of \eqref{eqn:FKforSTFTDimensiond} is expressed 
in terms of the incomplete Gamma function and implicit constants depending on $d$, whereas the present formulation (which is 
equivalent but more explicit)
is taken from \cite{NicolaTilli2} (see the remark after Theorem 2.3 therein).

It appears from \eqref{eqn:FKforSTFTDimensiond} that, in  dimension $d\geq 1$, the function
\begin{equation}
\label{defe*}
    \ee(s):=e^{-(d!\, s)^{\frac 1 d}},\quad s\geq 0,
\end{equation}
plays a crucial role, since when $d=1$, $\ee(s)=e^{-s}$ and the right hand side of \eqref{eqn:FKforSTFTDimensiond} reduces
to $1-e^{-|\Omega|}$, as in \eqref{eqn:FKInequalityFock}. 
To state a stability result in dimension $d$, we must modify the deficit $\delta$ defined in \eqref{eqn:Deficit} 
to suit the right hand side of  \eqref{eqn:FKforSTFTDimensiond}, so we let
\begin{equation}\label{eqn:DeficitDimensiond}
    \delta(f;\Om) \coloneqq 
    1 - \frac
    {\int_\Om |\mathcal{V} f(x,\om)|^2 \, \diff x \, \diff \om}
    {\|f\|_{L^2(\R^d)}^2\,\int_0^{|\Omega|} \ee(s) \diff s },
\end{equation}
which once again reduces to \eqref{eqn:Deficit} when $d=1$.
Redefining
the asymmetry index $\mcalA (\Om)$ by simply replacing $\R^2$ with $\R^{2d}$ in \eqref{eqn:FraenkelAsymmetry},  
our extension of Theorem~\ref{thm:Stability} to dimension $d$ can be stated as follows.

\begin{numthm}[Stability of the FK inequality for the STFT in dimension $d$]\label{thm:StabilityDimensiond}
       There is an explicitly computable constant $C=C(d)>0$ such that, for all  measurable sets $\Om \subset \R^{2d}$ with finite measure
    $|\Omega|>0$ and all functions $f \in L^2(\R^d)\backslash\{0\}$, we have 
    \begin{equation}\label{eqn:StabilityFunctionDimensiond}
        \min_{z_0\in \C^d, |c|=\|f\|_{2}} \frac{\|f - c\, \varphi_{z_0} \|_2}{\|f\|_2} \leq C  
        \left(\frac{\delta(f;\Om)}
      {\int_{|\Omega|}^\infty   e^{-(d!\, s)^{\frac 1 d}} \diff s  } \right)^{\frac 1 2}  .     
    \end{equation}
    Moreover, for some explicit constant $K=K(d,|\Omega|)$ we also have
    \begin{equation}\label{eqn:StabilitySetDimensiond}
        \mcalA(\Om) \leq K  \delta(f;\Om)^{1/2} .
    \end{equation}
\end{numthm}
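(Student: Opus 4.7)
The plan is to adapt the full proof strategy of Theorem \ref{thm:StabilityFockSpace} to the $d$-dimensional Fock space $\Fock(\C^d)$. Via the unitary Bargmann transform $\Barg\colon L^2(\R^d)\to\Fock(\C^d)$, the stability estimates reduce to analogous statements for holomorphic $F\colon\C^d\to\C$, where $u_F(z) = |F(z)|^2 e^{-\pi|z|^2}$ is now defined on $\C^d\cong\R^{2d}$. The role played in dimension one by $e^{-s}$ is now taken by $\ee(s) = e^{-(d!\,s)^{1/d}}$ of \eqref{defe*}, which is the decreasing rearrangement of $z\mapsto e^{-\pi|z|^2}$ on $\C^d$ and satisfies $\int_0^\infty \ee(s)\,ds = 1$. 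The higher-dimensional analogue of the differential inequality \eqref{eqn:DiffInequ} established in \cite{NicolaTilli,NicolaTilli2} is equivalent to the statement that the ratio $s\mapsto u^*(s)/\ee(s)$ is non-decreasing, which is exactly the only structural property of the one-dimensional inequality used in the proofs of Lemmas \ref{lemma:QuantitativeMaxuNew} and \ref{reinforcedlemma}. Consequently, Lemma \ref{lemma:QuantitativeMaxuNew} generalizes essentially verbatim to yield
\[
\int_0^{s^*}\bigl(\ee(s) - u^*(s)\bigr)\,ds \leq \delta_{s_0}\cdot\frac{\int_0^{s_0}\ee(s)\,ds}{\int_{s_0}^\infty \ee(s)\,ds},
\]
and the denominator $\int_{|\Omega|}^\infty \ee(s)\,ds$ appearing in \eqref{eqn:StabilityFunctionDimensiond} arises naturally, generalizing the factor $e^{-|\Omega|}$ in the one-dimensional case. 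The suboptimal lower bound $(1-T)^{d+1}/(d+1)! \leq \int_0^{s^*}(\ee-u^*)\,ds$, obtained from $\ee(s)\geq 1-(d!\,s)^{1/d}$ integrated on $[0,(1-T)^d/d!]$, still suffices to reduce the sharp argument to the case $T\geq\tau$ for some universal $\tau\in(0,1)$.

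The main obstacle is the generalization of Lemma \ref{lemma:super-level-new} to higher dimensions, but the underlying algebra turns out to be remarkably dimension-free. After normalizing so that $T = u(0)$ with $F(0)=\sqrt{T}$, expand $F$ in the orthonormal Fock basis $e_\alpha(z) = (\pi^{|\alpha|}/\alpha!)^{1/2}z^\alpha$, $\alpha\in\N^d$. The condition $\nabla u(0)=0$ forces the coefficients with $|\alpha|=1$ to vanish, so $F(z)/\sqrt{T} = 1 + R(z)$ with $R$ vanishing to second order at the origin. The multinomial identity $\sum_{|\alpha|=n}|z^\alpha|^2/\alpha! = |z|^{2n}/n!$ gives the \emph{same} pointwise estimate $|R(z)|^2 \leq \delta^2(e^{\pi|z|^2}-1-\pi|z|^2)$ with $\delta^2 = (1-T)/T$, and the bounds on $h(z)=2\operatorname{Re}R(z)$ and its radial derivatives carry over. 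The geometric half parameterizes $\C^d$ in polar coordinates $z = r\omega$ with $\omega \in S^{2d-1}\subset\R^{2d}$, defines the star-shaped family $E_\sigma$ and the implicit radius $r_\sigma(\omega)$ as in the proof of Lemma \ref{lemma:super-level-new}, and writes the volume as $f(\sigma) = \frac{1}{2d}\int_{S^{2d-1}} r_\sigma(\omega)^{2d}\,d\mathcal{H}^{2d-1}$. The decisive cancellation $f'(0)=0$ persists because $h$ is pluriharmonic, hence harmonic on $\R^{2d}$, so its spherical mean on $S^{2d-1}$ vanishes. The remainder—the uniform bound on $f''$, the Taylor expansion, and the estimate on the symmetric radius $r_0$—adapts with constants depending on $d$, giving
\[
\mu(t) \leq \bigl(1 + C_d(1-T)\bigr)\cdot\frac{(\log(T/t))^d}{d!}
\]
for $t$ close to $T$ and $T$ close to $1$; note that $(\log(T/t))^d/d!$ is precisely the distribution function of $Te^{-\pi|z|^2}$ on $\C^d$, so this is the natural higher-dimensional analogue of $\log(T/t)$.

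Once the $d$-dimensional versions of Lemmas \ref{lemma:super-level-new} and \ref{lemma:QuantitativeMaxuNew} are in place, the proof of the function stability \eqref{eqn:StabilityFunctionDimensiond} proceeds as in Section \ref{sec:firstproof}: the inverse-function argument of Lemma \ref{reinforcedlemma} yields $1-T \leq C_d\int_0^{s^*}(\ee-u^*)\,ds$, and combining with the reproducing-kernel Lemma \ref{lemma:kern}—which holds verbatim in $\Fock(\C^d)$ with reproducing kernel $K_w(z)=e^{\pi z\cdot\conj{w}}$—gives \eqref{eqn:StabilityFunctionDimensiond}. For the set stability \eqref{eqn:StabilitySetDimensiond}, the regularity and convexity results of Section \ref{subsec:GeometryOfSuperLevelSets} (Lemmas \ref{lemma:Level-Set-Regularity}, \ref{lemma:star-shaped-levels} and Proposition \ref{prop:convexity}) adapt directly to $\R^{2d}$ using the same dimension-free bound on $R$, and the transport-map argument of Section \ref{sec:set-stability} goes through essentially unchanged, with the explicit constant $K(d,|\Omega|)$ tracked through the chain of estimates.
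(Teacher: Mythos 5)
Your proposal is correct and follows essentially the same route as the paper's Section \ref{sec:Generalize}: reduction to $\Fock(\C^d)$, replacement of $e^{-s}$ by $\ee(s)=e^{-(d!\,s)^{1/d}}$, the monotonicity of $u^*/\ee$ driving the analogues of Lemmas \ref{lemma:QuantitativeMaxuNew} and \ref{reinforcedlemma}, the dimension-free Cauchy--Schwarz bound on $R$, and the mean-value cancellation $f'(0)=0$ on $\mathbb{S}^{2d-1}$ from harmonicity of $h$. The only point you gloss over that the paper treats at length is the multi-index combinatorics behind the first- and second-derivative bounds \eqref{Rprimed}--\eqref{estRsecondd} (and the $a^d-b^d\geq(a-b)a^{d-1}$ step in the analogue of Lemma \ref{reinforcedlemma}), but these are routine computations rather than conceptual gaps.
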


As in the case of Theorem~\ref{thm:Stability}, the first step is to translate the problem into the Fock space
$\Fock(\C^d)$, now defined as the Hilbert space of all holomorphic functions $F \colon \C^d \to \C$ such that
\[ \Fnorm{F} \coloneqq \of{\int_{\C^d} \abs{F(z)}^2 e^{- \pi \abs{z}^2} \diff z}^{1/2} \ls \infty, \]
with its induced inner product. An orthonormal basis -- that reduces to \eqref{monomials} when $d=1$ -- is given, using multi-index notation, by the normalized monomials
\begin{equation}
    \label{monomialsd} e_\alpha(z)= (\pi^{|\alpha|}/\alpha!)^{1/2} \,z^\alpha,\quad \alpha\in\N^d,\quad z\in\C^d,
\end{equation}
while the reproducing kernels are the functions $K_w(z)=e^{\frac \pi 2 |w|^2} F_w(z)$, where, in analogy to \eqref{eqn:DefnFz0},
\begin{equation}
    \label{defFzd} F_{z_0}(z)= e^{-\frac{\pi}{2} \abs{z_0}^2} e^{\pi z \cdot\conj{z_0}} .
\end{equation}
The Bargmann transform is now an unitary operator from $L^2(\R^d)$ onto $\Fock(\C^d)$, defined  as in \eqref{defB}, with $\R^d$ and $\C^d$ in place of $\R$ and $\C$, and the multi-index notation
being adopted. 
Moreover, the functions $F_{z_0}$ in \eqref{defFzd} are, much as in \eqref{eqn:DefnFz0}, the Bargmann transforms of the optimal
functions $\varphi_{z_0}$ defined in \eqref{eqn:FKEqualityFunctionsd}.
In this setting, 
since by an identity similar to \eqref{concB} the concentration of a function $f$ on $\Omega$ 
can still be expressed in terms of its Bargmann transform,
one can rephrase  Theorem~\ref{thm:StabilityDimensiond}  in terms of Fock spaces.

\begin{numthm}[Fock space version of Theorem \ref{thm:StabilityDimensiond}]\label{thm:StabilityFockSpaceDimensiond}
   There is a computable constant $C=C(d)>0$ such that, for all  measurable sets $\Om \subset \R^{2d}$ with finite measure $|\Omega|>0$ and all functions $F\in \Fock(\C^d)\backslash\{0\}$, we have 
    \begin{equation}\label{eqn:StabilityFunctionFockDimensiond}
        \min_{\substack{|c|=\|F\|_{\Fock},\\ z_0\in \C^d}} \frac{\Fnorm{F-cF_{z_0}}}{\Fnorm{F}} \leq
        C
        \left(\frac{\delta(F;\Om)}
      {\int_{|\Omega|}^\infty   e^{-(d!\, s)^{\frac 1 d}} \diff s  } \right)^{\frac 1 2},
    \end{equation}
where
\begin{equation}
\label{defdeltaFOd}
\delta(F;\Omega):=1-\frac{\int_\Omega |F(z)|^2 e^{-\pi |z|^2}\diff z}
{\Vert F\Vert_{\Fock}^2\, \int_0^{|\Omega|} \ee(s)\diff s}.
\end{equation}
Moreover, for some explicit constant $K=K(d,|\Omega|)$ we also have
    \begin{equation}\label{eqn:StabilitySetFockDimensiond}
        \mcalA(\Om) \leq K \delta(F;\Om)^{1/2} .
    \end{equation}
\end{numthm}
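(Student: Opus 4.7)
The plan is to follow Section \ref{sec:firstproof} step by step, systematically replacing $e^{-s}$ by $\ee(s) = e^{-(d! s)^{1/d}}$ defined in \eqref{defe*}: this is the decreasing rearrangement of the extremizer $u_0(z) = e^{-\pi|z|^2}$ on $\C^d$, whose distribution function is $\mu_0(t) := (-\log t)^d/d!$. Under the normalization $\Vert F\Vert_{\Fock(\C^d)} = 1$ one still has $\int_0^\infty \ee(s)\diff s = 1$, and the reproducing-kernel identity $\langle F, F_{z_0}\rangle = F(z_0)e^{-\pi|z_0|^2/2}$ carries over verbatim, so Lemma \ref{lemma:kern} extends without change. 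The monotonicity of the ratio $r(s) = u^*(s)/\ee(s)$---the only nontrivial analytic fact underlying the proof of Lemma \ref{lemma:QuantitativeMaxuNew}---follows from the higher-dimensional convexity arguments used in the proof of \eqref{eqn:FKforSTFTDimensiond} in \cite{NicolaTilli}; the two-case dichotomy of that lemma then goes through with $\int_{s_0}^\infty \ee(s)\diff s$ replacing $e^{-s_0}$ in the denominator. An elementary computation using $\ee(s) \geq 1 - (d!s)^{1/d}$ together with $u^*(s)\leq T$ further replaces the first inequality in \eqref{estlemma1} by $c_d(1-T)^{d+1} \leq \int_0^{s^*}(\ee(s) - u^*(s))\diff s$, which already yields a suboptimal stability estimate.

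The main technical step is the $d$-dimensional analog of Lemma \ref{lemma:super-level-new}, namely
\begin{equation*}
\mu(t) \leq \bigl(1 + C_0(1-T)\bigr)\mu_0(t/T), \qquad t \in [t_0, T],
\end{equation*}
for $T$ sufficiently close to $1$. Assuming $F(0) = \sqrt T$ is the global maximum point of $u$, so that $\nabla F(0) = 0$, the multi-index expansion $F/\sqrt T = 1 + R(z)$ with $R(z) = \sum_{|\alpha|\geq 2}(a_\alpha/\sqrt{T})\pi^{|\alpha|/2}z^\alpha/\sqrt{\alpha!}$, combined with multi-index Cauchy--Schwarz and the identity $\sum_\alpha \pi^{|\alpha|}|z|^{2|\alpha|}/\alpha! = e^{\pi|z|^2}$, gives $|R(z)|^2 \leq \delta^2(e^{\pi|z|^2} - 1 - \pi|z|^2)$ with $\delta^2 = (1-T)/T$. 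The function $h = 2\Re R$ is pluriharmonic and hence harmonic on $\R^{2d}\cong \C^d$, which is the only property of $R$ used substantively. The super-level set $\{u > t\}$ is then contained in a star-shaped region $E_\sigma$ whose boundary is parametrized by $r_\sigma(\omega)$ for $\omega \in \mathbb{S}^{2d-1}$, with
\begin{equation*}
f(\sigma) := |E_\sigma| = \frac{1}{2d}\int_{\mathbb{S}^{2d-1}} r_\sigma(\omega)^{2d}\,\diff\mc H^{2d-1}(\omega).
\end{equation*}
Differentiating $f$ in $\sigma$ and applying the mean-value theorem for harmonic functions on spheres in $\R^{2d}$ yields the crucial cancellation $f'(0) = 0$; the derivative bounds on $r_\sigma$ follow, as in dimension one, from Cauchy--Schwarz and Cauchy-type bounds on $R$, $\nabla R$, $\nabla^2 R$ on $\C^d$, and the Taylor expansion $f(1) = f(0) + \tfrac{1}{2}f''(\tilde\sigma)$ concludes the estimate.

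With these ingredients, the analog of Lemma \ref{reinforcedlemma} follows by passing to inverse rearrangements: using the sharp bound above together with the smoothness and convexity of $\mu_0$ near $t = 1$, one obtains $1 - T \leq C_d \int_0^{s^*}(\ee(s) - u^*(s))\diff s$, and combining with Lemma \ref{lemma:kern} and the inequality $\delta_{s_0} \leq \delta(F;\Omega)$ coming from \eqref{eqn:DefnI} yields \eqref{eqn:StabilityFunctionFockDimensiond}. The set-stability estimate \eqref{eqn:StabilitySetFockDimensiond} then follows from the argument of Section \ref{sec:set-stability} essentially verbatim: the pointwise inequality $|u - u_0| \leq 2\e(F)$ is dimension-independent, the transport-map comparison between the super-level set $A_\Omega$ and $\Omega$ adapts without modification, and the final comparison of $A_\Omega$ with a ball only uses the explicit invertibility of $\mu_0$. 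The principal obstacle in the whole program will be the bookkeeping of constants in the $d$-dimensional Lemma \ref{lemma:super-level-new}: while the mean-value cancellation generalizes cleanly, the algebraic manipulations involving $r_\sigma$ in polar coordinates on $\R^{2d}$ become more delicate, and the resulting dependence of $T_0$ and $C_0$ on $d$ and $t_0$ will require careful tracking.
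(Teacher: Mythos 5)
Your proposal follows essentially the same route as the paper's Section \ref{sec:Generalize}: the systematic replacement of $e^{-s}$ by $\ee(s)$, the multi-index expansion with the pluriharmonic (hence harmonic) function $h=2\Re R$, the mean-value cancellation $f'(0)=0$ on spheres in $\R^{2d}$ with $f(\sigma)=\frac{1}{2d}\int_{\mbbS^{2d-1}}r_\sigma^{2d}$, the monotonicity of $u^*/\ee$ from the $d$-dimensional differential inequality, and the verbatim adaptation of the set-stability argument all match the paper's treatment. One small notational slip: the Cauchy--Schwarz step should use $\sum_{\alpha}\pi^{|\alpha|}\,|z^{2\alpha}|/\alpha! = e^{\pi|z|^2}$ (with $|z^{2\alpha}|=\prod_i|z_i|^{2\alpha_i}$), not $|z|^{2|\alpha|}$, but the resulting bound $|R(z)|^2\leq\delta^2(e^{\pi|z|^2}-1-\pi|z|^2)$ is the correct one and the argument is unaffected.
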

We point out that \eqref{eqn:StabilityFunctionFockDimensiond} reduces to \eqref{eqn:StabilityFunctionFock}, when
$d=1$ and $\ee(s)=e^{-s}$.

    The proof of \eqref{eqn:StabilityFunctionFockDimensiond} can be obtained by arguments 
  similar to those  given in Section \ref{sec:firstproof}, where every result has a suitable analogue in dimension $d$.
    Therefore, we limit ourselves to describing the  relevant, and not always trivial, changes  that
    are necessary to adapt Section \ref{sec:firstproof} to dimension $d$.
    
    We start with
    the necessary background results from  \cite{NicolaTilli}, which we discussed
    in Subsection \ref{subsec:12} only in dimension one. We warn the reader that
in \cite{NicolaTilli} some numerical constants were written in terms of 
$\bol_{2d}$, the volume of the
unit ball in $\R^{2d}$: here, in \eqref{diffineqd} and \eqref{diffinequd},
we write them explicitly using the fact that $\bol_{2d}=\pi^d/d!$, as done in \cite{NicolaTilli2}.

Given $F\in \Fock(\C^d)$, the function $u$ and its super-level sets $A_t$ are  defined as in \eqref{eqn:Defnu} and \eqref{eqn:DefnSuperLevelSetsOfu},
 now and henceforth with $z\in\C^d$. The distribution function $\mu(t)$ is defined as in \eqref{def:mut}, with
the adopted convention that $|\cdot|$ denotes
Lebesgue measure in $\R^{2d}$, but with \eqref{eqn:diffineq} being replaced (see \cite[\S 4]{NicolaTilli}) by
    \begin{equation}
    \label{diffineqd}
        \mu'(t) \leq   \,- \,\, \frac{d\,\mu(t)^{1-1/d}}{(d\,!)^{\frac 1 d} \, t}  \quad\text{for a.e. $t\in (0,T),\quad T \coloneqq \max_{z\in\C^d} u(z)$},
    \end{equation}
while
\eqref{estmut1} becomes
\[
\mu(t)\geq \frac 1 {d!} \left(\log_+ \frac T t\right)^d\quad \text{for all } t>0.
\]
Similarly, the decreasing rearrangement $u^*(s)$ (i.e.,\ the inverse function of $\mu(t)$)
is defined exactly as in \eqref{eqn:defu*}, but now with \eqref{eqn:DiffInequ} being replaced by
\begin{equation}
    \label{diffinequd}
    (u^*)'(s)+\frac {(d\,!)^{\frac 1 d}\,\, u^*(s)}{d \,\, s^{1-\frac 1 d}}\geq 0,\quad \text{for a.e. $s>0$.}
\end{equation}
These changes are natural in dimension $d$,  since when $F$  equals one of the optimal
functions defined in \eqref{defFzd}, we have $u(z)=e^{-\pi |z-z_0|^2}$ and its distribution function is
\begin{equation}
    \label{defmuspec}
\mu(t)=
\frac 1 {d!} \left(\log_+ \frac 1 t\right)^d,\quad t>0,
\end{equation}
as the explicit volume of the unit ball is $\bol_{2d}=\pi^d/d!$. Note that, for the particular $\mu$ in \eqref{defmuspec}, \eqref{diffineqd} is an equality.
Moreover,
if in \eqref{defmuspec} we let $\mu(t)=s>0$ and solve for $t$, the resulting inverse function
is just the function $\ee(s)$ defined in \eqref{defe*}, much as $e^{-s}$ is the inverse of $\log_+ \frac 1 t$ when $d=1$. In particular, note that \eqref{diffinequd} becomes an equality when $u^*=\ee$.

Finally, the fact, expressed by \eqref{eqn:DefnI},  that super-level sets maximize the concentration under a volume constraint,
is clearly still valid, as so is \eqref{eq:repkernel}, with equality if and only if $F$ is a multiple of some $F_{z_0}$.

For what concerns Section \ref{sec:firstproof}, beside obvious changes such as $\C$ being replaced with $\C^d$ and analogous changes, a general rule is that $e^{-s}$ should always be replaced by $\ee(s)$, e.g. in \eqref{massone} and \eqref{defs*}, 
and $\log$ with $(\log)^d/d!$, e.g. in \eqref{deft*}.
Accordingly, the claim of Lemma~\ref{lemma:super-level-new} becomes
\[ \mu(t) \leq \inv{d!} \of{1+C_0(1-T)} \of{\log{T/t}}^d \quad\forall t\in [t_0,T], \]
where now the underlying constants may depend on the dimension $d$.
The proof follows the same pattern, with some changes being necessary, which we now indicate. Replacing $n$ by $\alpha=(\alpha_1,\ldots,\alpha_d)\in {\mathbb N}^d$ and adopting
the multi-index notation, \eqref{defR} becomes
\begin{equation}
\label{defRd}
R(z) \coloneqq \sum_{|\alpha|\geq 2} \frac{a_\alpha}{\sqrt{T}}\, \frac {\pi^{|\alpha|/2} z^\alpha }{\sqrt{\alpha!}},\quad z=(z_1,\ldots,z_d)\in\C^d,
\end{equation}
where we used the basis defined in \eqref{monomialsd}. Accordingly, \eqref{estcoda} changes into 
\begin{equation}
  \label{estcodad}
  \sum_{|\alpha|\geq 2} \frac {|a_\alpha|^2}T=\frac {1-T}T=:\delta^2.
\end{equation}
Some additional caution is needed in order to estimate the
subsequent powers series.
The outcome of \eqref{eq99} is unchanged, now with $z\in\C^d$,
but after Cauchy--Schwarz one faces the multivariate power series
\begin{equation}
\label{eq99bis}
\sum_{|\alpha|\geq 2} \frac {\pi ^{|\alpha|} \left|z^{2\alpha}\right|}{\alpha !}=
\sum_{|\alpha|\geq 2} \frac {\pi ^{|\alpha|} \left|z_1^{2\alpha_1}\cdots z_d^{2\alpha_d} \right|}
{\alpha_1!\cdots \alpha_d!}=e^{\pi |z|^2}-1-\pi |z|^2.
\end{equation}

Subtler changes are needed in
\textsc{Step II}. After defining $h(z)$ as in \eqref{defh} and obtaining \eqref{esth2},
one replaces \eqref{Rprime} by
\begin{equation}
\label{Rprimed}
\left|\frac{\partial R(z)}{\partial z_i}\right|\leq
\delta
\sqrt{2} \pi |z| e^{\frac{\pi |z|^2}2},\quad z\in \C^d,\quad
1\leq i\leq d.
\end{equation}
For instance, letting $e_1=(1,0,\ldots,0)\in {\mathbb N}^d$, differentiating \eqref{defRd}, and then
using Cauchy-Schwarz and \eqref{estcodad} one obtains
\begin{equation}
\label{Rprimedd}
\left|\frac{\partial R(z)}{\partial z_1}\right|
\leq
\sum_{|\alpha|\geq 2} \frac{|a_\alpha|}{\sqrt{T}}\, \frac {\pi^{|\alpha|/2} \alpha_1  \left| z^{\alpha-e_1}
\right|}{\sqrt{\alpha!}}
\leq
\delta \left(
\sum_{|\alpha|\geq 2}  \frac {\pi^{|\alpha|} \alpha_1^2  \left| z^{2(\alpha-e_1)}
\right|}{\alpha!}
\right)^{\frac 1 2}.
\end{equation}
Focussing on multi-indices $\alpha$ of a given size
$k\geq 2$, we have
\[
\sum_{|\alpha|=k}
\frac {\pi^{|\alpha|} \alpha_1^2  \left| z^{2(\alpha-e_1)}
\right|}{\alpha!}
=
\sum_{\substack{|\alpha|=k\\ \alpha_1\geq 1}}
\frac {\pi^{k} \alpha_1  \left| z^{2(\alpha-e_1)}
\right|}{(\alpha_1 -1)!\alpha_2!\cdots \alpha_d!}
=
\sum_{|\beta|=k-1}
\frac {\pi^{k} (1+\beta_1)  \left| z^{2\beta}
\right|}{\beta_1!\cdots \beta_d!}.
\]
Since $1+\beta_1\leq k\leq 2(k-1)$, we have,
from the multinomial theorem,
\[
\sum_{|\alpha|=k}
\frac {\pi^{|\alpha|} \alpha_1^2  \left| z^{2(\alpha-e_1)}
\right|}{\alpha!}
\leq
\frac {2\pi^k} {(k-2)!}\sum_{|\beta|=k-1}
\frac {(k-1)!   \left| z^{2\beta}
\right|}{\beta_1!\cdots \beta_d!}
=\frac {2\pi^k} {(k-2)!} (|z_1|^2+\cdots |z_d|^2)^{k-1}
\]
and, since $|z_1|^2+\cdots |z_d|^2=|z|^2$, summing over $k\geq 2$ we obtain
\[
\sum_{|\alpha|\geq 2}  \frac {\pi^{|\alpha|} \alpha_1^2  \left| z^{2(\alpha-e_1)}
\right|}{\alpha!}
\leq
2 \sum_{k=2}^\infty \frac {\pi^k |z|^{2(k-1)}} {(k-2)!} =2\pi^2 |z|^2 e^{\pi |z|^2},
\]
which combined with \eqref{Rprimedd} proves \eqref{Rprimed} when $i=1$ (when $i>1$ the proof being the same).
The analogue of \eqref{Rsecond} now 
reads
\begin{equation}
  \label{estRsecondd}
\left|\frac{\partial^2 R(z)}{\partial z_i\partial z_j}\right|\leq
C\delta
 (1+|z|^2) e^{\frac {\pi |z|^2}2},\quad z\in \C^d, \quad 1\leq i,j\leq d.
\end{equation}
For instance, differentiating \eqref{defR} twice,
then using Cauchy--Schwarz and \eqref{estcodad}, one finds
\begin{equation}
\label{derR12}
\left|\frac{\partial^2 R(z)}{\partial z_1\partial z_2}\right|\leq
\delta
\left(
\sum_{|\alpha|\geq 2}  \frac {\pi^{|\alpha|} \alpha_1^2 \alpha_2^2  \left| z^{2(\alpha-e_1-e_2)}
\right|}{\alpha!}
\right)^{\frac 1 2},
\end{equation}
where $e_2=(0,1,0,\ldots,0)\in {\mathbb N}^d$. Since the  sum
can be restricted to those multi-indices $\alpha$ where $\alpha_1\geq 1$ and $\alpha_2\geq 1$
(which imply that $|\alpha|\geq 2$), letting $\beta=\alpha-e_1-e_2$ we have
\begin{align*}
\sum_{|\alpha|\geq 2}  \frac {\pi^{|\alpha|} \alpha_1^2 \alpha_2^2  \left| z^{2(\alpha-e_1-e_2)}
\right|}{\alpha!}
& =
\sum_{\beta\in {\mathbb N}^d} 
\frac {\pi^{2+|\beta|} (1+\beta_1)(1+\beta_2)  \left| z^{2 \beta}
\right|}{\beta!}\\
& = \pi^2 S(z_1)S(z_2)\prod_{j=2}^d \left(\sum_{\beta_j=0}^\infty \frac {\pi^{\beta_j} 
| z_j|^{2\beta_j}}
{\beta_j!}
\right)=\pi^2 S(z_1)S(z_2) e^{\pi (|z_3|^2+\cdots +|z_d|^2)},
\end{align*}
where
\[
S(z_i):=\sum_{\beta_i=0}^\infty \frac {\pi^{\beta_i} (1+\beta_i)| z_i|^{2\beta_i}}
{\beta_i!} < \pi (1+|z_i|^2) e^{\pi |z_i|^2}.
\]
Plugging these estimates into \eqref{derR12}, one obtains \eqref{estRsecondd} when $i=1$ and $j=2$, and hence also for all $i\not=j$, by the same argument. Finally, the case where $i=j$ can be treated similarly,
by a suitable modification of \eqref{Rsecond}.
  
Then, by \eqref{defh} and the Cauchy--Riemann equations, it is easy to see that \eqref{Rprimed}
and \eqref{estRsecondd} provide bounds for the gradient $\nabla h(z)$ and the Hessian $D^2 h(z)$:
in particular, in polar coordinates $r\omega\in \R^{2d}$, one has the following bounds for
the radial derivatives
\begin{equation}
\label{esthrd}
\left\vert \frac {\partial h(r \omega)}{\partial r} \right\vert\leq
\delta C r e^{\frac {\pi r^2}2},\qquad
\left\vert \frac {\partial^2 h(r \omega)}{\partial r^2} \right\vert
\leq
\delta C (1+r^2) e^{\frac {\pi r^2}2},\quad r\geq 0,\quad
\omega\in {\mathbb S}^{2d-1},
\end{equation}
which replace \eqref{esthr} and \eqref{esthrr}.
The rest of the proof requires only minor changes, such as the systematic
usage of polar coordinates $r\omega\in \R^{2d}$ (instead of $r e^{i\theta}$) 
with $r\geq 0$ and $\omega\in \mbbS^{2d-1}$, as in \eqref{esthrd}.
In particular, in \eqref{defrs} and in the sequel, $r_\sigma=r_\sigma(\theta)$
becomes $r_\sigma=r_\sigma(\omega)$. Also integrals should be changed accordingly, e.g., \eqref{deffs}
now becomes
\[
f(\sigma):=|E_\sigma|=\frac 1 {2d} \int_{\mbbS^{2d-1}} r_\sigma(\omega)^{2d}\diff S(\omega),\quad \sigma\in [0,1],
\]
with the obvious related changes, e.g. in \eqref{fprime},
while \eqref{fzez} becomes
\[
f(0)=\frac 1 {2d} \int_{\mbbS^{2d-1}} r_0(\omega)^{2d}\diff S(\omega)=|B(0,r_0)|=\frac {\pi^d}{d!} r_0^{2d}.
\]
Corollary \ref{cor:nondegen} is unchanged and has  a   similar proof, where one of course should replace $\log$ with
$(\log )^d/d!$ as already mentioned.

The claim \eqref{estlemma1} of Lemma \eqref{lemma:QuantitativeMaxuNew}
must be rewritten as
\begin{equation}
    \label{estlemma1d}
\frac {(1-T)^{d+1}}{(d+1)!}\leq \int_0^{s^*}\left(\ee(s)-u^*(s)\right)\diff s\leq
\frac {\delta_{s_0}}{\int_{s_0}^\infty \ee(s)\diff s},
\end{equation}
with the proof, after rewritten in terms of $\ee(s)$ remaining valid almost ad litteram to prove  \eqref{estlemma1d},
the only necessary changes being the following.
For the first inequality in \eqref{estlemma1d}, in \eqref{eq2003}
one should use, instead of $\ee(s)\geq 1-s$ as when  $d=1$ and $\ee(s)=e^{-s}$, the similar inequality
\[
\ee(s)=e^{-(d!\,s)^{\frac 1 d}}\geq 1 - (d!\,s)^{\frac 1 d}, 
\]
and change \eqref{eq2003} into
\[
\int_0^{s^*}\left(\ee(s)-u^*(s)\right)\diff s\geq \int_0^{s^*}\left(1 - (d!\,s)^{\frac 1 d}-T\right)_+\diff s
=
\int_0^{\frac {(1-T)^d}{d!}} \left(1 - (d!\,s)^{\frac 1 d}-T\right)\diff s,
\]
which after a routine computation  yields the first  inequality in \eqref{estlemma1d}. 

Then the proof goes on unaltered, except that now \eqref{ratiodecr} follows from \eqref{diffinequd}
and \eqref{defe*}, rather than \eqref{eqn:DiffInequ} and \eqref{tempv*}, and 
in \textsc{Case 1} one arrives at
\eqref{eq2005}. Since $\eps\leq\delta_{s_0}$ by virtue of \eqref{eq2001}, \eqref{eq2005} proves
\eqref{estlemma1d}. Finally, \textsc{Case 2} requires no changes, since \eqref{eq2006}
already implies \eqref{estlemma1d}.

The claim of Lemma \ref{reinforcedlemma} now becomes
\begin{equation}
    \label{estreinforcedd}
1-T\leq C\int_0^{s^*} \bigl( \ee(s)-e^*(s)\bigr)\diff s.
\end{equation}
In the proof, the first inequality in \eqref{estlemma1d} can now be used to justify, in a similar way, why on proving \eqref{estreinforcedd} one can freely assume \eqref{eq1004} if needed.
Thus, replacing also $\log\frac 1 t$ by the right hand side of \eqref{defmuspec}, and
rewriting \eqref{eq1005} as
\[
\mu(t)\leq \frac 1 {d!} (1+C_0(1-T))\log\frac T t\quad\text{for all } t\in[\tau^*,T],
\]
one obtains the following version of \eqref{eq1010}:
\begin{equation}
\label{eq1010d}
d! \int_0^{s^*} \bigl(\ee(s)-u^*(s)\bigr)\diff s
\geq  \int_{\tau_1}^T \left(\left(\log\frac 1 t\right)^d -
(1+C_0(1-T))\left(\log\frac T t\right)^d\right)\diff t.
\end{equation}
Then,  using 
$a^d-b^d\geq (a-b)a^{d-1}$ with the choice $a=\log\frac 1 t$
and $b=\log\frac T t$, since $a-b=-\log T$ and $-\log T\geq 1-T$ we can replace
the minorization after \eqref{eq1010} by
\begin{align*}
\left(\log\frac 1 t\right)^d -
(1+C_0(1-T))\left(\log\frac T t\right)^d
& \geq (-\log T)\left(\log\frac 1 t\right)^{d-1}-
C_0(1-T))\left(\log\frac T t\right)^d\\
& \geq (1-T) \left(\log\frac 1 t\right)^{d-1}-
C_0(1-T))\left(\log\frac 1 t\right)^d \\
& \geq (1-T) \left(\log\frac 1 t\right)^{d-1}\left(1-C_0 \log\frac 1 {\tau_1}\right),
\end{align*}
for all $t\in [\tau_1,T]$.
Finally, fixing $\tau_1\in (\tau^*,1)$ in analogy to \eqref{epspos}, from \eqref{eq1010d}
and the previous estimate,
in place of \eqref{eq1021}
now one obtains
\[
d! \int_0^{s^*} \bigl(\ee(s)-u^*(s)\bigr)\diff s
\geq 
\eps_1(1-T)\int_{\tau_1}^T \left(\log\frac 1 t\right)^{d-1}\diff t.
\]
As explained after \eqref{eq1021}, one can proceed by further assuming $T\geq\tau_2>\tau_1$, now obtaining
\[
d! \int_0^{s^*} \bigl(\ee(s)-u^*(s)\bigr)\diff s
\geq 
\eps_1(1-T)\int_{\tau_1}^{\tau_2} \left(\log\frac 1 t\right)^{d-1}\diff t,
\]
which proves \eqref{estreinforced}.

Lemma \ref{lemma:kern}, as is well known, remains valid, with the obvious notational changes and the reproducing
kernels described before \eqref{defFzd}. As a consequence, the proof of \eqref{eqn:StabilityFunctionFockDimensiond}
can be completed essentially  as the proof of \eqref{eqn:StabilityFunctionFock}, replacing \eqref{eq1008} by
\[
\min_{\substack{z_0\in\C^d \\ |c|=1}}
     \Vert F-cF_{z_0}\Vert_{\Fock(\C^d)}^2
     \leq C\frac {\delta_{s_0}} {\int_{s_0}^\infty \ee(s)\diff s}.
\]

Finally, the proof of the set stability remains virtually unchanged. This finishes the proof of Theorem \ref{thm:StabilityDimensiond}. 
    

\vspace{2mm} 

We now discuss the sharpness of the estimate in Theorem \ref{thm:StabilityDimensiond}, in analogy to the discussions of Sections \ref{sec:alternative-proof} and \ref{sec:sharpness}, and we explain how to adapt the arguments of these sections to the case of general dimension; as before, we keep the notation from these sections. 

The first observation to be made  is that Lemmas \ref{lemma:flows} and \ref{lemma:flows-specific} hold after the obvious changes have been made, with essentially identical proofs. As in Section \ref{sec:alternative-proof}, we wish to compute the second variation $\partial_\e^2\K[1+\e G]|_{\e=0} ,$ where $G$ now satisfies that 
$$\langle G,1 \rangle = \langle G,z_i \rangle = 0, \quad i = 1,\dots,d.$$ 
We note that the same argument as in the proof of Lemma \ref{lemma:second-variation-K} implies that \eqref{eqn:second-variation-first-formula} still holds when passing to the higher-dimensional case. Hence, we only need to compute $\langle X_0, \nu \rangle,$ where $X_0$ is defined as the vector field associated with the flows $\Phi_\e$ at $\e=0$ in the analogue of Lemma \ref{lemma:flows-specific}, and $\nu$ denotes the unit normal at $\partial \Omega_0.$ 

In order to do so, we adapt the proof of Lemma \ref{lemma:second-variation-K}: first, note that equations \eqref{eqn:boundary-vector-expansion}--\eqref{eqn:condition-derivative-measure} hold in the exact same way also in the higher-dimensional case. Moreover, we also note that
\begin{align*}
s &= \mu_\e(u_\e^*(s)) \cr 
  &= \mu_0(u_\e^*(s)) + O(\e^2) \cr 
  &= s - (u_\e^*(s) - u_0^*(s))  \frac{d \cdot s^{1-1/d}}{(d!)^{1/d} u_0^*(s)} + O(\e^2), 
\end{align*}
where the last equality simply follows by differentiating \eqref{defmuspec} with respect to $t$ and evaluating at $t = u_0^*(s)=v^*(s).$ We hence conclude once again that 
\begin{align*}
u_\e(\Phi_\e(z)) & = (1 + 2 \e \Re G(z)- 2\pi \e \langle X_0(z),z\rangle + O(\e^2) )e^{-\pi |z|^2},\\
 u_\e^*(s) & = \left(1 + \frac{(d!)^{1/d}}{d \cdot s^{1-1/d}} O(\e^2)\right) u_0^*(s).
\end{align*}
Again, since $\Phi_\e (\{u_0 = u_0^*(s)\}) = \{u_\e = u_\e^*(s)\}$ by definition, if one looks at $z \in \{u_0 = u_0^*(s)\}$ and compares the expansion in $\e$ of $u_\e(\Phi_\e(z)) = u_\e^*(s),$ one arrives at 
\begin{equation}\label{eqn:boundary-high-d-vector}
\text{Re}(G) = \pi \langle X_0, z \rangle = \pi \cdot r(\Omega_0)  \langle X_0, \nu \rangle \text{ on } \partial \Omega_0,
\end{equation}
where $r(\Omega_0)$ denotes the radius of $\Omega_0$, which by the definition of $v^*$ in \eqref{defe*}, is given by
\begin{equation*}
    \label{eq:defrOmega0}
    r(\Omega_0) = \left( \frac{(d!s)^{1/d}}{\pi} \right)^{1/2}.
\end{equation*}
Hence, 
\begin{align*}
\label{eq:boundary-term-variation}
\begin{split}
\int_{\partial \Omega_0} \text{Re}(G) \langle X_0, \nu \rangle e^{-\pi |z|^2} \diff \mc H^{2d-1} &= \frac{\mathcal{H}^{2d-1}(\partial \Omega_0) \cdot e^{-\pi r( \Omega_0)^2}}{\pi \cdot r(\partial \Omega_0)} \fint_{\partial \Omega_0} \text{Re}(G)^2 \diff \mc H^{2d-1} \, \cr 
    & = \frac{\mathcal{H}^{2d-1}(\partial \Omega_0) \cdot e^{-\pi r( \Omega_0)^2}}{2 \pi \cdot r( \Omega_0)} \fint_{\partial \Omega_0} |G|^2 \diff \mc H^{2d-1}\\
    &= e^{-(d!s)^{1/d}}  \frac{d \cdot s^{1-1/d}}{(d!)^{1/d}}  \fint_{\partial \Omega_0} |G|^2 \diff \mc H^{2d-1},
    \end{split}
\end{align*}
where the second identity may be justified by the fact that $\Omega_0$ is a ball centered at 0 and $G,\, \overline{G}$ are harmonic functions with $G(0) = 0.$ Thus, as in Section \ref{sec:alternative-proof}, this shows that 
\begin{equation}\label{eq:variational-high-dim}
\frac 1 2 \frac{\diff^{\,2}}{\diff \e^{\,2}}\mc K[1+\e G]\Bigg|_{\e = 0 } 
= \int_{\Omega_0} |G|^2 e^{-\pi |z|^2} - \|G\|_{\mc F^2}^2\int_{\Omega_0} e^{-\pi |z|^2} +  e^{-(d!s)^{1/d}}  \frac{d \cdot s^{1-1/d}}{(d!)^{1/d}} \fint_{\partial \Omega_0} |G|^2 \,.
\end{equation}
We now wish to compute the right-hand side of \eqref{eq:variational-high-dim} for 
$$G(z) = \sum_{i=1}^d z_i^2 +   \sum_{1\le i < j \le d} \sqrt{2} z_i z_j .$$ In order to do so, we note that each monomial in the definition of $G$ is orthogonal to every other monomial not just over $\C^d$ but in fact over any ball centered at the origin. Thus we have 
\begin{equation}\label{eq:second-variation-higher}
 \frac 1 2\frac{\diff^{\,2}}{\diff \e^{\,2}}\mc K[1+\e G]\Bigg|_{\e = 0 }  = \int_{\Omega_0} |z|^4 e^{-\pi |z|^2} - \|G\|_{\mc F^2}^2\int_{\Omega_0} e^{-\pi |z|^2} +  e^{-(d!s)^{1/d}} \cdot \frac{d \cdot s^{1-1/d}}{(d!)^{1/d}}  \fint_{\partial \Omega_0} |z|^4 \,. 
\end{equation}
In order to further analyze \eqref{eq:second-variation-higher}, note the identity 
\[
\int_{\Omega_0} |G|^2 e^{-\pi|z|^2} - \|G\|_{\mc F^2}^2 \int_{\Omega_0} e^{-\pi|z|^2} = - \int_{\C^d \setminus \Omega_0} |G|^2 e^{-\pi|z|^2} + \|G\|_{\mc F^2}^2 \int_{\C^d \setminus \Omega_0} e^{-\pi|z|^2}. 
\]
For our choice of $G$, we can explicitly evaluate these integrals: indeed, a routine computation implies that
\[
\int_{\C^d \setminus \Omega_0} |z|^4 e^{-\pi|z|^2} \, \diff z =  \frac{\Gamma(d+2,\pi r(\Omega_0)^2)}{\pi^2 \Gamma(d)},
\]
while $\|G\|_{\mc F^2}^2 = \frac{d(d+1)}{\pi^2},$ which implies that 
\[
\|G\|_{\mc F^2}^2 \int_{\C\setminus \Omega_0} e^{-\pi|z|^2} \diff z = \frac{ d(d+1)\Gamma(d,\pi r( \Omega_0)^2) }{\pi^2 \Gamma(d)}.
\]
Hence, by using that $\Gamma(k,x) = (k-1)! e^{-x} \left( \sum_{j=0}^{k-1} \frac{x^j}{j!} \right),$ we conclude that 
\[
 \int_{\Omega_0} |z|^4 e^{-\pi |z|^2} \diff z- \|G\|_{\mc F^2}^2\int_{\Omega_0} e^{-\pi |z|^2}  \diff z = -  \frac{d s \cdot e^{-(d!s)^{1/d}}}{\pi^2} \left(  1+d+ (d! s)^{1/d} \right).
\]
Finally, the last term in \eqref{eq:second-variation-higher} may be explicitly computed to be $\frac{d \cdot s}{\pi^2} e^{-(d!s)^{1/d}}(d! s)^{1/d}.$ Thus, plugging these into \eqref{eq:second-variation-higher}, we obtain 
\[
\frac 1 2 \frac{\diff^{\,2}}{\diff \e^{\,2}}\mc K[1+\e G]\Bigg|_{\e = 0 } = - e^{-(d!s)^{1/d}} \cdot d(d+1) \frac{s}{\pi^2}. 
\]
A straightforward adaptation of the arguments from Section \ref{sec:sharpness} shows the desired sharpness of the exponent, as well as the stability of the order of growth of the constant in \eqref{eqn:StabilityFunctionFockDimensiond}. That is, we are able to obtain the following result: 

\begin{numcor}\label{cor:sharpnesshighd} The following assertions hold:
\begin{enumerate}
    \item\label{it:optimaldeltahighd} The factor $\delta(f;\Omega)^{1/2}$ cannot be replaced by $\delta(f;\Om)^{\beta},$ for any $\beta > 1/2,$ in \eqref{eqn:StabilityFunctionDimensiond};
    \item\label{it:optimalconstanthighd} There is no $c\in(0,(d!)^{1/d})$ such that, for all measurable sets $\Omega \subset \C^d$ of finite measure, we have
    $$    \min_{z_0\in \C^d, |c|=\|f\|_{2}} \frac{\|f - c\, \varphi_{z_0} \|_2}{\|f\|_2} \leq C \Big(e^{c |\Omega|^{1/d}} \delta(f;\Om)\Big)^{1/2}. $$
\end{enumerate}
\end{numcor}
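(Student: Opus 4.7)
The plan is to transcribe the strategy of Section \ref{sec:sharpness} to arbitrary dimension $d$, taking advantage of the second-variation computation that was already carried out above. Specifically, I would consider the test family
$$F_\e(z) = 1 + \e\,G(z), \qquad G(z) = \sum_{i=1}^d z_i^2 + \sqrt 2 \sum_{1\le i<j\le d} z_i z_j,$$
together with its normalization $\tilde F_\e := F_\e/\|F_\e\|_{\Fock}$. Since $\langle G,1\rangle_{\Fock}=\langle G,z_j\rangle_{\Fock}=0$ for each $j$, the first-order term in the Taylor expansion of $\K$ at $F_0\equiv 1$ vanishes, and combining the higher-dimensional analogue of Lemma \ref{lemma:control3rdder} with the explicit formula \eqref{eq:variational-high-dim} yields, for each fixed $s>0$ and $\e$ sufficiently small,
$$\delta(\tilde F_\e;\Omega_\e) \;\le\; \frac{C(d)\, s\, e^{-(d!\,s)^{1/d}}}{\int_0^{s} \ee(r)\,dr}\,\e^2, \qquad \Omega_\e := \{u_{\tilde F_\e}>u_{\tilde F_\e}^*(s)\}.$$

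Next I would produce the matching lower bound on the distance to the extremizing family, following verbatim the end of the proof of Proposition \ref{prop:estimate-eigenvalues}. Since $|G(z)|\le C_d |z|^2$ and $\sum_{k=0}^{d+1}(\pi|z|^2)^k/k!<e^{\pi|z|^2}$, the pointwise inequality $1+2\e\,\Re G(z)+\e^2|G(z)|^2<e^{\pi|z|^2}$ holds for $z\ne 0$ whenever $\e$ is sufficiently small (depending only on $d$); thus the unique global maximizer of $u_{\tilde F_\e}$ is $z=0$, and \eqref{eq:max} then gives
$$\inf_{z_0\in \C^d,\,c\in\C} \|\tilde F_\e - cF_{z_0}\|_{\Fock}^2 \;\ge\; 1-\max_{z\in\C^d} u_{\tilde F_\e}(z) \;\ge\; \frac{\e^2}{C_d}.$$
Claim \ref{it:optimaldeltahighd} follows by fixing any $s>0$ and sending $\e\to 0^+$: an inequality with exponent $\beta>1/2$ would combine with the two bounds to give $\e\lesssim_{s,d}\e^{2\beta}$, contradicting $\beta>1/2$. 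For Claim \ref{it:optimalconstanthighd} I would instead use the joint limit $s\to\infty$ then $\e\to 0$. Since $\int_0^\infty \ee(r)\,dr = 1$, if the bound \eqref{eqn:StabilityFunctionDimensiond} held with the factor $e^{c|\Omega|^{1/d}/2}$ for some $c<(d!)^{1/d}$, the two estimates would combine to force
$$\e \;\lesssim_d\; e^{c s^{1/d}/2}\cdot s^{1/2}\cdot e^{-(d!)^{1/d}s^{1/d}/2}\,\e,$$
whose right-hand side tends to $0$ as $s\to\infty$ because $c<(d!)^{1/d}$, a contradiction.

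The main obstacle is rigorously establishing the higher-dimensional version of Lemma \ref{lemma:control3rdder}, i.e.\ controlling the third-order remainder $\eta(\e)\e^2$ in the Taylor expansion of $\K$ uniformly in the $2d$-dimensional level-set geometry. All the ingredients are already in place: the analogues of Lemmas \ref{lemma:Level-Set-Regularity}--\ref{lemma:star-shaped-levels} (regularity, smoothness and star-shapedness of super-level sets) and of Lemmas \ref{lemma:flows}--\ref{lemma:flows-specific} (divergence-free perturbation flows) go through with only notational modifications, replacing polar coordinates $re^{i\theta}$ by $r\omega$ with $\omega\in\mbbS^{2d-1}$, exactly as was already done while adapting Section \ref{sec:firstproof} to dimension $d$ earlier in this section. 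Once this adaptation is set up, both claims of the corollary are immediate transcriptions of Corollary \ref{cor:sharpness}.
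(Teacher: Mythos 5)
Your proposal is correct and follows essentially the same route as the paper: the explicit second‑variation value $-e^{-(d!s)^{1/d}}d(d+1)s/\pi^2$ for the test function $G(z)=\sum_i z_i^2+\sqrt2\sum_{i<j}z_iz_j$ gives the $O(\e^2)$ upper bound on the deficit, the reproducing‑kernel inequality \eqref{eq:max} together with the pointwise comparison of $u_{\tilde F_\e}$ with $e^{-\pi|z|^2}$ gives the matching $\e^2/C_d$ lower bound on the distance, and claims \ref{it:optimaldeltahighd} and \ref{it:optimalconstanthighd} follow by letting $\e\to0$ for fixed $s$ and then $s\to\infty$, exactly as in the transcription of Proposition \ref{prop:estimate-eigenvalues} and Corollary \ref{cor:sharpness}. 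The one point you flag as the remaining obstacle, the $2d$-dimensional analogue of Lemma \ref{lemma:control3rdder}, is the same ingredient the paper implicitly invokes when it defers to ``a straightforward adaptation of the arguments from Section \ref{sec:sharpness}.''
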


Elementary computations reveal that the denominator on the right-hand side of  \eqref{eqn:StabilityFunctionFockDimensiond} behaves as
$$\int_{|\Om|}^\infty e^{-(d!s)^{1/d}} \diff s \approx C_d |\Om|^{\frac{d-1}{d}} e^{-(d! |\Om| )^{1/d}} \quad \text{ as } |\Om|\to \infty,$$
for some explicitly computable constant $C_d>0$. Thus, Corollary \ref{cor:sharpnesshighd}\ref{it:optimalconstanthighd} yields, indeed, the desired optimal dependence of \eqref{eqn:StabilityFunctionFockDimensiond} on $|\Om|$.

\appendix
\section{Proof of Lemma \ref{lemma:control3rdder}}
\label{sec:appendix}

This appendix is dedicated to the proof of the technical Lemma \ref{lemma:control3rdder}. 
Before proceeding with the main part of the proof it is convenient to establish some auxiliary estimates for the vector fields $X_\e$, with flows $\Phi_\e$, which were constructed in Lemma \ref{lemma:flows-specific}.

\begin{numlemma}\label{lemma:flows-specific-estimate}
Let $G\in \mc F^2(\C)$ satisfy \eqref{eq:normG}. Let $\Phi_\e, \Psi_\e$ be as in Lemma \ref{lemma:flows-specific}.
There is $\e_0 =\e_0(s,\|G\|_{\mc F^2})> 0$ and a modulus of continuity $\eta\colon [0,\infty)\to [0,\infty)$, depending on $s$, such that, when $\e\leq \e_0$, 
\begin{align}
    \label{eq:estimateXeps}
    \|X_\e - X_0\|_{C^1} & \leq \eta(\e)\|G\|_{\mc F^2},\\
    \label{eq:estimateflow}
    \|\Phi_\e-\Phi_0\|_{C^1}& \leq \eta(\e) \|G\|_{\mc F^2},\\
    \label{eq:H1estimate}
    |\mc H^1(\p \Omega_\e)- \mc H^1(\p \Omega_0)|& \leq \eta(\e) \|G\|_{\mc F^2},\\
    \label{eq:measureestimate1}
    |\Omega_\e\triangle \Omega_0| & \leq \eta(\e) \|G\|_{\mathcal F^2},\\
    \label{eq:measureestimate2}
    \sigma(\Omega_\e\triangle \Omega_0) & \leq \eta(\e) \|G\|_{\mathcal F^2}.
\end{align}
\end{numlemma}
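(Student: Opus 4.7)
The central object throughout is the graph function $f_\e$ on $\mb S^1$ determined implicitly by
\[
u_\e\bigl((1+f_\e(\omega))\omega\bigr) = u_\e^*(s), \qquad \omega \in \mb S^1,
\]
as in \eqref{eq:deffeps}; after the normalizing dilation used in Lemma \ref{lemma:flows-specific}, $\Omega_0 = B_1$ and $f_0 \equiv 0$. The plan is to deduce all five estimates from a quantitative application of the Implicit Function Theorem to this equation. The first step is to establish a perturbative bound of the form
\[
\|f_\e\|_{C^2(\mb S^1)} + \|\p_\e f_\e\|_{C^1(\mb S^1)} \leq \eta(\e)\|G\|_{\mc F^2},
\]
where $\eta$ is a modulus of continuity depending on $s$. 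Two ingredients are needed for this: (i) $C^2$-control of the perturbation $u_\e - u_0 = 2\e\Re(G)e^{-\pi|\cdot|^2} + \e^2|G|^2 e^{-\pi|\cdot|^2}$ in a fixed neighborhood of $\mb S^1$, which follows from Cauchy estimates on $G$ exactly as in \eqref{eq:bound-derivative-G}--\eqref{eq:bound-G-der-local}; (ii) the uniform non-degeneracy $|\p_r u_\e(r\omega)| \geq c(s) > 0$ for $r$ in a neighborhood of $1$, which follows from the corresponding lower bound for $u_0$ and the $C^1$-perturbation estimate on $u_\e - u_0$, provided $\e$ is sufficiently small. The expansion $u_\e^*(s) = u_0^*(s) + O(\e^2)$ established in the proof of Lemma \ref{lemma:second-variation-K} then allows a direct application of the implicit function theorem to extract $\|f_\e\|_{C^2} = O(\e)$. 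Differentiating the defining equation once more in $\e$ yields the claimed bound for $\p_\e f_\e$.

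With these estimates in hand, \eqref{eq:estimateXeps} follows by inserting the bounds on $f_\e$ and $\p_\e f_\e$ into the explicit formula
\[
X_\e(r,\omega) = \frac{1}{r}(1+f_\e(\omega))\p_\e f_\e(\omega)\,\omega
\]
valid in a neighborhood of $\mb S^1$; away from such a neighborhood the extensions constructed in Lemma \ref{lemma:flows} can be chosen so as to inherit the same rate of convergence. Then \eqref{eq:estimateflow} is immediate from \eqref{eq:estimateXeps} via Gronwall's inequality applied to the flow ODE $\p_\e \Phi_\e = X_\e \circ \Phi_\e$ together with $\Phi_0 = \mathrm{id}$.

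For the remaining three geometric estimates, the graph parametrization yields the explicit formulas
\[
\mc H^1(\p \Omega_\e) = \int_{\mb S^1}\sqrt{(1+f_\e(\omega))^2 + (\p_\omega f_\e(\omega))^2}\,\diff\mc H^1(\omega), \quad |\Omega_\e| = \frac12 \int_{\mb S^1}(1+f_\e(\omega))^2\,\diff\mc H^1(\omega),
\]
and a similar expression for $\sigma(\Omega_\e) = \int_{\Omega_\e} e^{-\pi |z|^2}\diff z$ in polar coordinates. The uniform smallness of $f_\e$ and $\p_\omega f_\e$ gives \eqref{eq:H1estimate} and \eqref{eq:measureestimate1} directly, while \eqref{eq:measureestimate2} follows since $\Omega_\e \triangle \Omega_0$ is contained in a fixed compact neighborhood of $\mb S^1$ where $e^{-\pi|z|^2}$ is bounded.

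The main obstacle is the first, quantitative step: one must ensure that the constants appearing in the implicit function theorem depend only on $s$, uniformly in $\omega$ and $\e$. This hinges entirely on the lower bound $|\p_r u_\e| \geq c(s) > 0$ near $\p \Omega_\e$; this is a direct consequence of the proof of Lemma \ref{lemma:Level-Set-Regularity} applied to $u_0$, combined with the perturbative bound $\|u_\e - u_0\|_{C^1} = O(\e)\|G\|_{\mc F^2}$, which also controls the dependence of the final modulus of continuity $\eta$ on $\e$ in a computable manner.
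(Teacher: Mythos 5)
Your proposal is correct, and its core — controlling the graph function $f_\e$ through the implicit equation $u_\e((1+f_\e(\omega))\omega)=u_\e^*(s)$, using Cauchy estimates on $G$ for the $C^2$-perturbation of $u_\e-u_0$ and the non-degeneracy $|\partial_r u_\e|\geq c(s)$ near $\partial\Omega_0$ — is the same as the paper's. The differences are in execution. For the bounds on $\partial_\theta f_\e$ and $\partial_\theta^2 f_\e$, the paper does not invoke a quantitative implicit function theorem abstractly; it differentiates the identity $u_\e(\Phi_\e(e^{i\theta}))=u_\e^*(s)$ in $\theta$, isolates $\partial_\theta f_\e$, and absorbs it from the right-hand side into the left — which is of course just the IFT unpacked by hand, and your route is equivalent. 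Also, the paper only needs the cruder bound $|u_\e^*(s)-e^{-s}|\leq 2\e$ rather than the $O(\e^2)$ expansion you import from Lemma \ref{lemma:second-variation-K}. The genuinely different step is \eqref{eq:H1estimate}--\eqref{eq:measureestimate2}: the paper sandwiches $\Omega_\e$ and $\Omega_0$ between the balls $\{u_0>e^{-s}\pm 4\e\}$ and, crucially, invokes the \emph{convexity} of $\Omega_\e$ from Proposition \ref{prop:convexity} to compare $\mc H^1(\partial\Omega_\e)$ with the perimeters of the nested balls; you instead read all three quantities off the polar-graph formulas using the uniform smallness of $f_\e$ and $\partial_\omega f_\e$. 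Your route buys independence from the convexity result and is arguably more self-contained once the $C^1$ bound on $f_\e$ is in hand; the paper's route gets the perimeter and measure estimates from the $C^0$ information alone. One point to be aware of: for \eqref{eq:estimateXeps} you need not only $\|\partial_\e f_\e\|_{C^1}\lesssim\|G\|_{\mc F^2}$ but continuity of $\e\mapsto\partial_\e f_\e$ in $C^1(\mb S^1)$ at $\e=0$ (to control $\partial_\e f_\e-\partial_\e f_0$ inside $X_\e-X_0$); this follows from the joint smoothness established in Lemma \ref{lemma:flows-specific}, and the paper glosses over it at the same level you do.
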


We note that in \eqref{eq:measureestimate2}, as in the proof of Lemma \ref{lemma:second-variation-K}, we denote for simplicity $\diff \sigma(z) \coloneqq e^{-\pi |z|^2} \diff z.$

\begin{proof}
Let $f_\e$ be as in \eqref{eq:deffeps}. We begin by proving  that
\begin{equation}
    \label{eq:claimappendix}
\|f_\e\|_{C^2(\partial \Omega_0)} \le \eta(\e) \|G\|_{\mc F^2},
\end{equation}
whenever $\e$ is sufficiently small. 

We first argue exactly as in Proposition \ref{prop:convexity}. More precisely, we write $u_0=e^{-\pi |\cdot|^2},$ and so, as in \eqref{eq:first-set-comparison}, we have
\begin{equation}
    \label{eq:setcomparisonappendix}
    \{u_0 > u^*_\e(s) + 2 \e  \} \subset \{  u_\e  > u^*_\e(s) \} \subset \{  u_0 > u^*_\e(s) - 2 \e  \}.
\end{equation}
Thus we see that
\begin{equation}
    \label{eq:uepsstar}
    |u_\e^*(s) - e^{-s} | \leq 2\e 
\end{equation}
and, if $u_\e(z)\geq u_\e^*(s)$, we have
\begin{equation}
    \label{eq:radiicomparisonappendix}
    s\leq \pi |z|^2 \leq - \log(u_\e^*(s)-2 \e)\leq s+8 e^s \e 
\end{equation}
for all $\e$ sufficiently small (depending on $s$). In particular, we may take $z\in \p\Omega_\e$ and so $z=(1+f_\e(\omega))\omega$ for some $\omega \in \p \Omega_0$, thus $\pi |z|^2 = \pi |\omega|^2 (1+f_\e(\omega))^2 = s(1+f_\e(\omega))^2$, and the last inequalities yield
$$1 \leq (1+f_\e(\omega))^2 \leq 1+ 8 e^s s^{-1} \e.$$
Since $1+f_\e(\omega)\geq 0$, this easily implies the $C^0$-estimate
\begin{equation}\label{eq:upper-bound-f_eps} 
|f_\e(\omega)| \leq 4 e^s s^{-1} \e .
\end{equation} 

We  now prove the $C^1$-estimate for $f_\e$ on $\p \Omega_0$; in order to simplify a bit the notation, we assume that $\p \Omega_0=\mb S^1$ and so we write $\om = e^{i \theta}.$ Then $u_\e(\Phi_\e(\om)) = u_\e^*(s)$ implies  
\begin{align}
\label{eq:derivative-f_eps}
\begin{split}
0   & = \partial_\theta (u_\e^*(s)) = \partial_\theta \left(u_\e(\Phi_\e(e^{i\theta})) \right) \cr 
    & = \left( 2 \e \Re[G'(\Phi_\e) \partial_\theta (\Phi_\e(e^{i \theta}))] + 2\e^2 \Re[ G'(\Phi_\e) \partial_\theta (\Phi_\e(e^{i\theta}))\,
    \overline{G(\Phi_\e)}] \right) e^{-\pi|\Phi_\e|^2} \cr 
    & \quad + \left(1 + 2\e \Re(G(\Phi_\e)) + \e^2 |G(\Phi_\e)|^2 \right)(- \pi \partial_\theta|\Phi_\e(e^{i\theta})|^2) e^{-\pi |\Phi_\e|^2} ,
    \end{split}
\end{align}
which can be rewritten as 
\begin{equation}
    \label{eq:rewrite-derivative-f_eps}
    |1+\e G(\Phi_\e)|^2 \p_\theta|\Phi_\e(e^{i\theta})|^2 
    = \frac{2 \e}{\pi}\Re \left(G'(\Phi_\e) \partial_\theta (\Phi_\e(e^{i\theta})) \left(1+ \e \overline{G(\Phi_\e)} \right) \right).
\end{equation}
By the explicit formula for $\Phi_\e$, cf.\ \eqref{eq:defPsi} and \eqref{eq:defXeps}, we have
\begin{align}
\label{eq:auxidentitiesfeps}
\begin{split}
\partial_\theta (\Phi_\e(e^{i\theta}))& = ie^{i\theta}(1+f_\e(e^{i\theta})) + ie^{2i\theta} \partial_\theta (f_\e(e^{i\theta})),\\ \partial_\theta |\Phi_\e(e^{i\theta})|^2 & = 2ie^{i\theta}(1+f_\e(e^{i\theta}))\partial_\theta (f_\e(e^{i\theta})).    
\end{split}
\end{align}
We now argue essentially as in Lemma \ref{lemma:Level-Set-Regularity}. More precisely,  we note that if $\e$ is sufficiently small (depending on $\|G\|_{\Fock}$) then 
$$\frac 1 2 \leq |1+\e G(\Phi_\e)| \leq \frac 3 2, \qquad \frac 1 2 \leq |1+f_\e(e^{i \theta})|\leq \frac 3 2, $$
where the last bounds follow from \eqref{eq:upper-bound-f_eps}. Thus, taking absolute values in \eqref{eq:rewrite-derivative-f_eps} and then inserting identities \eqref{eq:auxidentitiesfeps}, we obtain 
\begin{align}
    \label{eq:bound-first-derivative-f_eps-aux}
     \begin{split}
        |\p_\theta(f_\e(e^{i \theta}))| & 
        \leq C \e |\p_\theta(\Phi_\e(e^{i \theta}))| |G'(\Phi_\e)|\left(1+ \e |G(\Phi_\e)|\right)\\
        & \leq C \e \|G\|_{\Fock}\left(1+ |\p_\theta(f_\e(e^{i \theta}))| + \e \|G\|_{\Fock}\right).
    \end{split}
\end{align}
Here, to pass to the last line, we also used the estimate 
$|G'(\Phi_\e)| \leq C \|G\|_{\Fock}$
which follows from the Cauchy integral formula and the $C^0$-estimate \eqref{eq:upper-bound-f_eps}, exactly as in \eqref{eq:bound-derivative-G}.
By choosing $\e$ sufficiently small, we can absorb the term $|\p_\theta(f_\e(e^{i \theta}))|$ on the right-hand side of \eqref{eq:bound-first-derivative-f_eps-aux} into its left-hand side, and so we obtain
\begin{equation}\label{eq:bound-first-derivative-f_eps}
|\partial_\theta (f_\e(e^{i\theta}))| \le \eta(\e) \|G\|_{\mc F^2}.
\end{equation}
By differentiating \eqref{eq:derivative-f_eps} once more with respect to $s$, repeating the argument above and using the bound \eqref{eq:bound-first-derivative-f_eps}, one likewise obtains $|\partial_\theta^2 f_\e(e^{i\theta})| \le  \eta(\e) \|G\|_{\mc F^2}$, as claimed in \eqref{eq:claimappendix}. Since the details are essentially the same and bear no real insight, we omit them.

Having \eqref{eq:claimappendix} at our disposal, estimate \eqref{eq:estimateXeps} follows easily from the explicit formula for the fields $X_\e$ in \eqref{eq:defXeps} and   \eqref{eq:estimateflow} follows immediately from \eqref{eq:estimateXeps} by ODE theory; alternatively, one may also argue this directly from the explicit form of the underlying vector fields in Lemma \ref{lemma:flows}. 

To prove the remaining estimates we return to \eqref{eq:setcomparisonappendix} and \eqref{eq:uepsstar} to see that $\Omega_\e=\{u_\e>u_\e^*(s)\}$ is nested between two balls:
$$B_\e^1 := \{ u_0 > e^{-s} + 4\e \} \subset \Omega_\e 
\subset \{ u_0 > e^{-s} - 4\e   \}=: B_\e^2. 
$$
Proposition \ref{prop:convexity} ensures that, by choosing $\e_0$ sufficiently small, $\Omega_\e$ is convex, hence
$$\mc H^1(\partial B_\e^1) \leq \mc H^1(\p \Omega_\e) \leq \mc H^1(\p B_\e^2).$$
Let us denote by $R_\e^1, R_\e^2$ the radii of $B_\e^1,B_\e^2$ respectively. 
If we choose $\e_0$ sufficiently small, depending on $s$, then it is easy to see that
\begin{equation}
\label{eq:estimateradii}
s- 8 e^s \e  \leq \pi(R_\e^1)^2 \leq \pi(R_\e^2)^2 \leq s+8  e^s \e,
\end{equation}
similarly to \eqref{eq:radiicomparisonappendix}.
This immediately yields 
\begin{align*}
4 \pi(s - 8 e^s \e  ) \leq \mc H^1(\p B_\e^1)^2 \leq \mc H^1(\p B_\e^2)^2 & \leq 4 \pi(s + 8 e^s\e),
\end{align*}
and, as $\mc H^1(\p \Omega_0)^2 = 4 \pi s$, \eqref{eq:H1estimate} follows. Similarly, for \eqref{eq:measureestimate1}, we have
\begin{align*} 
|\Omega_\e \triangle \Omega_0| & \le |B_\e^2 \setminus B_\e^1|  \le \pi \left( (R_\e^2)^2-  (R_\e^1)^2\right) \leq 16 e^s \e .
\end{align*} 
Finally, \eqref{eq:measureestimate2} follows from \eqref{eq:measureestimate1} through $\sigma(\Omega_\e\Delta \Omega_0)\leq |\Omega_\e \Delta \Omega_0|$.
\end{proof}

\begin{proof}[Proof of Lemma \ref{lemma:control3rdder}]
Returning to the beginning of the proof of Lemma \ref{lemma:second-variation-K}, and in particular to \eqref{eq:derK0}, we can write, using Taylor's theorem, 
\begin{equation*}
    K_\e = K_0 + \frac{\e^2}{2} K''_0 + \int_0^\e (\e-s)(K''_s-K''_0) \diff s, 
\end{equation*}
and thus our task is to show that, for all $\e\in[0,\e_0],$ we have
\begin{equation}
    \label{eq:decay}
    |K''_\e-K''_0| \leq \eta(\e)
\end{equation}
for a suitable modulus of continuity $\eta$. We recall that, as in the proof of Lemma \ref{lemma:second-variation-K}, primes denote derivatives with respect to $\e$, and we also recall the definition of $I_\e$ and $J_\e$ from \eqref{eq:defIJ}.

Although in the statement of the lemma we assumed that $\|G\|_{\mc F^2}=1$, for the sake of clarity we will still write $\|G\|_{\mc F^2}$ explicitly in our estimates. Since $\langle G, 1\rangle_{\mathcal F^2}=0$ by \eqref{eq:normG}, we have
\begin{equation}
\label{eq:4thterm}
J_\e - J_0 = \|1+\e G\|_{\mathcal F^2}^2 - \|1\|_{\mathcal F^2}^2 = \e^2 \|G\|^2_{\mc F^2} = \e^2,
\end{equation}
and so $J_\e \geq J_0 = 1$.  By \eqref{eq:derK}, the function $K_\e ''$, seen as a function of $(I_\e, I_\e',I_\e'', J_\e, J_\e',J_\e'')$,  is smooth in the set $\{J_\e >0\}$ and therefore there is a constant $C$ such that
\begin{equation}
\label{eq:lipestimate}
\left|K_\e'' - K_0''\right| \leq C
(|I_\e - I_0| + |I'_\e - I'_0| + |I''_\e- I''_0| + |J_\e-J_0| + |J'_\e-J'_0|).
\end{equation}
since $J''_\e = J''_0$ is independent of $\e$, cf.\ \eqref{eq:derIJsecond}.  It now suffices to estimate each of the terms on the right-hand side of \eqref{eq:lipestimate}. In \eqref{eq:4thterm} have already estimated $|J_\e-J_0|$ and, similarly, we have
\begin{equation}
\label{eq:5thterm}
|J'_\e - J'_0| = 2 \int_\C \langle G, \e G\rangle \diff \sigma \leq 2 \e \|G\|^2_{\mathcal F^2},
\end{equation}
where we used the identities in \eqref{eq:derIJ}.
Thus it remains to estimate the first three terms in \eqref{eq:lipestimate}.
 
 For the first term, again since $\langle G, 1\rangle_{\mathcal F^2}=0$, we estimate using the fundamental theorem of calculus:
\begin{align}
\label{eq:1stterm}
\begin{split}
|I_\e-I_0| & \leq \e \sup_\e I'_\e \leq 2\e |\langle G,1+ \e G\rangle_{\mc F^2} |= 2 \e^2 \|G\|_{\mc F^2}^2.
\end{split}
\end{align}
For the second term,  with the help of \eqref{eq:measureestimate2},  we estimate
\begin{align}
\label{eq:2ndterm}
\begin{split}
|I'_\e-I'_0| & = 2\left|\langle G, (1+\e G)1_{\Omega_\e} - 1_{\Omega_0}\rangle_{\mc F^2}\right|\\
& \leq 2 \|G \|_{\mc F^2} \sigma(\Omega_\e \Delta \Omega_0) + 2 \e \|G\|_{\mc F^2}^2\\
& \leq \eta(\e) \|G\|_{\mc F^2}^2.
\end{split}
\end{align}
Finally, we arrive at the third term:
\begin{align*}
    \frac 1 2(I''_\e - I''_0) & = \int_\C |G|^2 (1_{\Omega_\e}- 1_{\Omega_0}) \diff \sigma  + \int_{\p \Omega_\e} \langle G, 1+\e G\rangle \langle X_\e, \nu_\e\rangle e^{-\pi |z|^2} \diff \mc H^1(z)\\
& \quad - \int_{\p \Omega_0} \langle G, 1\rangle \langle X_0, \nu_0\rangle e^{-\pi |z|^2}\diff \mc H^1(z).
\end{align*}
The first term on the right-hand side is easily estimated  using \eqref{eq:measureestimate1}:
$$
\int_\C |G|^2(1_{\Omega_\e}- 1_{\Omega_0}) \diff \sigma 
\leq \|G\|_{\mc F^2}^2 |\Omega_\e \Delta \Omega_0| \leq \eta(\e) \|G\|_{\mc F^2}^3.
$$
For the last two terms, we write
\begin{align*}
\int_{\p \Omega_\e} \langle G, 1+\e G\rangle \langle X_\e, \nu_\e\rangle e^{-\pi |z|^2} \diff \mc H^1(z)
- \int_{\p \Omega_0} \langle G, 1\rangle \langle X_0, \nu_0\rangle e^{-\pi |z|^2} \diff \mc H^1(z)& = A_1 + A_2 + A_3, 
\end{align*}
where
\begin{align*}
    A_1 & := \int_{\p \Omega_\e} \langle G, 1+\e G\rangle e^{-\pi |z|^2} (\langle X_\e, \nu_\e\rangle - \langle X_0, \nu_0\rangle )\diff \mc H^1(z),\\
    A_2 & := \int_{\p \Omega_\e} \e |G|^2 \langle X_0,\nu_0\rangle e^{-\pi |z|^2}\diff \mc H^1(z),\\
    A_3 & := \int_{\p \Omega_\e} \langle G, 1\rangle e^{-\pi |z|^2} \langle X_0, \nu_0\rangle -\int_{\p \Omega_0} \langle G, 1\rangle e^{-\pi |z|^2} \langle X_0, \nu_0\rangle\diff \mc H^1(z) .
\end{align*}
Here, we have extended $\nu_\e$ to a neighbourhood of $\partial \Om_0$ as $\nu_\e = - \frac{\nabla u_\e}{|\nabla u_\e|}.$ With this definition, it follows from a computation similar to that of Lemma \ref{lemma:Level-Set-Regularity} that $\|\nu_\e - \nu_0\|_{L^{\infty}} \le \tilde{C}(s) \e \|G\|_{\mc F^2}$ in a neighbourhood of $\partial \Omega_0.$ Thus, on $\p \Omega_\e$, 
$$|\langle X_\e,\nu_\e\rangle - \langle X_0,\nu_0\rangle | \leq |\langle X_\e, \nu_\e-\nu_0\rangle| + | \langle X_\e-X_0, \nu_0\rangle|\leq C(s)\e \|G\|_{\Fock} + \eta(\e) \|G\|_{\Fock} $$
and so, using \eqref{eq:estimateXeps} and \eqref{eq:H1estimate}, we estimate
\begin{gather*}
    |A_1|\leq C(s)\e \|G\|_{\Fock} \int_{\p\Omega_\e} |G|(1+\e |G|) e^{- \pi|z|^2} \diff \mc H^1(z) \leq C(s) \eta(\e) (\|G\|_{\mc F^2}^2+ \|G\|_{\mc F^2}^3).
\end{gather*}
Similarly, by \eqref{eq:estimateXeps} and \eqref{eq:vector-field-X_0} we have
$$|A_2|\leq \e \|G\|^2_{\Fock} \int_{\p \Omega_\e} \langle X_0,\nu_0\rangle \diff \mc H^1 \leq  C(s)\e \|G\|_{\mc F^2}^2.$$
Finally, we note that
\begin{align*}
    & \int_{\p \Omega_\e} \langle G, 1\rangle e^{-\pi |z|^2} \langle X_0, \nu_0\rangle \diff \mc H^1(z)\\
    & \qquad = \int_{\p \Omega_0} |\nabla \Phi_\e(\omega)| \langle G(\Phi_\e(\omega)),1\rangle \langle X_0(\Phi_\e(\omega)),\nu_0(\Phi_\e(\omega))\rangle e^{-\pi |\Phi_\e(\omega)|^2} \diff \mc H^1(\omega),
\end{align*}
and so, using \eqref{eq:vector-field-X_0}, we have
$$A_3= \frac{2}{\mc H^1(\p \Omega_0)} \int_{\p \Omega_0} |\nabla \Phi_\e| (\Re G(\Phi_\e))^2 e^{-\pi |\Phi_\e|^2} - (\Re G)^2 e^{-\pi |\cdot|^2} \diff \mc H^1.$$
By \eqref{eq:estimateflow} we have $\left||\nabla \Phi_\e|-1\right| \leq \eta(\e)\|G\|_{\Fock}$, and so it suffices to estimate the function
$$g(z) \coloneqq (\Re G(z))^2 e^{-\pi |z|^2}.$$
For $z\in \p \Omega_0$, we have
\begin{align*}
    |\nabla g(z)| & \leq (2 |G(z)| |G'(z)| + 2\pi |z| |G(z)|^2 )e^{-\pi |z|^2} \\
    & \leq ( C(s) |G(z)|^2 + 2\pi |z| |G(z)|^2)e^{-\pi |z|^2}  \leq C(s) \|G\|_{\Fock}^2,
\end{align*}
where to pass to the second line we used the fact that $G$ is holomorphic together with Cauchy's integral formula, as in \eqref{eq:bound-derivative-G}. Hence, using again \eqref{eq:estimateflow}, and as $\Phi_0(\omega)=\omega$, we have
$$|g(\Phi_\e(\omega)) - g(\omega)|\leq \|\nabla g\|_{L^\infty} |\Phi_\e(\omega) - \omega|\leq C(s) \eta(\e) \|G\|^3_{\Fock},$$
which yields immediately
$$|A_3|\leq \eta(\e)\|G\|_{\mc F^2}^2.$$
Hence, combining the last estimates, and up to replacing $\eta$ with a new modulus of continuity, we have
\begin{equation}
    \label{eq:3rdterm}
    |I''_\e - I''_0| \leq \eta(\e) \|G\|^2_{\mc F^2}.
\end{equation}
The desired decay \eqref{eq:decay} now follows by combining \eqref{eq:4thterm}--\eqref{eq:3rdterm}.
\end{proof}

\nocite{*}                                  
\printbibliography                          
\end{document}